\newcommand{\bm}[1]{\boldsymbol{#1}}
\newtheorem{theorem}{Theorem}[section]
\newtheorem{lemma}[theorem]{Lemma}
\newtheorem{proposition}[theorem]{Proposition}
\newtheorem{corollary}[theorem]{Corollary}
\theoremstyle{definition}
\newtheorem{definition}[theorem]{Definition}
\newtheorem{notation}[theorem]{Notation}
\newtheorem{example}[theorem]{Example}
\theoremstyle{remark}
\newtheorem{remark}[theorem]{Remark}
\numberwithin{equation}{section}
\newcommand{\R}{\mathbb{R}}
\newcommand{\Z}{\mathbb{Z}}
\newcommand{\C}{\mathbb{C}}
\newcommand{\Q}{\mathbb{Q}}
\newcommand{\de}{\delta}
\newcommand{\ep}{\varepsilon}
\newcommand{\g}{\gamma}
\newcommand{\cO}{{\mathcal O}}
\newcommand{\cR}{{\mathcal R}}
\newcommand{\cS}{{\mathcal S}}
\newcommand{\cU}{{\mathcal U}}
\newcommand{\cV}{{\mathcal V}}
\newcommand{\cW}{{\mathcal W}}
\newcommand{\cX}{{\mathcal X}}
\newcommand{\cY}{{\mathcal Y}}
\newcommand{\cZ}{{\mathcal Z}}
\newcommand{\CM}{\overline{\mathcal{M}}}
\newcommand{\ww}{\omega}
\newcommand{\delbar}{\overline{\partial}}
\newcommand{\delbarj}{\overline{\partial}_J}
\newcommand{\delbarinv}{\smash{\overline{\partial}}\vphantom{\partial}^{-1}}
\newcommand{\hdelbar}{
	\mathchoice
	{
		\hat{\overline{\partial}}
	}
	{
		\scalerel*{\hat{\overline{\partial}}}{\hat{M}}
	}
	{
		\scalerel*{\hat{\overline{\partial}}}{\hat{M}}
	}
	{
		\scalerel*{\hat{\overline{\partial}}}{\hat{M}}
	}
}
\newcommand{\hdelbarj}{
	\mathchoice
	{
		\hat{\overline{\partial}}_J
	}
	{
		\scalerel*{\hat{\overline{\partial}}_J}{\hat{M}_J}
	}
	{
		\scalerel*{\hat{\overline{\partial}}_J}{\hat{M}_J}
	}
	{
		\scalerel*{\hat{\overline{\partial}}_J}{\hat{M}_J}
	}
}
\newcommand{\tdelbar}{
	\mathchoice
	{
		\tilde{\overline{\partial}}
	}
	{
		\scalerel*{\tilde{\overline{\partial}}}{\hat{M}}
	}
	{}{}
}
\newcommand{\abs}[1]{\lvert#1\rvert}
\newcommand{\norm}[1]{\left\Vert#1\right\Vert}
\DeclareMathOperator{\dR}{dR}
\newcommand{\red}[1]{\textcolor{red}{#1}}
\newcommand{\ssc}{\text{sc}}
\newcommand{\dmlog}{\smash{\overline{\mathcal{M}}}\vphantom{\mathcal{M}}^{\text{log}}}
\newcommand{\supp}{\operatorname{supp}}
\newcommand{\id}{\operatorname{id}}
\newcommand{\DET}{\operatorname{DET}}
\newcommand{\Lin}{\operatorname{Lin}}
\newcommand{\coker}{\operatorname{coker}}
\title[Naturality of invariants and pulling back perturbations]{Naturality of polyfold invariants \\ and pulling back abstract perturbations}
\date{\today}
\subjclass[2010]{Primary 53D30, 53D45, 58B15}
\thanks{Research partially supported by Project C5 of SFB/TRR 191 ``Symplectic Structures in Geometry, Algebra and Dynamics,'' funded by the DFG}
\author{Wolfgang Schmaltz}
\address{Mathematics Institute, Justus-Liebig University, D-35392 Gie{\ss}en, Germany}
\email{\href{mailto:wolfgang.schmaltz@math.uni-giessen.de}{wolfgang.schmaltz@math.uni-giessen.de}}
\urladdr{\url{https://sites.google.com/view/wolfgang-schmaltz/home}}
\begin{document}


\begin{abstract}
	It is possible to construct distinct polyfolds which model a given moduli space problem in subtly different ways.
	These distinct polyfolds yield invariants which, a priori, we cannot assume are equivalent. 
	We provide a general framework for proving that polyfold invariants are \emph{natural}, in the sense that under a mild hypothesis (the existence of an ``intermediary subbundle'' of a strong polyfold bundle) the polyfold invariants for such different models will be equal.
	As an application, we show that the polyfold Gromov--Witten invariants are independent of all choices made in the construction of the Gromov--Witten polyfolds. 
	Furthermore, we show that the polyfold Gromov--Witten invariants are independent of the choice of exponential decay at the marked points.
	
	In addition, we consider the following problem.
	Given a map between polyfolds, we cannot naively consider the restriction of this map to the respective perturbed solution spaces.
	Under a mild topological hypothesis on the map, we show how to pullback abstract perturbations which then allows us to obtain a well-defined map between the perturbed solution spaces.
	As an application, we show that there exists a well-defined permutation map between the perturbed Gromov--Witten moduli spaces.
\end{abstract}

\maketitle

\tableofcontents


\section{Introduction}

\subsection{Polyfold regularization}

Consider a compactified moduli space arising from the study of $J$-holomorphic curves in symplectic geometry.
A foundational problem is finding some way to give this moduli space enough structure to define invariants.
Polyfold theory, as developed by Hofer, Wysocki, and Zehnder, has been successful in providing a general abstract framework in which it is possible to ``regularize'' such a moduli space, yielding a perturbed moduli space which has sufficient additional structure.

\begin{theorem}[Polyfold regularization theorem, {\cite[Thm.~15.4, Cor.~15.1]{HWZbook}}]
	In some established cases, we can construct a polyfold $\cZ$ such that the compactified moduli space $\CM$ is equal to the zero set of a $\ssc$-smooth Fredholm section $\delbar$ of a strong polyfold bundle $\cW \to \cZ$, i.e., $\CM = \delbarinv (0) \subset \cZ$.
	
	We may then ``regularize'' the moduli space $\CM$ by means of an ``abstract perturbation.''
	The perturbed moduli space $\CM(p):=(\delbar+p)^{-1}(0)$ then has the structure of a compact oriented ``weighted branched orbifold.''
\end{theorem}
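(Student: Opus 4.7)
The plan is to split the proof into two logically separate components: (i) an analytic construction, specific to each moduli problem, that realizes $\CM$ as the zero set of a sc-smooth Fredholm section $\delbar$ of a strong polyfold bundle $\cW \to \cZ$; and (ii) an abstract perturbation argument, carried out purely at the level of polyfolds, that produces the regularized moduli space $\CM(p)$ together with its weighted branched orbifold structure. The phrase ``established cases'' in the statement acknowledges that step (i) has been verified in specific geometric settings (stable $J$-holomorphic maps, SFT buildings, Hamiltonian Floer trajectories). For Gromov--Witten, for instance, $\cZ$ is built from equivalence classes of stable maps whose domains are nodal Riemann surfaces, with a sc-smooth structure modeled on exponentially weighted Sobolev spaces near the nodes and marked points; the Cauchy--Riemann operator $\delbar_J u$ defines the section, and the key analytic inputs are the sc-smoothness of the reparametrization action, the sc-smoothness of the gluing profile, and the sc-Fredholm property of the linearization.

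For step (ii), I would first fix a sc$^+$ auxiliary norm $N$ on $\cW$ in the sense of Hofer--Wysocki--Zehnder, together with nested open neighborhoods $V \Subset U$ of the unperturbed zero set $\CM \subset \cZ$. Using the topological properness of $\delbar$ on $\cZ$ and a partition-of-unity argument on the ambient polyfold, I would construct a sc$^+$ multisection functor $p$ of $\cW$, supported near $\CM$ and with $N(p)$ uniformly small, such that $\delbar + p$ is transverse to the zero section in the multisection sense. The existence of such $p$ follows from the polyfold Sard--Smale theorem applied to finite-dimensional reductions coming from local sc-Fredholm charts.

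Next I would establish that $\CM(p) = (\delbar+p)^{-1}(0)$ is compact. The idea is that $N(p)$ bounded uniformly by a sufficiently small constant, combined with the compactness property of $\delbar$, forces every sequence in $\CM(p)$ to accumulate inside a compact sublevel set on which one can extract an sc-convergent subsequence. This is the step I expect to be the main obstacle in a rigorous write-up: one needs to delicately choose the pair $(N, U)$ and the support of $p$ so that admissibility is preserved, and then invoke the polyfold compactness theorem in a way that is compatible with the multisection structure. The weighted branched orbifold structure and compact orientation then follow essentially formally: locally in a sc-Fredholm chart, transversality of $\delbar + p$ together with the classical implicit function theorem produces finitely many smooth branches, each carrying a positive rational weight from the multisection; the isotropy of $\cZ$ descends to give orbifold charts, and orientability is inherited from an orientation of the determinant line bundle $\DET(\delbar)$, which propagates through the perturbation since $p$ is sc$^+$ and hence a compact perturbation of $\delbar$.
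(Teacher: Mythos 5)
This statement is a background citation in the paper (to \cite[Thm.~15.4, Cor.~15.1]{HWZbook}), not something the paper proves; the machinery you invoke is exactly what \S\ref{subsec:abstract-perturbations} recalls from Hofer--Wysocki--Zehnder. Your sketch reproduces the standard HWZ argument faithfully: fix a pair $(N,\cU)$ controlling compactness, build an admissible transversal $\ssc^+$-multisection via the general position argument (thickening with $\ssc^+$-sections from \cite[Lem.~5.3]{HWZbook} and Sard), get the weighted branched orbifold structure from the M-polyfold implicit function theorem applied to the local branches, and inherit compactness and orientation from admissibility and the determinant line bundle, respectively.

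One small terminology correction: the auxiliary norm $N$ is an $\ssc^0$-map $\cW[1] \to [0,\infty)$ restricting to a complete norm on each fiber, not an ``$\ssc^+$ auxiliary norm''; and the crucial point in the compactness step is not merely that $N[p]$ is small but that $(N,\cU)$ is fixed \emph{in advance} as a pair that controls compactness of $\delbar$, after which any admissible $p$ (i.e., $N[p]\leq 1$ and $\text{dom-supp}(p)\subset\cU$) yields a compact zero set via \cite[Lem.~4.16]{HWZ3}. You gesture at this (``one needs to delicately choose the pair $(N,U)$''), so the logic is right; just be aware that the order is load-bearing, since an a posteriori choice of $(N,\cU)$ adapted to $p$ would be circular.
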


In the boundaryless case, such an approach has been successful in regularizing the Gromov--Witten moduli spaces (see \cite{HWZGW}).
A specialized approach has yielded a proof of the Arnold conjecture (see \cite{filippenko2018arnold}).
This approach is also being used in the pursuit of a well-defined symplectic field theory (see \cite{fish2018sft}).

For a suitably constructed abstract perturbation, the perturbed moduli space $\CM(p)$ has the structure of a compact oriented weighted branched orbifold, and therefore possesses sufficient structure to define the ``branched integration'' of differential forms.

\begin{theorem}[Polyfold invariants, {\cite[Cor.~15.2]{HWZbook}}]
	Let $\cO$ be an orbifold and consider a $\ssc$-smooth map $f: \cZ \to \cO$.
	We may define the \textbf{polyfold invariant} as the homomorphism obtained by pulling back a de Rahm cohomology class from the orbifold and taking the ``branched integral'' over a perturbed moduli space:
	\[
	H^*_{\dR} (\cO) \to \R, \qquad 	\ww 			\mapsto \int_{\CM(p)} f^*\ww.
	\]
	This homomorphism does not depend on the choice of abstract perturbation.
\end{theorem}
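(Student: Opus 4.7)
The plan is a cobordism-and-Stokes argument adapted to the weighted branched orbifold setting. There are two independences to check: (i) that $\int_{\CM(p)} f^*\ww$ depends only on the de Rham class of $\ww$, and (ii) that it is independent of the admissible abstract perturbation $p$. Both reduce to a branched Stokes theorem for compact oriented weighted branched orbifolds, which replaces the usual orientation-and-volume datum on a manifold with the weights on finite-dimensional local branch structures.

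For (i), if $\ww = d\alpha$, then $f^*\ww = d(f^*\alpha)$ by naturality of $d$, and since $\CM(p)$ is a compact weighted branched orbifold \emph{without boundary}, the branched Stokes theorem yields $\int_{\CM(p)} f^*\ww = 0$. For (ii), given two admissible perturbations $p_0, p_1$, I would work on the product polyfold $[0,1] \times \cZ$ together with the strong polyfold bundle pulled back along the projection to $\cZ$. On this extended polyfold consider the family section
\[
(t, x) \;\longmapsto\; \delbar(x) + (1-t)\, p_0(x) + t\, p_1(x) + q(t, x),
\]
where $q$ is a small auxiliary perturbation, compactly supported away from $t \in \{0,1\}$, chosen so that the total perturbation is admissible and transverse in the polyfold sense. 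The polyfold regularization theorem applied in this one-parameter family setting then produces a compact oriented weighted branched orbifold with boundary $\widetilde{\CM}$, with $\partial \widetilde{\CM} = \CM(p_1) \sqcup \bigl(-\CM(p_0)\bigr)$.

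The conclusion follows by pulling back $\ww$ along the $\ssc$-smooth composition $f \circ \pr_\cZ : [0,1] \times \cZ \to \cO$ and applying Stokes:
\[
\int_{\CM(p_1)} f^*\ww \;-\; \int_{\CM(p_0)} f^*\ww \;=\; \int_{\partial \widetilde{\CM}} (f \circ \pr_\cZ)^* \ww \;=\; \int_{\widetilde{\CM}} d\bigl((f \circ \pr_\cZ)^*\ww\bigr) \;=\; 0.
\]
The real difficulty lies not in this formal calculation but in the construction of the interpolating perturbation: one must show that, within the class of admissible $\ssc^+$-multisections, the auxiliary $q$ can be chosen so that the family section is simultaneously transverse in the branched sense \emph{and} retains the compactness control that makes $\widetilde{\CM}$ into a compact weighted branched orbifold with boundary. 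Once such a cobordism is available the Stokes step is essentially formal, and the whole argument runs in parallel to the finite-dimensional cobordism-of-pseudocycles proof in the manifold case.
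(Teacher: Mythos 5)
Your proposal is correct and takes essentially the same route as the paper: the paper's proof is precisely to invoke its cobordism theorem (which constructs the regular perturbation $\tilde{\Lambda}$ on $[0,1]\times\cZ$ restricting to $\Lambda_0$ and $\Lambda_1$ at the endpoints, and gives $\partial \cS(\tdelbar,\tilde{\Lambda}) = -\cS(\delbar,\Lambda_0)\sqcup\cS(\delbar,\Lambda_1)$) together with the branched Stokes theorem, which is exactly your interpolation-plus-Stokes argument. You correctly identify that the substantive work is hidden in constructing the auxiliary $q$ so that the family multisection remains both transverse and $(N,\cU)$-admissible on the cobordism; this is the content of the cited cobordism theorem in the paper.
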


In particular, this is precisely the form for the polyfold Gromov--Witten invariants defined in \cite[Thm.~1.12]{HWZbook}.

\subsection{Naturality of polyfold invariants}

Given a compactified moduli space $\CM$, it is possible to model $\CM$ in subtly different ways.
That is, it is possible to construct distinct polyfolds $\cZ$ and $\cZ'$ which contain $\CM$ as a compact subset, $\CM \subset \cZ$ and $\CM \subset \cZ'$.
After regularization of the moduli space $\CM$ we obtain perturbed moduli spaces $\CM(p) \subset \cZ$ and $\CM(p') \subset \cZ'$ which have the structure of compact oriented weighted branched suborbifolds.
We obtain distinct polyfold invariants by taking the branched integral over these perturbed moduli spaces.
Thus, we find ourselves in the following situation: given a moduli space $\CM$ we can define polyfold invariants associated to the distinct polyfolds $\cZ$ and $\cZ'$ and which, a priori, we cannot assume are equivalent.
\emph{Therefore the polyfold invariants, which aspire to be agnostic of all possible choices, may depend on the subtle choices made in modeling a given moduli space.}

In this paper, we provide a general framework for studying and resolving this problem.
The first step is to find a third polyfold $\cY$ which models $\CM$ and which refines the different structures or choices made, in the sense that there are inclusion maps
	\[
	\cX' \hookleftarrow \cY \hookrightarrow \cX.
	\]
The problem then reduces to showing that the polyfold invariants for $\cY$ and $\cX$ are equal.
We consider a commutative diagram of inclusion maps between polyfolds and between strong polyfold bundles of the form:
	\[\begin{tikzcd}
	\cV \arrow[r, hook] \arrow[d, "\delbar_\cY \quad"'] & \cW \arrow[d, "\quad \delbar_\cZ"] &  \\
	\cY \arrow[r, hook] \arrow[u, bend left] & \cZ \arrow[u, bend right] & 
	\end{tikzcd}\]
in addition to a commutative diagram with target space the orbifold $\cO$:
	\[\begin{tikzcd}
	&  & \cO \\
	\cY \arrow[r, hook] \arrow[rru, "f_\cY"] & \cZ \arrow[ru, "f_\cZ"'] & 
	\end{tikzcd}
	\]
As outlined at the start of \S~\ref{subsec:intermediary-subbundles-naturality}, we assume that these inclusion maps satisfy a number of properties.
Although these hypothesis are somewhat lengthy at a glance, they will be natural from the point of view of our applications, and moreover reflect some commonalities of the practical construction of distinct polyfolds which model the same moduli space.
In these application, we furthermore note that the bundle $\cV$ is not the same as the pullback bundle of $\cW$, hence we may not use the methods of pulling back abstract perturbations of Theorem~\ref{thm:pullback-regular-perturbation}.

The most substantial hypothesis is the existence of an ``intermediary subbundle,'' a subset of the target strong polyfold bundle $\cR\subset \cW$ whose object space is fiberwise a (not necessarily complete) vector space and which satisfies some additional properties (see Definition~\ref{def:intermediate-subbundle}).

\begin{theorem}[Naturality of polyfold invariants]
	\label{thm:naturality-polyfold-invariants}
	Consider a compactified moduli space $\CM$ which is modeled by two polyfolds $\cY$ and $\cZ$, i.e., $\CM \subset \cY$ and $\CM \subset \cZ$.
	Suppose there is an inclusion map $\cY \hookrightarrow \cZ$.
	Moreover, assume we satisfy the hypothesis of the general framework described in \S~\ref{subsec:intermediary-subbundles-naturality}.
	
	Suppose there exists an intermediary subbundle $\cR \subset \cW$. Then the polyfold invariants for $\cY$ and $\cZ$ defined via the branched integral are equal.
	This means that, given a de Rahm cohomology class $\ww\in H^*_{\dR} (\cO)$ the branched integrals over the perturbed moduli spaces are equal,
	\[
	\int_{\CM(p)} f_\cY^* \ww = \int_{\CM(p')} f_\cZ^* \ww,
	\]
	for any regular abstract perturbations.
\end{theorem}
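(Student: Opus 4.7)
\textit{Overall strategy.} The plan is to use the intermediary subbundle $\cR \subset \cW$ to construct a single regular abstract perturbation which can be viewed simultaneously as a perturbation of $\delbar_\cZ$ on $\cZ$ and of $\delbar_\cY$ on $\cY$, produces the \emph{same} perturbed moduli space in both settings, and therefore yields the same branched integral. Since the branched integral does not depend on the choice of regular abstract perturbation (by the polyfold invariants theorem quoted above), reducing to this one common perturbation suffices to establish equality for the arbitrary pair $(p,p')$ appearing in the statement.

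\textit{Constructing the common perturbation.} The direct pullback of perturbations along $\cY \hookrightarrow \cZ$ is unavailable here, since $\cV$ is not the pullback bundle of $\cW$; the intermediary subbundle $\cR$ must serve as a bridge between the two settings. I would construct a regular $\ssc^+$-multisection perturbation $q$ of $\delbar_\cZ$ whose local section values all lie in the fiberwise vector subspace $\cR$, and then use the structure relating $\cR$ to $\cV$ to produce a corresponding $\ssc^+$-multisection perturbation $q_\cY$ of $\delbar_\cY$. The axioms of an intermediary subbundle (\S\ref{subsec:intermediary-subbundles-naturality}) should be designed precisely so that the standard polyfold regularization machinery---abstract partitions of unity, general-position and transversality arguments---can be carried out entirely within $\cR$, while simultaneously providing the compatibility required to transfer the perturbation across to $\cV$.

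\textit{Identifying the perturbed moduli spaces.} The principal obstacle, and the heart of the argument, is showing
\[
(\delbar_\cY + q_\cY)^{-1}(0) \;=\; (\delbar_\cZ + q)^{-1}(0)
\]
as compact oriented weighted branched suborbifolds, where the left-hand set sits in $\cY$ and the right-hand set in $\cZ$. That the left includes into the right is automatic from the inclusion $\cY \hookrightarrow \cZ$ together with the construction of $q_\cY$. The reverse inclusion is a regularity statement: every solution in $\cZ$ of $\delbar_\cZ + q = 0$ must actually be a $\cY$-object, which should follow from the fact that $q$ takes values only in the intermediary subbundle $\cR$ and from the control this imposes on the $\cV$-versus-$\cW$ discrepancy. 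Regularity of $q_\cY$ then transfers from regularity of $q$, and the agreement of weights and orientations follows from the compatibility of the induced filled Fredholm structures of $\delbar_\cY$ and $\delbar_\cZ$ at each common solution.

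\textit{Comparing branched integrals.} Commutativity of the target diagram gives $f_\cY = f_\cZ \circ \iota$ for the inclusion $\iota : \cY \hookrightarrow \cZ$, so
\[
\int_{\CM(q_\cY)} f_\cY^* \ww \;=\; \int_{\CM(q_\cY)} \iota^* f_\cZ^* \ww \;=\; \int_{\CM(q)} f_\cZ^* \ww,
\]
where the last equality uses the identification from the previous step. Applying the independence of the branched integral from the choice of regular perturbation---on the $\cY$-side to compare $p$ with $q_\cY$, and on the $\cZ$-side to compare $p'$ with $q$---then yields the desired equality of polyfold invariants.
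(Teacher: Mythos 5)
Your high-level strategy is the right one---use the intermediary subbundle to build a perturbation on $\cZ$ that restricts to a perturbation on $\cY$ with the same solution set, then invoke independence of the branched integral from the choice of perturbation. But the central technical obstacles, which form the real content of the paper's proof, are absent from your outline, and one of your assertions is false in a way that would break the argument.

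The critical error is the claim ``Regularity of $q_\cY$ then transfers from regularity of $q$.'' It does not. Regularity (Definition~\ref{def:regular-perturbation}) requires both transversality \emph{and} admissibility with respect to a pair $(N,\cU)$ that controls compactness. Transversality does transfer, by design of the intermediary subbundle axioms. Admissibility does not: the domain support of the restricted perturbation is $\iota_\cZ^{-1}(\text{dom-supp}(q))$, which need not lie inside any controlling neighborhood $\cU_\cY$ on the $\cY$-side, since $\iota_\cZ$ is not proper in the applications (the paper explicitly records this failure). So $q_\cY$ is a transversal but generally \emph{not} regular perturbation, and your final step---invoking Theorem~\ref{thm:cobordism-between-regular-perturbations} to compare $q_\cY$ with an arbitrary regular $p$ on $\cY$---is not licensed, because that theorem requires both endpoints to be regular. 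Resolving this is precisely what motivates the paper's development of Fredholm multisection theory (\S\ref{subsec:fredholm-multisections}) and Proposition~\ref{prop:cobordism-multisection-regular}, which manufactures a compact cobordism from the merely-transversal perturbation to a genuine regular one.

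A second, related gap: you assert that the two perturbed solution sets are identified ``as compact oriented weighted branched suborbifolds,'' but compactness on the $\cY$-side is not free. Since $q_\cY$ is not admissible, Theorem~\ref{thm:compactness} does not apply to $\cS(\delbar_\cY,q_\cY)$. The paper obtains compactness only indirectly, by showing the continuous bijection $\cS(\delbar_\cY,q_\cY)\to\cS(\delbar_\cZ,q)$ is actually a homeomorphism and transporting compactness from the $\cZ$-side. This is not a point-set triviality (you have a continuous bijection from a possibly non-compact space to a compact Hausdorff space, where the usual lemma goes the wrong way); the paper proves an invariance-of-domain result for branched suborbifolds (Lemma~\ref{lem:invariance-of-domain-branched-orbifolds}) specifically to get local-homeomorphism, hence openness, of this map. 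Without that lemma, or a substitute, the identification you want has no justification. In short: the skeleton is right, but the two lemmas you implicitly assume---that restriction preserves regularity, and that the solution-set identification is automatically a homeomorphism---are exactly the two places where the theorem is hard, and the paper devotes \S\ref{subsec:invariance-of-domain} and \S\ref{subsec:fredholm-multisections} to supplying what your outline takes for granted.
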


The proof ends up being somewhat involved as we encounter some substantial technical difficulties, which we sketch briefly.
Roughly, the existence of an intermediary subbundle allows the construction of abstract perturbations $p'$ of the strong polyfold bundle $\cW \to \cZ$ whose restrictions induce a well-defined abstract perturbation $p$ of the strong polyfold bundle $\cV \to \cY$.
This allows us to consider a well-defined restriction between perturbed moduli spaces, 
	\[\CM(p) \hookrightarrow \CM(p').\]
On the level of topological spaces, this restriction is a continuous bijection.
While we can achieve transversality for both perturbations, the abstract polyfold machinery is only able to ``control the compactness'' of the target perturbed moduli space, hence via usual methods we can only assume that $\CM(p')$ is a compact topological space.

Using only knowledge of the underlying topologies of both of these spaces, it is impossible to say anything more. 
The key to resolving this problem is understanding the additional structure that these spaces possess---the branched orbifold structure---and using this structure to prove an invariance of domain result for weighted branched orbifolds (see Lemma~\ref{lem:invariance-of-domain-branched-orbifolds}).
This result will allow us to assert that the above map is a homeomorphism---and therefore, $\CM(p)$ is also compact.

The second major difficulty comes from the fact that the restricted perturbation $p$ on the source space is not a ``regular'' perturbation (see Definition~\ref{def:regular-perturbation}).
This is problematic due to the fact that the present theory only guarantees the existence of a compact cobordism between abstract perturbations which are both assumed to be ``regular'' (see Theorem~\ref{thm:cobordism-between-regular-perturbations}).
In order to resolve this problem, we must generalize the abstract perturbation theory to allow for perturbation of $\ssc$-smooth proper ``Fredholm multisections'' (see \S~\ref{subsec:fredholm-multisections}).
This generalization enables us to construct a compact cobordism from the restricted perturbation $p$ to a regular perturbation (see Proposition~\ref{prop:cobordism-multisection-regular}).

\subsection{Application: Naturality of the polyfold Gromov--Witten invariants}

The construction of a Gromov--Witten polyfold structure requires choices, such as the choice of a cut-off function in the gluing constructions, choices of good uniformizing families of stable maps, choice of a locally finite refinement of a cover of M-polyfold charts, as well as the exponential gluing profile.

In addition to these choices, one must also choose a strictly increasing sequence $(\delta_i)_{i\geq 0} \subset (0,2\pi)$, i.e.,
	\[
	0<\delta_0 < \delta_1 < \cdots < 2\pi.
	\]
This sequence is used to define $\ssc$-Banach spaces which are then used to define the M-polyfold models of the Gromov--Witten polyfold $\cZ_{A,g,k}^{3,\delta_0}$ (see \cite[\S~2.4]{HWZGW}).

The following theorem states that, having fixed the exponential gluing profile and a strictly increasing sequence $(\delta_i)_{i\geq 0}\subset (0,2\pi)$, different choices lead to Morita equivalent polyfold structures. Hence the Gromov--Witten polyfold invariants are independent of such choices.

\begin{theorem}[{\cite[Thm.~3.37]{HWZGW}}]
	Having fixed the exponential gluing profile and a strictly increasing sequence $(\delta_i)_{i\geq 0}\subset (0,2\pi)$, the underlying topological space $\cZ_{A,g,k}^{3,\delta_0}$ possesses a  natural equivalence class of polyfold structures.
\end{theorem}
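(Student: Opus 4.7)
The plan is to verify that any two polyfold structures on $\cZ_{A,g,k}^{3,\delta_0}$ arising from different allowable choices are Morita equivalent, so that the equivalence class itself is canonical. Fix two sets of choices---cut-off functions $\beta, \beta'$ in the gluing construction, good uniformizing families $\{(O_i, F_i, \Gamma_i)\}$ and $\{(O'_j, F'_j, \Gamma'_j)\}$, and locally finite refinements of the induced chart covers---yielding polyfold structures $\cZ$ and $\cZ'$ on the same underlying set. The strategy is to construct a zig-zag of weak equivalences between the associated ep-groupoids covering the identity on orbit spaces.

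First I would observe that the underlying set is intrinsic to the choice of $\delta_0$: points are equivalence classes of stable maps with the Gromov topology, and the $\ssc$-Banach space structure on the spaces of sections depends only on the fixed sequence $(\delta_i)_{i\geq 0}$, not on the remaining auxiliary data. Next, for each point $[u]\in\cZ_{A,g,k}^{3,\delta_0}$ and each pair of overlapping charts coming from $\cZ$ and $\cZ'$, I would construct an $\ssc$-diffeomorphism of the corresponding M-polyfold models. In the case where the charts differ only in the cut-off function, one employs the $\ssc$-smooth isotopy $\beta_t = (1-t)\beta + t\beta'$: because the exponential gluing profile is defined so that any two cut-off functions with identical asymptotic behavior produce $\ssc$-smoothly comparable gluings---the pointwise differences being absorbed by the exponential weights $\delta_i$---the interpolation yields a one-parameter family of $\ssc$-diffeomorphisms at each parameter. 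When the charts further differ by the choice of uniformizing family, the standard $\ssc$-smooth dependence of the gluing construction on the base-family parameters supplies the transition. These chart-level equivalences are then compatible with the morphism spaces by the universal property of good uniformizing families and the equivariance of the gluing under reparametrization.

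Finally I would assemble these local transitions into a weak equivalence of ep-groupoids that covers the identity on orbit spaces, placing $\cZ$ and $\cZ'$ in a common Morita class; independence from the locally finite refinement is automatic, since any two refinements admit a common refinement that tautologically weakly covers both. The main obstacle is verifying $\ssc$-smoothness of the cut-off interpolation in the strict polyfold sense, rather than merely continuity or classical smoothness: this demands level-wise estimates on the glued curves and their derivatives in the interpolation parameter, and reduces to re-examining the proof of the gluing theorem \cite[Thm.~3.23]{HWZGW} with one additional parameter, using the strict inequalities $\delta_0<\delta_1<\cdots<2\pi$ to absorb each derivative loss by a reduction in exponential weight.
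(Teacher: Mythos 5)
The theorem in question is \emph{cited} in this paper from \cite[Thm.~3.37]{HWZGW}; the paper offers no proof of its own, so there is no internal argument to compare against. Your overall plan---show that the polyfold structures arising from the various allowable choices admit a common refinement and hence lie in a single Morita class---is correct and matches the strategy of the cited reference, and your preliminary observations (that the underlying set and the $\ssc$-Banach structure on section spaces are fixed once $(\delta_i)$ and the gluing profile are fixed) are sound.

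The gap is in the cut-off function step. Interpolating $\beta_t = (1-t)\beta + t\beta'$ does produce an $\ssc$-smooth family of retractions $r_t$ with $\ssc$-retracts $O_t$, but an $\ssc$-smooth family of retractions does \emph{not} automatically furnish $\ssc$-diffeomorphisms $O_0 \simeq O_1$. The $\ssc$-category has no flow or tubular-neighborhood machinery with which to ``integrate'' the $t$-dependence of $r_t$ into an ambient isotopy carrying one retract onto the other, and a retraction is not a submersion, so the $\ssc$-implicit function theorem gives no purchase either. What one must do---and what \cite{HWZGW} does---is exhibit the transition between the two charts (in essence $r_{\beta'}\circ\iota_{\beta}$ and its would-be inverse $r_{\beta}\circ\iota_{\beta'}$) and verify directly that these are $\ssc$-smooth and mutually inverse, using the explicit structure of the plus-gluing and anti-gluing maps together with the exponential weight estimates, not a homotopy of cut-offs. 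Your remark about ``re-examining the gluing theorem with one additional parameter'' would at best establish joint $\ssc$-smoothness of $r_t$, which is neither sufficient nor the economical route; replace the interpolation with a direct comparison of the two gluings.
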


We can use Theorem~\ref{thm:naturality-polyfold-invariants} to show that the polyfold Gromov--Witten invariants are also independent of the choice of increasing sequence, and hence are natural in the sense that they do not depend on any choice made in the construction of the Gromov--Witten polyfolds.

\begin{corollary}[Naturality of the polyfold Gromov--Witten invariants]
	\label{cor:naturality-polyfold-gw-invariants}
	The polyfold Gromov--Witten invariants do not depend on the choice of an increasing sequence $(\delta_i)_{i\geq 0} \allowbreak \subset (0,2\pi)$.
\end{corollary}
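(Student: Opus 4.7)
The plan is to apply Theorem~\ref{thm:naturality-polyfold-invariants} to compare the invariants defined from any two strictly increasing sequences $(\delta_i)_{i\geq 0}, (\delta_i')_{i\geq 0} \subset (0,2\pi)$. Given such a pair, I first construct a common refinement: choose a third strictly increasing sequence $(\delta_i'')_{i\geq 0} \subset (0,2\pi)$ with $\delta_i'' \geq \max(\delta_i, \delta_i')$ for every $i$, which is possible since both sequences are bounded above by $2\pi$. Set $\cZ := \cZ_{A,g,k}^{3,\delta_0}$ and $\cY := \cZ_{A,g,k}^{3,\delta_0''}$, and let $\cW \to \cZ$ and $\cV \to \cY$ denote the associated strong polyfold bundles carrying the Cauchy--Riemann sections. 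Because $\delta_i'' \geq \delta_i$ at each sc-level, the weighted Sobolev spaces used to build $\cY$ are subspaces of those used to build $\cZ$, producing an sc-smooth inclusion $\cY \hookrightarrow \cZ$ covered by an sc-smooth inclusion $\cV \hookrightarrow \cW$. The evaluation and forgetful maps $f_\cY, f_\cZ$ to the target orbifold $\cO = M^k \times \dmspace_{g,k}$ commute with this inclusion by construction.

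With the inclusion in place, I would next verify the list of hypotheses of the general framework of \S~\ref{subsec:intermediary-subbundles-naturality}. Crucially, $\cV$ is \emph{not} the pullback of $\cW$---over the same curve the two bundles have fibers consisting of $(0,1)$-forms in genuinely different weighted Sobolev spaces---so the simpler methods of Theorem~\ref{thm:pullback-regular-perturbation} are unavailable, and the full intermediary-subbundle machinery is required. The natural candidate intermediary subbundle $\cR \subset \cW$ is the fiberwise subspace of smooth $(0,1)$-form-valued sections that decay exponentially at every rate $\delta < 2\pi$ at nodes and punctures; such sections automatically sit inside both $\cW$ and $\cV$ (over points of $\cY$), carry the obvious fiberwise vector space structure inherited from $\cW$, and are dense in the weighted Sobolev fibers of $\cW$ by standard density results for smooth functions of rapid decay. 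This density is precisely what allows $\ssc^+$-multisection perturbations of $\delbar_\cZ$ to be chosen with representatives in $\cR$, so that their restrictions are well-defined $\ssc^+$-multisection perturbations of $\delbar_\cY$.

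Once the framework and intermediary subbundle are established, Theorem~\ref{thm:naturality-polyfold-invariants} yields equality of the invariants defined from $\cY$ and $\cZ$; the same argument applied to the inclusion $\cY \hookrightarrow \cZ' := \cZ_{A,g,k}^{3,\delta_0'}$ combined with transitivity completes the proof. The main obstacle is the precise description of the intermediary subbundle in the sc-smooth charts near nodal strata: one must show that ``smooth with rapid exponential decay'' can be formulated stably under the gluing construction---in particular uniformly across the gluing parameter---and that the fibers so defined are compatible with the transition maps of the strong bundle charts produced by the gluing profile. Once this technical compatibility is established, the remaining framework axioms reduce to the observation that the polyfold structures for $\cZ$ and $\cY$ are constructed from identical underlying data---stable maps, marked points, gluing profile, cutoffs, and good uniformizing families---differing only through the exponential weights in the Sobolev completions, which is exactly what the intermediary subbundle is designed to bridge.
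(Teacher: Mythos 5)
Your high-level strategy agrees exactly with the paper's: construct a common refinement $(\delta_i'')$ with $\delta_i'' \geq \max(\delta_i,\delta_i')$, reduce to a single inclusion $\cZ^{3,\delta_0''}_{A,g,k} \hookrightarrow \cZ^{3,\delta_0}_{A,g,k}$, and then feed it through Theorem~\ref{thm:naturality-polyfold-invariants} by exhibiting an intermediary subbundle. The divergence is in the choice of that subbundle and in what you regard as the key property to verify. You propose sections with rapid exponential decay at every rate $\delta<2\pi$, and you argue via density in the weighted Sobolev fibers. The paper instead takes (Proposition~\ref{prop:existence-subbundle-naturality})
\[
\cR = \{[\Sigma,j,M,D,u,\xi] \mid \supp \xi \subset K \subset \Sigma\setminus\abs{D} \text{ for some compact } K\},
\]
the forms with support compactly away from the nodal set. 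This is a strictly stronger pointwise condition than rapid decay, and that extra rigidity is precisely what dissolves the obstacle you flag. First, density is not one of the requirements of Definition~\ref{def:intermediate-subbundle}; property~\ref{property-3-intermediary-subbundle} asks for specific $\ssc^+$-sections $s_i'$ with image in $\cR$, vanishing outside a fixed neighborhood $V$, that span the cokernels of both linearizations and whose restrictions to $\cZ^{3,\delta_0''}$ remain $\ssc^+$. The paper manufactures these via Corollary~\ref{cor:vectors-which-span-cokernel} (cokernels at solutions are spanned by vectors vanishing identically on disk-like neighborhoods of the nodes), and then the sections are $\beta\cdot\rho_a(v_i)$; since the $v_i$ vanish where the hat-gluing $\rho_a$ does anything nontrivial, the resulting local expressions are essentially cutoff-times-constant-vector, which makes the $\ssc^+$ property of both $s_i'$ and its restriction immediate. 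With your ``all rates $\delta<2\pi$'' subbundle, exactly the compatibility with $\rho_a$ that you worry about would still need to be established, and ``smooth with rapid decay'' is not manifestly preserved by the gluing construction in the way compact support away from the nodes is. Second, property~\ref{property-2-intermediary-subbundle} (that $\delbar([x])\in\cR$ forces $[x]$ into the refined polyfold) is an elementary removable-singularity argument for the compact-support $\cR$, whereas it would require a weighted elliptic regularity estimate for yours. Finally, a small imprecision: you write ``at nodes and punctures,'' but the polyfolds $\cZ^{3,\delta_0}_{A,g,k}$ in this corollary impose exponential weights only at the nodal pairs; marked points carry $H^3_{\text{loc}}$ regularity, and the question of punctures at marked points is a separate choice handled in Corollary~\ref{cor:punctures-equal}.
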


We now consider the choice of puncture at the marked points
The underlying set of the Gromov--Witten polyfolds consist of stables curves.
As constructed in \cite{HWZbook}, these stable curves are required to satisfy exponential decay estimates on punctured neighborhoods of the nodal pairs.
In contrast, for these Gromov--Witten polyfolds no such decay is required at the marked points.

However, in some situations we would like to treat the marked points in the same way as the nodal points.
For example, this is true in the context of the splitting and genus reduction axioms, where we will wish to identify a pair of marked points with the same image with a nodal pair.
Allowing a puncture with exponential decay at a specified marked point is a global condition on a Gromov--Witten polyfold, and hence different choices of puncture at the marked points yield distinct Gromov--Witten polyfolds.

We again use Theorem~\ref{thm:naturality-polyfold-invariants} to show that the polyfold Gromov--Witten invariants are independent of such choice of puncture at the marked points.

\begin{corollary}
	\label{cor:punctures-equal}
	The polyfold Gromov--Witten invariants do not depend on the choice of puncture at the marked points.
\end{corollary}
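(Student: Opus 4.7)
The plan is to apply Theorem~\ref{thm:naturality-polyfold-invariants} by interpolating between two arbitrary choices of punctures through a third, finer, polyfold structure. Let $\cZ$ and $\cZ'$ be the Gromov--Witten polyfolds corresponding to two choices of subsets $S, S' \subset \{1,\ldots,k\}$ of marked points at which one imposes a puncture with exponential decay. Because requiring exponential decay is a global strengthening of the definition, there is no direct inclusion between $\cZ$ and $\cZ'$ in general. I would therefore introduce the polyfold $\cY$ obtained by requiring punctures with the chosen exponential decay profile at \emph{every} marked point in $S \cup S'$. Every stable curve in $\cY$ satisfies the decay condition used to build both $\cZ$ and $\cZ'$, so the forgetful constructions yield two $\ssc$-smooth inclusions $\cY \hookrightarrow \cZ$ and $\cY \hookrightarrow \cZ'$ covered by bundle inclusions $\cV \hookrightarrow \cW$ and $\cV \hookrightarrow \cW'$, and each inclusion is compatible with the Cauchy--Riemann sections and with the evaluation maps to the target orbifold $\cO$.

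By symmetry, it suffices to prove equality of the invariants for the pair $(\cY, \cZ)$; the same argument then applies to $(\cY, \cZ')$, and the two equalities combine to give the corollary. For the pair $(\cY, \cZ)$, I would first check the hypotheses of the general framework of \S~\ref{subsec:intermediary-subbundles-naturality}: the inclusion $\cY \hookrightarrow \cZ$ is a $\ssc$-smooth homeomorphism onto its image in the relevant local M-polyfold models since the two spaces differ only by the exponential decay requirement at the specified marked points, the two $\delbar$-sections agree on $\cY$, and the evaluation maps factor strictly. Moreover, the bundle $\cV \to \cY$ is \emph{not} the pullback of $\cW \to \cZ$—the target fibers in $\cV$ are built from weighted Sobolev spaces with additional exponential decay at the punctures—which is precisely why pulling back abstract perturbations naively fails, and why we must work through the intermediary subbundle framework.

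The central step, and the main technical obstacle, is the construction of an intermediary subbundle $\cR \subset \cW$ in the sense of Definition~\ref{def:intermediate-subbundle}. In local coordinates over a stable curve, the fibers of $\cW$ consist of $(0,1)$-forms with values in a pulled-back tangent bundle, measured in Sobolev spaces with a fixed weight $\delta_0$ away from the nodes; the fibers of $\cV$ demand additionally that such forms decay exponentially on small punctured disks around each marked point in $S$. The natural candidate for $\cR$ is the subset of $\cW$ whose local representatives are supported away from small neighborhoods of the marked points in $S$, or equivalently are built from compactly supported sections in the cylindrical coordinates near those punctures. Such sections automatically lie in $\cV$, yielding a fiberwise vector subspace of $\cW$ whose elements restrict to give elements of $\cV$; I would verify that this $\cR$ is auto\-matically $\ssc^+$, is preserved by the structure maps of $\cW$, is dense enough in $\cV$ to support sections in general position, and satisfies the remaining axioms of Definition~\ref{def:intermediate-subbundle}. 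The main difficulty lies in this density/transversality check: one must confirm that sufficiently many compactly supported perturbations exist to achieve transversality of $\delbar + p$ on $\cY$ after restriction, which follows from the unique continuation property of the linearized Cauchy--Riemann operator and the standard abundance of smooth compactly supported perturbations used in the construction of regular sc$^+$-multisections.

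Once $\cR$ is in hand, Theorem~\ref{thm:naturality-polyfold-invariants} immediately yields $\int_{\CM(p)} f_\cY^*\ww = \int_{\CM(p')} f_\cZ^*\ww$ for any regular perturbations, where $f_\cY$ and $f_\cZ$ denote the evaluation-at-marked-points maps (composed with any fixed $\ssc$-smooth map to $\cO$). Applying the same argument to the pair $(\cY, \cZ')$ and composing the two equalities gives the desired independence of the polyfold Gromov--Witten invariants from the choice of puncture at the marked points.
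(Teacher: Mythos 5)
Your overall strategy matches the paper's: introduce a refined polyfold $\cY$, reduce by symmetry to a single inclusion $\cY\hookrightarrow\cZ$, and construct an intermediary subbundle out of $(0,1)$-forms supported away from the marked points. However, two of your definitions are off in ways that would break the argument. First, your $\cY$ requires only the exponential-decay condition $H^{3,\delta_0}$ at the marked points in $S\cup S'$. This does not yield the inclusion $\cY\hookrightarrow\cZ^S$: at a marked point $z\in S'\setminus S$ the space $\cZ^S$ requires $H^3_{\mathrm{loc}}$ regularity, and for small $\delta_0\in(0,2\pi)$ exponential decay in the cylindrical coordinate does not imply $H^3_{\mathrm{loc}}$ at the puncture (the disk-coordinate derivative picks up factors of order $e^{(1-\delta_0)s}$). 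The paper's $\cY_{A,g,k}$ avoids this by demanding both $H^{3,\delta_0}$ and $H^3_{\mathrm{loc}}$ at \emph{every} marked point.

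Second, you declare $\cR$ to consist of forms supported away from neighborhoods of the marked points in $S$ and assert that ``such sections automatically lie in $\cV$.'' This is false when $S'\not\subset S$: a fiber of $\cV$ over a curve in $\cY$ demands $H^{2,\delta_0}$ decay near all of $S\cup S'$, while an element of $\cW^S$ supported near a point of $S'\setminus S$ is only $H^2_{\mathrm{loc}}$ there. The paper instead takes $\cR$ to consist of forms whose support is compact in $\Sigma\setminus M$; this simultaneously ensures the forms restrict to $\cV$ and gives property~\ref{property-2-intermediary-subbundle} of Definition~\ref{def:intermediate-subbundle} (any curve with $\delbar$-image in $\cR$ is $J$-holomorphic near every marked point, hence extends smoothly there and lies in $\cY$). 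Finally, the cokernel-spanning verification does not rest on unique continuation but on local surjectivity of the restricted linearized Cauchy--Riemann operator over disk-like neighborhoods of the marked points (Lemma~\ref{lem:local-surjectivity-cauchy-riemann} and Corollary~\ref{cor:vectors-which-span-cokernel}), which is what lets one choose cokernel representatives vanishing near those points. With these definitions corrected, your argument reduces to the paper's.
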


\subsection{Pulling back abstract perturbations in polyfold theory}

Consider distinct moduli spaces $\CM$ and $\CM'$ which are modeled by polyfolds $\cY$ and $\cZ$, respectively.
Consider a naturally defined $\ssc$-smooth map between polyfolds $f: \cY \to \cZ$ which restricts to a map between moduli spaces $f|_{\CM} : \CM \to \CM'$.
In many situations we would like to study the geometry of this map and in order to establish algebraic relationships between the respective polyfold invariants.

However, without work, we cannot assume that this map will \emph{persist} after abstract perturbation.
Abstract perturbations are constructed using bump functions and choices of vectors in a strong polyfold bundle, which in general we cannot assume will be preserved by the $\ssc$-smooth map $f$.

To solve this problem, consider a pullback diagram of strong polyfold bundles as follows:
	\[\begin{tikzcd}
	f^* \cW \arrow[d, "f^*\delbar \quad"'] \arrow[r, "\text{proj}_2"'] & \cW \arrow[d, "\quad \delbar"] &  \\
	\cY \arrow[r, "f"'] \arrow[u, bend left] & \cZ. \arrow[u, bend right] & 
	\end{tikzcd}\]
The natural approach for obtaining a well-defined map between the perturbed moduli spaces is to take the pullback an abstract perturbation.
The main technical point is ensuring that we can control the compactness of the pullback perturbation.
This is achieved by a mild topological hypothesis on the map $f$, called the ``topological pullback condition'' (see Definition~\ref{topological-pullback-condition}).

\begin{theorem}
	\label{thm:pullback-regular-perturbation}
	Consider a $\ssc$-smooth map between polyfolds, $f: \cY \to \cZ$, and consider a pullback diagram of strong polyfold bundles as above.
	If $f$ satisfies the topological pullback condition then there exists a regular perturbation $p$ which pulls back to a regular perturbation $f^*p$.
	
	It follows that we can consider a well-defined restriction between perturbed moduli spaces,
		\[
		f|_{\CM(f^*p)} : \CM(f^*p) \to \CM (p).
		\]
\end{theorem}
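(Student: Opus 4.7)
The plan is to construct a single $\ssc$-smooth perturbation $p$ of $\delbar$ on the strong polyfold bundle $\cW\to\cZ$ such that both $p$ and its pullback $f^*p$, regarded as a perturbation of $f^*\delbar$ on $f^*\cW\to\cY$, are regular in the sense of Definition~\ref{def:regular-perturbation}. Since $f$ is $\ssc$-smooth and the pullback of a strong polyfold bundle along an $\ssc$-smooth map is again a strong polyfold bundle, $\ssc$-smoothness of $f^*p$ is automatic from that of $p$. The substantive conditions to verify are therefore a compactness control giving a compact perturbed zero set $\CM(f^*p)=(f^*\delbar+f^*p)^{-1}(0)$, together with transversality of $f^*\delbar+f^*p$ to the zero section of $f^*\cW$.

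For compactness, first fix a standard compactness control pair $(N,U)$ for $\delbar$ on $\cZ$, where $U$ is an open neighborhood of $\CM'=\delbar^{-1}(0)$ with compact closure and $N$ is an auxiliary norm on $\cW$, and choose $p$ satisfying $\supp p\subset U$ with $N(p)$ sufficiently small to ensure compactness of $\CM(p)$. The pullback candidate $(f^*N,f^{-1}(U))$ is then the natural control pair for $f^*p$ on $\cY$. Here the topological pullback condition (Definition~\ref{topological-pullback-condition}) enters decisively: it is designed exactly to guarantee that the intersection of $f^{-1}(\overline U)$ with the pullback perturbed zero set is compact. This yields both the compactness of $\CM(f^*p)$ and the support/auxiliary-norm conditions required of a regular perturbation of $f^*\delbar$.

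For transversality, note that the linearization of $f^*\delbar+f^*p$ at a point $y\in \CM(f^*p)$ factors through the linearization of $\delbar+p$ at $f(y)\in\CM(p)$ together with the tangent map of $f$ on the base and the canonical isomorphism $(f^*\cW)_y\cong \cW_{f(y)}$. It therefore suffices to construct $p$ from a finite family of $\ssc^+$ local sections supported in $U$ whose values span the cokernels of the linearizations of $\delbar$ at the points of $\CM(p)$ \emph{and} of $f^*\delbar$ at the points of $\CM(f^*p)$. Because the topological pullback condition keeps both of these sets compact, a finite family suffices, and a standard Sard--Smale argument on the resulting finite-dimensional parameter space yields a $p$ realizing both transversality statements simultaneously. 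The restriction $f|_{\CM(f^*p)}:\CM(f^*p)\to\CM(p)$ is then well defined and continuous.

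The main obstacle is the compactness of $\CM(f^*p)$: without any properness hypothesis on $f$, pulling back an abstract perturbation can in principle produce a noncompact perturbed zero set even when the downstairs zero set is compact, and the auxiliary-norm mechanism that controls compactness for $\delbar+p$ on $\cZ$ does not a priori transfer upstairs. The topological pullback condition is formulated precisely to prevent this pathology, and the core technical content of the proof is verifying that it interacts correctly with the auxiliary-norm and support structure used to define regular perturbations. Once compactness is secured, the transversality and $\ssc$-smoothness portions of the argument follow from essentially standard polyfold perturbation techniques.
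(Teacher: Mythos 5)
Your proposal is correct and follows essentially the same strategy as the paper's proof (Theorem~\ref{thm:compatible-pullbacks}): build a parametrized family of $\ssc^+$-multisections from local sections chosen to span the cokernels of the linearizations of both $\delbar$ and $f^*\delbar$ along the (compact) unperturbed solution sets, use the topological pullback condition via Proposition~\ref{prop:simultaneous-compactness} to obtain a pair $(N,\cU)$ downstairs whose pullback $(\text{proj}_2^*N, f^{-1}(\cU))$ controls compactness upstairs, and then apply Sard's theorem on the parameter space to get a common regular value. One small imprecision: you refer to spanning cokernels ``at the points of $\CM(p)$ and $\CM(f^*p)$,'' which is circular before $p$ is constructed; the cover should run over the unperturbed solution sets $\cS(\delbar)$ and $\cS(f^*\delbar)$, which is what the paper does—but this is a wording issue rather than a gap, since the parametrized-family-plus-Sard mechanism resolves it.
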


This theorem follows from the more technically stated Theorem~\ref{thm:compatible-pullbacks}.

\subsection{Application: Permutation maps between perturbed Gromov--Witten moduli spaces}

Let $(Q,\ww)$ be a closed symplectic manifold, and fix a homology class $A \in H_2 (Q;\Z)$ and integers $g,\ k\geq 0$ such that $2g+k \geq 3$.
Fix a permutation $\sigma: \{1,\ldots, k\} \to \{ 1,\ldots, k \}$.
Consider the natural $\ssc$-diffeomorpism between Gromov--Witten polyfold defined by permuting the marked points,
	\[
	\sigma: \cZ_{A,g,k}\to \cZ_{A,g,k}.
	\]
For a fixed compatible almost complex structure $J$, this map has a well-defined restriction to the unperturbed Gromov--Witten moduli spaces
	\[
	\sigma|_{\CM_{A,g,k}(J)} : \CM_{A,g,k}(J) \to \CM_{A,g,k}(J).
	\]

As we have mentioned, abstract perturbations are constructed using bump functions and choic\-es of vectors in a strong polyfold bundle, which in general will not exhibit symmetry with regards to the labelings of the marked points.
As a result, given a stable curve $x\in \cZ_{A,g,k}$ which satisfies a perturbed equation $(\delbarj+p)(x)=0$ we cannot expect that $(\delbarj+p)(\sigma(x))=0$, as the perturbations are not symmetric with regards to the permutation $\sigma$.
Therefore, naively there does not exist a permutation map between perturbed Gromov--Witten moduli spaces.

However, since $\sigma: \cZ_{A,g,k}\to \cZ_{A,g,k}$ is a homeomorphism on the level of the underlying topological spaces, it is immediate that it satisfies the topological pullback condition, hence we immediately obtain the following corollary.

\begin{corollary}
	\label{cor:pullback-via-permutation}
	There exists a regular perturbation which pulls back to a regular perturbation via the permutation map $\sigma:\cZ_{A,g,k}\to \cZ_{A,g,k}$.
	Therefore, we can consider a well-defined permutation map between the perturbed Gromov--Witten moduli spaces,
		\[
		\sigma|_{\CM_{A,g,k}(\sigma^*p)} : \CM_{A,g,k}(\sigma^*p) \to \CM_{A,g,k} (p).
		\]
\end{corollary}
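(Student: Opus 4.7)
The plan is that the corollary is an essentially immediate consequence of Theorem~\ref{thm:pullback-regular-perturbation}; all that must be verified is that the permutation map $\sigma: \cZ_{A,g,k} \to \cZ_{A,g,k}$ is $\ssc$-smooth and satisfies the topological pullback condition.

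For the first point, I would argue that the Gromov--Witten polyfold structure on $\cZ_{A,g,k}$ is constructed in a way that is invariant under relabeling of the marked points: the gluing data, the local M-polyfold models, and the choices of good uniformizing families of stable maps can all be taken equivariantly with respect to the symmetric group action permuting the markings. Consequently $\sigma$ is in fact a $\ssc$-diffeomorphism of $\cZ_{A,g,k}$ with $\ssc$-smooth inverse $\sigma^{-1}$, and is in particular a homeomorphism on the level of the underlying topological space.

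For the second point, as noted in the paragraph preceding the corollary, the topological pullback condition of Definition~\ref{topological-pullback-condition} is a mild compactness-control hypothesis that is automatically satisfied by any map which is a homeomorphism of underlying topological spaces --- pre-images of compact sets under a homeomorphism are compact, so the required control on compactness of the pullback perturbation is free. Hence $\sigma$ satisfies the topological pullback condition.

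With both hypotheses in hand, I would apply Theorem~\ref{thm:pullback-regular-perturbation} to the pullback diagram of the Gromov--Witten strong polyfold bundle along $\sigma$. The theorem produces a regular perturbation $p$ of the Cauchy--Riemann section $\delbarj$ whose pullback $\sigma^* p$ is again a regular perturbation, and yields the well-defined restriction
\[
\sigma|_{\CM_{A,g,k}(\sigma^*p)} : \CM_{A,g,k}(\sigma^*p) \to \CM_{A,g,k}(p).
\]
I anticipate no real obstacle, since the substantive technical work is carried by Theorem~\ref{thm:pullback-regular-perturbation} itself; the only content of the corollary is the trivial verification of its hypothesis for this particular map.
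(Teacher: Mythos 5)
Your proposal is correct and follows essentially the same route as the paper: observe that $\sigma$ is a $\ssc$-diffeomorphism, hence a homeomorphism on the underlying topological space, conclude that it satisfies the topological pullback condition, and then invoke the pullback theorem (the paper cites Theorem~\ref{thm:compatible-pullbacks}, the technical version of Theorem~\ref{thm:pullback-regular-perturbation}). The one small inaccuracy is your parenthetical justification that ``pre-images of compact sets under a homeomorphism are compact'': the topological pullback condition of Definition~\ref{topological-pullback-condition} is a statement about pushing forward open neighborhoods of fibers (for a homeomorphism $f$ and any open $\cV\supset f^{-1}([y])$, take $\cU_{[y]}:=f(\cV)$), not about preservation of compactness directly, though the conclusion you draw is correct and matches the paper's assertion.
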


\subsection{Organization of the paper}

We give a self contained introduction to the basic abstract perturbation machinery of polyfold theory in \S~\ref{sec:abstract-perturbations-polyfold-theory}.
In \S~\ref{subsec:polyfolds-ep-groupoids} we review scale calculus, the definition of a polyfold as an ep-groupoid, and discuss the induced topology on subgroupoids and on branched suborbifolds.
In \S~\ref{subsec:abstract-perturbations} we discuss strong polyfold bundles, $\ssc$-smooth Fredholm sections and $\ssc^+$-multisection perturbations. In addition, we also discuss transverse perturbations, how to control the compactness of a perturbation, and questions of orientation.
In \S~\ref{subsec:branched-integral-polyfold-invariants} we consider $\ssc$-smooth differential forms, the definition of the branched integral on a weighted branched suborbifold, and how to define the polyfold invariants.

We provide a general framework for proving that the polyfold invariants are natural, and do not depend on the construction of a polyfold model for a given moduli space in \S~\ref{sec:naturality-polyfold-invariants}.
In \S~\ref{subsec:invariance-of-domain} we prove an invariance of domain result for branched suborbifolds, Lemma~\ref{lem:invariance-of-domain-branched-orbifolds}.
In \S~\ref{subsec:fredholm-multisections} we generalize the polyfold abstract perturbation theory to the case of a $\ssc$-smooth proper Fredholm multisection.
In \S~\ref{subsec:intermediary-subbundles-naturality} we provide the general framework, introduce the definition of an intermediary subbundle, and prove that the equality of polyfolds invariants in Theorem~\ref{thm:naturality-polyfold-invariants}.
In \S~\ref{subsec:independence-sequence} we apply Theorem~\ref{thm:naturality-polyfold-invariants} to show that the polyfold Gromov--Witten invariants are independent of the choice of increasing sequence.
In \S~\ref{subsec:independence-punctures} we apply Theorem~\ref{thm:naturality-polyfold-invariants} to show that the polyfold Gromov--Witten invariants are independent of the choice of puncture at the marked points.

We discuss how to pull back regular perturbations in \S~\ref{sec:pulling-back-abstract-perturbations}.
In \S~\ref{subsec:pullbacks-strong-polyfold-bundles} we define the pullback of a strong polyfold bundle and of a $\ssc^+$-multisection.
In \S~\ref{subsec:topological-pullback-condition-controlling-compactness} we introduce the topological pullback condition and show how it allows us to pullback a pair which controls compactness.
In \S~\ref{subsec:construction-regular-perturbation-which-pullback} we construct regular perturbations which pullback to regular perturbations, proving Theorem~\ref{thm:compatible-pullbacks}.
In \S~\ref{subsec:permutation-map} we apply Theorem~\ref{thm:compatible-pullbacks} to obtain a well-defined permutation map between the perturbed Gromov--Witten moduli spaces.

In Appendix~\ref{appx:local-surjectivity} we consider some basic properties of the linearized Cauchy--Riemann operator, which allow us to assert the simple fact that cokernel vectors can be chosen so that they vanish on small neighborhoods of the marked or nodal points.


\section{Abstract perturbations in polyfold theory}
	\label{sec:abstract-perturbations-polyfold-theory}

In this section we recall and summarize the construction of abstract perturbations in polyfold theory, as developed by Hofer, Wysocki, and Zehnder.

\subsection{Polyfolds and ep-groupoids}
	\label{subsec:polyfolds-ep-groupoids}

We use the modern language of \'etale proper Lie groupoids to define polyfolds.  
The notion of orbifold was first introduced by Satake \cite{satake1956generalization}, with further descriptions in terms of groupoids and categories by Haefliger \cites{haefliger1971homotopy,haefliger1984groupoide,haefliger2001groupoids}, and Moerdijk \cites{moerdijk2002orbifolds,moerdijk2003introduction}.  
With this perspective, a polyfold may be viewed as a generalization of a (usually infinite-dimensional) orbifold, with additional structure.  This generalization of the \'etale proper Lie groupoid language to the polyfold context is due to Hofer, Wysocki, and Zehnder \cite{HWZ3}.
For full details in the present context, we will refer the reader to \cite{HWZbook} for the abstract definitions of ep-groupoids in the polyfold context.

\subsubsection[sc-Structures, M-polyfolds, and polyfold structures]{$\ssc$-Structures, M-polyfolds, and polyfold structures}


We begin by discussing the basic definitions of ``scale calculus'' in polyfold theory.
Scale calculus is a generalization of classical functional analytic concepts, designed to address the classical failure of reparametrization actions to be differentiable (see \cite[Ex.~2.1.4]{ffgw2016polyfoldsfirstandsecondlook}).
Thus, scale calculus begins by generalizing notions of Banach spaces and of Fr\'echet differentiability in order to obtain scale structures where reparametrization will be a smooth action.


\begin{definition}[{\cite[Def.~1.1]{HWZbook}}]
	A \textbf{$\ssc$-Banach space} consists of a Banach space $E$ together with a decreasing sequence of linear subspaces
	\[
	E=E_0\supset E_1 \supset \cdots \supset E_\infty := \cap_{i\geq 0} E_i
	\]
	such that the following two conditions are satisfied.
	\begin{enumerate}
		\item The inclusion operators $E_{m+1} \to E_m$ are compact.
		\item $E_\infty$ is dense in every $E_i$.
	\end{enumerate}
\end{definition}

\begin{definition}[{\cite[Def.~1.9]{HWZbook}}]
	\label{def:ssc-differentiability-ssc-Banach-spaces}
	A map $f:U\rightarrow U'$ between two open subsets of $\ssc$-Banach spaces $E$ and $E'$ is called a \textbf{$\ssc^0$-map}, if  $f(U_i)\subset U'_i$  for all $i\geq 0$ and if the induced maps  $f:U_i\rightarrow U'_i$ are continuous.  
	Furthermore, $f$ is called a \textbf{$\ssc^1$-map}, or of \textbf{class $\ssc^1$}, if the following conditions are satisfied.
	\begin{itemize}
		\item For every $x\in U_1$ there exists a bounded 
		linear  map $Df(x)\in  \mathcal{L}(E_0, E'_0)$ satisfying for $h\in
		E_1$, with $x+h\in U_1$,
		\[\frac{1}{\norm{h}_1}\norm{f(x+h)-f(x)-Df(x)h}_0\to 0\quad
		\text{as\ $\norm{h}_1\to 0$.}\mbox{}\\[4pt]\]
		\item  The tangent map  $Tf:TU\to TU'$,
		defined by
		\[Tf(x, h)=(f(x), Df(x)h),
		\]
		is a $\ssc^0$-map between the tangent spaces.
	\end{itemize}
\end{definition}

If $Tf:TU\to TU'$ is of class $\ssc^1$, then $f:U\to U'$ is called of class $\ssc^2$; inductively, the map $f:U\to E'$ is called of class $\ssc^k$ if the $\ssc^0$-map $T^{k-1}f:T^{k-1}U\to T^{k-1}E'$ is of class $\ssc^1$.  A map which is of class $\ssc^k$ for every $k$ is called  \textbf{$\ssc$-smooth}  or of  \textbf{class $\ssc^{\infty}$}.  The basic building block which allows us to check the $\ssc$-differentiability of maps is the chain rule.

\begin{proposition}[Chain rule, {\cite[Thm.~1.1]{HWZbook}}]
	Assume that $E$, $F$, and $G$ are $\ssc$-smooth Banach spaces and $U\subset E$ and $V\subset F$ are open sets.  Assume that $f:E\to F$, $g:V\to G$ are of class $\ssc^1$ and $f(U)=V$.  Then the composition $g\circ f :U\to G$ is of class $\ssc^1$ and the tangent maps satisfy
	\[
	T(g\circ f) = Tg \circ Tf.
	\]
\end{proposition}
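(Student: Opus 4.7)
The plan is to verify the three standard conditions that make $g \circ f$ a $\ssc^1$ map: (a) $g \circ f$ preserves levels and is continuous on each, (b) at every $x \in U_1$ the composition $Dg(f(x)) \circ Df(x) \in \mathcal{L}(E_0, G_0)$ serves as a derivative satisfying the required remainder estimate, and (c) the tangent map $T(g \circ f) = Tg \circ Tf$ is $\ssc^0$. Part (a) is immediate from the inclusions $f(U_i) \subset V_i$, $g(V_i) \subset G_i$ together with continuity of the restrictions $f|_{U_i}$ and $g|_{V_i}$. Part (c) is the observation that $(Tg \circ Tf)(x, h) = \bigl(g(f(x)),\, Dg(f(x)) Df(x) h\bigr)$ has exactly the form of a tangent map of $g \circ f$, and as a composition of $\ssc^0$ maps between the tangent $\ssc$-Banach spaces it is $\ssc^0$.

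The core of the argument is (b). Fix $x \in U_1$, set $y := f(x) \in V_1$, and for $h \in E_1$ with $x + h \in U_1$ set $k := f(x+h) - f(x)$. Continuity of $f\colon U_1 \to V_1$ (from $f$ being $\ssc^0$) gives $y,\, y+k \in V_1$, so $k \in F_1$, and $\|k\|_1 \to 0$ as $\|h\|_1 \to 0$. Adding and subtracting $Dg(y) k$ yields the decomposition
\[
g(f(x+h)) - g(f(x)) - Dg(y) Df(x) h \;=\; A + B,
\]
where $A := g(y + k) - g(y) - Dg(y) k$ and $B := Dg(y)\bigl[f(x+h) - f(x) - Df(x) h\bigr]$. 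The boundedness of $Dg(y) \in \mathcal{L}(F_0, G_0)$ combined with the $\ssc^1$-property of $f$ yields $\|B\|_0 / \|h\|_1 \to 0$ at once, while the $\ssc^1$-property of $g$ gives $\|A\|_0 / \|k\|_1 \to 0$ as $\|k\|_1 \to 0$.

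The main obstacle is converting the estimate $\|A\|_0 / \|k\|_1 \to 0$ into $\|A\|_0 / \|h\|_1 \to 0$; this requires a local Lipschitz-type bound $\|f(x + h) - f(x)\|_1 \leq C \|h\|_1$ for small $\|h\|_1$, which is \emph{not} automatic from the asymmetric $\ssc^1$ definition, since the remainder there is measured only in $\|\cdot\|_0$. I would extract such a bound from the $\ssc^0$ property of $Tf$ at higher levels of the tangent bundle, namely the continuity of $Df \colon U_{i+1} \times E_i \to F_i$, which yields local uniform bounds on $\|Df(x')\|_{\mathcal{L}(E_1, F_1)}$ for $x'$ ranging over $U_2$, and then pass from $U_2$-points to the given point in $U_1$ by approximation, using the density of $U_\infty$ in $U_1$ and the continuity of $f$ at level $1$. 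Once the Lipschitz bound is established,
\[
\|A\|_0 / \|h\|_1 \;\leq\; \bigl(\|A\|_0 / \|k\|_1\bigr)\cdot \bigl(\|k\|_1 / \|h\|_1\bigr) \;\to\; 0,
\]
which together with the estimate for $B$ completes the verification of (b) and hence of the chain rule.
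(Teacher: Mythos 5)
Your setup is correct and you have put your finger on exactly the right difficulty: the asymmetry of the $\ssc^1$ remainder (increment in $\|\cdot\|_1$, remainder in $\|\cdot\|_0$) means that the estimate $\|A\|_0/\|k\|_1 \to 0$ does not convert directly into $\|A\|_0/\|h\|_1\to 0$. The decomposition into $A$ and $B$, and the treatment of $B$, are fine. However, the proposed remedy --- a local Lipschitz bound $\|f(x+h)-f(x)\|_1\le C\|h\|_1$ near a point $x\in U_1$ --- is not available from the $\ssc^1$ hypotheses, and your mechanism for obtaining it does not close the gap. For $x\in U_1$ the derivative $Df(x)$ is only known to lie in $\mathcal{L}(E_0,F_0)$; that $Df(x')$ maps $E_1\to F_1$ is known only for $x'\in U_2$, and continuity of $Tf$ at level $1$ gives no control on $\|Df(x')\|_{\mathcal{L}(E_1,F_1)}$ as $x'\in U_2$ approaches a point of $U_1\setminus U_2$. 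Density of $U_\infty$ in $U_1$ together with level-$1$ continuity of $f$ therefore does not yield the desired uniform bound, and even granting such a bound one would still need a level-$1$ mean-value inequality for $f$, i.e.\ classical $C^1$ regularity of $f\colon U_1\to F_1$, which $\ssc^1$ does not provide.

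The actual argument avoids the Lipschitz bound entirely, and this is the substantive idea of the theorem. The $\ssc^1$ property implies that $g\colon V_1 \to G_0$ is classically $C^1$ (with derivative $Dg(y)|_{F_1}\colon F_1\to G_0$), so for $y\in V_1$, $k\in F_1$ with the segment $y+[0,1]k$ in $V_1$ one has the $G_0$-valued integral representation
\[
A \;=\; g(y+k)-g(y)-Dg(y)k \;=\; \int_0^1\bigl[Dg(y+tk)-Dg(y)\bigr]\,k\,dt .
\]
Now write $k = k(h) = Df(x)h + \rho(h)$ with $\|\rho(h)\|_0 = o(\|h\|_1)$. The crucial observation is that $k(h)/\|h\|_1$ ranges, for small $\|h\|_1$, over a precompact subset of $F_0$: the unit ball of $E_1$ is precompact in $E_0$ by the compact embedding $E_1\hookrightarrow E_0$, so $Df(x)$ maps it into a precompact subset of $F_0$, and the contribution of $\rho(h)/\|h\|_1$ tends to $0$ in $F_0$. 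Since $Tg$ is $\ssc^0$, the map $(y',k')\mapsto Dg(y')k'$ is continuous on $V_1\times F_0$, hence uniformly continuous on compact subsets; combined with $\|k(h)\|_1\to 0$ this makes the integrand $[Dg(y+tk)-Dg(y)]\,(k/\|h\|_1)$ tend to $0$ in $G_0$ uniformly in $t$, which gives $\|A\|_0/\|h\|_1\to 0$. In short: the proof estimates $\|k\|_0/\|h\|_1$ (bounded), not $\|k\|_1/\|h\|_1$ (uncontrolled), and trades the missing level-$1$ control for the compactness of the scale embedding --- which is precisely why that compactness is built into the definition of an $\ssc$-Banach space.
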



\begin{definition}[{\cite[Defs.~2.1,~2.2]{HWZbook}}]
	Consider a $\ssc$-Banach space $E$ and consider an open subset $U\subset E$.  A $\ssc$-smooth map $r:U\to U$ is called a \textbf{$\ssc$-smooth retraction} on $U$ if $r\circ r = r$.
	A \textbf{local M-polyfold model (without boundary)} is a pair $(O,E)$ consisting of a $\ssc$-Banach space $E$ and a subset $O\subset E$ such that there exists a $\ssc$-smooth retraction $r:U \to U$ defined on an open subset $U\subset E$ such that $r(U)= O$.  We call $O$, equipped with the subspace topology $O\subset E$, a \textbf{$\ssc$-retract}.
\end{definition}

These definitions of $\ssc$-differentiability extend to local M-polyfolds models in the following way.
\begin{definition}[{\cite[Def.~2.4]{HWZbook}}]
	A map $f:O \to O'$ between two local M-polyfold models is of \textbf{class $\ssc^k$} if the composition  $f\circ r:U\to E'$ is of class $\ssc^k$ where $U\subset E$ is an open subset of the $\ssc$-Banach space $E$ and where $r:U\to U$ is a $\ssc$-smooth retraction onto $r(U)=O$. 
\end{definition}

In the absence of isotropy, we may consider the following definition of an ``M-polyfold,'' short for a ``polyfold of manifold type.''

\begin{definition}[{\cite[Def.~2.8]{HWZbook}}]
	We say that a paracompact Hausdorff topological space $Z$ is an \textbf{M-polyfold} if every point $z\in Z$ has an open neighborhood 
	which is homeomorphic to a $\ssc$-retract $O$, and such that the induced transition maps between any two $\ssc$-retracts are $\ssc$-smooth.
\end{definition}

However, in almost all situations that arise isotropy is inevitable, and must be dealt with.
In this sense, polyfold behave like infinite-dimensional orbifolds, and so we introduce the language of ep-groupoids.

\begin{definition}[{\cite[Defs.~7.1,~7.3]{HWZbook}}]
	A \textbf{groupoid} $(Z,\bm{Z})$ is a small category consisting of a set of objects $Z$, a set of morphisms $\bm{Z}$ which are all invertible, and the five structure maps $(s,t,m,u,i)$ (the source, target, multiplication, unit, and inverse maps).
	An \textbf{ep-groupoid} is a groupoid $(Z,\bm{Z})$ such that the object set $Z$ and the morphism set $\bm{Z}$ are both M-polyfolds, and such that all the structure maps are $\ssc$-smooth maps which satisfy the following properties.
	\begin{itemize}
		\item \textbf{(\'etale).}  The source and target maps
		$s:\bm{Z}\to Z$ and $t:\bm{Z}\to Z$ are surjective local sc-diffeomorphisms.
		\item \textbf{(proper).}  For every point $z\in Z$, there exists an
		open neighborhood $V(z)$ so that the map
		$t:s^{-1}(\overline{V(z)})\rightarrow Z$ is a proper mapping.
	\end{itemize}
\end{definition}

For a fixed object $z\in Z$ we denote the \textbf{isotropy group of $z$} by
	\[
	\bm{G}(z) := \{	\phi \in \bm{Z} \mid s(\phi)=t(\phi = z)	\}.
	\]
By \cite[Prop.~7.4]{HWZbook}, the properness condition ensures that this is a finite group.
The \textbf{orbit space} of the ep-groupoid $(Z,\bm{Z})$,
	\[
	\abs{Z} := Z / \sim,
	\]
is the quotient of the set of objects $Z$ by the equivalence relation given by $z\sim z'$ if there exists a morphism $\phi\in \bm{Z}$ with $s(\phi)=z$ and $t(\phi)=z'$.  It is equipped with the quotient topology defined via the map 
	\begin{equation}\label{eq:quotient-map}
	\pi: Z\to\abs{Z}, \qquad z\mapsto \abs{z}.
	\end{equation}




\begin{definition}[{\cite[Def.~16.1]{HWZbook}}]
	Let $\cZ$ be a second countable, paracompact, Hausdorff topological space.  A \textbf{polyfold structure} on $\cZ$ consists of an ep-groupoid $(Z,\bm{Z})$ and a homeomorphism $\abs{Z}\simeq \cZ$.
\end{definition}

Defining an ep-groupoid involves making a choice of local structures.  Taking an equivalence class of ep-groupoids makes our differentiable structure choice independent.  The appropriate notion of equivalence in this category-theoretic context is a ``Morita equivalence class'' (see \cite[Def.~3.2]{HWZ3}).

\begin{definition}[{\cite[Def.~16.3]{HWZbook}}]
	A \textbf{polyfold} consists of a second countable, paracompact, Hausdorff topological space $\cZ$ together with a Morita equivalence class of polyfold structures $[(Z,\bm{Z})]$ on $\cZ$.
\end{definition}

Taking a Morita equivalence class of a given polyfold structure (in the case of polyfolds) is analogous to taking a maximal atlas for a given atlas (in the usual definition of manifolds).
Given distinct polyfold structures which define an orbifold or a polyfold, the method of proving they define the same Morita equivalence class is by demonstrating that both polyfold structures possess a common refinement.

The scales of a $\ssc$-Banach space induce a filtration on the local M-polyfold models, which is moreover preserved by the structure maps $s,t$.  Consequently, there is a well-defined filtration on the orbit space which hence induces a filtration
	\[
	\cZ = \cZ_0 \supset \cZ_1 \supset \cdots \supset \cZ_\infty = \cap_{k\geq 0} \cZ_k
	\]
on the underlying topological space $\cZ$.

\begin{notation}
	It is common to denote both the ep-groupoid ``$(Z,\bm{Z})$,'' and its object set ``$Z$,'' by the same letter ``$Z$.''	
	We will refer to the underlying set, the underlying topological space, or the polyfold by the letter ``$\cZ$.''
	We will always assume that a topological space $\cZ$ with a polyfold structure is necessarily second countable, paracompact, and Hausdorff.	
	Furthermore, we will write objects as ``$x\in Z$,'' morphisms as ``$\phi \in \bm{Z}$,'' and points as ``$[x]\in \cZ$'' (due to the identification $\abs{Z} \simeq \cZ$). We will write ``$\phi: x\to y$'' for a morphism $\phi \in \bm{Z}$ with $s(\phi)=x$ and $t(\phi)=y$.
\end{notation}

The local topology of a polyfold is related to the local isotropy groups, as demonstrated by the following proposition.

\begin{proposition}[Natural representation of $\bm{G}(x)$, {\cite[Thm.~7.1, Prop.~7.6]{HWZbook}}]
	\label{prop:natural-representation}
	Let be an ep-groupoid $(Z,\bm{Z})$.  Let $x\in Z$ with isotropy group $\bm{G}(x)$.  Then for every open neighborhood $V$ of $x$ there exists an open neighborhood $U\subset V$ of $x$, a group homomorphism $\Phi : \bm{G}(x)\rightarrow \text{Diff}_{\ssc}(U)$, $g\mapsto  \Phi (g)$,  and a $\ssc$-smooth map
	$\Gamma: \bm{G}(x)\times U\rightarrow \bm{Z}$ such that the following holds.
	\begin{enumerate}
		\item $\Gamma(g,x)=g$.
		\item $s(\Gamma(g,y))=y$ and $t(\Gamma(g,y))=\Phi (g)(y)$ for all $y\in U$ and $g\in \bm{G}(x)$.
		\item If $h: y\rightarrow z$ is a morphism between points in $U$, then there exists a unique element $g\in \bm{G}(x)$ satisfying $\Gamma(g,y)=h$, i.e., 
		\[
		\Gamma: \bm{G}(x)\times U\rightarrow \{\phi\in \bm{Z} \mid   \text{$s(\phi)$ and $t(\phi)\in U$}\}
		\]
		is a bijection.
	\end{enumerate}
	The data $(\Phi,\Gamma)$ is called the \textbf{natural representation} of $\bm{G}(x)$.
	Moreover, consider the following topological spaces:
	\begin{itemize}
		\item $\bm{G}(x) \backslash U$, equipped with quotient topology defined by the projection $U \to \bm{G}(x) \backslash U$,
		\item $U / \sim$, where $x \sim x'$ for $x, x' \in U$ if there exists a morphism $\phi \in \bm{Z}$ with $s(\phi)=x$ and $t(\phi)=x$, equipped with the quotient topology defined by the projection $U \to U / \sim$,
		\item $\abs{U}$, the image of $U$ under the map $Z\to \abs{Z}$, equipped with the subspace topology defined by the inclusion $\abs{U}\subset \abs{Z}$.
	\end{itemize}
	Then these spaces are all naturally homeomorphic.
\end{proposition}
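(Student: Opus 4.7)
The plan is to build the natural representation $(\Phi,\Gamma)$ from the étale property, to pin down the size of $U$ using the properness condition, and finally to deduce the three topological identifications from these structural facts.

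First I would construct $\Gamma$ one group element at a time. Fix $g\in\bm{G}(x)$, viewed as a morphism $g:x\to x$ in $\bm{Z}$. Since the source map $s:\bm{Z}\to Z$ is a surjective local $\ssc$-diffeomorphism, there exist open neighborhoods $W_g\subset\bm{Z}$ of $g$ and $U_g\subset Z$ of $x$ with $s|_{W_g}:W_g\to U_g$ a $\ssc$-diffeomorphism. Shrinking $W_g$ if necessary, we may also assume $t(W_g)\subset V$. Define $\Gamma(g,\cdot):=(s|_{W_g})^{-1}:U_g\to W_g$ and $\Phi(g):=t\circ\Gamma(g,\cdot):U_g\to V$. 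Because $\bm{G}(x)$ is finite (by properness and \cite[Prop.~7.4]{HWZbook}), I can take the intersection $U':=\bigcap_{g\in\bm{G}(x)}U_g\cap V$ to obtain a common neighborhood of $x$ on which every $\Gamma(g,\cdot)$ and every $\Phi(g)$ is defined. Then $\Phi(g)$ is a local $\ssc$-diffeomorphism, fixes $x$, and (by further shrinking to a $\Phi(\bm{G}(x))$-invariant neighborhood $U\subset U'$, built inductively by taking preimages) becomes a self-$\ssc$-diffeomorphism of $U$. Properties (1) and (2) are built into the definition, and assembling $\Gamma$ from the finite collection gives a $\ssc$-smooth map $\bm{G}(x)\times U\to\bm{Z}$.

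Next I would verify the homomorphism property and the bijection in (3). For $g,h\in\bm{G}(x)$, the morphism $m(\Gamma(g,\Phi(h)(y)),\Gamma(h,y))$ sends $y$ to $\Phi(g)\Phi(h)(y)$ and equals $gh$ at $y=x$; by the local uniqueness of inverses of $s$, it must coincide with $\Gamma(gh,y)$ on $U$, so $\Phi$ is a homomorphism. The main technical obstacle lives in the bijection statement (3): I need $U$ small enough that every morphism $\phi:y\to z$ between points of $U$ comes from some $\Gamma(g,\cdot)$. Here I invoke properness. Choose an open neighborhood $V(x)$ as in the properness axiom; then $K:=s^{-1}(\overline{U})\cap t^{-1}(\overline{U})$ is compact for $U$ chosen inside $V(x)$. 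The images under $\Gamma(g,\cdot)$ of $\bm{G}(x)$ cover all morphisms emanating from $x$ in $K$, so by compactness and the étale property the complement $K\setminus\bigcup_{g\in\bm{G}(x)}\Gamma(g,U)$ is closed and disjoint from $s^{-1}(x)$. Shrinking $U$ so that its closure lies outside $s(K\setminus\bigcup_g\Gamma(g,U))$ forces every morphism between points of $U$ to be of the form $\Gamma(g,y)$. Uniqueness of $g$ then follows from the injectivity of $s|_{W_g}$ together with the fact that distinct $g,g'\in\bm{G}(x)$ correspond to disjoint $W_g,W_{g'}$ (after a further shrink if needed).

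Finally, the three topological identifications. Property (3) says that two points $y,y'\in U$ are related by a morphism in $\bm{Z}$ if and only if $y'=\Phi(g)(y)$ for some $g\in\bm{G}(x)$; thus the set-level identification $\bm{G}(x)\backslash U=U/\sim$ is immediate. For the topology, I would note that the target map $t:\bm{Z}\to Z$ is a local $\ssc$-diffeomorphism and in particular open, so the projection $\pi:Z\to\abs{Z}$ in \eqref{eq:quotient-map} is an open map. Consequently $\abs{U}$ is open in $\abs{Z}$ and the restriction $\pi|_U:U\to\abs{U}$ is a continuous open surjection whose fibers are exactly the $\sim$-equivalence classes; by the universal property of the quotient this identifies the subspace topology on $\abs{U}\subset\abs{Z}$ with $U/\sim$. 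The same argument, using that the finite group $\bm{G}(x)$ acts by $\ssc$-diffeomorphisms via $\Phi$, identifies $\bm{G}(x)\backslash U$ with $U/\sim$. I expect the properness step in the previous paragraph to be the principal difficulty; once that compactness-based shrinking of $U$ is done, the topological equivalences follow formally from openness of $\pi$.
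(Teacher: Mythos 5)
The paper does not prove this proposition: it is quoted from [HWZbook, Thm.~7.1, Prop.~7.6] and used as a black box, so there is no argument in the present text to compare against. Your construction of $(\Phi,\Gamma)$ from the \'etale property, the verification of the homomorphism law via local uniqueness of $s$-lifts, and the final passage from item (3) to the three topological identifications (via openness of $\pi:Z\to\abs{Z}$) all follow the standard argument and are sound in outline.

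There is, however, a genuine gap in the properness step for item (3). You assert that $K:=s^{-1}(\overline{U})\cap t^{-1}(\overline{U})$ is compact once $U\subset V(x)$, and the rest of the paragraph (the closed complement $C$ disjoint from $s^{-1}(x)$, the final shrinking of $U$) leans entirely on that compactness. But the properness axiom only gives that $t:s^{-1}(\overline{V(x)})\to Z$ is proper, i.e.\ that $t^{-1}(\mathrm{compact})\cap s^{-1}(\overline{V(x)})$ is compact; since $Z$ is an M-polyfold, $\overline{U}$ is generally \emph{not} compact, so neither is $t^{-1}(\overline{U})\cap s^{-1}(\overline{V(x)})$, and $K$ need not be compact. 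Your claim that $C$ is disjoint from $s^{-1}(x)$ also implicitly assumes that every morphism with source $x$ and target in $\overline{U}$ already lies in $\bm{G}(x)$, which is not automatic and is in fact part of what (3) asserts. The standard repair is sequential: if (3) failed for every shrinking of $U$, choose $\phi_n:y_n\to z_n$ with $y_n,z_n\to x$ and $\phi_n\notin\Gamma(\bm{G}(x)\times U_n)$; the set $\{z_n\}\cup\{x\}$ is compact, so properness gives a convergent subsequence $\phi_{n_k}\to\phi$ with $s(\phi)=t(\phi)=x$, i.e.\ $\phi=g_0\in\bm{G}(x)$; for $k$ large, $\phi_{n_k}$ lies in the neighborhood $W_{g_0}$ on which $s$ is a diffeomorphism, forcing $\phi_{n_k}=\Gamma(g_0,y_{n_k})$, a contradiction. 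You should also re-impose $\Phi(\bm{G}(x))$-invariance of $U$ \emph{after} this final shrinking, e.g.\ by replacing $U$ with $\bigcap_{g}\Phi(g)(U)$; the invariance you arranged earlier is not preserved under the later shrinking, and the bijectivity in (3) (in particular $t(\Gamma(g,y))\in U$ for $y\in U$) requires it.
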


\subsubsection{Maps between polyfolds}

Using category-theoretic language, we discuss the definition of map between polyfolds.

\begin{definition}
	A \textbf{$\ssc^k$ functor} between two polyfold structures
	\[
	\hat{f}:(Z_1,\bm{Z}_1) \to (Z_2,\bm{Z}_2)
	\]
	is a functor on groupoidal categories which moreover is a $\ssc^k$ map when considered on the object and morphism sets.
\end{definition}

A $\ssc^k$ functor between two polyfold structures $(Z_1,\bm{Z}_1)$, $(Z_2,\bm{Z}_2)$ with underlying topological spaces $\cZ_1$, $\cZ_2$ induces a continuous map on the orbit spaces $\abs{\hat{f}}:\abs{Z_1} \to \abs{Z_2}$, and hence also induces a continuous map $f : \cZ_1 \to \cZ_2$, as illustrated in the following commutative diagram.
	\[
	\begin{tikzcd}[row sep=small]
	\abs{Z_1} \arrow[d,phantom,"\rotatebox{90}{\(\simeq\)}"] \arrow[r,"\abs{\hat{f}}"] & \abs{Z_2} \arrow[d,phantom,"\rotatebox{90}{\(\simeq\)}"] \\
	\cZ_1 \arrow[r,"f"] & \cZ_2
	\end{tikzcd}
	\]

\begin{definition}
	Consider two topological spaces  $\cZ_1$, $\cZ_2$ with orbifold structures $(Z_1,\bm{Z}_1)$, $(Z_2,\bm{Z}_2)$. We define a \textbf{$\ssc^k$ map between polyfolds} as a continuous map 
	\[
	f: \cZ_1 \to \cZ_2
	\]
	between the underlying topological spaces of the polyfolds, for which there exists an associated $\ssc^k$ functor
	\[
	\hat{f}: (Z_1,\bm{Z}_1) \to (Z_2,\bm{Z}_2).
	\]
	such that $\abs{\hat{f}}$ induces $f$.
\end{definition}

\begin{remark}
	From an abstract point of view a stronger notion of map is needed.   This leads to the definition of \textit{generalized maps} between orbifold structures, following a category-theoretic localization procedure \cite[\S~2.3]{HWZ3}.  Following this, a precise notion of map between two polyfolds is defined using an appropriate equivalence class of a given generalized map between two given polyfold structures \cite[Def.~16.5]{HWZbook}.	
	With this in mind, taking an appropriate equivalence class of a given $\ssc^k$-functor between two given polyfold structures is sufficient for giving a well-defined map between two polyfolds.
\end{remark}

\subsubsection{Subgroupoids}

We state some essential facts about the topology of subgroupoids.

\begin{definition}	
	Let $(Z,\bm{Z})$ be an ep-groupoid.
	We say that a subset of the object set, $S\subset Z$, is \textbf{saturated} if $S = \pi^{-1} (\pi(S))$, where $\pi$ is the quotient map \eqref{eq:quotient-map}.
	We define a \textbf{subgroupoid} as the full subcategory $(S,\bm{S})$ associated to a saturated subset of the object set.
\end{definition}

A subgroupoid $(S,\bm{S})$ comes equipped with the subspace topology induced from the ep-groupoid $(Z,\bm{Z})$, in addition to the induced grading.  It does not come with a $\ssc$-smooth structure in general, so the \'etale condition no longer makes sense.  However, one may observe it inherits the following directly analogous properties.
\begin{itemize}
	\item The source and target maps are surjective local homeomorphisms which moreover respect the induced grading. We say that the source and target maps are \textbf{$\ssc^0$-homeomorphisms} and the subgroupoid $(S,\bm{S})$ is automatically \textbf{$\ssc^0$-\'etale}.
	\item For every point $x\in S$, there exists an open neighborhood $V(x)$ so that the map $t: s^{-1}(\overline{V(x)}) \to S$ is a proper mapping. (This can be shown from the definitions, using in addition that if $f:X\to Y$ is proper, then for any subset $V\subset Y$ the restriction $f|_{f^{-1}(V)}: f^{-1}(V)\to V$ is proper.)
\end{itemize}
Thus, a subgroupoid is automatically $\ssc^0$-\'etale in the above sense, as well as proper.

\begin{remark}
	\label{rmk:local-topology-subgroupoid}
	Let $U$ be an open subset of $S$.
	We may consider two topologies on $U$:
	\begin{itemize}
		\item $(U, \tau_S)$, where $\tau_S$ is the subspace topology induced from the inclusion $\cup_{i\in I} M_i \allowbreak \hookrightarrow S$,
		\item $(U, \tau_Z)$, where $\tau_Z$ is the subspace topology induced from the inclusion $\cup_{i\in I} M_i \allowbreak \hookrightarrow Z$.
	\end{itemize}
	Then these two topologies are identical. Moreover, $U\hookrightarrow S$ is a local homeomorphism.
\end{remark}

\begin{proposition}
	\label{prop:topology-subgroupoid}
	Consider the orbit space of a subgroupoid, $\abs{S}$.  There are two topologies on this space we may consider:
	\begin{itemize}
		\item the subspace topology $\tau_s$, induced from the inclusion $\abs{S}\subset \abs{Z}$,
		\item the quotient topology $\tau_q$, induced from the projection $S\to \abs{S}$.
	\end{itemize}
	These two topologies are identical.
\end{proposition}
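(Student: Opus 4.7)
The plan is to show the two topologies agree by a double inclusion of the identity map, i.e., by checking that the identity $(|S|,\tau_q) \to (|S|,\tau_s)$ and its inverse are continuous.

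First I would establish the auxiliary fact that the quotient projection $\pi : Z \to |Z|$ is an open map. This is where the étale property is essential. Given an open $U \subset Z$, one has the identity
\[
\pi^{-1}(\pi(U)) = t(s^{-1}(U)),
\]
because two objects project to the same point in $|Z|$ exactly when there is a morphism connecting them. Since $s$ is continuous, $s^{-1}(U)$ is open in $\bm{Z}$, and since $t$ is a local $\ssc$-diffeomorphism, $t(s^{-1}(U))$ is open in $Z$. By definition of the quotient topology, this means $\pi(U)$ is open in $|Z|$, so $\pi$ is open. The same argument applied to the restricted structure maps shows that $\pi_S : S \to |S|$ is an open map with respect to the subspace topology on $|S|$ as well.

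The inclusion $\tau_s \subseteq \tau_q$ is straightforward: the composition $S \hookrightarrow Z \xrightarrow{\pi} |Z|$ is continuous and factors through $|S| \subset |Z|$, so the projection $\pi_S : S \to (|S|,\tau_s)$ is continuous. By the universal property of the quotient topology, $\tau_s$ is coarser than $\tau_q$.

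The harder direction is $\tau_q \subseteq \tau_s$, and this is where saturation of $S$ enters decisively. Let $U \subset |S|$ be open in $\tau_q$, so $\pi_S^{-1}(U) \subset S$ is open in the subspace topology of $S \subset Z$; write $\pi_S^{-1}(U) = S \cap W$ for some open $W \subset Z$. Set $V := \pi(W) \subset |Z|$, which is open by the openness of $\pi$ established above. I claim $U = V \cap |S|$, which would exhibit $U$ as open in $\tau_s$. The inclusion $U \subseteq V \cap |S|$ is immediate, since any $y \in U$ has a lift $x \in \pi_S^{-1}(U) \subset W \cap S$, whence $y = \pi(x) \in V \cap |S|$. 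Conversely, if $y \in V \cap |S|$, pick $x \in W$ with $\pi(x) = y$ and $x' \in S$ with $\pi(x') = y$; then $x$ and $x'$ are connected by a morphism in $\bm{Z}$, and since $S$ is saturated this forces $x \in S$. Thus $x \in S \cap W = \pi_S^{-1}(U)$, giving $y \in U$. This proves $\tau_q \subseteq \tau_s$, and combined with the previous inclusion the two topologies coincide. The main subtlety throughout is ensuring that saturation lets us move freely between lifts in $Z$ and lifts in $S$; everything else is a formal consequence of the openness of the étale quotient map.
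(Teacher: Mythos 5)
Your proof is correct and rests on the same two ingredients as the paper's: openness of the quotient projection $\pi:Z\to|Z|$ and saturation of $S$, combined in a double-inclusion argument. The one notable difference is in the direction $\tau_q\subseteq\tau_s$: you directly exhibit $U$ as $\pi(W)\cap|S|$ where $\pi_S^{-1}(U)=S\cap W$, whereas the paper argues pointwise, producing for each $[x]\in U$ a basic open neighborhood $|V(x)|\cap|S|\subset U$ using a natural-representation neighborhood $V(x)$; your global version is slightly cleaner and shows that the natural-representation machinery, while convenient, is not actually needed here. You also re-derive openness of $\pi$ from the \'etale property rather than citing it, which is fine, though the aside claiming $\pi_S:S\to(|S|,\tau_s)$ is open ``by the same argument on restricted structure maps'' would literally give openness into $\tau_q$; openness into $\tau_s$ needs the saturation identity $\pi(S\cap B)=|S|\cap\pi(B)$ --- but since you never use that aside, the proof stands.
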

\begin{proof}
We show that $\tau_s = \tau_q$.
	\begin{itemize}
		\item $\tau_s \subset \tau_q$
	\end{itemize}
	Suppose $U\subset \abs{S}$ and $U\in \tau_s$.  Then $U = V\cap \abs{S}$ for $V\subset \abs{Z}$ open.  By definition, $\pi^{-1} (V) \subset Z$ is open.  Moreover, $\pi^{-1} (U)  = \pi^{-1} (V) \cap \pi^{-1}(S) = \pi^{-1}(V)\cap S$.  Hence $\pi^{-1}(U)$ is open in $S$.  It follows from the definition of the quotient topology that $U\in \tau_q$.
	\begin{itemize}
		\item $\tau_q \subset \tau_s$
	\end{itemize}
	Suppose $U\subset \abs{S}$ and $U\in \tau_q$.  We will show for every $[x]\in U$ there exists a subset $B \subset \abs{S}$ such that $B \in \tau_s$ and $[x] \in B \subset U$.  It will then follow that $U\in\tau_s$, as desired.
	
	Let $x\in \pi^{-1}(U)$ be a representative of $[x]$.  There exists an open neighborhood $V(x) \subset Z$ equipped with the natural action by $\bm{G}(x)$ and such that $V(x) \cap S \subset \pi^{-1}(U)$.
	Observe that $\abs{V(x)\cap S} = \abs{V(x)} \cap \abs{S}$; this follows since $S$ is saturated.	
	
	Let $B:= \abs{V(x)} \cap \abs{S} \subset U$.  Then observe that $\abs{V(x)} \subset \abs{Z}$ is open,
	since the quotient map $\pi : Z \to \abs{Z}$ is an open map (see \cite[Prop.~7.1]{HWZbook}).
	Hence $B:=\abs{V(x)} \cap \abs{S}\subset \abs{S}$ is open in the subspace topology.  It follows that  $B \in \tau_s$ and $[x] \in B \subset U$, as desired.
\end{proof}

The following proposition is an analog of Proposition~\ref{prop:natural-representation} for subgroupoids.

\begin{proposition}[Induced representation of $\bm{G}(x)$ for a subgroupoid]
	\label{prop:natural-representation-subgroupoid}
	Let $(S,\bm{S})$ be a subgroupoid of an ep-groupoid $(Z,\bm{Z})$.  Let $x\in S$ with isotropy group $\bm{G}(x)$.  Then for every open neighborhood $V$ of $x$ there exists an open neighborhood $U\subset V$ of $x$, a group homomorphism $\Phi : \bm{G}(x)\rightarrow \text{Homeo}_{\ssc^0}(U)$, $g\mapsto  \Phi (g)$,  and a $\ssc^0$-map
	$\Gamma: \bm{G}(x)\times U\rightarrow \bm{S}$ such that the following holds.
	\begin{enumerate}
		\item $\Gamma(g,x)=g$,
		\item $s(\Gamma(g,y))=y$ and $t(\Gamma(g,y))=\Phi (g)(y)$ for all $y\in U$ and $g\in \bm{G}(x)$,
		\item if $h: y\rightarrow z$ is a morphism between points in $U$, then there exists a unique element $g\in \bm{G}(x)$ satisfying $\Gamma(g,y)=h$, i.e., 
		\[
		\Gamma: \bm{G}(x)\times U\rightarrow \{\phi\in \bm{Z} \mid   \text{$s(\phi)$ and $t(\phi)\in U$}\}
		\]
		is a bijection.
	\end{enumerate}
	Moreover, consider the following topological spaces:
	\begin{itemize}
		\item $\bm{G}(x) \backslash U$, equipped with quotient topology via the projection $U \to \bm{G}(x) \backslash U$,
		\item $U / \sim$, where $x \sim x'$ for $x, x' \in U$ if there exists a morphism $\phi : x \to x'$, equipped with the quotient topology via the projection $U \to U / \sim$,
		\item $\abs{U}$, the image of $U$ under the map $S\to \abs{S}$, equipped with the subspace topology,
		\item $\abs{U}$, the image of $U$ under the map $Z \to \abs{Z}$, equipped with the subspace topology.
	\end{itemize}
	Then these spaces are all naturally homeomorphic.
\end{proposition}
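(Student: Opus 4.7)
The plan is to reduce everything to the already-established Proposition~\ref{prop:natural-representation} for the ambient ep-groupoid $(Z,\bm{Z})$, and then restrict to $S$, using saturation to verify that all restrictions stay inside the subgroupoid. First I would apply Proposition~\ref{prop:natural-representation} at $x \in S \subset Z$ with the given neighborhood $V$, obtaining an open $U' \subset V$ in $Z$, a group homomorphism $\Phi':\bm{G}(x)\to \text{Diff}_{\ssc}(U')$, and a $\ssc$-smooth map $\Gamma':\bm{G}(x)\times U'\to \bm{Z}$ satisfying the three listed properties together with the homeomorphism statement for $\abs{U'} \subset \abs{Z}$.

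Next I would take $U := U' \cap S$, which is open in $S$ (and by Remark~\ref{rmk:local-topology-subgroupoid} its subspace topology as a subset of $S$ agrees with its subspace topology as a subset of $Z$). The key structural observation is that saturation of $S$ forces $\Phi'(g)(U)\subset U$: for any $y \in U$, the morphism $\Gamma'(g,y)\in \bm{Z}$ has source $y \in S$, so by $S = \pi^{-1}(\pi(S))$ its target $\Phi'(g)(y)$ lies in $S$, hence in $U$. Similarly, $\Gamma'(\bm{G}(x)\times U) \subset \bm{S}$, since $\bm{S}$ is by definition the set of all $\phi \in \bm{Z}$ with $s(\phi), t(\phi)\in S$. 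I would therefore define $\Phi(g) := \Phi'(g)|_U$ and $\Gamma := \Gamma'|_{\bm{G}(x)\times U}$. Each $\Phi(g)$ is a homeomorphism of $U$ that preserves the induced grading, hence a $\ssc^0$-homeomorphism, and $\Gamma$ is $\ssc^0$ as a restriction of a $\ssc$-smooth map. Properties (1) and (2) are immediate from those for $\Gamma'$, and (3) follows from the bijection statement in the ambient proposition combined with the fact that any morphism $h: y \to z$ between points of $U$ lies in $\bm{Z}$ with both endpoints in $U' \cap S$, hence is one of the $\Gamma'(g,y)$.

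Finally, for the homeomorphism statement among the four spaces: the identification $\bm{G}(x)\backslash U \cong U/\sim$ is a direct consequence of property (3), since two points of $U$ are equivalent under $\sim$ precisely when they lie in the same $\bm{G}(x)$-orbit. The identification of $U/\sim$ with $\abs{U} \subset \abs{S}$ is the definition of $\abs{U}$ combined with the quotient topology on $\abs{S}$, which by Proposition~\ref{prop:topology-subgroupoid} coincides with the subspace topology on $\abs{S}\subset \abs{Z}$. The identification of $\abs{U}$ viewed inside $\abs{S}$ with $\abs{U}$ viewed inside $\abs{Z}$ then follows from transitivity of subspace topologies, using again Proposition~\ref{prop:topology-subgroupoid}.

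The proof is essentially bookkeeping; the only substantive input is saturation of $S$, which is exactly what guarantees that the restricted $\Phi$ and $\Gamma$ remain valued in $U$ and $\bm{S}$ respectively. The mildly subtle point, already anticipated by Remark~\ref{rmk:local-topology-subgroupoid} and Proposition~\ref{prop:topology-subgroupoid}, is the verification that the various candidate topologies on $U$ and on $\abs{U}$ agree, but these are precisely the tools needed to make the identifications routine.
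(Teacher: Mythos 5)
Your proposal is correct and follows the restriction-and-saturation argument that the paper itself implies (the paper states this proposition as an ``analog'' of Proposition~\ref{prop:natural-representation} without supplying a proof, and yours is the natural one). Two small points worth tightening: (i) when you invoke Proposition~\ref{prop:natural-representation} ``with the given neighborhood $V$,'' note that $V$ is open in $S$, not necessarily in $Z$, so you should first choose an open $V'\subset Z$ with $V'\cap S=V$, apply the ambient proposition to $V'$ to get $U'\subset V'$, and then set $U:=U'\cap S\subset V$; and (ii) in the passage identifying $U/\!\sim$ with $\abs{U}\subset\abs{S}$, the substantive fact being used (beyond Proposition~\ref{prop:topology-subgroupoid}) is that $U=U'\cap S$ is open and saturated inside $U'$, so the quotient map $U'\to\abs{U'}$ restricts to a quotient map $U\to\abs{U}$; this is exactly where the saturation of $S$ enters again, and it deserves to be said explicitly rather than absorbed into ``the definition of $\abs{U}$.''
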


\subsubsection{Weighted branched suborbifolds}

View $\Q^+:= \Q \cap [0,\infty)$ as an ep-groupoid, having only the identities as morphisms. 
Consider a polyfold, consisting of a polyfold structure $(Z,\bm{Z})$ and an underlying topological space $\cZ$.
Consider a functor $\hat{\theta}: (Z,\bm{Z}) \to \Q^+$ which induces the function $\theta:=\abs{\hat{\theta}} :\cZ \to \Q^+$.
Observe that $\hat{\theta}$ defines a subgroupoid $(S,\bm{S})\subset (Z,\bm{Z})$ with object set
	\[
	S:= \supp (\hat{\theta}) = \{x\in Z\mid \hat{\theta}(x)>0 \}
	\]
and with underlying topological space
	\[
	\cS := \supp (\theta) = \{[x]\in \cZ \mid \theta([x])>0\}.
	\]
Moreover, $(S,\bm{S})$ is a full subcategory of $(Z,\bm{Z})$ whose object set is saturated, i.e., $S= \pi^{-1} (\pi(S))$ where $\pi : Z \to \abs{Z}, x\mapsto [x]$.

\begin{definition}[{\cite[Def.~9.1]{HWZbook}}]
	\label{def:weighed-branched-suborbifold}
	A \textbf{weighted branched suborbifold structure} consists of a subgroupoid $(S,\bm{S}) \subset (Z,\bm{Z})$ defined by a functor $\hat{\theta} : (Z,\bm{Z}) \to \Q^+$ as above which satisfies the following properties.
	\begin{enumerate}
		\item $\cS \subset \cZ_\infty$.
		\item Given an object $x\in S$, there exists an open neighborhood $U\subset Z$ of $x$ and a finite collection $M_i$, $i\in I$ of finite-dimensional submanifolds of $Z$ (in the sense of \cite[Def.~4.19]{HWZ2}) such that
		\[
		S \cap U= \bigcup_{i \in I}M_i.
		\]
		We require that the inclusion maps $\phi_i: M_i\hookrightarrow U$ are proper and are {topological embeddings,} and in addition we require that the submanifolds $M_i$ all have the same dimension. 
		The submanifolds $M_i$ are called \textbf{local branches} in $U$. \label{def:local-branches}
		\item There exist positive rational numbers $w_i$, $i\in I$, (called \textbf{weights}) such that if $y\in S \cap U$, then
		\[\hat{\theta}(y)=\sum_{\{i \in I \mid   y\in M_i\}} w_i.\]
	\end{enumerate}
	We call ${(M_i)}_{i\in I}$ and ${(w_i)}_{i\in I}$ a \textbf{local branching structure}.
\end{definition}

By shrinking the open set $U$ we may assume that the local branches $M_i$ (equipped with the subspace topology induced from $U$) are homeomorphic to open subsets of $\R^n$.  Hence we may assume that a local branch is given by a subset $M_i\subset\R^n$ and an inclusion map $\phi_i : M_i\hookrightarrow U$ where $\phi_i$ is proper and a homeomorphism onto its image.


\begin{definition}
	\label{def:local-orientation}
	Let $(S,\bm{S})$ be a weighted branched suborbifold structure. Consider an object $x\in S$ and a local branching structure $(M_i)_{i\in I}$, $(w_i)_{i\in I}$ at $x$.
	Suppose moreover that each local branch has an \textit{orientation}, denoted as $(M_i,o_i)$
	
	We define a \textbf{local orientation} at $x$ with respect to the local branching structure $(M_i)_{i\in I}$, $(w_i)_{i\in I}$ as the following finite formal sum of weighted oriented tangent planes:
	\[
	\sum_{\{i\in I \mid x\in M_i\}} w_i \cdot T_x (M_i,o_i).
	\]
\end{definition}

\begin{definition}
	\label{def:orientation}
	Let $(S,\bm{S})$ be a weighted branched suborbifold structure.
	We define an \textbf{orientation} on $(S,\bm{S})$ as a local orientation at every object $x\in S$ and local branching structure $(M_i)_{i\in I}$, $(w_i)_{i\in I}$ at $x$ such that the following holds.
	\begin{enumerate}
		\item We require that the local orientation is well-defined and does not depend on choice of local branching structure. Given an object $x\in S$, suppose we have:
		\begin{itemize}
			\item a local orientation at $x$ with respect to a local branching structure $(M_i)_{i\in I}$, $(w_i)_{i\in I}$,
			\item a local orientation at $x$ with respect to a local branching structure $(M'_j)_{j\in I'}$, $(w'_j)_{j\in I'}$.
		\end{itemize}
		We require the finite formal sums of weighted oriented tangent planes to be identical, i.e.,
		\[
		\sum_{\{i\in I \mid x\in M_i\}} w_i \cdot T_x (M_i,o_i)= \sum_{\{j\in I' \mid x\in M'_j\}} w'_j \cdot T_x (M'_j,o_j)
		\]
		\item We require morphism invariance of the local orientations. Given a morphism, $\phi : x \to y$ there exists a well-defined tangent map $T\phi : T_xZ \to T_yZ$. 
		Suppose we have:
		\begin{itemize}
			\item a local orientation at $x$ with respect to a local branching structure $(M_i)_{i\in I}$, $(w_i)_{i\in I}$,
			\item a local orientation at $y$ with respect to a local branching structure $(M'_j)_{j\in I'}$, $(w'_j)_{j\in I'}$.
		\end{itemize}		
		The image of a finite formal sum of weighted oriented tangent planes under this map is again a finite formal sum of weighted oriented tangent planes.
		We require invariance of the local orientations under this map, i.e.,
		\[
		\sum_{\{j\in I' \mid y\in M'_j\}} w'_j \cdot T_y (M'_j,o'_j) = \sum_{\{i\in I \mid x\in M_i\}} w_i \cdot T\phi_* (T_x (M_i,o_i)).
		\]
	\end{enumerate}
\end{definition}

A \textbf{weighted branched suborbifold structure with boundary} consists of a subgroupoid $(S,\bm{S})\subset (Z,\bm{Z})$ defined identically to Definition~\ref{def:weighed-branched-suborbifold} except we allow the possibility that the local branches are manifolds with boundary.
A \textbf{local orientation} at an object $x\in S$ is again defined as in Definition~\ref{def:local-orientation} as a finite formal sum determined by orientations of the local branches, and again an \textbf{orientation} is also defined similarly to Definition~\ref{def:orientation}.

\subsection{Abstract perturbations in polyfold theory}
	\label{subsec:abstract-perturbations}

Abstract perturbations in polyfold theory are a mixture of two different technologies:
\begin{enumerate}
	\item scale calculus generalizations of classical Fredholm theory, involving the development of analogs of Fredholm maps, compact perturbations, and the implicit function theorem for surjective Fredholm operators (originally developed in \cite{HWZ2});
	\item equivariant transversality through the use of ``multisections;'' due to the presence of nontrivial isotropy, it is generally impossible to obtain transversality through the use of single valued sections, and thus it is necessary to work with multisections (developed in \cite{cieliebak2003equivariant} and generalized to polyfold theory in \cite{HWZ3}).
\end{enumerate}

\subsubsection[Strong polyfold bundles and sc+-multisections]{Strong polyfold bundles and $\ssc^+$-multisections}
	\label{subsubsec:polyfold-abstract-perturbations}

In order to develop a Fredholm theory for polyfolds, it is necessary to formulate the notion of a ``strong polyfold bundle'' over a polyfold.
Let $P:W\to Z$ be a strong M-polyfold bundle (see \cite[Def.~2.26]{HWZbook}).
Recall that a fiber  $p^{-1} (y) = W_y$ over an object $y \in O_x$ carries the structure of a $\ssc$-Banach space. Furthermore $W$ is equipped with a double filtration $W_{m,k}$ for $0\leq k\leq m+1$, and the filtered spaces
	\begin{gather*}
	W[0]:= W_{0,0} \supset W_{1,1}\supset \cdots \supset W_{i,i} \supset \cdots,	\\
	W[1]:= W_{0,1} \supset W_{1,2}\supset \cdots \supset W_{i,i+1}\supset \cdots 
	\end{gather*}
are both M-polyfolds in their own rights.
With respect to these filtrations, the maps $P[0]: W[0]\to Z$ and $P[1]:W[1]\to Z$ are both $\ssc$-smooth.

\begin{proposition}[{\cite[Prop.~2.16]{HWZbook}}]
	\label{prop:pullback-bundle}
	Let $P: W \to Z$ be a strong M-polyfold bundle, and let $f: Y \to Z$ be a $\ssc$-smooth map between M-polyfolds. The pullback $f^* W := \{(y,w_x) \in Y \times W \mid f(y)=x=P(w_x) \}$ carries a natural structure of a strong M-polyfold bundle over the M-polyfold $Y$.
\end{proposition}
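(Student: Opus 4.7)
The plan is to construct the strong M-polyfold bundle structure on $f^*W$ locally, by combining local strong bundle charts for $W \to Z$ with local M-polyfold charts for $Y$, and then verifying that the induced retractions inherit the required scale-smoothness and double-filtration properties. Since everything in M-polyfold theory is defined through local models (retracts) and $\ssc$-smooth transition maps, it suffices to produce local models for $f^*W$ and check compatibility.

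First I would set up the local picture. Fix $y_0 \in Y$ with $f(y_0)=x_0 \in Z$. Choose a local M-polyfold chart for $Y$ near $y_0$ identifying a neighborhood with a $\ssc$-retract $O'=r'(U')$ in a $\ssc$-Banach space $E'$, and a local strong bundle chart for $W$ near $x_0$ of the form $K \subset U \triangleleft F$ (with base retract $O = r_0(U) \subset E$) together with a strong bundle retraction $R: U \triangleleft F \to U \triangleleft F$ of the form $R(u,v) = (r_0(u), R_1(u,v))$ where $v\mapsto R_1(u,v)$ is linear. In these coordinates $f$ becomes a $\ssc$-smooth map $g:O' \to O$, which extends (after shrinking) to a $\ssc$-smooth map $\tilde g:U' \to U$.

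Second, I would define the candidate retraction $\tilde R: U' \triangleleft F \to U' \triangleleft F$ by
\[
\tilde R(u',v) := \bigl(r'(u'),\, R_1(\tilde g(u'), v)\bigr).
\]
By the chain rule (Proposition~1.1) applied level by level in the double filtration, $\tilde R$ is $\ssc$-smooth on both filtration levels $W'[0]$ and $W'[1]$. Idempotency $\tilde R\circ\tilde R=\tilde R$ follows from $r'\circ r'=r'$, $r_0\circ\tilde g\circ r' = \tilde g \circ r'$ (since $\tilde g \circ r'$ lands in $O$ and $r_0$ is the identity there), and the linearity/idempotency of $R_1$ in the fiber. Hence $\tilde R$ is a strong bundle retraction in the sense of \cite[Def.~2.26]{HWZbook}, and its image $\tilde R(U'\triangleleft F)$ is a strong bundle retract over $O'$. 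The natural map $(u',v) \mapsto (u',(\tilde g(u'),v))$ identifies this retract with $f^*W$ restricted to a neighborhood of $y_0$, which transports the topology, filtrations, and fiberwise $\ssc$-Banach space structure from the local model to $f^*W$.

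Third, I would check compatibility of charts and the bundle axioms. Two choices of local strong bundle chart of $W$ and/or local chart of $Y$ induce pullback transition maps that are compositions of the original $\ssc$-smooth transition maps with $\tilde g$ (and with $\ssc$-smooth charts of $Y$), hence are $\ssc$-smooth by the chain rule; by construction they are linear in the fibers. The double filtration $f^*W[0]$, $f^*W[1]$ arises by pulling back the filtrations of $W[0]$, $W[1]$, and in the local model above each becomes the corresponding retract of $U'\triangleleft F$ with the respective filtration on $F$, yielding M-polyfold structures on both with $\ssc$-smooth projections to $Y$. The only genuinely subtle point is verifying that $\tilde R$ is a strong bundle retraction on \emph{both} filtration levels simultaneously; this is where the hypothesis that $R$ is itself a strong bundle retraction (and not merely a $\ssc$-smooth fiberwise-linear retraction on $W[0]$) is essential, but on each level the argument is the same chain-rule computation, which gives the required regularity.
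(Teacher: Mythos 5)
The paper states this proposition with a citation to \cite[Prop.~2.16]{HWZbook} and gives no proof of its own, so I am comparing your proposal against the standard argument one would expect from that source. Your local-model approach---building a strong bundle retraction $\tilde R$ for the pullback out of a strong bundle retraction $R(u,v)=(r_0(u),R_1(u,v))$ for $W$ and a retraction $r'$ for $Y$, then identifying the resulting retract with $f^*W$ over a chart---is exactly the right strategy, and your chart-compatibility and filtration discussion is fine.

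There is, however, a gap in the idempotency argument. Writing $R_1(u,v)=\rho(u)v$, one computes
\[
\tilde R\circ\tilde R(u',v)=\bigl(r'(r'(u')),\ \rho(\tilde g(r'(u')))\,\rho(\tilde g(u'))\,v\bigr),
\]
so what must be shown is $\rho(\tilde g(r'(u')))\,\rho(\tilde g(u'))=\rho(\tilde g(u'))$. The fact you invoke---that $\tilde g\circ r'$ lands in $O$, hence $r_0\circ\tilde g\circ r'=\tilde g\circ r'$---is true but is not the right condition: what $R\circ R=R$ actually gives is $\rho(r_0(u))\rho(u)=\rho(u)$, so one needs $\tilde g(r'(u'))=r_0(\tilde g(u'))$, i.e.\ the compatibility $\tilde g\circ r'=r_0\circ\tilde g$. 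For an arbitrary $\ssc$-smooth extension $\tilde g$ of $g$ to $U'$ this can fail. It does hold if you make the canonical choice $\tilde g:=g\circ r'$, which is anyway how $\ssc$-smoothness of $g$ on the retract $O'$ is defined: then $\tilde g\circ r'=\tilde g$ and $r_0\circ\tilde g=\tilde g$ (since $\tilde g$ takes values in $O$), and since $\tilde g(u')\in O$ one gets $\rho(\tilde g(u'))^2=\rho(r_0(\tilde g(u')))\rho(\tilde g(u'))=\rho(\tilde g(u'))$. With this choice of $\tilde g$ made explicit, the idempotency is immediate and the rest of your argument goes through.
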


Let $(Z,\bm{Z})$ be a polyfold structure, and consider a strong M-polyfold bundle over the object space, $P:W\to Z.$
The source map $s:\bm{Z}\to Z$ is a local $\ssc$-diffeomorphism, and hence we may consider the fiber product
	\[
	\bm{Z} _s\times_P W = \{(\phi,w)\in \bm{Z}\times W	\mid	s(\phi)=P(w)	\}.
	\]
Via the above proposition, we can also view as $\bm{Z} _s\times_P W$ as the pullback bundle via $s$ over the morphism space $\bm{Z}$,
	\[\begin{tikzcd}
	\bm{Z} _s\times_P W \arrow[r] \arrow[d] & W \arrow[d] \\
	\bm{Z} \arrow[r, "s"] & Z.
	\end{tikzcd}\]

\begin{definition}[{\cite[Def.~8.4]{HWZbook}}]
	\label{def:strong-polyfold-bundle}
	A \textbf{strong polyfold bundle structure} $(W,\bm{W})$ over a polyfold structure $(Z,\bm{Z})$ consists of a strong M-polyfold bundle over the object M-polyfold $P:W\to Z$ together with a strong bundle map
	\[
	\mu : \bm{Z} _s\times_P W \to W
	\]
	which covers the target map $t:\bm{Z} \to Z$, such that the diagram
	\begin{center}
		\begin{tikzcd}
		\bm{Z} _s\times_P W \arrow[r, "\mu"] \arrow[d] & W \arrow[d] \\
		\bm{Z} \arrow[r, "t"] & Z
		\end{tikzcd}
	\end{center}
	commutes.
	Furthermore we require the following:
	\begin{enumerate}
		\item $\mu$ is a surjective local diffeomorphism and linear on fibers,
		\item $\mu(\id_x,w)= w$ for all $x\in Z$ and $w\in W_x$,
		\item $\mu(\phi \circ \g ,w)= \mu (\phi,\mu(\g,w))$ for all $\phi,\g\in\bm{Z}$ and $w\in W$ which satisfy
		\[
		s(\g) = P(w),\qquad t(\g) = s(\phi) = P(\mu(\g,w)).
		\]
	\end{enumerate}
\end{definition}

A strong polyfold bundle structure $(W,\bm{W})$ has polyfold structures in its own right: we may take $W$ as the object set with the grading $W_{i,i}$ or $W_{i,i+1}$, and define the morphism set by $\bm{W}:= \bm{Z} _s\times_P W$. Moreover, we have source and target maps $s,t: \bm{W} \to W$ defined as follows:
	\[
	s(\phi,w) := w, \qquad t(\phi,w) := \mu(\phi,w).
	\]
We have a natural smooth projection functor $\hat{P}: (W,\bm{W}) \to (Z,\bm{Z})$.

\begin{definition}
	A \textbf{strong polyfold bundle} consists of a topological space $\cW$ together with a Morita equivalence class of strong polyfold bundle structures $(W,\bm{W})$.
\end{definition}

The double filtration of the fibers is preserved by the structure maps, and hence the orbit space $\abs{W}$ is equipped with a double filtration
	\[
	\abs{W}_{m,k}, \quad \text{for } 0\leq m \ \text{and}\ 0\leq k\leq m+1.
	\]
We moreover obtain polyfolds $\cW[0]$ and $\cW[1]$ with the filtrations $\cW[0]_i := \cW_{i,i}$ and $\cW[1]_i := \cW_{i,i+1}$.  Unless specified, ``$\cW$'' refers to the first filtration, i.e., $\cW[0]$ and ``$P$'' refers to the projection map with respect to this filtration.

\begin{definition}[{\cite[Def.~12.1]{HWZbook}}]
	We define a \textbf{$\ssc$-smooth Fredholm section} of the strong polyfold bundle $P:\cW\to\cZ$ as a $\ssc$-smooth map between polyfolds $\delbar: \cZ\to \cW$ which satisfies $P\circ \delbar = \id_\cZ$ (where $\id_\cZ$ is the identity map on $\cZ$)
	We require that $\delbar$ is \textbf{regularizing}, meaning that if $[x] \in \cZ_m$ and $\delbar ([x]) \in \cW_{m,m+1}$ then $[x]\in \cZ_{m+1}$.
	Finally, we require that at every smooth object $x\in Z$ the germ $(\hdelbar,x)$ is a ``Fredholm germ'' (see \cite[Def.~3.7]{HWZbook}).
\end{definition}

\begin{definition}
	\label{def:unperturbed-solution-space}
	We define the \textbf{unperturbed solution space} of $\delbar$ as the set 
		\[
		\cS(\delbar) :=\{ [x]\in \cZ\mid \delbar([x]) = 0\} \subset \cZ,
		\]
	with topology given by the subspace topology induced from $\cZ$.
	The space $\cS(\delbar)$ has an associated subgroupoid structure $(S(\hdelbar), \bm{S}(\hdelbar))$ defined as follows:
\begin{itemize}
	\item (saturated) object set: $S(\hdelbar) := \{	x\in Z \mid \hdelbar(x)=0	\} \subset Z$,
	\item morphism set: $\bm{S} (\hdelbar) := \{ \phi \in \bm{Z}	\mid s(\phi) \in S(\hdelbar) \ (\text{equivalently } t(\phi)\in S(\hdelbar))	\} \subset	\bm{Z}$.
\end{itemize}
Both the object and morphism sets carry the subspace topology induced from the topologies on the object space $Z$ and morphism space $\bm{Z}$.
\end{definition}

We say that the Fredholm section $\delbar$ is \textbf{proper} if the unperturbed solution space $\cS(\delbar)$ is a compact topological space.

\begin{definition}[{\cite[Def.~2.24]{HWZbook}}]
	A \textbf{$\ssc^+$-section} is a $\ssc$-smooth map $s: Z \to W[1]$ which satisfies $P\circ s = \id_Z$
\end{definition}

The significance of this definition is captured in the fact that if $(\hdelbar,x)$ is a Fredholm germ and $s$ is a germ of a $\ssc^+$-section around $y$, then $(\hdelbar+s,x)$ remains a Fredholm germ.
This follows tautologically from the definition of a Fredholm germ (see the comment following \cite[Def.~2.44]{HWZGW}).
We may view the relationship of Fredholm sections and $\ssc^+$-sections in the current theory as the analogs of Fredholm and compact operators in classical functional analysis.

One can view a ``multisection'' as the rationally weighted characteristic function of an equivariant collection of locally defined single valued sections.
This is made precise in the following definition.

\begin{definition}
	\label{def:sc-multisection}
	We view $\Q^+:= \Q \cap [0,\infty)$ as an ep-groupoid, having only the identities as morphisms.
	A \textbf{$\ssc^+$-multisection} of a strong polyfold bundle $P:\cW\to \cZ$ consists of the following:
		\begin{itemize}
			\item a function $\Lambda:\cW \to \Q^+$,
			\item an associated functor $\hat{\Lambda}: W \to \Q^+$ where $\abs{\hat{\Lambda}}$ induces $\Lambda$,
		\end{itemize}
	such that at every $[x]\in \cZ$ there exists a \textbf{local section structure} defined as follows.
	Let $x\in Z$ be a representative of $[x]$ and let $U\subset Z$ be a $\bm{G}(x)$-invariant open neighborhood of $x$, and consider the restricted strong M-polyfold bundle $P: W|_U \to U$.
	Then there exist finitely many $\ssc^+$-sections $s_1,\ldots,s_k : U \to W_U$ (called \textbf{local sections}) with associated positive rational numbers $\sigma_1,\ldots ,\sigma_k \in \Q^+$ (called \textbf{weights}) which satisfy the following:
	\begin{enumerate}
		\item $\sum_{i=1}^k \sigma_i =1.$
		\item The restriction $\hat{\Lambda}|_{W|_{U}}: W_U \to \Q^+$ is related to the local sections and weights via the equation 
			\[
			\hat{\Lambda}|_{W|_{U}}(w)=\sum_{i\in \{1,\ldots, k \mid w=s_i(P(w))\}} \sigma_i
			\]
		where the empty sum has by definition the value $0$.
	\end{enumerate}	
\end{definition}

We define the \textbf{domain support} of $\Lambda$ as the subset of $\cZ$ given by
	\[
	\text{dom-supp}(\Lambda) := \text{cl}_\cZ (	\{	[x]\in\cZ \mid \exists [w]\in \cW_{[x]}\setminus\{0\} \text{ such that } \Lambda([w])>0	\}	).
	\]

\begin{definition}
	\label{def:perturbed-solution-space}
	Associated to a $\ssc$-smooth Fredholm section $\delbar$ and a $\ssc^+$-multisection $\Lambda$, 
	we define the \textbf{perturbed solution space} as the set
		\[
		\cS(\delbar,\Lambda) :=\{[x]\in\cZ \mid \Lambda(\delbar([x]))	>0	\}\subset \cZ
		\]
	with topology given by the subspace topology induced from $\cZ$. It is equipped with a \textbf{weight function} $\Lambda\circ \delbar:\cS(\delbar,\Lambda) \to \Q^+$.
	The space $\cS(\delbar,\Lambda)$ has an associated subgroupoid structure $(\cS(\hdelbar,\hat{\Lambda}), \bm{\cS}(\hdelbar,\hat{\Lambda}))$ with (saturated) object set
		\[
		S(\hdelbar,\hat{\Lambda}) :=\{x\in Z \mid \hat{\Lambda} ( \hdelbar(x))>0\} \subset Z
		\]
	and with morphism set given by 
		\[
		\bm{S} (\hdelbar, \hat{\Lambda}) := \{ \phi \in \bm{Z}	\mid s(\phi) \in S(\hdelbar,\hat{\Lambda})	\} \subset	\bm{Z}
		\]
	(we could equivalently require that $t(\phi)\in S(\hdelbar, \hat{\Lambda})$).
	It is equipped with a \textbf{weight functor} $\hat{\Lambda}\circ \hdelbar:(\cS(\hdelbar,\hat{\Lambda}), \bm{\cS}(\hdelbar,\hat{\Lambda})) \to \Q^+$.
\end{definition}

Note that the space $\cS(\hdelbar,\hat{\Lambda})$ or the subgroupoid $(\cS(\hdelbar,\hat{\Lambda}), \bm{\cS}(\hdelbar,\hat{\Lambda}))$ can be respectively encoded entirely via the weight function or the weight functor; such a description is closer to the language used in \cite{HWZ3} and \cite{HWZbook}.

\subsubsection{Transverse perturbations}

At a local level, it is easy to adapt the functional analytic construction of compact perturbations of Fredholm operators to M-polyfolds; the implicit function theorem for M-polyfolds \cite[Thm.~3.13]{HWZbook} then guarantees that the zero set of a transversal $\ssc$-Fredholm section has the structure of a finite-dimensional manifold.
It is somewhat more involved to adapt these constructions to the global level, as this requires using multisections to obtain equivariance.

\begin{definition}[{\cite[Def.~15.2]{HWZbook}}]
	\label{def:transversal-pair}
	Let $P:\cW\to \cZ$ be a strong polyfold bundle, $\delbar$ a $\ssc$-smooth Fredholm section, and $\Lambda$ a $\ssc^+$-multisection.
	
	Consider a point $[x]\in \cZ$.  We say $(\delbar,\Lambda)$ is \textbf{transversal at $[x]$} if, given a local $\ssc^+$-section structure for $\Lambda$ at a representative $x$, the linearized local expression 
		\[D(\hdelbar-s_i)(x):T_x Z \to W_x\]
	is surjective for all $i\in I$ with $\hdelbar(x)=s_i(x)$.  We say that $(\delbar,\Lambda)$ is \textbf{transversal} if it is transversal at every $[x] \in \cS(\delbar,\Lambda)$.
	
	
\end{definition}

Given a $\ssc$-Fredholm section it is relatively easy to construct a transversal multisection (see the general position argument of \cite[Thm.~15.4]{HWZbook}); a key ingredient is \cite[Lem.~5.3]{HWZbook} which guarantees the existence of locally defined $\ssc^+$-sections which take on a prescribed value at a point.

\begin{theorem}[{\cite[Thm.~4.13]{HWZ3}}\footnote{The original statement of this theorem carries the additional requirement that the perturbed solution set $\cS(\delbar,\Lambda)$ is a compact set.  This requirement is unnecessary, and is not used in the proof.}]
	\label{thm:transversal-pairs-weighted-branched-suborbifolds} \label{thm:transversality}
	If the pair  $(\delbar,\Lambda)$ is transversal, then the perturbed solution set $\cS(\delbar,\Lambda)$ carries in a natural way the structure of a weighted branched suborbifold.
\end{theorem}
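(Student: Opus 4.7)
The plan is to verify the three conditions of Definition~\ref{def:weighed-branched-suborbifold} for the subgroupoid $(S(\hdelbar,\hat{\Lambda}),\bm{S}(\hdelbar,\hat{\Lambda}))$ equipped with the weight functor $\hat{\theta}:=\hat{\Lambda}\circ\hdelbar:(Z,\bm{Z})\to\Q^+$. The whole argument is essentially local: I would fix an object $x\in S(\hdelbar,\hat{\Lambda})$, produce a local branching structure by applying the implicit function theorem to each active local section, and then check equivariance.

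First I would invoke Definition~\ref{def:sc-multisection} to obtain a local $\ssc^+$-section structure $s_1,\ldots,s_k:U\to W|_U$ with weights $\sigma_1,\ldots,\sigma_k$ on a $\bm{G}(x)$-invariant open neighborhood $U$ of $x$. Let $I:=\{i\mid \hdelbar(x)=s_i(x)\}$. By continuity, after shrinking $U$ I may assume $\hdelbar(y)\neq s_j(y)$ for all $j\notin I$ and all $y\in U$, so
\[
S(\hdelbar,\hat{\Lambda})\cap U \;=\; \bigcup_{i\in I}M_i, \qquad M_i:=(\hdelbar-s_i)^{-1}(0).
\]
For each $i\in I$, the section $\hdelbar-s_i$ remains a $\ssc$-smooth Fredholm section: the Fredholm germ property is preserved by $\ssc^+$-perturbation, and regularizing persists because $s_i$ automatically lands in $W_{m,m+1}$ when restricted to $Z_m$. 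The transversality hypothesis gives that $D(\hdelbar-s_i)(x)$ is surjective, so the implicit function theorem for Fredholm sections \cite[Thm.~3.13]{HWZbook} yields, after shrinking $U$, that $M_i$ is a finite-dimensional $\ssc$-smooth submanifold whose dimension equals the Fredholm index of $\hdelbar$ at $x$, in particular independent of $i\in I$. The regularizing property forces $M_i\subset Z_\infty$, establishing condition~(1) of Definition~\ref{def:weighed-branched-suborbifold}. The inclusion $M_i\hookrightarrow U$ is a closed topological embedding, hence proper, since $M_i$ is a continuous level set cut out locally by a nonlinear equation; this is condition~(2). For condition~(3), setting $w_i:=\sigma_i$, the defining formula for a local section structure reads
\[
\hat{\theta}(y)=\hat{\Lambda}(\hdelbar(y))=\sum_{\{i\in I\mid y\in M_i\}}\sigma_i, \qquad y\in S(\hdelbar,\hat{\Lambda})\cap U,
\]
exactly as required.

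The main obstacle I anticipate is equivariance: the local branching data at an object $x$ with nontrivial isotropy $\bm{G}(x)$ must be stable under the natural representation of Proposition~\ref{prop:natural-representation}, and morphism-compatible so that branches around distinct representatives of the same orbit glue. This is not automatic from the implicit function theorem, but follows from the fact that $\hat{\Lambda}$ is by hypothesis a functor and that the bundle morphism $\mu$ of Definition~\ref{def:strong-polyfold-bundle} intertwines the local sections $s_i$ with the $\bm{G}(x)$-action on $U$. Concretely, by averaging the local section structure over $\bm{G}(x)$ (or by transporting the $s_i$ through $\Gamma$ from Proposition~\ref{prop:natural-representation}) one may arrange the $s_i$ to form a $\bm{G}(x)$-equivariant family, so that $\bm{G}(x)$ permutes the branches $M_i$ and preserves the weights $\sigma_i$. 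The same transport argument applied to morphisms in $\bm{S}(\hdelbar,\hat{\Lambda})$ identifies the branching structures near different objects of the same orbit, and the independence of the resulting multiset $\{(M_i,\sigma_i)\}$ from the choice of local section structure reduces to the intrinsic definition of $\hat{\theta}=\hat{\Lambda}\circ\hdelbar$.
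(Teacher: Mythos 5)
Your proof is correct and follows the same approach the paper indicates in Remark~\ref{rmk:relationship-local-section-structures-local-branching-structures}: restrict to the active indices $I$ after shrinking $U$, and apply the M-polyfold implicit function theorem to $\hdelbar - s_i$ for each $i\in I$ to produce the local branches $M_i$, with the weights $w_i=\sigma_i$ inherited directly from the local section structure.

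One remark on your closing paragraph: the equivariance concern you anticipate is not actually an obstacle, because Definition~\ref{def:weighed-branched-suborbifold} asks only for the \emph{existence} of a local branching structure at each object, with no compatibility across objects, no compatibility under morphisms, and no $\bm{G}(x)$-invariance of the collection $(M_i)_{i\in I}$. The subgroupoid structure of $(S(\hdelbar,\hat{\Lambda}),\bm{S}(\hdelbar,\hat{\Lambda}))$ and the morphism-invariance of $\hat{\theta}$ come for free from $\hat{\Lambda}$ and $\hdelbar$ being functors; averaging or transporting the $s_i$ through $\Gamma$ is not needed. (Such compatibility \emph{would} become relevant if one wanted to construct an orientation on the branched suborbifold as in Definition~\ref{def:orientation}, but that is not what the theorem asserts.) Finally, a small caution on condition (2): the fact that $M_i \hookrightarrow U$ is a topological embedding---i.e., that the manifold topology on $M_i$ agrees with the subspace topology from $U$ on the level-$0$ filtration---is not a formal consequence of $M_i$ being a closed level set (M-polyfolds are not locally compact); it is part of the content of the M-polyfold implicit function theorem after shrinking $U$, and is worth citing explicitly.
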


\begin{remark}[Relationship between local section structures and local branching structures]
	\label{rmk:relationship-local-section-structures-local-branching-structures}
	Consider a weighted branched suborbifold $\cS(\delbar,\Lambda)$ defined by a transversal $\ssc$-smooth Fredholm section $\delbar$ and a $\ssc^+$-multisection $\Lambda$.
	The relationship between the local section structure for $\Lambda$ and the local branching structure can be described as follows.
	Consider a point $[x]\in\cS(\delbar,\Lambda)$, and let $U\subset Z$ be a $\bm{G}(x)$-invariant open neighborhood of a representative $x$.	
	Consider a local section structure for $\Lambda$ at $[x]$ consisting of $\ssc^+$-sections $s_i  : U \to W|_U$ and weights $w_i$ for $i\in I$.
	The implicit function theorem for M-polyfolds then implies that the sets
		\[
		M_i = (\hdelbar -s_i)^{-1}(0)
		\]
	define finite dimensional submanifolds which together with the weights $w_i$ give a local branching structure in $U$.
\end{remark}

\subsubsection{Pairs which control compactness}

Given a proper $\ssc$-smooth Fredholm section $\delbar$ and a $\ssc^+$-multisection $\Lambda$, we need some way to control the compactness of the resulting perturbed solution space $\cS(\delbar,\Lambda)$. This can be achieved by requiring that the perturbation $\Lambda$ is ``small'' in a suitable sense.

\begin{definition}[{\cite[Def.~12.2]{HWZbook}}]
	\label{def:auxiliary-norm}
	Let $P:\cW\to \cZ$ be a strong polyfold bundle. 
	We define an \textbf{auxiliary norm} as a $\ssc^0$-map 
	\[N:\cW[1] \to [0,\infty)\]
	where we regard $[0,\infty)$ as a smooth manifold with the trivial ep-groupoid structure (i.e., a polyfold with finite-dimensional local models and trivial isotropy).
	It has an associated $\ssc^0$-functor $\hat{N}:W[1]\to [0,\infty)$ where as usual $\abs{\hat{N}}$ induces $N$.
	We require that $\hat{N}$ satisfies the following conditions.
	\begin{enumerate}
		\item The restriction of $\hat{N}$ to each fiber $W_x[1]$ is a complete norm. (Recall that for each $x\in Z$, the fiber $W_x[1]$ is a $\ssc$-Banach space.)
		\item \label{property-2-auxiliary-norm} If $\{h_k\}$ is a sequence in $W[1]$ such that $\{\hat{P}(h_k)\}$ converges in $Z$ to an object $x$, and if $\lim_{k\to \infty} \hat{N}(h_k) = 0$, then $\{h_k\}$ converges to $0_x \in W_x[1]$.
	\end{enumerate}
\end{definition}


Given a point $[x]\in\cZ$ we define the \textbf{pointwise norm} of $\Lambda$ at $[x]$ with respect to the auxiliary norm $N$ by
	\[
	N[\Lambda] ([x]) := \max \{	N([w])	\mid	[w]\in \cW[1], \Lambda ([w])>0, P([w])=[x]	\}
	\]
and moreover define the \textbf{norm} of $\Lambda$ with respect to $N$ by
	\[
	N[\Lambda] := \sup_{[x]\in\cZ} N[\Lambda] ([x]).
	\]

\begin{definition}[{\cite[Def.~15.4]{HWZbook}}]
	\label{def:pair-which-controls-compactness}
	Let $P:\cW\to \cZ$ be a strong polyfold bundle, let $\delbar$ be a $\ssc$-smooth proper Fredholm section, and let $N :\cW[1]\to [0,\infty)$ be an auxiliary norm.	
	Consider an open neighborhood $\cU$ of the unperturbed solution set $\cS(\delbar)\subset \cZ$. 
	We say that the pair $(N,\cU)$ \textbf{controls the compactness} of $\delbar$ if the set 
	\[
	cl_\cZ \{[x]\in \cU \mid \delbar ([x]) \in \cW[1], N(\delbar([x]))\leq 1\} \subset \cZ
	\]
	is compact.
\end{definition}

\begin{remark}
	\label{rmk:shrink-neighborhood}
	We may always shrink the controlling neighborhood $\cU$ of the unperturbed solution set. To be precise, suppose that $(N, \cU)$ is a pair which controls compactness, and let $\cU'$ be an open set such that $\cS(\delbar)\subset\cU'\subset \cU$.  It is immediate from the above definition that the pair $(N, \cU')$ also controls compactness.
\end{remark}

Given a $\ssc$-smooth proper Fredholm section of a strong polyfold bundle, \cite[Prop.~2.27]{HWZ3} guarantees the existence of auxiliary norms.  The existence of a pair which control compactness then follows from \cite[Thm.~4.5]{HWZ3} which states that given an auxiliary norm $N$ there always exists an associated neighborhood $\cU$, such that the pair $(N, \cU)$ controls compactness.

\begin{theorem}[{\cite[Lem.~4.16]{HWZ3}}]
	\label{thm:compactness}
	Let $P:\cW\to \cZ$ be a strong polyfold bundle, let $\delbar$ be a $\ssc$-smooth proper Fredholm section, and let $(N, \cU)$ be a pair which controls compactness.
	
	Consider a $\ssc^+$-multisection $\Lambda$ and suppose it satisfies the following:
		\begin{itemize}
			\item $N[\Lambda] \leq 1$,
			\item $\text{dom-supp} (\Lambda) \subset \cU$.
		\end{itemize}
	Then the perturbed solution set $\cS(\delbar,\Lambda)$ is compact.
	We call such a $\ssc^+$-multisection \textbf{$(N,\cU)$-admissible} (compare with \cite[Def.~15.5]{HWZbook}).
\end{theorem}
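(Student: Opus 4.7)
The plan is to exhibit $\cS(\delbar,\Lambda)$ as a closed subset of the compact set $K := \text{cl}_\cZ\{[x]\in \cU \mid \delbar([x])\in\cW[1],\ N(\delbar([x]))\leq 1\}$ furnished by the controlling pair $(N,\cU)$; compactness of $\cS(\delbar,\Lambda)$ follows immediately.

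First I would verify the set-theoretic containment $\cS(\delbar,\Lambda)\subset K$. Fix $[x]\in \cS(\delbar,\Lambda)$, pick a representative $x$, and choose a local section structure $s_1,\ldots,s_k$ of $\Lambda$ with positive rational weights $\sigma_1,\ldots,\sigma_k$ on a $\bm{G}(x)$-invariant neighborhood $U$ of $x$. The condition $\Lambda(\delbar([x]))>0$ forces $\hdelbar(x)=s_i(x)$ for at least one index $i$, so $\delbar([x])\in \cW[1]$ since each $s_i$ is an $\ssc^+$-section. Moreover, by definition of the pointwise norm, $N(\delbar([x]))\leq N[\Lambda]([x])\leq N[\Lambda]\leq 1$. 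For $\cU$-membership I split cases: if $\delbar([x])\neq 0$, then $[x]$ lies in the defining set of $\text{dom-supp}(\Lambda)\subset \cU$; if $\delbar([x])=0$, then $[x]\in \cS(\delbar)\subset \cU$ because $\cU$ was chosen as a neighborhood of the unperturbed solution set. In either case $[x]\in K$.

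Next I would show $\cS(\delbar,\Lambda)$ is closed in $\cZ$. Given $[x_n]\to[x]$ with $[x_n]\in\cS(\delbar,\Lambda)$, lift to representatives $x_n\to x$ and choose a local section structure $s_1,\ldots,s_k$ of $\Lambda$ on a neighborhood $U$ of $x$. For all sufficiently large $n$, the point $x_n$ lies in $U$ and $\hdelbar(x_n)=s_{j_n}(x_n)$ for some $j_n\in\{1,\ldots,k\}$. By the pigeonhole principle I pass to a subsequence along which $j_n$ is constantly equal to some index $j$; then $\ssc^0$-continuity of $\hdelbar$ and $s_j$ gives $\hdelbar(x)=s_j(x)$, so $\Lambda(\delbar([x]))\geq\sigma_j>0$ and $[x]\in \cS(\delbar,\Lambda)$. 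Combined with the first step, $\cS(\delbar,\Lambda)$ is a closed subset of the compact set $K$, hence compact.

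The main subtlety is the handling of perturbed solutions $[x]$ with $\delbar([x])=0$ but $\Lambda(0)>0$: such points genuinely lie in $\cS(\delbar,\Lambda)$ yet are not guaranteed by the bare definition of $\text{dom-supp}(\Lambda)$ to lie there (that set demands a \emph{nonzero} fiber element). This is resolved cleanly by the observation above that such points automatically lie in the unperturbed solution set $\cS(\delbar)\subset \cU$, so the $\cU$-containment needed in the first step still holds. All other steps are straightforward from the local section structure of $\Lambda$, the $\ssc^+$-property, and the $\ssc^0$-continuity afforded by the auxiliary norm.
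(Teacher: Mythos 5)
Your argument is correct and is essentially the standard proof: the paper itself gives no proof of this statement, citing \cite[Lem.~4.16]{HWZ3} instead, and your two-step strategy (containment in the compact set $K$ furnished by the controlling pair, then closedness via lifting a convergent sequence, passing to a subsequence with constant branch index by pigeonhole, and taking limits) matches the argument in that reference. You were also right to flag and resolve the edge case where $\delbar([x])=0$ with $\Lambda(0)>0$, since $\text{dom-supp}(\Lambda)$ only records nonzero fiber elements and the inclusion $\cS(\delbar)\subset\cU$ is what covers those points.
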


\subsubsection{Determinant line bundles and orientations}

We do not try to give a full account of the polyfold theory on orientations (for that, we refer to \cite[\S~6]{HWZbook}).
However, to talk precisely about orientations in our main theorems it is necessary to give a brief summary of the main ideas and definitions.

\begin{definition}[{\cite[Defs.~6.3,~6.4]{HWZbook}}]
	Let $T: E \to F$ be a bounded linear Fredholm operator between real Banach spaces. The \textbf{determinant} of $T$ is the 1-dimensional real vector space
		\[\det T = \Lambda^{\max} (\ker T) \otimes \left(\Lambda^{\max} (\coker T)	\right)^*.\]
	An \textbf{orientation} of $T$ is a choice of orientation of the real line $\det T$.
\end{definition}

Let $P: W\to Z$ be a strong M-polyfold bundle, and let $\hdelbar: Z \to W$ be a $\ssc$-smooth Fredholm section.
In general, there is no intrinsic notion a linearization of the section $\hdelbar$ at smooth points $x \in Z_\infty$ if $\hdelbar(x) \neq 0$.
To deal with this, one chooses a locally defined $\ssc^+$-section $s$ such that $s(x)=\hdelbar(x)$; one may then consider the well-defined linearization $D(\hdelbar - s)(x) :T_xZ \to W_x$.
The \textbf{space of linearizations} of $\hdelbar$ at $x$ is then defined as the following subset of linear Fredholm operators from $T_x Z \to W_x:$
	\[\Lin(\hdelbar,x) := \{	D(\hdelbar - s)(x) + a \mid a: T_x Z \to W_x \text{ is a }\ssc^+\text{-operator}	\}.\]
It may be observed that $\Lin(\hdelbar,x)$ is a convex subset, and hence is contractible.

To each linearization we may associate its determinant; in doing so, we may consider the disjoint union
	\[\DET(\hdelbar,x) := \bigsqcup_{L\in \Lin(\hdelbar,x)} \{L\}\times \det(L).\]
A priori, this set does not have much structure, as although each determinant is a real line, locally the kernel and cokernel of the linearizations may vary in dimension.
However, with some work it is possible to prove the following proposition.

\begin{proposition}[{\cite[Prop.~6.11]{HWZbook}}]
	The set $\DET(\hdelbar,x)$ has the structure of a topological line bundle over $\Lin(\hdelbar,x)$.
	The base space $\Lin(\hdelbar,x)$ is contractible and hence $\DET(\hdelbar,x)$ has two possible orientations.
\end{proposition}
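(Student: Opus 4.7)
The proposition has two claims: a topological line bundle structure on $\DET(\hdelbar,x)$, and contractibility of the base $\Lin(\hdelbar,x)$. The contractibility is immediate --- $\Lin(\hdelbar,x)$ is an affine translate of the real vector space of $\ssc^+$-operators $T_xZ \to W_x$ inside the bounded linear operators, hence convex. Once the bundle is constructed, contractibility forces it to be trivializable, so $\DET(\hdelbar,x)$ admits exactly two orientations.

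The main work is putting a topology on the total space, for which I would adapt the classical stabilization technique for Fredholm determinant line bundles. Fix $L_0 \in \Lin(\hdelbar,x)$ and choose a finite-dimensional subspace $V \subset (W_x)_\infty$ of smooth vectors such that $\operatorname{im}(L_0) + V = W_x$. By openness of surjectivity in the operator norm, the augmented operator
\[
\tilde L \colon T_xZ \oplus V \to W_x, \qquad (e,v) \mapsto Le + v,
\]
remains surjective on an open neighborhood $\cU \subset \Lin(\hdelbar,x)$ of $L_0$. Since $\operatorname{ind}\tilde L = \operatorname{ind} L + \dim V$ is constant and $\coker \tilde L = 0$ throughout $\cU$, the kernels $\ker \tilde L$ assemble into a finite-dimensional continuous vector bundle over $\cU$. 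The short exact sequence
\[
0 \to \ker L \to \ker \tilde L \to V \to \coker L \to 0,
\]
whose middle map is $(e,v) \mapsto v$, induces a canonical isomorphism $\det L \cong \det \tilde L \otimes (\det V)^*$, and so provides a local trivialization of $\DET(\hdelbar,x)$ over $\cU$. To globalize, I would verify that trivializations built from two choices $V, V'$ agree up to a continuous transition function, by factoring both through the common stabilization $V + V'$ and reducing the compatibility to a finite-dimensional linear-algebraic identity.

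The main obstacle is the scale-Banach aspect of the setup: one must ensure the stabilization respects the regularity structure. This is where choosing $V \subset (W_x)_\infty$ and invoking the regularizing property of Fredholm germs perturbed by $\ssc^+$-sections is essential, since it guarantees that $\ker \tilde L \cong L^{-1}(V)$ consists of smooth vectors, so the finite-dimensional vector bundle structure on $\{\ker \tilde L\}_{L \in \cU}$ is genuinely built from smooth data. Continuity of $\ker \tilde L$ as a function of $L$ in the sc-operator topology is then a standard Fredholm argument via a continuous local right inverse of $\tilde L$, but demands some care to carry out cleanly in the sc-setting.
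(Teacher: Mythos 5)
The paper does not prove this proposition itself; it is a citation to \cite[Prop.~6.11]{HWZbook}. Your proposal reconstructs the proof by the standard stabilization method, which is in fact the method used in the cited reference: fix $L_0$, stabilize by a finite-dimensional complement $V$ of $\operatorname{im} L_0$ chosen inside the smooth vectors, observe that the stabilized operator stays surjective on a neighborhood, read off the local trivialization of the determinant line from the five-term exact sequence, and patch by passing to $V+V'$. The exact sequence, the index bookkeeping, and the conclusion $\det L \cong \det \tilde L \otimes (\det V)^*$ are all correct, and you correctly flag the two points where the sc-calculus enters: $V \subset (W_x)_\infty$ plus the regularizing property guarantee that $\ker\tilde L$ lies in the smooth level, and the openness/continuity claims must be interpreted in the topology $\Lin(\hdelbar,x)$ inherits from the affine $\ssc^+$-operator structure rather than an arbitrary operator norm. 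Those two caveats are precisely where the HWZ argument spends its effort, and your sketch identifies them accurately, so there is no gap of substance — this is essentially the same proof.
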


We may therefore define an \textbf{orientation of $\hdelbar$ at a smooth point $x\in Z_\infty$} as a choice of one of the two possible orientations for $\DET(\hdelbar,x)$. We denote such an orientation by $o_{(\hdelbar,x)}$.

As we vary the smooth points, we need some way to compare the orientations at each point.
Intuitively, the choice of an orientation at a point should automatically determine an orientation at all nearby points.
This intuition is made precise in the theory as a sort of ``local orientation propagation.''

\begin{theorem}[{\cite[Thm.~6.1]{HWZbook}}]
	Consider a smooth point $x\in Z_\infty$. There exists an open neighborhood $U\subset Z$ such that for any smooth point $y\in U$ and for any $\ssc$-smooth path $\phi: [0,1] \to Z$ with $\phi(0)=x,$ $\phi(1)=y$ there exists a well-defined ``local orientation propagation.'' This means that, given an orientation $o_{(\hdelbar,x)}$ of $\DET(\hdelbar,x)$ we can associate an orientation $\phi_* o_{(\hdelbar,x)}$ of $\DET(\hdelbar,y)$, and moreover this association does not depend on the choice of $\ssc$-smooth path.
\end{theorem}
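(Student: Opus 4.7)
The plan is to reduce the problem to the triviality of a real line bundle over a contractible neighborhood. First I would choose a local $\ssc^+$-section $s$ defined on an open neighborhood $V$ of $x$ with $s(x) = \hdelbar(x)$; such a section exists by the local surjectivity statement \cite[Lem.~5.3]{HWZbook}. Then at every smooth point $y \in V_\infty$ the distinguished linearization $L_y := D(\hdelbar - s)(y) : T_y Z \to W_y$ is a well-defined bounded Fredholm operator and a canonical element of $\Lin(\hdelbar, y)$.

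Next I would promote $\{L_y\}_{y \in V_\infty}$ to a continuous Fredholm family between \emph{fixed} Banach spaces. Shrink $V$ to a smaller neighborhood $U$ of $x$, contractible in $Z$, over which $TZ$ and $W$ admit continuous local trivializations at an appropriate scale level; the assignment $y \mapsto L_y$ then becomes a continuous map into a space of Fredholm operators on fixed Banach spaces. Applying the classical Quillen-type stabilization construction (choose a continuous finite-rank map whose sum with $L_y$ is surjective, then take the top exterior power of the resulting finite-dimensional kernel) produces a real line bundle $\cL \to U_\infty$ whose fiber at $y$ is canonically isomorphic to $\det L_y$. Because $U_\infty$ is contractible, $\cL$ is a trivial line bundle.

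Using this triviality, an orientation of $\cL_x = \det L_x$ extends uniquely to a continuous global orientation of $\cL$. The starting datum $o_{(\hdelbar,x)}$ determines an orientation of $\det L_x$ (via the fact that $\DET(\hdelbar, x) \to \Lin(\hdelbar, x)$ is trivial by contractibility of $\Lin(\hdelbar, x)$), and for any $y \in U_\infty$ the transported orientation of $\det L_y$ determines an orientation of $\DET(\hdelbar, y)$, which I would call $\phi_* o_{(\hdelbar,x)}$. For any $\ssc$-smooth path $\phi : [0,1] \to U$ from $x$ to $y$ the pullback $\phi^* \cL$ is trivial over $[0,1]$, so the transported orientation at $y$ depends only on the endpoints, giving path independence within $U$. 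Independence of the choice of section $s$ follows from the linear interpolation $s_t = (1-t)s + ts'$, which yields a homotopy of Fredholm families and hence a canonical isomorphism between the line bundles built from $s$ and $s'$. Extending propagation to arbitrary $\ssc$-smooth paths $\phi : [0,1] \to Z$ that leave $U$ follows by covering the image of $\phi$ with finitely many such trivializing neighborhoods, gluing the resulting trivial line bundles over overlaps, and again using triviality over $[0,1]$.

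The principal technical hurdle is the second step: honestly setting up $\{L_y\}$ as a continuous Fredholm family and constructing its Quillen determinant line bundle in the scale-calculus setting, where $T_y Z$ and $W_y$ vary with $y$ and the scale levels interact subtly. The resolution is to fix local trivializations of $TZ$ and $W$ at a single appropriate scale level over the shrunken neighborhood $U$, after which the family $y \mapsto L_y$ lives in a fixed operator space and the classical construction applies verbatim.
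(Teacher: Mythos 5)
Note first that this paper does not actually prove the stated result; it is quoted verbatim from \cite[Thm.~6.1]{HWZbook} as background in the orientation discussion, so there is no in-paper proof to compare against. Your high-level strategy---pick a local $\ssc^+$-section $s$ to obtain a distinguished linearization $L_y = D(\hdelbar - s)(y)$, stabilize by a continuous finite-rank map to get a finite-dimensional kernel bundle, take its top exterior power, and propagate orientations via triviality over a contractible neighborhood---is the correct skeleton and matches the spirit of the HWZ argument, as do the independence of the choice of $s$ via linear interpolation, the path-independence via triviality of the pullback bundle over $[0,1]$, and the covering argument for long paths.

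The step you yourself flag as the ``principal technical hurdle'' is, however, not resolved by the reduction you propose, and this is a genuine gap. You claim one can ``fix local trivializations of $TZ$ and $W$ at a single appropriate scale level'' so that $y \mapsto L_y$ becomes a continuous family of Fredholm operators between \emph{fixed} Banach spaces and ``the classical construction applies verbatim.'' In the M-polyfold setting the local model is a $\ssc$-retract $O = r(U) \subset E$, so $T_y Z = Dr(y)(E)$ is a family of closed subspaces of $E$ that genuinely varies with $y$; the linearization $D(\hdelbar-s)(y)$ is only available at smooth points and interacts with the entire scale filtration, not a single fixed level, so freezing one level does not by itself produce a classical norm-continuous Fredholm family retaining the needed information. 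The device that actually resolves this in \cite{HWZbook} is the \emph{filled} version of the Fredholm section: the Fredholm germ condition (\cite[Def.~3.7]{HWZbook}) supplies, in suitable local coordinates, a linearized filled operator acting between \emph{ambient} $\ssc$-Banach spaces, and the determinant of this filled operator is canonically isomorphic to $\det\bigl(D(\hdelbar - s)(y)\bigr)$ because the filler is an isomorphism on the complementary factor. Your proof would need to invoke this filled-operator machinery (or prove a genuine $\ssc$-level trivialization result doing equivalent work) before the stabilization and determinant-line-bundle steps can be carried out honestly.
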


We may therefore define an orientation of a Fredholm section as a fixed choice of orientation at all smooth points which is consistent with the local orientation propagation.

\begin{definition}[{\cite[Def.~6.11]{HWZbook}}]
	\label{def:oriented-Fredholm}
	Let $P: W\to Z$ be a strong M-polyfold bundle, and let $\hdelbar: Z \to W$ be a $\ssc$-smooth Fredholm section.
	We define an \textbf{orientation} of $\hdelbar$ as an association for every smooth point $x\in Z_\infty$ with an orientation $o_{(\hdelbar,x)}$ of the determinant $\DET(\hdelbar,x)$ and which is consistent with the local orientation propagation in the following sense.
	
	For any two smooth points $x, y \in Z_\infty$ and for any $\ssc$-smooth path $\phi:[0,1] \to Z$ with $\phi(0)=x,$ $\phi(1)=y$ the orientation $o_{(\hdelbar,y)}$ is the same as the pushforward orientation $\phi_* o_{(\hdelbar,x)}$ determined by the local orientation propagation. (Compare with \cite[Defs.~6.12,~6.13]{HWZbook}.)
\end{definition}

We end with an observation regarding how the above abstract discussion induces orientations on the perturbed solution spaces.
Consider an oriented $\ssc$-smooth Fredholm section $\hdelbar:Z\to W$ and consider a $\ssc^+$-section locally defined on a neighborhood $U$ of a point $x\in Z$, $s: U \to W|_U$.
Suppose that $\hdelbar(x) = s(x)$ and suppose that the linearization 
	\[D(\hdelbar - s)(x) :T_x Z \to W_x\]
is surjective. The implicit function theorem for M-polyfolds implies that $M:= (\hdelbar - s)^{-1}(0)$ has the structure of a finite-dimensional manifold.

A choice of orientation $o_{(\hdelbar,x)}$ of $\DET(\hdelbar,x)$ determines for any linearization $T\in \Lin (\hdelbar,x)$ a choice of orientation of $\det T$.
Then simply observe that $D(\hdelbar - s)(x) \in \Lin(\hdelbar,x)$, and since
	\[\det ((D(\hdelbar - s)(x)) = \Lambda^{\max} (\ker (D(\hdelbar - s)(x))) = \Lambda^{\max} (T_x M)\]
a choice of orientation for $\det ((D(\hdelbar - s)(x))$ automatically induces an orientation for $M$ at $x$.

\subsubsection{Regular perturbations and compact cobordism}

In order to define invariants, a perturbed solution set needs to be both transversally cut out, and compact.
We therefore introduce the following definition, given also in \cite[Cor.~15.1]{HWZbook}.

\begin{definition}
	\label{def:regular-perturbation}
	Let $P:\cW\to \cZ$ be a strong polyfold bundle, let $\delbar$ be a $\ssc$-smooth proper Fredholm section, and let $(N, \cU)$ be a pair which controls compactness.
	
	Suppose a $\ssc^+$-multisection $\Lambda$ satisfies both the requirements of Theorem~\ref{thm:transversality} and Theorem~\ref{thm:compactness}, i.e.,
	\begin{itemize}
		\item $(\delbar, \Lambda)$ is a transversal pair,
		\item $N[\Lambda] \leq 1$ and $\text{dom-supp} (\Lambda) \subset \cU$.
	\end{itemize}
	We then say $\Lambda$ is a \textbf{regular perturbation} of $\delbar$ with respect to the pair $(N,\cU)$.
\end{definition}

\begin{corollary}[{\cite[Cor.~15.1]{HWZbook}}]
	\label{prop:existence-regular-perturbations}
	
	There exist regular perturbations $\Lambda$ of $\delbar$ with respect to the pair $(N,\cU)$.	
	Theorems~\ref{thm:transversality} and \ref{thm:compactness} immediately imply that the perturbed solution space $\cS(\delbar,\Lambda)$ has the structure of a compact weighted branched suborbifold, with weight function given by $\Lambda\circ \delbar : \cS(\delbar,\Lambda) \to \Q^+$.
\end{corollary}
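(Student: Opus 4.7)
The plan is to prove existence of regular perturbations by a finite-cover general-position construction, and then to invoke Theorems \ref{thm:transversality} and \ref{thm:compactness} for the compact weighted branched suborbifold conclusion. Since $\delbar$ is proper, the unperturbed solution set $\cS(\delbar)\subset\cU$ is compact, so the argument reduces to producing transversality in a uniform neighborhood of $\cS(\delbar)$ while keeping the perturbation norm small.

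First I would work locally. For each point $[x]\in\cS(\delbar)$, choose a smooth representative $x\in Z_\infty$ with isotropy $\bm{G}(x)$ and a $\bm{G}(x)$-invariant open neighborhood $U_x\subset\cU$ on which the natural representation of Proposition \ref{prop:natural-representation} is defined. Because $\hdelbar$ is Fredholm at $x$, any linearization has a finite-dimensional cokernel, and \cite[Lem.~5.3]{HWZbook} provides $\ssc^+$-sections $s_1,\dots,s_k:U_x\to W|_{U_x}$ whose values $s_j(x)$ project onto a basis of $\coker D\hdelbar(x)$. Averaging over $\bm{G}(x)$ produces an equivariant finite collection $\{g\cdot s_j\}$, which I package as a local $\ssc^+$-multisection $\Lambda_x$ with uniform weights $\sigma_{g,j}=1/(|\bm{G}(x)|\,k)$ summing to one. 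Multiplying each $s_j$ by a $\ssc$-smooth bump function compactly supported in $U_x$ and by a positive scalar $\ep>0$ makes the pointwise norm $N[\Lambda_x]$ arbitrarily small while preserving transversality at $[x]$, since surjectivity is an open condition on linearizations and is inherited by the averaged sections.

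Next I would globalize. By compactness of $\cS(\delbar)$, extract a finite subcover $\{\abs{U_{x_1}},\dots,\abs{U_{x_n}}\}$, choose neighborhoods contained in $\cU$, and combine the $\Lambda_{x_i}$ via the convolution-type sum of $\ssc^+$-multisections developed in \cite{HWZ3}. Taking the scalars $\ep_i$ small enough enforces $N[\Lambda]\leq 1$, and $\text{dom-supp}(\Lambda)\subset\cU$ by construction. To verify transversality of $(\delbar,\Lambda)$ at every $[y]\in\cS(\delbar,\Lambda)$, I would shrink $\cU$ (using Remark \ref{rmk:shrink-neighborhood}) and the supports of the bump functions so that any perturbed solution lies in some $U_{x_i}$ at which the spanning property of the local sections $\{s_j\}$ persists; this is again by openness of surjectivity together with the a priori containment of $\cS(\delbar,\Lambda)$ in a small neighborhood of $\cS(\delbar)$ provided by Theorem \ref{thm:compactness}.

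The main obstacle is this last point: ensuring that transversality, which is arranged by construction only at the points of $\cS(\delbar)$, propagates to all of $\cS(\delbar,\Lambda)$. This requires a careful interplay between the scale $\ep_i$ of the perturbation, the size of the controlling neighborhood $\cU$, and the size of the local supports, since a large $N[\Lambda]$-budget could push perturbed solutions outside the transverse region, while an overly small budget could still fail equivariance at boundaries between patches. A secondary point is to confirm that the convolution sum preserves $\ssc^+$-regularity, equivariance, the normalization $\sum\sigma=1$, and satisfies a subadditive bound for the auxiliary norm; these properties are formally catalogued in the theory. Once $\Lambda$ is regular in the sense of Definition \ref{def:regular-perturbation}, Theorem \ref{thm:transversality} equips $\cS(\delbar,\Lambda)$ with the structure of a weighted branched suborbifold with weight function $\Lambda\circ\delbar$, and Theorem \ref{thm:compactness} ensures compactness, completing the proof.
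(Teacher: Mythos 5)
Your proposal correctly identifies all the standard ingredients — local spanning $\ssc^+$-sections via \cite[Lem.~5.3]{HWZbook}, equivariant averaging over $\bm{G}(x)$, bump-function localization, scaling to keep $N[\Lambda]\leq 1$ and $\text{dom-supp}(\Lambda)\subset\cU$, a finite subcover by compactness, and the multisection sum from \cite{HWZ3}. The final invocation of Theorems~\ref{thm:transversality} and~\ref{thm:compactness} is also right. But the transversality step has a genuine gap that you notice and then do not resolve correctly.

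The issue is that the spanning property $\text{span}\{s_1(x),\dots,s_k(x)\}\oplus \text{Im}\,D\hdelbar(x)=W_x$ does \emph{not} directly give surjectivity of the single linearization $D(\hdelbar - s)(y)$ at a perturbed solution $y$ for a specific local section $s$. What it gives is transversality of the \emph{parametrized} problem: if you introduce parameters $t=(t_1,\dots,t_k)\in B^k_\ep$ and take the parametrized multisection with local section structure $\bigl\{g*\bigl(\sum_i t_i\, s_i\bigr)\bigr\}_{g\in\bm{G}(x)}$, then the linearization of $(y,t)\mapsto \hdelbar(y) - g*\bigl(\sum_i t_i\, s_i(y)\bigr)$ in \emph{both} $y$ and $t$ is surjective, because the $\partial/\partial t_i$ directions contribute $-s_i(y)$, which spans the cokernel. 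This makes the thickened solution set $\cS(\tdelbar,\Lambda)\subset\cZ\times B^N_\ep$ a weighted branched suborbifold, and then Sard's theorem applied to the projection $\cS(\tdelbar,\Lambda)\to B^N_\ep$ produces a regular value $t_0$; the slice $\Lambda(\cdot,t_0)$ is then a transversal $\ssc^+$-multisection. This is exactly the pattern used in \cite[Thm.~15.4]{HWZbook} and reproduced in Theorems~\ref{thm:naturality} and~\ref{thm:compatible-pullbacks} of the present paper. Your attempt to replace the Sard step with ``openness of surjectivity plus shrinking $\cU$ and the $\ep_i$'' cannot succeed in general: the perturbed solutions $y$ are points where $\hdelbar(y)=s(y)\neq 0$, and surjectivity of $D(\hdelbar-s)(y)$ there is a genericity statement in the coefficients $t$, not a consequence of continuity from $x$. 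Relatedly, your packaging of $\Lambda_x$ with local section structure $\{g\cdot s_j\}$ and uniform weights $1/(|\bm{G}(x)|\,k)$ is not the right object — it has no free parameters and there is no reason its perturbed solution set is cut out transversally. You should replace it with the parametrized family above, prove transversality of the thickened problem, apply Sard's theorem to extract a common regular value, and then verify that for $\ep$ small the chosen slice is $(N,\cU)$-admissible.
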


Compact weighted branched suborbifolds are suitable geometric spaces for defining invariants.
However, it remains to show that such invariants are independent of the choices used to define such a compact weighted branched orbifold, in particular, are independent of:
	\begin{itemize}
		\item the choice of regular perturbation,
		\item the choice of pair which controls compactness.
	\end{itemize}

\begin{theorem}[{\cite[Cor.~15.1]{HWZbook}}]
	\label{thm:cobordism-between-regular-perturbations}
	Let $P:\cW\to \cZ$ be a strong polyfold bundle, let $\delbar$ be a $\ssc$-smooth proper oriented Fredholm section, and let $(N_0, \cU_0)$, $(N_1,\cU_1)$ be two pairs which control compactness.  Suppose that $\Lambda_0$ is a regular perturbation of $\delbar$ with respect to the pair $(N_0,\cU_0)$, and likewise $\Lambda_1$ is a regular perturbation of $\delbar$ with respect to the pair $(N_1,\cU_1)$.  Consider the strong polyfold bundle $[0,1]\times \cW \to [0,1]\times \cZ$ and the $\ssc$-smooth proper oriented Fredholm section $\tdelbar$ defined by $(t,[z]) \mapsto (t,\delbar([z]))$.
	
	Then there exists a pair $(N,\cU)$ which controls the compactness of $\tdelbar$ and which satisfies the following:
	\begin{enumerate}
		\item the auxiliary norm $N:[0,1]\times \cW \to \Q^+$ restricts to $N_0$ on $\{0\}\times \cW$ and restricts to $N_1$ on $\{1\}\times \cW$,
		\item the open neighborhood $\cU$ of $\cS (\tdelbar)$ satisfies $\cU \cap (\{0\}\times \cZ )= \cU_0$ and $\cU \cap (\{1\}\times \cZ )= \cU_1$.
	\end{enumerate}
	
	In addition, there exists a regular perturbation $\tilde{\Lambda}$ of $\tdelbar$ with respect to the pair $(N,\cU)$, such that $\tilde{\Lambda}|_{\{0\}\times \cW}$ can be identified with $\Lambda_0$ and likewise $\tilde{\Lambda}|_{\{1\}\times \cW}$ can be identified with $\Lambda_1$.
	
	It follows that the perturbed solution set $\cS (\tdelbar, \tilde{\Lambda})$ has the structure of a compact weighted branched suborbifold, and is a cobordism between perturbed solution sets, in the sense that
		\[
		\partial \cS (\tdelbar, \tilde{\Lambda}) = -\cS (\delbar,\Lambda_0) \sqcup \cS (\delbar,\Lambda_1).
		\]
\end{theorem}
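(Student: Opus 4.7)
The plan is to build the cobordism directly on the product polyfold $[0,1]\times\cZ$ and its strong polyfold bundle $[0,1]\times\cW$, first constructing the controlling pair $(N,\cU)$ and then the extending perturbation $\tilde\Lambda$, in each case starting with a naive interpolation and then fixing it to meet the required transversality and smallness conditions.

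First I would construct the auxiliary norm by the convex combination
\[
\hat N(t,h) := (1-t)\,\hat N_0(h) + t\,\hat N_1(h),
\]
which is again a $\ssc^0$-functor, fiberwise a complete norm (since convex combinations of complete norms on a fixed $\ssc$-Banach space are equivalent and complete), and still satisfies property~(2) of Definition~\ref{def:auxiliary-norm}. By construction $N|_{\{0\}\times\cW} = N_0$ and $N|_{\{1\}\times\cW} = N_1$. Next I would build $\cU$ by pasting. Choose numbers $0<a<b<1$, a $\bm G$-invariant open neighborhood $\cV\subset\cZ$ of $\cS(\delbar)$ with $\cV\subset\cU_0\cap\cU_1$ (possible after shrinking via Remark~\ref{rmk:shrink-neighborhood} and using Theorem~\ref{thm:compactness}-style compactness of $\cS(\delbar)$), and set
\[
\cU := \bigl([0,a)\times\cU_0\bigr)\cup\bigl((a',b')\times\cV\bigr)\cup\bigl((b,1]\times\cU_1\bigr)
\]
with $a'<a$, $b<b'$ so the pieces overlap. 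This $\cU$ is an open neighborhood of $\cS(\tdelbar)=[0,1]\times\cS(\delbar)$ restricting correctly at the endpoints. To verify that $(N,\cU)$ controls the compactness of $\tdelbar$, I would take a sequence $(t_k,[x_k])$ in the relevant closed set, pass to a subsequence with $t_k\to t_\infty\in[0,1]$, and on each of the three slabs above use the compactness-control property of $(N_0,\cU_0)$ or $(N_1,\cU_1)$ (observing that $\hat N\geq\min(\hat N_0,\hat N_1)$ on the relevant regions, so the bound $N(\delbar([x_k]))\leq 1$ transfers) to extract a convergent subsequence.

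Next I would construct $\tilde\Lambda$. Pull back $\Lambda_0$ and $\Lambda_1$ under the projections $[0,1]\times\cZ\to\cZ$ to obtain $\ssc^+$-multisections $\tilde\Lambda_0,\tilde\Lambda_1$ on the product. Pick a smooth bump function $\beta:[0,1]\to[0,1]$ with $\beta\equiv 1$ near $0$ and $\beta\equiv 0$ near $1$, and form the naive homotopy
\[
\tilde\Lambda_{\text{naive}} := \beta(t)\odot\tilde\Lambda_0 \oplus (1-\beta(t))\odot\tilde\Lambda_1
\]
where $\oplus$ and $\odot$ denote the convex combination operation on $\ssc^+$-multisections (as in \cite[Ch.~13]{HWZbook}). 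Since $\tilde\Lambda_0$ and $\tilde\Lambda_1$ are $(N_i,\cU_i)$-admissible, the convex combination is $(N,\cU)$-admissible, so Theorem~\ref{thm:compactness} already controls compactness. The main obstacle is that $\tilde\Lambda_{\text{naive}}$ is not transversal to $\tdelbar$ on the interior $(0,1)\times\cZ$. To fix this, apply the general position argument in the proof of Theorem~\ref{thm:transversality} (as used in Corollary~\ref{prop:existence-regular-perturbations}) relative to the boundary: one chooses local $\ssc^+$-section structures supported in $(0,1)\times\cZ$, away from collar neighborhoods of $\{0,1\}\times\cZ$ where $\tilde\Lambda_{\text{naive}}$ already coincides with $\tilde\Lambda_0$ or $\tilde\Lambda_1$, and shrinks the resulting perturbations so their auxiliary norm contribution keeps the total below $1$ and their domain support stays inside $\cU$. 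This yields a regular $\tilde\Lambda$ with $\tilde\Lambda|_{\{i\}\times\cW}=\Lambda_i$ for $i=0,1$.

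The hard part is indeed this last step: ensuring that the equivariant general position perturbation can be made simultaneously (i) transversal on the interior, (ii) $(N,\cU)$-admissible, and (iii) identically equal to $\tilde\Lambda_0$, $\tilde\Lambda_1$ on the ends. The admissibility is handled by the linearity of the norm bound under convex combinations and by choosing the fresh local $\ssc^+$-sections small enough, while the collar condition is handled by supporting them away from $\{0,1\}\times\cZ$. Once $\tilde\Lambda$ is constructed, Theorems~\ref{thm:transversality} and~\ref{thm:compactness} give that $\cS(\tdelbar,\tilde\Lambda)$ is a compact oriented weighted branched suborbifold with boundary, and the orientation, propagated from $\hdelbar$ via the product structure on $[0,1]\times Z$ as in Definition~\ref{def:oriented-Fredholm}, yields the signed boundary identity
\[
\partial\cS(\tdelbar,\tilde\Lambda) = -\cS(\delbar,\Lambda_0)\sqcup\cS(\delbar,\Lambda_1)
\]
by restriction to $t=0$ and $t=1$, completing the cobordism.
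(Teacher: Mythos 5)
The paper does not prove this statement itself but cites it directly from \cite[Cor.~15.1]{HWZbook}, so there is no internal proof to compare against; I therefore assess your argument on its own terms. Your overall plan (interpolate the norms, paste the neighborhoods, interpolate the perturbations, then repair transversality in the interior by a relative general position argument) is the right shape, but the verifications of the two admissibility conditions contain genuine gaps.

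The linear interpolation $N(t,\cdot)=(1-t)N_0+tN_1$ together with your pasted $\cU$ does not control compactness. The inequality $N\geq\min(N_0,N_1)$ gives $\{N(t,\delbar)\leq 1\}\subset\{N_0(\delbar)\leq 1\}\cup\{N_1(\delbar)\leq 1\}$, which suffices on the middle slab where the slice of $\cU$ is $\cV\subset\cU_0\cap\cU_1$; but on $[0,a)\times\cU_0$ a point $[x]\in\cU_0\setminus\cU_1$ satisfying $N(t,\delbar([x]))\leq 1$ may have $N_0(\delbar([x]))>1$ and only $N_1(\delbar([x]))\leq 1$, and then neither controlling pair is applicable. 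The fix is to replace $t$ by a cutoff $\gamma(t)$ with $\gamma\equiv 0$ near $t=0$ and $\gamma\equiv 1$ near $t=1$, so that $N\equiv N_0$ on the first slab, $N\equiv N_1$ on the last, and the crossover occurs only over $\cV$. The second and more serious gap is the unproved assertion that the naive homotopy $\beta(t)\odot\tilde\Lambda_0\oplus(1-\beta(t))\odot\tilde\Lambda_1$ is $(N,\cU)$-admissible. The domain support of a sum of multisections is essentially the union of the domain supports, so for $t$ in the transition region $\text{dom-supp}(\tilde\Lambda_{\text{naive}}|_t)$ is roughly $\cU_0\cup\cU_1$, which is not contained in $\cU|_t=\cV$; and the local sections of the sum are $\beta s^0+(1-\beta)s^1$, whose $N$-norm involves the uncontrolled mixed quantities $N_1(s^0)$ and $N_0(s^1)$, so ``linearity of the norm bound under convex combinations'' does not give $N[\tilde\Lambda_{\text{naive}}]\leq 1$. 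The standard remedy is a homotopy in three stages: scale $\Lambda_0$ down to the zero multisection on an initial interval, remain identically zero on a middle interval (where the cutoff $\gamma$ defining $N$ does its transitioning), and scale $\Lambda_1$ up from zero on a final interval, so that at no slice do the two families of local sections appear simultaneously; compare the scaling $\tilde{\Lambda}$ used in Proposition~\ref{prop:cobordism-multisection-regular}. With these modifications your remaining steps---relative general position supported away from the boundary, and orientation propagation from the product structure---go through.
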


\subsection{The branched integral and polyfold invariants}
	\label{subsec:branched-integral-polyfold-invariants}

We now describe how to define the polyfold invariants through the use of the branched integral. The definition of the branched integral theory on compact oriented weighted branched suborbifolds was originally developed in \cite{HWZint}.

\begin{definition}[{\cite[Def.~4.9]{HWZbook}}]
	Let $\cZ$ be a polyfold with an associated polyfold structure $(Z,\bm{Z})$.
	The vector space of $\ssc$-differential $k$-forms $\Omega^k (Z)$ is the set of $\ssc$-smooth maps 
	\[\ww:\bigoplus^k_{n=1} TZ\rightarrow \R\]
	defined on the Whitney sum of the tangent of the object space, which are linear in each argument and skew-symmetric.
	Moreover, we require that the maps $\ww$ are morphism invariant in the following sense: for every morphism $\phi: x\to y$ in $\bm{Z}_1$ with tangent map $T\phi:T_xZ\rightarrow T_yZ$ we require that
	\[
	(T\phi)^*\ww_y=\ww_x.
	\]
\end{definition}

Recall the definition of $\cZ^i$ as the shifted polyfold with shifted polyfold structure $(Z^i,\bm{Z}^i)$.
Via the inclusion maps $\cZ^i \hookrightarrow \cZ$ we may pullback a $\ssc$-differential $k$-form $\ww$ in $\Omega^k(Z)$ to $\Omega^k(Z^i)$, obtaining a directed system
\[
\Omega^k(Z) \to \cdots \to \Omega^k(Z^i) \to \Omega^k(Z^{i+1}) \to \cdots,
\]
we denote by $\Omega^k_\infty (Z)$ the direct limit of this system.
As defined in \cite[p.~149]{HWZbook} there exists an \textbf{exterior derivative}
\[
d:\Omega^*(Z^{i+1}) \to \Omega^{* +1}(Z^i)
\]
such that the composition $d\circ d = 0$.
The exterior derivative commutes with the inclusion maps $Z^i \hookrightarrow Z^{i+1}$ and hence induces a map
\[
d:\Omega^*_\infty(Z) \to \Omega^{* +1}_\infty(Z)
\]
which also satisfies $d\circ d =0$.

\begin{theorem}[{\cite[Thm.~9.2]{HWZbook}}]
	\label{def:branched-integral}
	Let $\cZ$ be a polyfold with polyfold structure $(Z,\bm{Z})$ which admits $\ssc$-smooth partitions of unity.
	Given a $\ssc$-smooth differential form $\ww\in \Omega^n_\infty (Z)$ and an $n$-dimensional compact oriented weighted branched suborbifold $\cS\subset \cZ$.
	
	Then there exists a well-defined \textbf{branched integral}, denoted as
	\[\int_{\cS} \ww,\]
	which is partially characterized by the following property. 
	Consider a point $[x]\in \cS$ and a representative $x\in S$ with isotropy group $\bm{G}(x)$. Let $(M_i)_{i\in I}$, $(w_i)_{i\in I}$, $(o_i)_{i\in I}$ be a local branching structure at $x$ contained in a $\bm{G}(x)$-invariant open neighborhood $U\subset Z$ of $x$.
	Consider a $\ssc$-smooth differential form $\ww\in \Omega^n_\infty (Z)$ and suppose that $\abs{\supp \ww} \subset \abs{U}$.
	Then
	\[
	\int_{\cS} \ww = \frac{1}{\sharp \bm{G}^\text{eff}(x)} \left( \sum_{i\in I} w_i \cdot \int_{(M_i,o_i)} \ww\right)
	\]
	where $\sharp \bm{G}^\text{eff}(x)$ is the order of the effective isotropy group and $\int_{(M_i,o_i)} \ww$ is the usual integration of the differential $n$-form $\ww$ on the oriented $n$-dimensional manifold $M_i$.
\end{theorem}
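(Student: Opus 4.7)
The plan is to construct the branched integral by first defining it locally using the prescribed formula, then assembling a global definition via a $\ssc$-smooth partition of unity, and finally verifying that the result is independent of all auxiliary choices. Throughout, the compactness of $\cS$ will guarantee that the partition of unity sums reduce to finite sums, and morphism invariance of both $\ww$ and of the local orientations (Definition~\ref{def:orientation}) will be the main mechanism enforcing well-definedness.

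For the local construction, fix $[x]\in \cS$ and a representative $x\in S$, and choose a $\bm{G}(x)$-invariant open neighborhood $U\subset Z$ of $x$ together with a local branching structure $(M_i)_{i\in I}$, $(w_i)_{i\in I}$, $(o_i)_{i\in I}$ in $U$ as in Definition~\ref{def:weighed-branched-suborbifold}. By Proposition~\ref{prop:natural-representation} the natural representation $\Phi: \bm{G}(x)\to \text{Diff}_{\ssc}(U)$ acts on $U$ by $\ssc$-diffeomorphisms permuting the branches (by the orientation morphism invariance in Definition~\ref{def:orientation}, up to reindexing and compatibly with the weights). For a form $\ww\in \Omega^n_\infty(Z)$ with $\abs{\supp\ww}\subset \abs{U}$, I would define
\[
\int_\cS \ww \ := \ \frac{1}{\sharp \bm{G}^\text{eff}(x)} \sum_{i\in I} w_i \int_{(M_i,o_i)} \ww.
\]
The first well-definedness check is independence of the local branching structure: given a second local branching structure $(M'_j)_{j\in I'}$, $(w'_j)_{j\in I'}$, $(o'_j)_{j\in I'}$ at $x$, one intersects $M_i$ with $M'_j$ to obtain a common refinement on which both formal sums of weighted oriented tangent planes agree by Definition~\ref{def:orientation}(1), and then one compares integrals using the standard additivity of ordinary integration over manifolds. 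The second check is independence of the representative $x$: for another representative $y$ with a morphism $\phi: x\to y$, the tangent map $T\phi$ pushes forward the local branching structure at $x$ to one at $y$, and $\phi^*\ww_y=\ww_x$ gives equality of the resulting local integrals; here the factor $1/\sharp\bm{G}^\text{eff}$ is exactly what compensates for the $\Phi(\bm{G}(x))$-orbits of branches, so that branches differing by an isotropy element are not double-counted.

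To globalize, use the compactness of $\cS$ to extract a finite cover of $\cS$ by images $\abs{U_\a}$ of $\bm{G}(x_\a)$-invariant open neighborhoods carrying local branching structures; extend this to an open cover of the ambient polyfold and choose a subordinate $\ssc$-smooth partition of unity $\{\rho_\a\}$ on $\cZ$, which exists by hypothesis. For $\ww\in\Omega^n_\infty(Z)$, each $\rho_\a \ww$ has $\abs{\supp(\rho_\a \ww)}$ contained in an $\abs{U_\a}$, so the local formula applies, and I would set
\[
\int_\cS \ww \ := \ \sum_\a \int_\cS \rho_\a \ww.
\]
Independence of the partition of unity is then a standard refinement argument: given two such partitions $\{\rho_\a\}$ and $\{\rho'_\b\}$, the family $\{\rho_\a\rho'_\b\}$ is a common refinement and the local formula is manifestly linear in $\ww$, so both sums equal the sum over $(\a,\b)$.

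The main obstacle I anticipate is the morphism invariance step, specifically the bookkeeping forced by the distinction between the full isotropy group $\bm{G}(x)$ and the effective isotropy group $\bm{G}^\text{eff}(x)$. The local representation $\Phi$ permutes the local branches, but a nontrivial kernel of $\Phi$ (the ineffective part) acts trivially on $U$ and hence trivially on the branches; if one divided by $\sharp\bm{G}(x)$ rather than $\sharp\bm{G}^\text{eff}(x)$, the local integral would artificially shrink on passage between morphism-equivalent representatives whose isotropy groups have different ineffective parts, breaking invariance. Verifying that the $\Phi(\bm{G}(x))$-orbit structure of $\{M_i\}$ interacts correctly with the weights $w_i$ (so that branches in a common orbit contribute equally, and the total count is exactly $\sharp\bm{G}^\text{eff}(x)$ times the orbit-sum) is the technical heart of the argument, and it is precisely this bookkeeping that makes the formula in the statement the correct one.
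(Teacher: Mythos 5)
The theorem is stated in the paper as a citation to \cite[Thm.~9.2]{HWZbook}, and the paper does not prove it; the construction originates in \cite{HWZint}. So there is no proof in this paper to compare your proposal against.

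Evaluating the proposal on its own merits: the overall strategy---local formula, partition of unity, independence checks---is the right one and mirrors the HWZ construction. The one genuine gap is in the independence-of-local-branching-structure step. Intersecting $M_i$ with $M'_j$ does not produce a ``common refinement'': the intersection of two $n$-dimensional embedded submanifolds is generally not an $n$-dimensional manifold, so ``standard additivity of ordinary integration over manifolds'' does not apply, and the pointwise equality of the formal sums of weighted oriented tangent planes from Definition~\ref{def:orientation}(1) does not by itself give equality of integrals. The correct argument is measure-theoretic: one shows that a local branching structure induces a canonical (signed) Borel measure on $S\cap U$, essentially
\[
\mu \;=\; \sum_{i\in I} w_i\cdot\bigl(\text{Lebesgue measure restricted to }M_i\bigr),
\]
and that this measure is determined by the weight functor $\hat\theta$ alone. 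The key observation making this work is that two branches through a point either coincide on an open neighborhood or have nowhere dense intersection (hence measure zero in each branch, as they are embedded submanifolds of the same dimension); away from that measure-zero locus the local weights are forced by $\hat\theta$, so the measure is presentation-independent. It is the equality of measures, not of formal tangent-plane sums, that gives the equality $\sum_i w_i\int_{(M_i,o_i)}\ww=\sum_j w'_j\int_{(M'_j,o'_j)}\ww$. Your treatment of the effective isotropy factor $\sharp\bm{G}^\text{eff}(x)$ in the morphism-invariance step is on target, and the partition-of-unity globalization and refinement argument are standard and correct once the local independence is secured.
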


\begin{theorem}[Stokes' theorem, {\cite[Thm.~9.4]{HWZbook}}]
	\label{thm:stokes}
	Let $\cZ$ be a polyfold with polyfold structure $(Z,\bm{Z})$ which admits $\ssc$-smooth partitions of unity.
	Let $\cS$ be an $n$-dimensional compact oriented weighted branched suborbifold, and let $\partial \cS$ be its boundary with induced weights and orientation.  Consider a $\ssc$-differential form $\omega\in \Omega^{n-1}_\infty (Z)$.
	Then 
	\[
	\int_{\cS} d\omega   = \int_{\partial \cS} \omega.
	\]
\end{theorem}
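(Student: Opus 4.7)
The plan is to reduce the global identity to a purely local statement on a single branching chart via a $\ssc$-smooth partition of unity, and then deduce it from the classical Stokes theorem on each finite-dimensional local branch. First, I would exploit the compactness of $\cS$ to cover it by finitely many images $\abs{U_\alpha}$, where each $U_\alpha\subset Z$ is a $\bm{G}(x_\alpha)$-invariant open neighborhood carrying a local branching structure $(M_{i,\alpha})_{i\in I_\alpha}$, $(w_{i,\alpha})_{i\in I_\alpha}$, $(o_{i,\alpha})_{i\in I_\alpha}$ in the with-boundary sense. Using the standing hypothesis that $(Z,\bm{Z})$ admits $\ssc$-smooth partitions of unity, I produce a subordinate partition of unity $\{\rho_\alpha\}$ by morphism-invariant $\ssc$-smooth functions, obtained by averaging cutoffs over the finite isotropy groups so that each $\rho_\alpha$ descends to $\cZ$. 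Since the exterior derivative satisfies a Leibniz rule and both sides of the identity are linear in $\omega$, it suffices to prove the theorem for each $\rho_\alpha\omega$ separately; this reduces matters to the local case where $\abs{\supp \omega}\subset \abs{U}$ for a single branching chart $U$ around a point $x$.

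Second, in this local setting I invoke the characterizing formula of Theorem~\ref{def:branched-integral}. The left-hand side becomes
\[
\int_{\cS} d\omega = \frac{1}{\sharp \bm{G}^{\text{eff}}(x)} \sum_{i\in I} w_i \int_{(M_i,o_i)} d\omega,
\]
and for the right-hand side I use that $\partial\cS$ inherits a local branching structure at $[x]$ whose branches are $(\partial M_i)_{i\in I}$ with the same weights $w_i$, induced boundary orientations, and the same effective isotropy group, so that
\[
\int_{\partial\cS} \omega = \frac{1}{\sharp \bm{G}^{\text{eff}}(x)} \sum_{i\in I} w_i \int_{(\partial M_i,o_i)} \omega.
\]
On each oriented finite-dimensional manifold-with-boundary $M_i$, the inclusion $M_i\hookrightarrow U$ is proper and a topological embedding, so the pullback of $\omega$ to $M_i$ has compact support. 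The classical Stokes theorem then gives $\int_{(M_i,o_i)} d\omega = \int_{(\partial M_i,o_i)} \omega$ term by term; multiplying by $w_i$, summing over $i\in I$, and dividing by $\sharp \bm{G}^{\text{eff}}(x)$ equates the two displayed expressions, which combined with the partition of unity reduction yields the global identity.

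The main obstacle I expect lies in the first step: constructing the morphism-invariant $\ssc$-smooth partition of unity subordinate to a cover by local branching charts, and checking that the boundary $\partial\cS$ genuinely inherits a local branching structure from $\cS$ with the same index set, weights, and boundary-induced orientations, so that the local formula for $\int_{\partial\cS}\omega$ is applicable. The first issue depends crucially on the standing hypothesis that $(Z,\bm{Z})$ admits $\ssc$-smooth partitions of unity and on the averaging procedure over the finite isotropy groups $\bm{G}(x_\alpha)$, which must be done carefully to ensure that the cutoffs $\rho_\alpha\omega$ remain in $\Omega^{n-1}_\infty(Z)$; the second issue reduces to tracking how the submanifold-with-boundary structures of Definition~\ref{def:weighed-branched-suborbifold} (in the with-boundary version) restrict to the boundary strata.
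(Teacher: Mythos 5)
This theorem is quoted from \cite[Thm.~9.4]{HWZbook} and the present paper does not reprove it; there is therefore no in-paper proof to compare against. That said, your sketch follows the standard argument used in the cited reference: localize by a morphism-invariant $\ssc$-smooth partition of unity subordinate to a finite cover of the compact set $\cS$ by branching charts, reduce to the local characterization of the branched integral from Theorem~\ref{def:branched-integral}, and apply classical Stokes on each finite-dimensional local branch $M_i$, using that $\partial\cS$ inherits the branching structure with the same index set, weights, and effective isotropy. One small correction: the partition-of-unity reduction does not use a Leibniz rule, only linearity of $d$ and of both integrals, since $d\omega = \sum_\alpha d(\rho_\alpha\omega)$ and $\int_{\partial\cS}\omega = \sum_\alpha\int_{\partial\cS}\rho_\alpha\omega$ hold directly; once Stokes is verified for each compactly-supported $\rho_\alpha\omega$, summing gives the global identity. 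You are right that the two points requiring genuine care are the construction of morphism-invariant $\ssc$-smooth cutoffs (averaging over $\bm{G}(x_\alpha)$ and ensuring $\rho_\alpha\omega$ stays in $\Omega^{n-1}_\infty(Z)$) and the verification that the boundary branching structure on $\partial\cS$ is induced with matching index set, weights, and boundary orientations; both of these are established in \cite{HWZbook}.
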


The next theorem follows the same reasoning used to prove \cite[Thm.~11.8]{HWZbook}.
\begin{theorem}[Change of variables]
	\label{thm:change-of-variables}
	Let $\cS_i\subset \cZ_i$ be $n$-dimensional compact oriented weighted branched suborbifolds with weight functions $\vartheta_i:\cS_i \to \Q^+$ for $i=1,2$.  
	Let $(S_i,\bm{S_i})$ be the associated branched suborbifold structures with associated weight functors $\hat{\vartheta}_i: (S_i,\bm{S_i}) \to \Q^+$ for $i=1,2$.
	
	Let $g:\cZ_1 \to \cZ_2$ be a $\ssc$-smooth map between polyfolds, which has a well-defined restriction $g|_{\cS_1}	: \cS_1 \to \cS_2$ between the branched suborbifolds. In addition, assume the following:
	\begin{itemize}
		\item $g: \cS_1 \to \cS_2$ is a homeomorphism between the underlying topological spaces,
		\item $\hat{g}: S_1\to S_2$ is injective and an orientation preserving local homeomorphism,
		\item $g$ is weight preserving, i.e., $\vartheta_2\circ g=\vartheta_1$ and $\hat{\vartheta}_2 \circ \hat{g}=\hat{\vartheta}_1$.
	\end{itemize}
	
	Then given a $\ssc$-smooth differential form $\ww \in \Omega^n_\infty (Z_2)$,
	\[
	\int_{\cS_2} \ww = \int_{\cS_1} g^* \ww.
	\]
\end{theorem}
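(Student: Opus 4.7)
The plan is to reduce the equality to a finite sum of classical integrals on oriented manifolds and then invoke the usual change of variables formula. First I would exploit compactness of $\cS_2$ together with the existence of $\ssc$-smooth partitions of unity on $\cZ_2$ to choose a finite open cover of $\cS_2$ by sets of the form $\abs{V_\beta}$, where $V_\beta \subset Z_2$ is a $\bm{G}(y_\beta)$-invariant neighborhood of a representative $y_\beta \in S_2$ admitting a local branching structure $(M'_{\beta,j})_{j\in J_\beta}, (w'_{\beta,j})_{j\in J_\beta}, (o'_{\beta,j})_{j\in J_\beta}$. Writing $\ww = \sum_\beta \rho_\beta \ww$ subordinate to a $\ssc$-smooth partition of unity dominated by this cover, and using linearity of both sides of the claimed identity in $\ww$, I reduce to the case that $\abs{\supp \ww} \subset \abs{V}$ for a single such $V$, so that the local formula of Theorem~\ref{def:branched-integral} applies on the target side.

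Next I would use $g$ to transfer this local data to $\cS_1$. Since $g\vert_{\cS_1} : \cS_1 \to \cS_2$ is a homeomorphism, the preimage of $[y] = [y_\beta]$ is a single orbit $[x] \in \cS_1$; pick a representative $x \in S_1$ with $\hat{g}(x) = y$. Shrinking $V$ if needed and choosing a $\bm{G}(x)$-invariant neighborhood $U \subset Z_1$ of $x$ so small that $\hat{g}(U \cap S_1) \subset V \cap S_2$, pick a local branching structure $(M_i)_{i\in I}, (w_i)_{i\in I}, (o_i)_{i\in I}$ of $\cS_1$ on $U$. Since $\hat{g}\vert_{S_1}$ is an injective, orientation preserving local homeomorphism, and each $\hat{g}\vert_{M_i}$ is $\ssc$-smooth with image an open subset of $S_2 = \bigcup_j M'_j$ of the correct dimension, $\hat{g}$ maps each $M_i$ diffeomorphically onto an open subset of a unique branch $M'_{\sigma(i)}$, defining a map $\sigma : I \to J$ preserving orientations. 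Applying the weight-preserving identity $\hat{\vartheta}_1 = \hat{\vartheta}_2 \circ \hat{g}$ on an open dense subset of $U$ whose points lie on a single branch on each side forces the matching $w_i = w'_{\sigma(i)}$ and the bijectivity of $\sigma$. Finally, the functor $\hat{g}$ induces a group homomorphism $\bm{G}(x) \to \bm{G}(y)$; combining the injectivity of $\hat{g}$ on $S_1$ with the bijection between $\bm{G}(x)\backslash(U \cap S_1)$ and $\bm{G}(y)\backslash (V \cap S_2)$ from Proposition~\ref{prop:natural-representation-subgroupoid} and the homeomorphism $g\vert_{\cS_1}$ on orbit quotients yields $\sharp\bm{G}^{\text{eff}}(x) = \sharp\bm{G}^{\text{eff}}(y)$.

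With these identifications in hand, the local formula of Theorem~\ref{def:branched-integral} on the target side and the classical change of variables formula along each oriented diffeomorphism $\hat{g}\vert_{M_i} : M_i \to \hat{g}(M_i) \subset M'_{\sigma(i)}$ combine to give
\[
\int_{\cS_2} \ww = \frac{1}{\sharp \bm{G}^{\text{eff}}(y)} \sum_{j \in J} w'_j \int_{(M'_j, o'_j)} \ww = \frac{1}{\sharp \bm{G}^{\text{eff}}(x)} \sum_{i \in I} w_i \int_{(M_i, o_i)} g^*\ww = \int_{\cS_1} g^*\ww,
\]
which after summing over the partition of unity yields the desired equality.

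I expect the main obstacle to be the branch-matching analysis in the second step, namely verifying that $\sigma : I \to J$ is really a well-defined bijection compatible with weights. The pointwise weight identity $\hat{\vartheta}_1(x') = \hat{\vartheta}_2(\hat{g}(x'))$ is a single scalar equation and does not by itself prevent distinct branches with coincidentally equal weight sums from being mixed under $g$; ruling this out requires using the identity on an entire open set together with the local submanifold geometry of the branches, so that distinct tangent planes force distinct branches to be matched individually. A related subtlety is checking that the identification $\bm{G}^{\text{eff}}(x) \cong \bm{G}^{\text{eff}}(y)$ is compatible with the decomposition into branches modulo effective isotropy, since the functor $\hat{g}$ need not a priori be injective on the full isotropy groups $\bm{G}(x) \to \bm{G}(y)$, only on their effective quotients.
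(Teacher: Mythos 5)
The paper itself gives no proof of this theorem; it merely remarks that the result "follows the same reasoning used to prove \cite[Thm.~11.8]{HWZbook}," so your reconstruction is necessarily from scratch and cannot be compared line-by-line with the paper.

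Your overall shape—reduce via $\ssc$-smooth partitions of unity to the case $\abs{\supp\ww}\subset\abs{V}$, compute locally, and invoke the classical change of variables on the branches—is sound, and it is to your credit that you explicitly flag the branch-matching step as the weak point. However, the flag is not resolved, and the step as written does fail: there is no reason that $\hat{g}$ carries each $M_i$ into a \emph{single} target branch $M'_{\sigma(i)}$. A connected $n$-dimensional submanifold lying inside $\bigcup_j M'_j$ can pass from one branch to another along a locus where two branches are mutually tangent, and the pointwise weight identity $\hat{\vartheta}_1 = \hat{\vartheta}_2\circ\hat{g}$, even on an open set, does not exclude this since several distinct branches may share tangent planes along positive-codimension loci. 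The appeal to "distinct tangent planes force distinct branches to be matched individually" is precisely the claim that needs proving and does not hold in general for branching structures. Consequently the asserted bijection $\sigma\colon I\to J$ with $w_i=w'_{\sigma(i)}$ is not justified, and the chain of equalities at the end is not established as written.

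A route that avoids $\sigma$ entirely: after the partition-of-unity reduction, apply the classical change of variables to each $\hat{g}|_{M_i}$ to get $\sum_i w_i \int_{(M_i,o_i)} g^*\ww = \sum_i w_i \int_{(\hat{g}(M_i),\hat{g}_*o_i)} \ww$ with no reference to the $M'_j$. Then argue that $(\hat{g}(M_i))_{i\in I}$, $(w_i)_{i\in I}$, $(\hat{g}_*o_i)_{i\in I}$ is itself a valid local branching structure for $\cS_2$ near $y$: each $\hat{g}|_{M_i}$ is a proper topological embedding of an $n$-dimensional submanifold (using injectivity of $\hat{g}$ on $S_1$, properness of the original inclusions, and the orientation-preserving hypothesis to ensure $D\hat{g}$ is injective on each $T_xM_i$), the images cover a neighborhood of $y$ in $S_2$ since $g|_{\cS_1}$ is a homeomorphism and $\hat{g}|_{S_1}$ a local homeomorphism, and the weight functor $\hat{\vartheta}_2$ is reproduced because $\hat{\vartheta}_1 = \hat{\vartheta}_2\circ\hat{g}$. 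Since the local formula in Theorem~\ref{def:branched-integral} holds for any choice of local branching structure, this gives $\frac{1}{\sharp\bm{G}^{\text{eff}}(y)}\sum_i w_i\int_{(\hat{g}(M_i),\hat{g}_*o_i)}\ww = \int_{\cS_2}\ww$. The remaining isotropy comparison $\sharp\bm{G}^{\text{eff}}(x)=\sharp\bm{G}^{\text{eff}}(y)$ still needs an argument—your sketch via the functoriality of $\hat{g}$ and the orbit-space homeomorphism is the right ingredient, though it should be carried out at a nearby generic object lying on a single branch so that the orbit cardinality is exactly $\sharp\bm{G}^{\text{eff}}$—but the problematic branch matching disappears entirely.
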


\begin{theorem}[Polyfold invariants as branched integrals, {\cite[Cor.~15.2]{HWZbook}}]
	Consider a $\ssc$-smooth map 
	\[
	f:\cZ \to \cO
	\]
	from a polyfold $\cZ$ to an orbifold $\cO$.
	We may define the \textbf{polyfold invariant} as the homomorphism obtained by pulling back a de Rahm cohomology class from the orbifold and taking the branched integral over a perturbed zero set:
	\[
	H^*_{\dR} (O) 	\to \R, \qquad \ww \mapsto \int_{\cS(p)} f^*\ww.
	\]
	By Theorem~\ref{thm:cobordism-between-regular-perturbations} and by Stokes' theorem~\ref{thm:stokes}, this homomorphism does not depend on the choice of abstract perturbation used to obtain the compact oriented weighted branched suborbifold $\cS(p)$.
\end{theorem}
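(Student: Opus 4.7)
The plan is to verify three things: that $\omega \mapsto \int_{\cS(p)} f^*\omega$ is well-defined on $H^*_{\dR}(\cO)$ (independent of the representative of a de Rham class), that it does not depend on the choice of regular perturbation $p$, and that it is a linear homomorphism. Linearity is immediate, since pullback $f^*$ and the branched integral are both $\R$-linear operations on $\ssc$-smooth forms.

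For independence of the cohomology representative, suppose $\omega \in \Omega^n_\infty(\cO)$ is closed and let $\omega' = \omega + d\eta$ for some $\eta \in \Omega^{n-1}_\infty(\cO)$. By naturality of the exterior derivative under $\ssc$-smooth pullback, $f^*\omega' - f^*\omega = d(f^*\eta)$. The perturbed solution space $\cS(p)$ is a compact oriented weighted branched suborbifold without boundary (it arises as the zero set of the Fredholm section in the interior of $\cZ$, and the boundaryless case is built into Corollary~\ref{prop:existence-regular-perturbations}). Stokes' theorem~\ref{thm:stokes} then gives
\[
\int_{\cS(p)} f^*\omega' - \int_{\cS(p)} f^*\omega \;=\; \int_{\cS(p)} d(f^*\eta) \;=\; \int_{\partial \cS(p)} f^*\eta \;=\; 0,
\]
so the integral depends only on the class $[\omega] \in H^*_{\dR}(\cO)$.

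For independence of the regular perturbation, let $\Lambda_0, \Lambda_1$ be two regular perturbations of $\delbar$ with respect to pairs $(N_0,\cU_0)$, $(N_1,\cU_1)$ respectively. By Theorem~\ref{thm:cobordism-between-regular-perturbations}, there is a strong polyfold bundle $[0,1]\times \cW \to [0,1]\times \cZ$, a $\ssc$-smooth proper oriented Fredholm section $\tdelbar(t,[z]) = (t,\delbar([z]))$, a pair $(N,\cU)$ controlling its compactness, and a regular perturbation $\tilde{\Lambda}$ for which the perturbed solution space $\cS(\tdelbar,\tilde{\Lambda})$ is a compact oriented weighted branched suborbifold of $[0,1]\times \cZ$ of one dimension higher than $\cS(\delbar, \Lambda_i)$, with oriented boundary
\[
\partial \cS(\tdelbar,\tilde{\Lambda}) = -\cS(\delbar,\Lambda_0) \sqcup \cS(\delbar,\Lambda_1).
\]
Let $\pi : [0,1]\times \cZ \to \cZ$ denote projection, which is $\ssc$-smooth, and consider the pulled back closed form $(f\circ \pi)^*\omega = \pi^* f^*\omega \in \Omega^n_\infty([0,1]\times \cZ)$. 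Since $\omega$ is closed so is $\pi^* f^*\omega$. Applying Stokes' theorem~\ref{thm:stokes} to the cobordism,
\[
0 \;=\; \int_{\cS(\tdelbar,\tilde{\Lambda})} d(\pi^* f^*\omega) \;=\; \int_{\partial \cS(\tdelbar,\tilde{\Lambda})} \pi^* f^*\omega \;=\; \int_{\cS(\delbar,\Lambda_1)} f^*\omega \;-\; \int_{\cS(\delbar,\Lambda_0)} f^*\omega,
\]
where in the last step we used that $\pi$ restricts to the identity on each boundary slice $\{i\}\times \cS(\delbar,\Lambda_i) \cong \cS(\delbar,\Lambda_i)$. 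The two branched integrals therefore agree, establishing well-definedness of the polyfold invariant.

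There is no significant obstacle here beyond assembling the pieces already provided; the only subtlety worth checking is that the cobordism produced by Theorem~\ref{thm:cobordism-between-regular-perturbations} carries the correct induced orientation on its boundary so that the signs in Stokes' theorem combine as claimed, and that $\pi^* f^* \omega$ lies in the inverse limit $\Omega^n_\infty([0,1]\times \cZ)$ needed for the branched integration theorem to apply. Both are built into the quoted statements, so the proof reduces to the short cohomological argument above.
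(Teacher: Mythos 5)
Your proof is correct and follows exactly the route the paper intends: the independence of the regular perturbation comes from the compact oriented cobordism of Theorem~\ref{thm:cobordism-between-regular-perturbations} combined with Stokes' theorem~\ref{thm:stokes}, and independence of the de Rham representative is another application of Stokes' theorem to the boundaryless $\cS(p)$. You have simply unpacked the one-line justification in the statement into the full cohomological argument, including the necessary observations about pulling back through the projection $[0,1]\times\cZ \to \cZ$ and about the induced boundary orientation.
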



\section{Naturality of polyfold invariants}
	\label{sec:naturality-polyfold-invariants}

In this section we establish the necessary theory for proving the naturality of polyfold invariants, culminating in Theorem~\ref{thm:naturality-polyfold-invariants} and in Corollaries~\ref{cor:naturality-polyfold-gw-invariants} and \ref{cor:punctures-equal}.

\subsection{Invariance of domain and branched suborbifolds}
	\label{subsec:invariance-of-domain}

In the process of considering the naturality of the polyfold invariants, we will encounter a smooth bijection between weighted branched suborbifolds,
	\[
	f: \cS_1 \to \cS_2,
	\]
where $\dim \cS_1 = \dim \cS_2$ and $\cS_2$ is a compact topological space.
We would like to show that this map is a homeomorphism.

However using only knowledge of the topologies of these spaces, it is impossible to show this.
The key to resolving this problem is understanding the branched suborbifold structure and how to use this additional structure to prove an invariance of domain result.
This result will allow us to assert that the above map is a homeomorphism.

Invariance of domain is a classical theorem of algebraic topology due to Brouwer, and was originally published in 1911.
\begin{theorem}[Invariance of domain, {\cite{brouwer1911beweis}}]
	\label{thm:invariance-of-domain}
	Let $U$ be an open subset of $\R^n$, and let $f: U\to \R^n$ be an injective continuous map.  Then $f$ is a homeomorphism between $U$ and $f(U)$.
\end{theorem}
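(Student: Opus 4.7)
The plan is to reduce the theorem to showing that $f$ is an open map. Given this, $f:U\to f(U)$ becomes a continuous open bijection, whose inverse is automatically continuous; hence $f$ is a homeomorphism onto $f(U)$, which is itself open in $\R^n$. The entire content is therefore the claim that for every $x \in U$, the image $f(x)$ has an open neighborhood contained in $f(U)$.

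First I would pass to a local picture. Fix $x \in U$, choose $r > 0$ so that $\bar B := \bar B(x,r) \subset U$, and let $B$ denote the open ball and $\partial B$ its bounding sphere. Because $\bar B$ is compact and $f$ is a continuous injection, the restriction $f|_{\bar B}$ is a homeomorphism onto its image $f(\bar B)$ (any continuous bijection from a compact space to a Hausdorff space is closed). In particular, $f(\bar B)$ is compact, $f(\partial B)$ is homeomorphic to $S^{n-1}$, and $f(B)$, $f(\partial B)$ are disjoint. It suffices to prove that $f(B)$ is open in $\R^n$.

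The main homological input is the Jordan--Brouwer separation theorem: any subset of $\R^n$ homeomorphic to $S^{n-1}$ separates $\R^n$ into exactly two connected components, one bounded and one unbounded, and any subset homeomorphic to a closed $n$-disk does not separate $\R^n$ at all. Both statements follow from Alexander duality in the one-point compactification $S^n$, which for compact $A \subset S^n$ yields $\tilde H_0(S^n \setminus A) \cong \tilde H^{n-1}(A)$, combined with the classical computations $\tilde H^{n-1}(S^{n-1}) \cong \Z$ and $\tilde H^{n-1}(D^n) = 0$. Granting these, $\R^n \setminus f(\bar B)$ is connected and unbounded (since $f(\bar B)$ is bounded), so it coincides with the unbounded component of $\R^n \setminus f(\partial B)$. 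Since $\R^n = f(B) \sqcup f(\partial B) \sqcup (\R^n \setminus f(\bar B))$ and $f(B) \cap f(\partial B) = \emptyset$, the remaining piece $f(B)$ must coincide exactly with the bounded component of $\R^n \setminus f(\partial B)$, and this bounded component is open in $\R^n$.

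The hard part is the Alexander duality input, and in particular the non-obvious fact that a topologically embedded closed disk does not separate $\R^n$; without this, one could not conclude that $\R^n \setminus f(\bar B)$ is connected. These are precisely the classical algebraic-topology computations at the heart of Brouwer's 1911 argument, which the present paper simply quotes rather than reproves.
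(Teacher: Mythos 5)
The paper does not prove this theorem at all --- it is quoted as a classical result with a citation to Brouwer's 1911 paper, and is then used as a black box in the proofs of Lemmas~\ref{lem:local-homeo-m-polyfold} and~\ref{lem:invariance-of-domain-branched-orbifolds}. So there is no ``paper's own proof'' to compare against. That said, your argument is the standard modern proof and is correct: the reduction to openness, the passage to a closed ball $\bar B\subset U$, the use of Jordan--Brouwer separation for $f(\partial B)\cong S^{n-1}$ together with the non-separation of $f(\bar B)\cong D^n$, and the identification of $f(B)$ with the bounded complementary component of $f(\partial B)$ all go through, and the final openness follows because $\R^n$ is locally connected so components of open sets are open. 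The one point worth spelling out, which you implicitly use, is that Alexander duality is stated on $S^n$ while the conclusion is about $\R^n$; one removes the point at infinity from the connected open set $S^n\setminus f(\bar B)$, and for $n\ge 2$ deleting a point from a connected open manifold preserves connectedness (the case $n=1$ is handled separately by an elementary argument). Your closing remark is exactly right: the genuine content lives in the Alexander-duality computations, which is precisely why the paper cites the theorem rather than reproving it.
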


This result can immediately be generalized to manifolds; let $M$ and $N$ be an $n$-dimensional manifolds and let $f: M\to N$ be an injective continuous map.  Then $f$ is a homeomorphism onto its image.  Moreover, if $f$ is bijective, it is a homeomorphism.  We seek to generalize this result to the branched suborbifolds of our current situation.

\subsubsection{Local topology of branched submanifolds}


As a starting definition, a \emph{branched manifold} is a topological space which is locally homeomorphic to a finite union of open subsets of $\R^n$.
However, such a broad definition of a branched manifold immediately raises the possibility of non-desirable topological properties. Consider the classic example of the \emph{line with two origins}---although this is a locally Euclidean and second-countable topological space, it is not Hausdorff.
In contrast, the branched submanifolds we study are embedded into open subsets of ambient M-polyfolds and have better behaved topologies.

\begin{lemma}
	\label{lem:topology-of-local-branching-structures}
	Let $U$ be a metrizable topological space.
	Let $M_i$, $i\in I$ be a finite collection of finite-dimensional manifolds together with inclusion maps $\phi_i: M_i \hookrightarrow U$.
	Assume moreover that each $\phi_i$ is proper and a topological embedding.
	
	Consider the set defined by the image of the inclusions, $\cup_{i \in I} \phi_i (M_i)$.
	There are two topologies we may consider on this set:
	\begin{itemize}
		\item $(\cup_{i \in I} \phi_i(M_i), \tau_s)$, where $\tau_s$ is the subspace topology induced from $U$
		\item $(\cup_{i \in I} \phi_i(M_i), \tau_q)$, where $\tau_q$ is the quotient topology induced by the map $\sqcup_{i\in I} \phi_i : \sqcup_{i\in I} M_i \to \cup_{i \in I} \phi(M_i)$.
	\end{itemize}
	These two topologies are identical.
\end{lemma}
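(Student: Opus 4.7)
The plan is to prove each containment separately, with the key tool being that properness of the $\phi_i$ into the metrizable space $U$ forces each $\phi_i$ to be a closed map, and the finiteness of $I$ then lets us transport closed sets from the $M_i$ to $U$.

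First I would dispatch the easier direction $\tau_s \subset \tau_q$. Suppose $V \in \tau_s$, so $V = W \cap \bigcup_{i\in I} \phi_i(M_i)$ for some $W$ open in $U$. For each $i$, since $\phi_i(M_i) \subset X := \bigcup_{j\in I}\phi_j(M_j)$, we have $\phi_i^{-1}(V) = \phi_i^{-1}(W)$, which is open in $M_i$ because $\phi_i$ is continuous. By the definition of the quotient topology generated by $\bigsqcup_{i\in I} \phi_i$, this shows $V \in \tau_q$.

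The substance of the lemma is the reverse containment $\tau_q \subset \tau_s$. Here is the key observation: since $U$ is metrizable (hence Hausdorff) and each $\phi_i : M_i \to U$ is a proper continuous map between metrizable spaces, each $\phi_i$ is a closed map. (Given a closed $C \subset M_i$ and a sequence $y_n = \phi_i(x_n) \to y$ in $U$, the set $\{y_n\}\cup\{y\}$ is compact, so properness gives $x_n \in \phi_i^{-1}(\{y_n\}\cup\{y\})$ which is compact, yielding a subsequence $x_{n_k} \to x \in C$ with $\phi_i(x) = y$.) Now let $V$ be $\tau_q$-open and write $V^c := X \setminus V$. By the definition of the quotient topology, each $\phi_i^{-1}(V^c)$ is closed in $M_i$. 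Applying closedness of $\phi_i$, the set $\phi_i(\phi_i^{-1}(V^c)) = \phi_i(M_i) \cap V^c$ is closed in $U$ for every $i\in I$. Since $I$ is finite,
\[
V^c \;=\; X \cap V^c \;=\; \bigcup_{i\in I} \bigl(\phi_i(M_i) \cap V^c\bigr)
\]
is a finite union of closed sets in $U$, hence closed in $U$. Therefore $U \setminus V^c$ is open in $U$ and $V = X \cap (U \setminus V^c) \in \tau_s$.

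The main obstacle is this second containment, and the proof depends essentially on both standing hypotheses: metrizability of $U$ (or any condition ensuring proper continuous maps are closed) and finiteness of the index set $I$. Without finiteness one would be asking an arbitrary union of closed sets to be closed, and without metrizability one could encounter pathologies of the ``line with two origins'' type, where the quotient topology is strictly finer than any subspace topology from a Hausdorff ambient space.
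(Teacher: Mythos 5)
Your proof is correct, and the $\tau_q \subset \tau_s$ direction takes a genuinely different route from the paper's. The paper argues locally: given a $\tau_q$-open $V$ and a point $x$, it uses regularity of $U$ to produce a small metric ball $B_\epsilon(x)$ that avoids the closed images $\phi_j(M_j)$ of the branches not through $x$ and whose preimages under the $\phi_{i_l}$ for branches through $x$ (these are small neighborhoods, using the embedding hypothesis) lie inside $q^{-1}(V)$; intersecting the ball with $\cup \phi_i(M_i)$ gives the required $\tau_s$-open neighborhood. You instead argue globally: properness into the metrizable $U$ makes each $\phi_i$ a closed map, so the complement $V^c = \bigcup_i \phi_i(\phi_i^{-1}(V^c))$ is a finite union of closed sets and hence closed in $U$. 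Your version is shorter and more conceptual, and it isolates exactly which hypotheses do the work (metrizability of $U$ to make proper maps closed, and finiteness of $I$); notably it does not invoke the topological-embedding hypothesis at all, which the paper uses in its neighborhood-basis step. Since a proper continuous injection into a metrizable space is automatically a closed embedding, the embedding assumption in the lemma is essentially subsumed by properness plus injectivity, so nothing is lost. One small point of rigor worth flagging: your sequential argument for closedness of $\phi_i$ implicitly uses that compact subsets of $M_i$ are sequentially compact, which holds because the $M_i$ are (second-countable, hence metrizable) finite-dimensional manifolds; it would be worth saying this explicitly.
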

\begin{proof}
	We show that $\tau_s = \tau_q$.
	\begin{itemize}
		\item $\tau_s \subset \tau_q$
	\end{itemize}
	Consider the following commutative diagram where $q$ is the quotient map, $\phi_i$ are the continuous inclusion maps $\phi_i: M_i \to U$, and $i$ is inclusion map.
	\begin{center}
		\begin{tikzcd}
		\bigsqcup_{i\in I} M_i \arrow[r, "\sqcup_{i\in I}\phi_i"] \arrow[d, "q"'] & U \\
		(\bigcup_{i\in I} \phi_i(M_i),\tau_q) \arrow[ru, "i",hook] \arrow[r, "\id"] & (\bigcup_{i \in I} \phi_i(M_i), \tau_s) \arrow[u, "i"', hook]
		\end{tikzcd}
	\end{center}
	Then by the characteristic property of the quotient topology, $\sqcup_{i\in I} \phi_i$ continuous implies $i:(\cup_{i \in I} \phi_i(M_i), \tau_q) \allowbreak \hookrightarrow \allowbreak U $ continuous.
	By the definition of the subspace topology, $i:(\cup_{i \in I} \phi_i(M_i), \tau_q) \hookrightarrow U $ is continuous.
	By the characteristic property of the subspace topology, $i:(\cup_{i \in I} \phi_i(M_i), \tau_q) \hookrightarrow U $ continuous implies 
	$\id : (\cup_{i \in I} \phi_i(M_i), \tau_q) \to (\cup_{i \in I} \phi_i(M_i), \tau_s)$
	is continuous.
	
	\begin{itemize}
		\item $\tau_q \subset \tau_s$
	\end{itemize}
	By assumption $U$ is a metrizable space; hence it is also a regular topological space.
	The assumption that each $\phi_i$ is a topological embedding and is proper implies moreover that the images $\phi_i(M_i)\subset U$ are closed in the subspace topology; to see this note that in metric spaces, sequential compactness is equivalent to compactness, and then use properness.
	
	Suppose $V\subset \cup_{i \in I} \phi_i(M_i)$ and $V\in \tau_q$.  
	We will show for every $x\in V$ there exists a subset $B \subset \cup_{i \in I} \phi_i(M_i)$ such that $B \in \tau_s$ and $x \in B \subset V$.  This implies that $V\in\tau_s$, as desired.
	
	By the definition of the quotient topology, the set $q^{-1} (V) \subset \sqcup_{i\in I} M_i$ is open and hence $q^{-1} (V) \cap M_i$ is open in the topology on $M_i$.	Consider $x$ as a point in $U$ via the set inclusion $\cup_{i\in I} \phi_i(M_i) \subset U$, since $\phi_i : M_i\to U$ is an injection it follows that $q^{-1} (x) = \{x_{i_1}, \ldots, x_{i_k} \}$ where $x_{i_l} \in M_{i_l}$ for a nonempty subset $\{i_1,\ldots , i_k \} \subset I$.
	
	Let $B_\epsilon (x)\subset U$ be an $\epsilon$-ball at $x$.
	Since $\phi_{i_l}$ is a topological embedding it follows that the sets $\phi^{-1}_{i_l} (B_\epsilon(x))$ give a neighborhood basis for $M_{i_l}$ at the point $x_{i_l}$.
	Therefore, we may take $\epsilon$ small enough that $\phi^{-1}_{i_l} (B_\epsilon(x)) \subset q^{-1} (V) \cap M_{i_l}$ for all $i_l \in \{i_1,\ldots , i_k \}$.	
	Since $U$ is a regular topological space, and since $x$ and $\phi_j(M_j)$ are disjoint closed subsets of $U$ for $j \in I \setminus \{i_1,\ldots , i_k \}$, we can find disjoint open neighborhoods that separate $x$ and $\phi_j(M_j)$.
	This moreover implies that we may take $\epsilon$ small enough that $\phi^{-1}_j (B_\epsilon (x)) = \emptyset$ for all $j \in I \setminus \{i_1,\ldots , i_k \}$.
	For such an $\epsilon$, it follows that $\phi_i^{-1} (B_\ep(x)) \subset q^{-1}(V) \cap M_i$ for all $i\in I$.
		
	The desired set is then given by
		\[
		B:= B_\epsilon (x) \cap \bigcup_{i \in I} \phi_i (M_i);
		\]
	it is an open set in the subspace topology on $\cup_{i \in I} \phi_i(M_i)$.
	By construction, $q^{-1}(B) = \sqcup_{i\in I} \phi_i^{-1} (B_\ep(x)) \subset \sqcup_{i\in I} q^{-1}(V) \cap M_i = q^{-1} (V)$, therefore $B\subset V$ as desired.
	
	
\end{proof}

An open subset of an M-polyfold with the subspace topology is a metrizable topological space, and hence the above lemma applies to the branched suborbifolds of Definition~\ref{def:weighed-branched-suborbifold}

\begin{lemma}
	\label{lem:local-homeo-m-polyfold}
	Let $S_i$ be $n$-dimensional branched submanifolds of M-polyfolds $Z_i$ for $i=1,2$.
	Consider an injective continuous map between these two M-polyfolds,
	$\hat{f}: Z_1 \hookrightarrow Z_2,$
	and suppose that there is a well-defined restriction to the branched submanifolds,
	$\hat{f}|_{S_1}:S_1 \hookrightarrow S_2.$
	
	For every $x\in S_1$ with $y:= \hat{f}(x) \in S_2$, suppose that there exist local branching structures $(M_i)_{i\in I}$ at $x$ and $(M'_j)_{j\in I}$ at $y$ which have the same index set $I$. 
	Moreover, assume that $\hat{f}$ has a well-defined restriction to the individual local branches for each index $i\in I$ as follows:
	\[
	\hat{f}|_{M_i} : M_i \hookrightarrow M'_i.
	\]
	Then $\hat{f}|_{S_1}$ is a local homeomorphism between $S_1$ and $S_2$. Since we have assumed that $\hat{f}$ is injective, it follows that $\hat{f}|_{S_1}$ is also a homeomorphism onto its image.
\end{lemma}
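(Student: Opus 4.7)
The plan is to fix $x \in S_1$, set $y := \hat{f}(x)$, and exhibit an open neighborhood $W \subset S_1$ of $x$ such that $\hat{f}(W)$ is open in $S_2$ and $\hat{f}|_W : W \to \hat{f}(W)$ is a homeomorphism. Applying the hypothesis, I choose local branching structures $(M_i)_{i\in I}$ at $x$ and $(M'_i)_{i\in I}$ at $y$ with branch-correspondence $\hat{f}|_{M_i} : M_i \hookrightarrow M'_i$, together with ambient open neighborhoods $U_1 \ni x$ and $U_2 \ni y$. Since each $\phi_i(M_i)$ and $\phi'_i(M'_i)$ is closed in the respective ambient open set---by properness together with metrizability, as in the proof of Lemma~\ref{lem:topology-of-local-branching-structures}---after shrinking $U_1$ and $U_2$ I may assume that every branch $\phi_i(M_i)$ contains $x$ and every $\phi'_i(M'_i)$ contains $y$; let $x_i \in M_i$ and $y_i \in M'_i$ denote the unique preimages.

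Next, since each $\hat{f}|_{M_i} : M_i \to M'_i$ is an injective continuous map between $n$-dimensional manifolds, classical invariance of domain (Theorem~\ref{thm:invariance-of-domain}) supplies open neighborhoods $V_i \subset M_i$ of $x_i$ and $V'_i := \hat{f}|_{M_i}(V_i) \subset M'_i$ of $y_i$ such that $\hat{f}|_{V_i} : V_i \to V'_i$ is a homeomorphism. A sequential compactness argument using properness of $\phi'_k : M'_k \to U_2$ and metrizability of $U_2$ then yields, for every $k \in I$, some $\delta_k > 0$ with $(\phi'_k)^{-1}(\overline{B(y, \delta_k)}) \subset V'_k$, where $B(y, \delta_k)$ denotes a metric ball in $U_2$. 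Setting $\delta := \min_{k \in I} \delta_k$ and invoking continuity of each $\phi'_i \circ \hat{f}|_{M_i}$ at $x_i$, I choose an open neighborhood $B_1 \subset U_1$ of $x$ small enough that $\phi_i^{-1}(B_1) \subset V_i$ and $\phi'_i(\hat{f}|_{M_i}(\phi_i^{-1}(B_1))) \subset B(y, \delta)$ for every $i \in I$, and define $W := B_1 \cap S_1$.

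The main step is to show that $\hat{f}(W)$ is open in $S_2$; by Lemma~\ref{lem:topology-of-local-branching-structures} it suffices to verify that $(\phi'_k)^{-1}(\hat{f}(W))$ is open in $M'_k$ for each $k$, which I will establish via the equality
\[
(\phi'_k)^{-1}(\hat{f}(W)) = \hat{f}|_{M_k}(\phi_k^{-1}(W)),
\]
whose right-hand side is open by invariance of domain. The inclusion $\supset$ is immediate from $\hat{f} \circ \phi_k = \phi'_k \circ \hat{f}|_{M_k}$. For $\subset$, given $m'_k$ in the left-hand side, let $z \in W$ be the unique (by injectivity of $\hat{f}$) preimage satisfying $\hat{f}(z) = \phi'_k(m'_k)$, and write $z = \phi_i(m_i)$. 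The case $i = k$ yields $m'_k \in \hat{f}|_{M_k}(\phi_k^{-1}(W))$ directly from injectivity of $\phi'_k$. The main obstacle is the branch-overlap case $i \neq k$ with $z \notin \phi_k(M_k)$: here the choice of $B_1$ forces $\phi'_k(m'_k) = \phi'_i(\hat{f}|_{M_i}(m_i)) \in B(y, \delta)$, whence $m'_k \in V'_k$ by the properness estimate; the homeomorphism $\hat{f}|_{V_k}$ then produces $m_k \in V_k$ with $\hat{f}|_{M_k}(m_k) = m'_k$, and injectivity of $\hat{f}$ applied to $\hat{f}(\phi_k(m_k)) = \phi'_k(m'_k) = \hat{f}(z)$ gives $\phi_k(m_k) = z \in \phi_k(M_k)$, contradicting the case assumption.

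It follows that $\hat{f}(W)$ is open in $S_2$ and that $\hat{f}|_W$ is a continuous bijection onto $\hat{f}(W)$. Openness of $\hat{f}|_W$ follows by rerunning the same local construction at every $x' \in W$ (the hypothesis is available at every point of $S_1$), so $\hat{f}|_W$ is a homeomorphism. Since $x$ was arbitrary, $\hat{f}|_{S_1}$ is a local homeomorphism, and global injectivity of $\hat{f}$ promotes this to a homeomorphism onto its image.
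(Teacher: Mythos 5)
Your proof is correct and reaches the same conclusion via a genuinely different assembly of the same ingredients. Both your argument and the paper's rely on invariance of domain applied to each branch map $\hat{f}|_{M_i}$ and on Lemma~\ref{lem:topology-of-local-branching-structures} to translate between quotient and subspace topologies on the branch locus, and both implicitly use the same reading of the hypothesis (namely that every branch on each side passes through the relevant point so that the branch correspondence is preserved near $x$ and $y$; your ``after shrinking $U_1$ and $U_2$'' step is where you make this explicit, though the shrinking on the two sides is a priori independent and this deserves more care than you give it). Where you diverge is in the middle: the paper chooses $\epsilon$ so that each restricted branch map $\hat{f}|_{N_i}:N_i\to N_i'$ becomes a surjective homeomorphism, builds the homeomorphism $\sqcup\hat{f}|_{N_i}$ on the disjoint union, and then descends to a homeomorphism $\cup N_i\to\cup N_i'$ of quotients; you instead fix a single neighborhood $W=B_1\cap S_1$ and prove directly that $\hat{f}(W)$ is open in $S_2$ by establishing the branchwise identity $(\phi'_k)^{-1}(\hat{f}(W))=\hat{f}|_{M_k}(\phi_k^{-1}(W))$, with the nontrivial inclusion handled by the metric-ball/properness trap and a contradiction via injectivity of $\hat{f}$. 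The paper's route is more structural and simultaneously yields surjectivity onto a full branched neighborhood $\cup N_i'$; your route is more hands-on and only produces openness of the image, so you need the extra ``rerun at every $x'\in W$'' step to upgrade to a homeomorphism, but it avoids any need to arrange surjectivity of the branch restrictions and is arguably easier to trust line by line.
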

\begin{proof}
	Let $x\in S_1$ which maps to $\hat{f}(x)\in S_2$.  By assumption, there exists a local branching structure $(M_i)_{i\in I}$ in a neighborhood $O_x$ of $x$, and there exists a local branching structure $(M'_j)_{j\in J}$ in a neighborhood $O_{\hat{f}(x)}$ of $\hat{f}(x)$ such that the index sets are the same, $I=J$, and $\hat{f}$ restricts to a injective continuous map between each branch, i.e.,
	\[\hat{f}|_{M_i} : M_i \to M'_i.\]
	
	We may invoke invariance of domain \ref{thm:invariance-of-domain} to see that the restricted maps $\hat{f}|_{M_i}$ are homeomorphisms onto their images.
	Observe that the open balls $B_\epsilon(\hat{f}(x))\subset O_{\hat{f}(x)}$ give a neighborhood basis for $M_i'$ at $\hat{f}(x)$ for all $i\in I$.  It follows that $\hat{f}^{-1} (B_\epsilon (\hat{f}(x))) \subset O_x$ give a neighborhood basis for $M_i$ at $x$ for all $i\in I$.  For $\epsilon$ small enough, the restricted maps 
	\begin{equation}\label{eq:restriction-to-local-branches}
	\hat{f}|_{M_i \cap \hat{f}^{-1} (B_\epsilon (\hat{f}(x)))} : M_i \cap \hat{f}^{-1} (B_\epsilon (\hat{f}(x))) \to M_i' \cap B_\epsilon(\hat{f}(x))
	\end{equation}
	are homeomorphisms for all $i\in I$.
	
	Define a neighborhood of $x$ by $U_x = \hat{f}^{-1} (B_\epsilon (\hat{f}(x)))$; then $N_i := M_i \cap \hat{f}^{-1} (B_\epsilon(\hat{f}(x)))$ give local branches in $U_x$.  Define a neighborhood of $\hat{f}(x)$ by $U_{\hat{f}(x)}:=B_\epsilon(\hat{f}(x))$; then $N_i' := M_i' \cap B_\epsilon(\hat{f}(x))$ give local branches in $U_{\hat{f}(x)}$.  We can now rewrite \eqref{eq:restriction-to-local-branches} more simply as
	\[
	\hat{f}|_{N_i} : N_i \to N'_i.
	\]
	and note again that the maps $\hat{f}|_{N_i}$ are homeomorphisms for all $i\in I$.  Hence the map $\sqcup_{i\in I} (\hat{f}|_{N_i}): \sqcup_{i\in I} N_i \to \sqcup_{i\in I} N'_i$ is also a homeomorphism.
	
	Consider the following commutative diagram of maps.
	\begin{center}
		\begin{tikzcd}
		\bigsqcup_{i\in I} N_i \arrow[d, "q"] \arrow[r, "\sqcup (\hat{f}|_{N_i})"] & \bigsqcup_{i\in I} N'_i \arrow[d, "q'"] \\
		(\cup_{i\in I} N_i, \tau_q) \arrow[r, "\hat{f}|_{\cup N_i}"', hook] & (\cup_{i\in I} N'_i, \tau_{q'})
		\end{tikzcd}
	\end{center}
	We assert that the map $\hat{f}|_{\cup N_i} : (\cup_{i \in I} N_i,\tau_q) \hookrightarrow (\cup_{i\in I} N'_i,\tau_{q'})$ is a homeomorphism.
	Indeed, by assumption $\hat{f}|_{\cup N_i}$ is injective.  We can use the fact that $\sqcup (\hat{f}|_{N_i})$ is a bijection to see that $\hat{f}|_{\cup N_i}$ must also be surjective.
	It is easy to check that $\hat{f}|_{\cup N_i}$ is continuous with respect to the quotient topologies $\tau_q$ and $\tau_{q'}$.
	Furthermore, $\hat{f}|_{\cup N_i}$ is an open map.
	To see this, let $U\subset (\cup_{i \in I} N_i,\tau_q)$ be an open set.  Then $q^{-1} (U) \subset \sqcup_{i\in I} N_i$ is open by the definition of the quotient topology.
	Since $\sqcup (\hat{f}|_{N_i})$ is a homeomorphism, $(\sqcup (\hat{f}|_{N_i})) (q^{-1}(U))$ is open.
	Commutativity of the diagram and the fact that both $\sqcup (\hat{f}|_{N_i})$ and $\hat{f}|_{\cup N_i}$ are bijections implies that $(\sqcup \hat{f}|_{N_i}) (q^{-1}(U)) = q'^{-1} (\hat{f}|_{\cup N_i} (U))$.
	It therefore follows that $\hat{f}|_{\cup N_I} (U)$ is open by the definition of the quotient topology.
	
	By Lemma~\ref{lem:topology-of-local-branching-structures}, the fact that $\hat{f}|_{\cup N_i} : (\cup_{i \in I} N_i,\tau_q) \hookrightarrow (\cup_{i\in I} N'_i,\tau_{q'})$ is a homeomorphism implies that $\hat{f}|_{\cup N_i} : (\cup_{i \in I} N_i,\tau_s) \hookrightarrow (\cup_{i\in I} N'_i,\tau_s)$ is a homeomorphism.
	Note that $\cup_{i \in I} N_i \subset S_1$ and $\cup_{i \in I} N'_i \subset S_2$ are both open subsets.
	By Remark~\ref{rmk:local-topology-subgroupoid}, the inclusion maps $(\cup_{i \in I} N_i,\tau_s)\hookrightarrow S_1$ and $(\cup_{i\in I} N'_i,\tau_s)\hookrightarrow S_2$ are both local homeomorphisms.  We now see that the map $\hat{f} : S_1 \to S_2$ is a local homeomorphism on an open neighborhood of the point $x\in S_1$.  Since $x\in S_1$ was arbitrary, and since $\hat{f}$ is injective, we can conclude $\hat{f}$, considered on the object sets, is a local homeomorphism.  It then follows from the \'etale property that $\hat{f}$, considered on the morphism sets, is a local homeomorphism.  This proves the claim.
\end{proof}

\begin{lemma}
	\label{lem:invariance-of-domain-branched-orbifolds}
	Let $\cS_i$ be an $n$-dimensional branched suborbifold of a polyfold $\cZ_i$ for $i=1,2$.
	Consider an injective continuous map between these two polyfolds, $f: \cZ_1 \hookrightarrow \cZ_2$,
	and which has an associated functor $\hat{f}: (Z_1,\bm{Z_1}) \hookrightarrow (Z_2,\bm{Z_2})$, which is injective and continuous with respect to the object and morphism sets.
	In addition, assume that the functor $\hat{f}$ is fully faithful.
	Suppose that $f$ has a well-defined restriction to the branched suborbifolds $f|_{\cS_1}:\cS_1 \hookrightarrow \cS_2$; it follows that $\hat{f}$ restricts to a well-defined functor between the subgroupoids $\hat{f}|_{S_1} : (S_1,\bm{S}_1) \to (S_2,\bm{S}_2)$.
	
	Assume that for every $x\in S_1$ with $y:= \hat{f}(x) \in S_2$, there exist local branching structures $M_i$, $i\in I$ at $x$ and $M'_j$, $j\in I$ at $y$ which have the same index set $I$. 
	Moreover, assume that $\hat{f}$ has a well-defined restriction to the individual local branches for each index $i\in I$ as follows:
	\[
	\hat{f}|_{M_i} : M_i \hookrightarrow M'_i.
	\]
	
	Then the restriction $f|_{\cS_1}:\cS_1 \hookrightarrow \cS_2$ is a local homeomorphism. In particular, if $f|_{\cS_1}$ is a bijection, then it is a homeomorphism.
\end{lemma}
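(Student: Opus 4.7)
The plan is to reduce to Lemma~\ref{lem:local-homeo-m-polyfold}, which already delivers the invariance-of-domain statement on the object-set level, and then push the conclusion down to the orbit spaces by combining the natural representation of the isotropy groups (Proposition~\ref{prop:natural-representation-subgroupoid}) with the fully faithful hypothesis on $\hat{f}$.

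First I would apply Lemma~\ref{lem:local-homeo-m-polyfold} to the restricted functor $\hat{f}|_{S_1} : S_1 \hookrightarrow S_2$ on the object M-polyfolds. The hypotheses on the local branching structures are tailored to this: for every $x \in S_1$ with $y = \hat{f}(x)$, the local branches $M_i$ at $x$ and $M_i'$ at $y$ are indexed by the same set $I$, with $\hat{f}$ restricting to injective continuous maps $M_i \hookrightarrow M_i'$. Lemma~\ref{lem:local-homeo-m-polyfold} then yields that $\hat{f}|_{S_1}$ is a local homeomorphism onto its image.

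Next I would localize around an arbitrary point $[x] \in \cS_1$ by invoking Proposition~\ref{prop:natural-representation-subgroupoid} at $x$ and at $y = \hat{f}(x)$ to obtain $\bm{G}(x)$-invariant (resp.\ $\bm{G}(y)$-invariant) open neighborhoods $U \subset Z_1$ and $V \subset Z_2$ realizing the identifications $|U \cap S_1| \simeq \bm{G}(x) \backslash (U \cap S_1)$ and $|V \cap S_2| \simeq \bm{G}(y) \backslash (V \cap S_2)$, and such that every morphism in $\bm{S}_1$ with source and target in $U$ is realized as $\Gamma(g,\,\cdot\,)$ for some $g \in \bm{G}(x)$. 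After shrinking $U$ using the first step, I may moreover assume that $\hat{f}(U) \subset V$, that $\hat{f}|_U$ is a homeomorphism onto its image, and that $\hat{f}(U \cap S_1)$ is open in $V \cap S_2$.

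The crux is then to show that $f|_{\cS_1}$ restricts to a homeomorphism of $|U \cap S_1|$ onto its image in $|V \cap S_2|$. Injectivity follows from fully faithfulness: if $z, z' \in U \cap S_1$ satisfy $[\hat{f}(z)] = [\hat{f}(z')]$ in $|V \cap S_2|$, a morphism $\phi : \hat{f}(z) \to \hat{f}(z')$ lifts uniquely to a morphism $\psi : z \to z'$ in $\bm{Z}_1$ whose source and target lie in $U$, and part~(3) of Proposition~\ref{prop:natural-representation-subgroupoid} forces $\psi = \Gamma(g,z)$ for some $g \in \bm{G}(x)$, so $[z] = [z']$ in $|U \cap S_1|$. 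For the open-mapping property, the commutative square
\[
\begin{tikzcd}
U \cap S_1 \arrow[r, "\hat{f}|_{U \cap S_1}"] \arrow[d, "q_1"'] & V \cap S_2 \arrow[d, "q_2"] \\
|U \cap S_1| \arrow[r, "f|_{\cS_1}"'] & |V \cap S_2|
\end{tikzcd}
\]
together with the openness of the quotient maps $q_1, q_2$ and the fact that $\hat{f}|_{U \cap S_1}$ is an open embedding into $V \cap S_2$, yields $f(O) = q_2(\hat{f}(q_1^{-1}(O)))$ open in $|V \cap S_2|$ for every open $O \subset |U \cap S_1|$. Combined with continuity of $f$, this exhibits $f|_{\cS_1}$ as a local homeomorphism; if it is in addition bijective, it is automatically a global homeomorphism. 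The main technical nuisance I anticipate is the bookkeeping required to arrange $U$ and $V$ so that the natural representations, the local branching structures, and the object-level homeomorphism from Lemma~\ref{lem:local-homeo-m-polyfold} hold simultaneously after the several shrinkings, but each individual shrinking is routine.
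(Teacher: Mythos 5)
Your proposal is correct and follows essentially the same route as the paper's proof: reduce to the object-level invariance of domain (Lemma~\ref{lem:local-homeo-m-polyfold}), then transfer the local homeomorphism to orbit spaces via Proposition~\ref{prop:natural-representation-subgroupoid} by checking the induced local map is injective, continuous, and open. The only minor deviations are cosmetic: the paper establishes local injectivity directly from the hypothesis that $f$ is injective (rather than via fully faithfulness) and shows openness via the commutativity-plus-bijectivity identity $(\hat{f}|_{\cup N_i})(q^{-1}(U)) = q'^{-1}(f|_{|\cup N_i|}(U))$ rather than by invoking openness of the quotient maps, but both of your variants are valid and equivalent in substance.
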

\begin{proof}	
	Let $[x] \in \cS_1$ and let $f([x]) \in \cS_2$.  Let $x$ be a representative of $[x]$, hence $\hat{f}(x)$ is a representative of $f([x])$.  
	From the proof of Lemma~\ref{lem:local-homeo-m-polyfold}, we have seen that there exists a local branching structure $(N_i)_{i\in I}$ at $x$ and a local branching structure $(N'_i)_{i\in I}$ at $\hat{f}(x)$ such that $\hat{f}|_{\cup N_i} : (\cup_{i \in I} N_i,\tau_s) \hookrightarrow (\cup_{i\in I} N'_i,\tau_s)$ is a homeomorphism.
	
	The proof now follows the same reasoning as Lemma~\ref{lem:local-homeo-m-polyfold}.
	Consider the following commutative diagram of maps.
	\begin{center}
		\begin{tikzcd}
		\bigcup_{i\in I} N_i \arrow[d, "q"] \arrow[r, "\hat{f}|_{\cup N_i}"] & \bigcup_{i\in I} N'_i \arrow[d, "q'"] \\
		(\abs{\cup_{i\in I} N_i},\tau_q)  \arrow[r, "f|_{\abs{\cup N_i}}"', hook] & (\abs{\cup_{i\in I} N'_i}, \tau_{q'})
		\end{tikzcd}
	\end{center}
	We assert that the map $f|_{\abs{\cup N_i}}$ is a homeomorphism.
	Indeed, by assumption $f|_{\abs{\cup N_i}}$ is injective.  We can use the fact that $\hat{f}|_{\cup N_i}$ is a bijection to see that $f|_{\abs{\cup N_i}}$ must also be surjective.
	By assumption, $f$ is continuous and therefore the restriction $f|_{\abs{\cup N_i}}$ is continuous.
	Furthermore, $f|_{\abs{\cup N_i}}$ is an open map.
	To see this, let $U\subset \abs{\cup_{i\in I} N_i}$ be an open set.
	Then $q^{-1}(U)\subset \cup_{i \in I} N_i$ is open by the definition of the quotient topology.
	Since $\hat{f}|_{\cup N_i}$ is a homeomorphism,  $(\hat{f}|_{\cup N_i})(q^{-1}(U)) \subset \cup_{i \in I} N'_i$ is open.  Commutativity of the diagram and the fact that both $\hat{f}|_{\cup N_i}$ and $f|_{\abs{\cup N_i}}$ are bijections implies that $(\hat{f}|_{\cup N_i}) (q^{-1}(U)) = q'^{-1} (f|_{\abs{\cup N_i}}(U))$.
	It therefore follows that $f|_{\abs{\cup N_i}}(U)$ is open by the definition of the quotient topology.
	
	Proposition~\ref{prop:natural-representation-subgroupoid} implies that the inclusion maps $\abs{\cup_{i\in I} N_i} \hookrightarrow \cS_1$ and $\abs{\cup_{i\in I} N'_i} \hookrightarrow \cS_2$ are local homeomorphisms.
	We now see that the map $f|_{\cS_1} : \cS_1 \to \cS_2$ is a local homeomorphism on an open neighborhood of the point $[x]\in \cS_1$.  Since $[x]\in \cS_1$ was arbitrary it follows that $f|_{\cS_1}$ is a local homeomorphism.
	It moreover follows that if $f|_{\cS_1}$ is bijective, it is a homeomorphism.  This proves the claim.
\end{proof}

\subsection{Fredholm multisections and abstract perturbations}
	\label{subsec:fredholm-multisections}

In this subsection we generalize the polyfold abstract perturbation theory from Fredholm sections to Fredholm multisections.
This involves minor modifications to the definitions and theorems originally developed in \cite{HWZ3} and which we recalled in \S~\ref{subsec:abstract-perturbations}.
This generalization is developed with a specific goal in mind, which is the proof of Theorem~\ref{thm:naturality}.

\begin{definition}
	Let $\cW\to \cZ$ be a strong polyfold bundle.  We define a \textbf{$\ssc$-smooth Fredholm multisection} as 
	\begin{enumerate}
		\item a function $F:\cW \to \Q^+$,
		\item an associated functor $\hat{F}: W \to \Q^+$ where $\abs{\hat{F}}$ induces $F$,
	\end{enumerate}
	such that at ever $[x]\in \cZ$ there exists a \textbf{local Fredholm section structure} defined as follows.
	Let $x\in Z$ be a representative of $[x]$ and let $U\subset Z$ be a $\bm{G}(x)$-invariant open neighborhood of $x$, and consider the restricted strong M-polyfold bundle $P: W|_U \to U$.
	Then there exist finitely many $\ssc$-Fredholm sections $f_1,\ldots,f_k : U \to W|_U$ 
	with associated positive rational numbers $\sigma_1,\ldots ,\sigma_k \in \Q^+$ which satisfy the following:
	\begin{enumerate}
		\item $\sum_{i=1}^k \sigma_i =1.$
		\item The restriction $\hat{F}|_{W|_U}: W|_U \to \Q^+$ is related to the local sections and weights via the equation 
			\[
			\hat{F}|_{W|_U}(w)=\sum_{\{i\in \{1,\ldots, k\} \mid w=f_i(p(w))\}} \sigma_i
			\]
		where the empty sum has by definition the value $0$.
	\end{enumerate}
\end{definition}

We say that the Fredholm multisection $F$ is \textbf{proper} if the unperturbed solution set
	\[
	\cS (F) := \{ [z]\in \cZ \mid F(0_{[x]})	>0	\} \subset \cZ
	\]
is a compact topological space.
(Notice that the condition $F(0_{[x]})>0$ is equivalent to the condition that $f_i(x) = 0$ for some $i \in I$ for a given representative $x$ and a local Fredholm section structure $(f_i)_{i\in I}$, $(\sigma_i)_{i\in I}$ at $x$.)
Furthermore, we can define a weight function on the unperturbed solution set, $\cS(F) \to \Q^+$, by $[z] \mapsto F(0_{[x]})$.

\begin{example}
	For the applications we have in mind, the $\ssc$-smooth Fredholm multisections are obtained as a pair $(\delbar,\Lambda)$ consisting of:
	\begin{itemize}
		\item a $\ssc$-smooth Fredholm section $\delbar:\cZ\to\cW$,
		\item a $\ssc^+$-multisection $\Lambda :\cW \to \Q^+$.
	\end{itemize}
	Given a point $[x]\in\cZ$, we define a local Fredholm section structure for $(\delbar,\Lambda)$ at $[x]$ as follows.
	Let $x\in Z$ be a representative of $[x]$ and let $U\subset Z$ be a $\bm{G}(x)$-invariant open neighborhood of $x$, and consider the restricted strong M-polyfold bundle $P: W|_U \to U$.
	Consider the $\ssc$-smooth Fredholm section $\hdelbar: U \to W|_U$, and let $(s_i)_{i\in I}$, $(\sigma_i)_{i\in I}$ be a local section structure for $\Lambda$ at $x$.
	
	Then the local Fredholm section structure is given by $f_i := \hdelbar - s_i$ with associated weight $\sigma_i$. It follows from \cite[Thm.~3.2]{HWZbook} that such an $f_i$ is in fact a $\ssc$-smooth Fredholm section.
	We may then define the functor $\hat{F}$ locally via the equation
		\[
		\hat{F}|_{W|_U}(w)=\sum_{i\in \{1,\ldots, k \mid w=f_i(p(w))\}} \sigma_i
		\]
	where the empty sum has by definition the value $0$. It is evident this extends to a well-defined functor $\hat{F}: (W,\bm{W}) \to \Q^+$.
	Finally, observe the perturbed solution set $\cS (\delbar,\Lambda)$ associated to the pair $(\delbar,\Lambda)$ is the same as the unperturbed solution set $\cS (F)$ associated to the Fredholm multisection $F$, i.e.,
		\[
		\{ [z]\in \cZ \mid \Lambda(\delbar([x]))	>0	\} = \{ [z]\in \cZ \mid F(0_{[x]})	>0	\}.
		\]
\end{example}

\subsubsection{Transverse perturbations of Fredholm multisections}

We can immediately adapt the main definitions and results of \S~\ref{subsec:abstract-perturbations}; there is no difficulty in generalizing the construction of transverse perturbations to Fredholm multisections.

\begin{definition}
	Associated to a $\ssc$-smooth Fredholm multisection $\delbar$ and a $\ssc^+$-mul\-ti\-sec\-tion $\Gamma$, 
	we define the \textbf{perturbed solution space} as the set
	\[
	\cS(F,\Lambda) :=\{[z]\in\cZ \mid (F\oplus\Gamma) (0_{[z]})	>0	\}\subset \cZ
	\]
	with topology given by the subspace topology induced from $\cZ$. It is equipped with the weight function $\cS(F,\Gamma) \to \Q^+,$ $[z]\mapsto (F\oplus\Gamma) (0_{[z]}).$
\end{definition}

Along the same lines as Definition~\ref{def:transversal-pair}, we can formulate what it means for a Fredholm multisection and a $\ssc^+$-multisection to be transversal.

\begin{definition}
	\label{def:fredholm-multisection-transversal}
	Let $P:\cW\to \cZ$ be a strong polyfold bundle, $F$ a $\ssc$-smooth Fredholm multisection, and $\Gamma$ a $\ssc^+$-multisection.
	
	Consider a point $[x]\in \cZ$.  We say $(F,\Gamma)$ is \textbf{transversal at $[x]$} if, given a local Fredholm section structure for $F$ at $[x]$ and given a local $\ssc^+$-section structure for $\Gamma$ at $[x]$, then the linearized local expression 
		\[
		D(f_i-s_j)(x):T_x Z \to W_x
		\]
	is surjective for all $i\in I$, $j\in J$ with $f_i(x)=s_j(x)$.  We say that $(F,\Gamma)$ is \textbf{transversal} if it is transversal at every $[x] \in \cS(F,\Gamma)$.
\end{definition}

Consider our example of a Fredholm multisection $(\delbar,\Lambda)$ consisting of a Fredholm section and a $\ssc^+$-multisection $\Lambda$, and let $\Gamma$ be an additional $\ssc^+$-multisection. Then the sum $\Lambda \oplus \Gamma: \cW \to \Q^+$ is a $\ssc^+$-multisection, with local section structure given by $s_i +r_j$ where $(s_i)_{i\in I}$ is a local section structure for $\Lambda$ and $(r_j)_{j\in J}$ is a local section structure for $\Gamma$.
We may now observe that the pair $(\delbar, \Lambda\oplus \Gamma)$ consisting of the Fredholm section $\delbar$ and the $\ssc^+$-multisection $\Lambda\oplus \Gamma$ is transversal in the sense of Definition~\ref{def:transversal-pair} if an only if the pair $((\delbar,\Lambda), \Gamma)$ consisting of the Fredholm multisection $(\delbar,\Lambda)$ and the $\ssc^+$-multisection $\Gamma$ is transversal in the sense of the above Definition~\ref{def:fredholm-multisection-transversal}.

We have an analog of Theorem~\ref{thm:transversal-pairs-weighted-branched-suborbifolds}.

\begin{proposition}
	\label{prop:fredholm-multisection-transversal-pairs-weighted-branched-suborbifolds}
	Let $P:\cW\rightarrow \cZ$ be a strong polyfold bundle, $F$ a $\ssc$-smooth Fredholm multisection, and $\Gamma$ a $\ssc^+$-multisection.
	If the pair  $(F,\Lambda)$ is transversal, then the perturbed solution set $\cS(F,\Gamma)$ carries in a natural way the structure of a weighted branched suborbifold.
\end{proposition}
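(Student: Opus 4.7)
The plan is to parallel the proof of Theorem~\ref{thm:transversality} (the case of a Fredholm section paired with a $\ssc^+$-multisection), working locally around a representative $x \in Z$ of an arbitrary point $[x] \in \cS(F,\Gamma)$ and using a double-indexed combination of the two local section structures. Fix a $\bm{G}(x)$-invariant open neighborhood $U \subset Z$ on which we have both a local Fredholm section structure $(f_i)_{i \in I}, (\sigma_i)_{i \in I}$ for $F$ and a local $\ssc^+$-section structure $(s_j)_{j \in J}, (\tau_j)_{j \in J}$ for $\Gamma$; such a common $U$ exists after shrinking. For each pair $(i,j) \in I \times J$ the difference $f_i - s_j : U \to W|_U$ is again a $\ssc$-smooth Fredholm section, since adding a $\ssc^+$-section preserves the Fredholm germ property, and all such sections share a common Fredholm index $n$.

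Invoking the transversality hypothesis from Definition~\ref{def:fredholm-multisection-transversal}, at every $y \in U$ with $f_i(y) = s_j(y)$ the linearization $D(f_i - s_j)(y) : T_y Z \to W_y$ is surjective. The implicit function theorem for M-polyfolds then yields that
\[
M_{ij} := (f_i - s_j)^{-1}(0) \cap U
\]
is an $n$-dimensional $\ssc$-smooth submanifold of $U$, and by shrinking $U$ we may arrange that each inclusion $M_{ij} \hookrightarrow U$ is a proper topological embedding. Assign the weight $w_{ij} := \sigma_i \tau_j$; note that $\sum_{(i,j)} w_{ij} = \bigl(\sum_i \sigma_i\bigr)\bigl(\sum_j \tau_j\bigr) = 1$. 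A direct unfolding of the local formulas for $\hat{F}$ and $\hat{\Gamma}$ gives, for $y \in U$,
\[
(F \oplus \Gamma)(0_{[y]}) = \sum_{\{(i,j) \in I \times J \,\mid\, y \in M_{ij}\}} \sigma_i \tau_j,
\]
which is exactly the weight identity required by Definition~\ref{def:weighed-branched-suborbifold}. Containment $\cS(F,\Gamma) \subset \cZ_\infty$ follows from the regularizing property of each Fredholm section $f_i - s_j$, and dimension constancy over connected components follows from constancy of the Fredholm index under $\ssc^+$-perturbation.

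The main technical point I expect is the verification of $\bm{G}(x)$-equivariance, which is needed so that the local branching data descends to a well-defined subgroupoid structure around $[x]$ rather than merely a local M-polyfold picture. Since $\hat{F}$ and $\hat{\Gamma}$ are honest functors and the local section structures are obtained from their local presentations, any morphism $g \in \bm{G}(x)$ acts via the natural representation on $U$ and permutes both the $f_i$ and the $s_j$ while preserving the weights $\sigma_i$ and $\tau_j$; consequently it permutes the pairs $(i,j)$ and hence the branches $M_{ij}$, preserving $w_{ij} = \sigma_i \tau_j$. A closely related subtlety is checking independence of the chosen local section structures: two different choices differ by a permutation of indices together with a refinement, and the double-indexed collection $\{M_{ij}\}$ transforms compatibly, so the resulting branching data is intrinsic to $(F,\Gamma)$. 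With equivariance and independence secured, the locally constructed branching data assembles into the required weighted branched suborbifold structure on $\cS(F,\Gamma)$.
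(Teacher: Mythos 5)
The paper offers no explicit proof of this proposition; it is stated as an ``analog'' of Theorem~\ref{thm:transversal-pairs-weighted-branched-suborbifolds} (itself cited from \cite[Thm.~4.13]{HWZ3}), with the surrounding text remarking only that there is ``no difficulty in generalizing the construction of transverse perturbations to Fredholm multisections.'' Your write-out is exactly the intended argument: the bi-indexed branches $M_{ij} = (f_i - s_j)^{-1}(0)$ with weights $\sigma_i\tau_j$ are the natural generalization of the single-indexed branching structure noted in Remark~\ref{rmk:relationship-local-section-structures-local-branching-structures}, transversality and the M-polyfold implicit function theorem furnish the local submanifold structure, the regularizing property of the $f_i - s_j$ gives $\cS(F,\Gamma) \subset \cZ_\infty$, and functoriality of $\hat{F}$ and $\hat{\Gamma}$ supplies the $\bm{G}(x)$-equivariance needed for the data to descend to a subgroupoid. (Incidentally, you silently corrected the paper's $\Lambda$/$\Gamma$ typo in the proposition statement, which is the right reading.)
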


\subsubsection{Controlling compactness of Fredholm multisections}

In contrast to construction of transverse perturbations of Fredholm multisections, where no modification of the underlying definitions or ideas was required, it is somewhat more involved to show how to control the compactness of Fredholm multisections.
It is necessary to refer to the earlier work contained in \cite[\S~4.2]{HWZ3} in order to obtain complete results in our current situation.

\begin{definition}
	Consider a Fredholm multisection and a point $[z] \in \cZ$.
	Let $(f_i)_{i\in I}$ be a local section structure for $F$ at a representative $z$.
	Let $N:\cW[1]\to [0,\infty)$ be an auxiliary norm with associated $\ssc^0$-functor $\hat{N}:W[1]\to [0,\infty)$; as in \cite[p.~434]{HWZbook}, we may extend $N$ to all of $\cW$ by defining $N([w]):= +\infty$ for $[w]\in \cW[0] \setminus \cW[1]$, and likewise extend $\hat{N}$ to all of $W$.
	We define the \textbf{min norm of the Fredholm multisection} $F$ at $[z]$ by the equation
	\[
	N_{\min} (F) [z] := \min_{i\in I} \{	\hat{N} (	f_i(z)	)	\}.
	\]
\end{definition}

\begin{definition}
	Let $P:\cW\to \cZ$ be a strong polyfold bundle, let $F$ be a $\ssc$-smooth proper Fredholm multisection, and let $N :\cW\to [0,\infty)$ be an extended auxiliary norm.
	
	Consider an open neighborhood $\cU$ of the unperturbed solution set $\mathcal{S}(F)\subset \cZ$.
	We say that the pair $(N,\cU)$ \textbf{controls the compactness} of $F$ provided the set 
	\[
	cl_\cZ \{[x]\in \cU \mid N_{\min} (F) [x]\leq 1\} \subset \cZ
	\]
	is compact.
\end{definition}

\begin{proposition}[Analog of {\cite[Thm.~4.5]{HWZ3}}]
	Let $P:\cW\to \cZ$ be a strong polyfold bundle, let $F$ be a $\ssc$-smooth proper Fredholm multisection, and let $N :\cW[1]\to [0,\infty)$ be an auxiliary norm.  Then there exists an open neighborhood $\cU$ of the unperturbed solution set $\mathcal{S}(F)$ such that the pair $(N,\cU)$ controls the compactness of $F$.
\end{proposition}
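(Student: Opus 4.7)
The plan is to reduce the multisection statement to the single--Fredholm-section case of Theorem~4.5 of \cite{HWZ3} by working locally and gluing via a finite cover of the compact unperturbed solution set $\cS(F)$.

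First, I would use the compactness of $\cS(F)$ together with the existence of a local Fredholm section structure at every point to cover $\cS(F)$ by finitely many $\bm{G}$-invariant open sets $\abs{U_\alpha}$ for $\alpha = 1,\dots,n$, on each of which $F$ admits a local Fredholm section structure $(f^\alpha_i)_{i\in I_\alpha}$, $(\sigma^\alpha_i)_{i\in I_\alpha}$. Each $f^\alpha_i : U_\alpha \to W|_{U_\alpha}$ is a $\ssc$-smooth Fredholm section on the open sub-polyfold $\abs{U_\alpha}$. Although $f^\alpha_i$ need not be proper there, the set $\cS(F) \cap \abs{U_\alpha}$ is compact, being closed in $\cS(F)$.

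Next, I would apply the local content of the proof of Theorem~4.5 of \cite{HWZ3} to each $f^\alpha_i$. That argument is local: at each point $[x_0]$ of the solution set the Fredholm-germ structure produces a neighborhood $\cU_{[x_0]}$ such that $\{[y] \in \cU_{[x_0]} : \hat{N}(f^\alpha_i([y])) \leq 1\}$ has compact closure. Covering the compact set $\cS(F) \cap \abs{U_\alpha}$ by finitely many such neighborhoods yields an open neighborhood $\cV^\alpha_i \subset \abs{U_\alpha}$ of $\cS(F) \cap \abs{U_\alpha}$ such that
\[
\mathrm{cl}_\cZ\{[x] \in \cV^\alpha_i : \hat{N}(f^\alpha_i([x])) \leq 1\}
\]
is compact in $\cZ$. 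Finally I would set $\cU := \bigcup_{\alpha=1}^n \bigcap_{i \in I_\alpha} \cV^\alpha_i$, which is an open neighborhood of $\cS(F)$ since each $\bigcap_{i \in I_\alpha} \cV^\alpha_i$ is a neighborhood of $\cS(F) \cap \abs{U_\alpha}$. For any $[x] \in \cU$ with $N_{\min}(F)[x] \leq 1$, there is some $\alpha$ with $[x] \in \bigcap_i \cV^\alpha_i$, and by definition of $N_{\min}$ some $i \in I_\alpha$ with $\hat{N}(f^\alpha_i([x])) \leq 1$; hence $[x]$ lies in the compact set displayed above. Consequently $\{[x] \in \cU : N_{\min}(F)[x] \leq 1\}$ is contained in the finite union
\[
\bigcup_{\alpha,\, i} \mathrm{cl}_\cZ\{[y] \in \cV^\alpha_i : \hat{N}(f^\alpha_i([y])) \leq 1\},
\]
which is compact, and taking closure preserves compactness.

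The main obstacle is justifying that the localized form of Theorem~4.5 of \cite{HWZ3} applies to the (possibly non-proper) Fredholm sections $f^\alpha_i$ on $\abs{U_\alpha}$. This requires revisiting the proof of the original theorem to verify that the construction of the compactness-controlling neighborhood is genuinely germ-theoretic, requiring only compactness of the piece of the zero set under consideration---here $\cS(F) \cap \abs{U_\alpha}$---rather than global properness of the section on the whole polyfold.
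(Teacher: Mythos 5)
The paper does not actually prove this proposition; it is stated only as an ``Analog of [HWZ3, Thm.~4.5]'' with the implicit understanding that the original argument transfers with minor modification. Your proof supplies the missing details in what is surely the intended way, and the argument is correct.

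Your reduction is the right one, and your flagged concern---that the compactness-controlling neighborhood in [HWZ3, Thm.~4.5] is produced by a genuinely local, germ-theoretic construction---is exactly the point one must check. It does hold: in that proof, the local precompactness estimate for $\{ [y] : N(f([y])) \leq 1\}$ near a solution $[x_0]$ comes from conjugating the Fredholm section to a contraction-germ normal form, which is a property of the Fredholm germ $(f, x_0)$ alone and does not invoke properness of $f$ on the whole polyfold; properness is used only to extract a finite subcover of the zero set. Two small points worth making explicit in a polished write-up: first, when you shrink to get $\cV^\alpha_i \subset \abs{U_\alpha}$, one should arrange that $\mathrm{cl}_\cZ\{[y] \in \cV^\alpha_i : \hat{N}(f^\alpha_i([y])) \leq 1\}$ is itself contained in $\abs{U_\alpha}$ (so the closure does not escape the domain on which $f^\alpha_i$ is defined), which the local normal-form estimate does provide after a further shrink; second, the step where $N_{\min}(F)[x] \leq 1$ yields some $i\in I_\alpha$ with $\hat{N}(f^\alpha_i(x)) \leq 1$ implicitly uses that $N_{\min}$ is well-defined independently of the choice of local Fredholm section structure, which follows because the set $\{w \in W_x : \hat{F}(w) > 0\}$ of section values is intrinsic to the functor $\hat{F}$.
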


\begin{proposition}[Analog of {\cite[Lem.~4.16]{HWZ3}}]
	\label{prop:fredholm-multisection-compactness}
	Let $P:\cW\to \cZ$ be a strong polyfold bundle, let $F$ be a $\ssc$-smooth proper Fredholm multisection, and let $(N,\cU)$ be a pair which controls compactness.
	If a $\ssc^+$-multisection $\Gamma$ satisfies $N[\Gamma] \leq 1$ and $\text{dom-supp} (\Gamma) \subset \cU$, then the perturbed solution set $\mathcal{S}(F,\Gamma)$ is compact.
\end{proposition}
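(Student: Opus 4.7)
The plan is to follow the structure of the proof of the analogous statement for Fredholm sections in \cite[Lem.~4.16]{HWZ3}, adapting it to the Fredholm multisection setting. The argument reduces to establishing a set-theoretic inclusion
\[
\cS(F,\Gamma) \subset \{[x]\in \cU \mid N_{\min}(F)[x] \leq 1\}
\]
together with the fact that $\cS(F,\Gamma)$ is closed in $\cZ$; the conclusion then follows because a closed subset of the compact set $\text{cl}_\cZ \{[x]\in \cU \mid N_{\min}(F)[x] \leq 1\}$ is itself compact.

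To establish the inclusion, I would fix a point $[z] \in \cS(F,\Gamma)$ and choose a representative $z$ together with a local Fredholm section structure $(f_i)_{i \in I}$ for $F$ and a local $\ssc^+$-section structure $(s_j)_{j \in J}$ for $\Gamma$ defined on a common $\bm{G}(z)$-invariant neighborhood (obtained by intersecting the two given neighborhoods). The condition $(F \oplus \Gamma)(0_{[z]}) > 0$ unpacks to the existence of indices $i, j$ with $f_i(z) = s_j(z)$. Containment in $\cU$ then follows by a dichotomy: if $s_j(z) = 0$, then $f_i(z) = 0$, so $[z] \in \cS(F) \subset \cU$; if $s_j(z) \neq 0$, then $s_j(z) \in \cW_{[z]} \setminus \{0\}$ satisfies $\Gamma(s_j(z)) > 0$, placing $[z]$ in $\text{dom-supp}(\Gamma) \subset \cU$. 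The norm bound exploits that $s_j$ takes values in $\cW[1]$, so $f_i(z) = s_j(z) \in \cW[1]$ (the place where the extended $\hat{N}$ is finite), and hence
\[
N_{\min}(F)[z] \leq \hat{N}(f_i(z)) = \hat{N}(s_j(z)) \leq N[\Gamma]([z]) \leq 1,
\]
where the middle inequality is the defining pointwise bound for a $\ssc^+$-multisection and the last is the hypothesis $N[\Gamma] \leq 1$.

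For the closedness of $\cS(F,\Gamma)$ in $\cZ$, I would argue locally: on a neighborhood $U$ of a representative $z$ equipped with local structures $(f_i)_{i \in I}$ and $(s_j)_{j \in J}$ as above, the trace $\cS(F,\Gamma) \cap \abs{U}$ coincides with the image under the quotient $Z \to \abs{Z}$ of the finite union of zero sets $\bigcup_{(i,j) \in I \times J} (f_i - s_j)^{-1}(0)$. Each $f_i - s_j$ is continuous, so this finite union is closed in $U$, and because the quotient map is open the image is closed in $\abs{U}$. Hence $\cS(F,\Gamma)$ is locally closed, and therefore closed in $\cZ$.

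The main obstacle is essentially bookkeeping: one must be careful that the local section structures for $F$ and $\Gamma$ can be chosen on a common neighborhood, and one must verify that the extended auxiliary norm $\hat{N}$ interacts correctly with $N_{\min}$---which it does precisely because the matching condition $f_i(z) = s_j(z)$ forces the relevant vector into $\cW[1]$, where $\hat{N}$ takes finite values. No essentially new ideas beyond those in the single-valued case of \cite{HWZ3} are needed.
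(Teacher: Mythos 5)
The paper leaves this proposition without proof, stating it only as an ``analog'' of the single-section compactness lemma \cite[Lem.~4.16]{HWZ3}; your argument is the expected adaptation of that lemma, and it is correct in substance: you establish the inclusion $\cS(F,\Gamma)\subset\{[x]\in\cU\mid N_{\min}(F)[x]\leq 1\}$ and closedness of $\cS(F,\Gamma)$ in $\cZ$, and then appeal to compactness of the controlling set. The case split via $s_j(z)=0$ versus $s_j(z)\neq 0$ to land in $\cS(F)\subset\cU$ or $\text{dom-supp}(\Gamma)\subset\cU$, and the chain
$N_{\min}(F)[z]\leq\hat N(f_i(z))=\hat N(s_j(z))\leq N[\Gamma]([z])\leq 1$ using that $\ssc^+$-sections take values in $\cW[1]$, is exactly the right structure.

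One small imprecision in the closedness step: openness of the quotient map $\pi\colon Z\to\abs{Z}$ does \emph{not} by itself imply that images of closed sets are closed. What makes the argument work is that the set $C:=\bigcup_{i,j}(f_i-s_j)^{-1}(0)\cap U$ is not only closed but also \emph{saturated} in $U$ --- this follows because $\hat F\oplus\hat\Gamma$ is a functor, so $C=\{x\in U\mid (\hat F\oplus\hat\Gamma)(0_x)>0\}$ is invariant under the morphisms between points of $U$. For a saturated closed set one has that $U\setminus C$ is saturated and open, hence $\pi(U\setminus C)=\abs{U}\setminus\pi(C)$ is open, and $\pi(C)$ is closed. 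Equivalently, and perhaps cleaner: $\cS(F,\Gamma)$ is closed in $\cZ$ iff its full preimage $S(\hat F,\hat\Gamma)\subset Z$ is closed, and the latter is checked directly from the local description $S(\hat F,\hat\Gamma)\cap U=C$ without reference to openness of $\pi$. With this fix the proof is complete.
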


\subsubsection{Regular perturbations and compact cobordism}

Let $P:\cW\to \cZ$ be a strong polyfold bundle, let $F$ be a $\ssc$-smooth proper Fredholm multisection, and let $(N, \cU)$ be a pair which controls compactness.
We say a $\ssc^+$-multisection $\Gamma$ is a \textbf{regular perturbation} of $F$ with respect to the pair $(N,\cU)$ if it satisfies the following:
	\begin{itemize}
		\item $(F, \Gamma)$ is a transversal pair,
		\item $N[\Gamma] \leq 1$ and $\text{dom-supp} (\Gamma) \subset \cU$.
	\end{itemize}
As in \cite[Cor.~15.1]{HWZbook}, one can prove that there exist regular perturbations $\Gamma$ of $F$ with respect to the pair $(N,\cU)$.
It follows from Proposition~\ref{prop:fredholm-multisection-transversal-pairs-weighted-branched-suborbifolds} and Proposition~\ref{prop:fredholm-multisection-compactness} that the perturbed solution space $\cS(F,\Gamma)$ has the structure of a compact weighted branched suborbifold, with weight function given by $\cS(F,\Gamma) \to \Q^+,$ $[z]\mapsto (F\oplus \Gamma)(0_{[z]})$.

Furthermore, as in \cite[Cor.~15.1]{HWZbook} one can prove the existence of a compact cobordism between perturbed solution sets of regular perturbations.

\subsubsection{Cobordism from a transversal Fredholm multisection to a regular perturbation}
	\label{subsubsec:cobordism-multisection-regular}

Having developed the above generalization to Fredholm multisections, we are finally in a position to state the desired specialized result, Proposition~\ref{prop:cobordism-multisection-regular}.

Consider a strong polyfold bundle $P:\cW \to \cZ$ and a $\ssc$-smooth proper Fredholm section $\delbar$.
Suppose that $(N_0,\cU_0)$ is a pair which controls the compactness of $\delbar$.
Consider a $\ssc^+$-multisection $\Lambda$ and suppose that $(\delbar,\Lambda)$ is a transversal pair.
(Note that we do not assume that $\Lambda$ is admissible to a pair which controls compactness.)
Now, consider the strong polyfold bundle $\cW\times[0,1] \to \cZ\times[0,1]$, and consider a $\ssc$-smooth Fredholm multisection $(\tdelbar,\tilde{\Lambda})$ defined as follows:
	\begin{itemize}
		\item $\tdelbar$ is the $\ssc$-smooth Fredholm section defined by $([z],s)\mapsto (\delbar([z]),s)$,
		\item $\tilde{\Lambda}$ is the $\ssc^+$-multisection defined for $s\neq 0$ by
		$([w],s)\mapsto \Lambda(1/s \cdot [w])$ and for $s=0$ by
			\[
			([w],0)	\mapsto 
			\begin{cases}
				1, &\text{if } [w]=[0], \\
				0, &\text{if } [w]\neq [0],
			\end{cases}
			\]
		and
		whose local section structure at an object $(x,s)$ is defined by $O_x\times [0,1] \to W \times [0,1]; (x,s) \mapsto (s\cdot s_i(x), s)$ (where $(s_i)$ is the original local section structure for $\Lambda$ at the object $x\in Z$).
	\end{itemize}
Moreover, let us assume that the Fredholm multisection $(\tdelbar,\tilde{\Lambda})$ is proper, i.e., the solution set $\cS(\tdelbar,\tilde{\Lambda})$ is compact.

Observe that the topological boundary of $\cS(\tdelbar,\tilde{\Lambda})$ is given by the following set:
	\[
	\partial \cS(\tdelbar,\tilde{\Lambda}) = \cS(\delbar) \sqcup \cS(\delbar,\Lambda).
	\]
By the assumption that $(\delbar, \Lambda)$ is a transversal pair $\cS(\delbar,\Lambda)$ is a weighted branched orbifold; moreover it is a closed subset of $\cS(\tdelbar, \tilde{\Lambda})$ and is therefore compact.
We emphasize that since $\Lambda$ is not admissible to a pair which controls compactness, it is not a regular perturbation (see Definition~\ref{def:regular-perturbation}) and hence cannot be used to define polyfold invariants.
We can almost consider $\cS(\tdelbar,\tilde{\Lambda})$ as a compact cobordism, except $\delbar$ is not assumed to be transverse and hence $\cS(\delbar)$ is not assumed to have the structure of a weighted branched suborbifold.

The following proposition demonstrates how to perturb the solution space $\cS(\tdelbar,\tilde{\Lambda})$ in order to obtain a compact cobordism between $\cS(\delbar,\Lambda)$ and a perturbed solution space $\cS(\delbar,\Gamma_0)$ where $\Gamma_0$ is a \emph{regular} perturbation.

\begin{proposition}
	\label{prop:cobordism-multisection-regular}
    Suppose that $\Gamma_0$ is a regular perturbation of $\delbar$ with respect to the pair $(N_0,\cU_0)$,
	There exists a pair $(N,\cU)$ which controls the compactness of the Fredholm multisection $(\tdelbar,\tilde{\Lambda})$ and which satisfies the following:
	\begin{itemize}
		\item the auxiliary norm $N: \cW\times [0,1] \to \Q^+$ restricts to $N_0$ on $\cW\times \{0\}$,
		\item the open neighborhood $\cU$ of $\cS(\tdelbar,\tilde{\Lambda})$ satisfies $\cU \cap (\cZ\times \{0\}) = \cU_0$.
	\end{itemize}
	Moreover, there exists a regular perturbation $\Gamma$ of $(\tdelbar,\tilde{\Lambda})$ with respect to the pair $(N,\cU)$ such that $\Gamma|_{\cW\times \{0\}}$ can be identified with $\Gamma_0$ and such that $\Gamma|_{\cW\times\{1\}} \equiv 0$.
\end{proposition}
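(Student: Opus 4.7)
The plan is to mimic the construction of a compact cobordism between regular perturbations from Theorem~\ref{thm:cobordism-between-regular-perturbations}, adapted to the Fredholm multisection setting developed in this subsection.

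First, I would construct the auxiliary norm by pulling back, setting $\hat{N}(w,s):= \hat{N}_0(w)$ for $(w,s)\in W\times[0,1]\equiv (\cW\times[0,1])[1]$. The fiberwise completeness and the sequential property in Definition~\ref{def:auxiliary-norm} descend immediately from the corresponding properties of $N_0$, and by construction $N|_{\cW\times\{0\}}=N_0$. Next, I would construct the neighborhood $\cU$. By the analog of \cite[Thm.~4.5]{HWZ3} for Fredholm multisections, there exists some open neighborhood $\cU'$ of $\cS(\tdelbar,\tilde{\Lambda})$ such that $(N,\cU')$ controls compactness of $(\tdelbar,\tilde{\Lambda})$. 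Because $\cS(\tdelbar,\tilde{\Lambda})\cap(\cZ\times\{0\})=\cS(\delbar)\times\{0\}\subset\cU_0\times\{0\}$ and $\cS(\tdelbar,\tilde{\Lambda})$ is compact, there is $\ep>0$ with $\cS(\tdelbar,\tilde{\Lambda})\cap(\cZ\times[0,\ep))\subset\cU_0\times[0,\ep)$. Gluing,
\[
\cU:= \bigl(\cU'\cap (\cZ\times(\ep/2,1])\bigr)\cup \bigl(\cU_0\times[0,2\ep/3)\bigr),
\]
is an open neighborhood of $\cS(\tdelbar,\tilde{\Lambda})$ which satisfies $\cU\cap(\cZ\times\{0\})=\cU_0$; since $\cU\subset\cU'$ it still controls compactness by the analog of Remark~\ref{rmk:shrink-neighborhood}.

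Next I would construct $\Gamma$. Fix a $\ssc$-smooth cutoff $\beta:[0,1]\to[0,1]$ with $\beta\equiv 1$ near $0$ and $\beta\equiv 0$ near $1$, and set $\Gamma'$ to be the $\ssc^+$-multisection whose local section structures are obtained from those of $\Gamma_0$ by the rule $r_k(x)\mapsto \beta(s)\cdot r_k(x)$ with unchanged weights. Then $\Gamma'|_{s=0}=\Gamma_0$ and $\Gamma'|_{s=1}\equiv 0$, and since $0\leq\beta\leq 1$ one also has $N[\Gamma']\leq N_0[\Gamma_0]\leq 1$ and $\text{dom-supp}(\Gamma')\subset \text{dom-supp}(\Gamma_0)\times[0,1]\subset\cU$. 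However, $((\tdelbar,\tilde{\Lambda}),\Gamma')$ may fail to be transversal in the interior $s\in(0,1)$, and I would correct this via the general position argument of Corollary~\ref{prop:existence-regular-perturbations} relative to the boundary. At $s=0$ the pair reduces to $(\delbar,\Gamma_0)$, which is transversal by hypothesis, and at $s=1$ it reduces to $(\delbar,\Lambda)$, which is transversal since $\Lambda$ was assumed to be transversal to $\delbar$. Using \cite[Lem.~5.3]{HWZbook} together with the analog of Theorem~\ref{thm:transversality} for Fredholm multisections, one can choose a small additional $\ssc^+$-multisection $\Gamma''$ with $\text{dom-supp}(\Gamma'')\subset\cU\cap(\cZ\times(0,1))$ and with $N[\Gamma'\oplus\Gamma'']\leq 1$, such that $((\tdelbar,\tilde{\Lambda}),\Gamma'\oplus\Gamma'')$ is transversal everywhere. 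Setting $\Gamma:=\Gamma'\oplus\Gamma''$, the support condition ensures that $\Gamma|_{s=0}=\Gamma_0$ and $\Gamma|_{s=1}\equiv 0$ are preserved, and by Proposition~\ref{prop:fredholm-multisection-compactness} the perturbed solution space $\cS((\tdelbar,\tilde{\Lambda}),\Gamma)$ is compact.

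The main obstacle is the relative general position step: one must produce the correction $\Gamma''$ that simultaneously (i) achieves transversality of $((\tdelbar,\tilde{\Lambda}),\Gamma)$ in the interior without disturbing the already-transverse boundary pairs, (ii) keeps $N[\Gamma]\leq 1$ so the compactness control is preserved, and (iii) stays supported in $\cU\cap(\cZ\times(0,1))$. This is handled exactly as in the existence proof of regular perturbations, but applied to the Fredholm multisection $(\tdelbar,\tilde{\Lambda})$: one covers the set where transversality can fail with countably many local $\bm{G}$-invariant charts whose closures avoid $\cZ\times\{0,1\}$, on each chart produces locally defined $\ssc^+$-sections taking prescribed generic cokernel values via \cite[Lem.~5.3]{HWZbook}, and then assembles them using a $\ssc$-smooth partition of unity with weights small enough to maintain the norm bound. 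Cutting off these local sections by the bump $\beta(s)(1-\beta(1-s))$ (or similar) ensures they die off before reaching either boundary slice.
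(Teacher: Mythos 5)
Your proposal takes essentially the same route the paper indicates: the paper's proof is a one-line reduction to the construction of Theorem~\ref{thm:cobordism-between-regular-perturbations}, noting that no perturbation is needed near $s=1$ because $(\delbar,\Lambda)$ is already transversal. You correctly identify and exploit that same observation, and your strategy of tapering $\Gamma_0$ by a bump function $\beta$ and then correcting transversality in the interior via a relative general-position argument is exactly the intended filling-in of those details.

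Two small issues in the write-up. First, in your construction of $\cU$ you write ``since $\cU\subset\cU'$ it still controls compactness,'' but this inclusion does not hold: the piece $\cU_0\times[0,2\ep/3)$ has no reason to lie inside $\cU'$, so the shrinking remark (the analog of Remark~\ref{rmk:shrink-neighborhood}) does not directly apply. To repair this one should either first arrange $\cU'$ so that $\cU'\cap(\cZ\times[0,\ep))\subset\cU_0\times[0,\ep)$ (which is possible by compactness of $\cS(\tdelbar,\tilde{\Lambda})$ and then taking the boundary slice to be $\cU_0$), or simply defer the existence of a pair with prescribed boundary behavior to the proof of Theorem~\ref{thm:cobordism-between-regular-perturbations}, as the paper does, rather than building it ad hoc. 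Second, the parenthetical cutoff $\beta(s)(1-\beta(1-s))$ does not vanish near $s=0$ (it equals $1$ there under your conventions for $\beta$), so it does not die off at both boundary slices as you need for $\Gamma''$; you want an independent bump $\gamma:[0,1]\to[0,1]$ vanishing near both $0$ and $1$. Since you wrote ``(or similar)'' this is clearly a slip rather than a conceptual error, but as written the formula fails the stated requirement.
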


The proof of this proposition follows the same reasoning used to prove Theorem~\ref{thm:cobordism-between-regular-perturbations}, noting in addition that  we do not need to perturb in a neighborhood of $\cZ\times\{1\}$, as by assumption $(\delbar,\Lambda)$ is a transversal pair.

\subsection{Intermediary subbundles and naturality of polyfold invariants}
	\label{subsec:intermediary-subbundles-naturality}

Consider a commutative diagram as follows,
\begin{equation}\label{eq:commutative-diagram-naturality}
	\begin{tikzcd}
	\cW_1 \arrow[r, "\iota_\cW"', hook] \arrow[d, "\delbar_1\quad "'] & \cW_2 \arrow[d, "\quad \delbar_2"] &  \\
	\cZ_1 \arrow[r, "\iota_\cZ"', hook] \arrow[u, bend left] & \cZ_2 \arrow[u, bend right] & 
	\end{tikzcd}
\end{equation}
where:
\begin{itemize}
	\item $\cW_i \to \cZ_i$ are strong polyfold bundles for $i=1,2$.
	\item $\delbar_i$ are $\ssc$-smooth proper oriented Fredholm sections of the same index for $i=1,2$.
	\item $\iota_\cZ :\cZ_1 \hookrightarrow \cZ_2$ is a $\ssc$-smooth injective map, and the associated functor between polyfold structures $\hat{\iota}_\cZ : (Z_1,\bm{Z}_1) \hookrightarrow (Z_2,\bm{Z}_2)$ is fully faithful and is also an injection on both the object and the morphism sets.
	\item $\iota_\cW:\cW_1\hookrightarrow\cW_2$ is a $\ssc$-smooth injective map, and the associated functor between polyfold strong bundle structures $\hat{\iota}_\cW :(W_1,\bm{W}_1) \hookrightarrow (W_2,\bm{W}_2)$ is fully faithful, and is also an injection on both the object and the morphism sets.  Moreover, $\hat{\iota}$ is a bundle map (i.e., restricts to a linear map on the fibers).
	\item $\cS(\delbar_2) \subset \text{Im} (\iota_\cZ)$.
\end{itemize}

In order to deal with orientations, consider the following.
Consider a smooth object $x\in (Z_1)_\infty$ which maps to $y:= \hat{\iota}_\cZ \in (Z_2)_\infty$.
Consider a locally defined $\ssc^+$-section $s:' U \to W_2$ defined on an open neighborhood $U\subset Z_2$ of $y$, which satisfies $s'(y) = \hdelbar_2 (y)$.
Assume that this $\ssc^+$-section has a well-defined restriction $s'|_{U \cap \hat{\iota}_\cZ (Z_1)} : U \cap \hat{\iota}_\cZ (Z_1) \to \hat{\iota}_\cW (W_1)$, which induces a $\ssc^+$-section $s: \hat{\iota}_\cZ^{-1}(U) \to W_1$ which moreover satisfies $s(x) = \hdelbar_1 (x)$.
We therefore have a commutative diagram.
	\[
	\begin{tikzcd}
	T_x W_1 \arrow[r, "D\hat{\iota}_\cW"'] & T_y W_2 &  \\
	T_x Z_1 \arrow[r, "D\hat{\iota}_\cZ"'] \arrow[u, "D(\hdelbar_1-s)(x)"] & T_y Z_2 \arrow[u, "D(\hdelbar_2-s')(y)"'] & 
	\end{tikzcd}
	\]
Consider the following maps: $D\hat{\iota}_\cZ: \ker (D(\hdelbar_1-s)(x))) \to \ker (D(\hdelbar_2-s')(y))$, and $D\hat{\iota}_\cW: \text{Im} (D(\hdelbar_1-s)(x))) \to \text{Im} (D(\hdelbar_2-s')(y))$, which therefore induces a map $\coker (D(\hdelbar_1-s)(x))) \to \coker (D(\hdelbar_2-s')(y))$.
These maps induce a map between the determinant real lines
	\begin{gather*}
	\det (D(\hdelbar_1-s)(x))) = \Lambda^{\max} (\ker (D(\hdelbar_1-s)(x)))) \otimes (\Lambda^{\max} (\coker (D(\hdelbar_1-s)(x)))))^*,	\\
	\det (D(\hdelbar_2-s')(y))) = \Lambda^{\max} (\ker (D(\hdelbar_2-s')(y))) \otimes (\Lambda^{\max} (\coker (D(\hdelbar_2-s')(y)))	)^*	.
	\end{gather*}

\begin{itemize}
	\item	Assume that the induced map between the determinants
				\[\hat{\iota}_* : \det (D(\hdelbar_1-s)(x))) \to \det (D(\hdelbar_2-s')(y))\]
			is an isomorphism. Moreover, assume that this isomorphism is orientation preserving, with respect to the chosen orientations of $\hdelbar_1$ at the point $x$ and $\hdelbar_2$ at the point $y$ (see Definition~\ref{def:oriented-Fredholm}).
\end{itemize}

Returning to the main discussion, it follows from commutativity of \eqref{eq:commutative-diagram-naturality} that $\iota_\cZ$ restricts to a continuous bijection between the unperturbed solution sets,
	\[\iota_\cZ |_{\cS(\delbar_1)} : \cS(\delbar_1)\to \cS(\delbar_2).\]
In fact, this map is a homeomorphism as can be shown via point-set topology, noting that $\cS(\delbar_1)$ is compact and $\cS(\delbar_2)$ is Hausdorff (see \cite[Rmk.~3.1.15]{MWtopology}).

In order to compare the polyfold invariants, suppose we also have a commutative diagram
	\begin{equation}\label{eq:gw-invariant-pair-of-polyfolds}
	\begin{tikzcd}
	&  & \cO \\
	\cZ_1 \arrow[r, "\iota_\cZ"', hook] \arrow[rru, "f_1"] & \cZ_2 \arrow[ru, "f_2"'] & 
	\end{tikzcd}
	\end{equation}
where:
\begin{itemize}
	\item $\cO$ is a finite-dimensional orbifold.
	\item $f_i$ are $\ssc$-smooth maps for $i=1,2$.
\end{itemize}

\begin{definition}\label{def:intermediate-subbundle}
	We define an  \textbf{intermediary subbundle} as a subset $\cR \subset \cW_2$ which satisfies the following properties.
	\begin{enumerate}
		\item Let $(R,\bm{R})$ be the associated subgroupoid of $\cR$. Then for every object $x\in Z_2$ we require that the fiber $R_x : = R \cap (W_2)_x$ is a vector subspace of $(W_2)_x$. (Note that we do not require that $R_x$ is complete.)
		\item \label{property-2-intermediary-subbundle} For any point $[x] \in \cZ_2$, if $\delbar_2 ([x]) \in \cR$ then $[x] \in \iota_\cZ(\cZ_1)$. (Equivalently, for any object $x\in Z_2$, if $\hdelbar_2 (x) \in R$ then $x \in \hat{\iota}_\cZ (Z_1)$.)
		\item \label{property-3-intermediary-subbundle}
		Given $[x_0] \in \cS(\delbar_1) \simeq \cS(\delbar_2)$, let $V\subset U\subset Z_2$ be $\bm{G}(x_0)$-invariant open neighborhoods of a representative $x_0\in Z_2$ such that $\overline{V} \subset U$.
		We require that there exist $\ssc^+$-sections
			\[
			s'_i :U \to W_2, \qquad 1\leq i \leq k
			\]
		which have well-defined restrictions $s'_i|_{U\cap \hat{\iota}_\cZ (Z_1)} : U\cap \hat{\iota}_\cZ (Z_1) \to \hat{\iota}_\cW (W_1)$. These restrictions induce sections $s_i : \hat{\iota}_\cZ^{-1} (U) \to W_1$ which we require to be $\ssc^+$ with respect to the M-polyfold structures on $Z_1$ and $W_1$.
		We require that:
			\begin{itemize}
			\item $s'_i (U) \subset R$,
			\item $s'_i= 0$ on $U\setminus V$,
			\item $\text{span}\{s'_1(x_0),\ldots , s'_k(x_0)\} \oplus \text{Im}(D\hdelbar_2(x_0)) = (W_2)_{x_0},$
			\item $\text{span}\{s_1(x_0),\ldots , s_k(x_0)\} \oplus \text{Im}(D\hdelbar_1(x_0)) = (W_1)_{x_0}.$
			\end{itemize}
		\item In addition, given a pair $(N_2,\cU_2)$ which controls the compactness of $\delbar_2$, we require that these $\ssc^+$-sections satisfy the following:
			\begin{itemize}
			\item $\hat{N}_2[s'_i] \leq 1,$
			\item $\abs {\supp (s'_i)}\subset \cU_2$.
			\end{itemize}
	\end{enumerate}
\end{definition}

Despite the lengthy properties that a intermediary subbundle must satisfy, in practice such subbundles are easy to construct, as we demonstrate in \S~\ref{subsec:independence-sequence} and \S~\ref{subsec:independence-punctures}.

We may now prove Theorem~\ref{thm:naturality-polyfold-invariants}, which we restate in order to be consistent with our current notation.

\begin{theorem}
	\label{thm:naturality}
	Suppose there exists an intermediary subbundle $\cR \subset \cW_2$. Then the polyfold invariants for $\cZ_1$ and $\cZ_2$ defined via the branched integral are equal.
	This means that, given a de Rahm cohomology class $\ww\in H^*_{\dR} (\cO)$ the branched integrals over the perturbed solution spaces are equal,
		\[
		\int_{\cS (\delbar_1, p_1)} f_1^* \ww = \int_{\cS(\delbar_2,p_2)} f_2^* \ww,
		\]
	for any choices of regular perturbations.
\end{theorem}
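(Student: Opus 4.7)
The plan is to use the intermediary subbundle $\cR$ to build, out of a pair $(N_2,\cU_2)$ controlling the compactness of $\delbar_2$, a regular perturbation $\Lambda'$ of $\delbar_2$ whose local $\ssc^+$-section structure is drawn entirely from $\cR$, and then to argue that $\Lambda'$ \emph{restricts}, via $\hat\iota_\cW$, to a transversal $\ssc^+$-multisection $\Lambda$ of $\delbar_1$. Concretely, at each point $[x_0]\in\cS(\delbar_2)\simeq \cS(\delbar_1)$ property (3) of Definition~\ref{def:intermediate-subbundle} supplies $\ssc^+$-sections $s'_i$ on a $\bm{G}(x_0)$-invariant neighborhood whose values lie in $R$, which span a complement to $\operatorname{Im}(D\hdelbar_2(x_0))$, which vanish outside a smaller neighborhood, and which pull back along $\hat\iota_\cW$ to $\ssc^+$-sections $s_i$ of $\cW_1$ spanning a complement to $\operatorname{Im}(D\hdelbar_1(x_0))$. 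Following the standard construction of \cite[Thm.~15.4]{HWZbook}, the $s'_i$ can be assembled (after scaling, averaging over $\bm{G}(x_0)$, and patching with a locally finite family) into a regular perturbation $\Lambda'$ of $\delbar_2$ with respect to $(N_2,\cU_2)$, and by construction $\Lambda'$ descends to a transversal $\ssc^+$-multisection $\Lambda$ for $\delbar_1$.

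Next I show the resulting inclusion of perturbed solution spaces
\[
\iota_\cZ|_{\cS(\delbar_1,\Lambda)}:\cS(\delbar_1,\Lambda)\hookrightarrow \cS(\delbar_2,\Lambda')
\]
is a homeomorphism. Property (2) of the intermediary subbundle forces this map to be a continuous bijection (since $\hdelbar_2(x)=s'_i(x)\in R$ already places $x\in \hat\iota_\cZ(Z_1)$), and by Remark~\ref{rmk:relationship-local-section-structures-local-branching-structures} the local branching structures on the two sides are indexed by the same $i$ and related branch-by-branch through $\hat\iota_\cZ$. Thus the hypotheses of Lemma~\ref{lem:invariance-of-domain-branched-orbifolds} are met, and the bijection is a homeomorphism onto a compact space; in particular $\cS(\delbar_1,\Lambda)$ is compact, even though $\Lambda$ need not be admissible to any pair controlling the compactness of $\delbar_1$ on $\cZ_1$. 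The weight functors match tautologically because the defining local section structures match, and orientations propagate through the commutative diagram of linearizations via the assumption that $\hat\iota_*$ is orientation preserving on determinants. Theorem~\ref{thm:change-of-variables}, together with the identity $f_2\circ \iota_\cZ=f_1$ coming from \eqref{eq:gw-invariant-pair-of-polyfolds}, then yields
\[
\int_{\cS(\delbar_1,\Lambda)} f_1^*\ww \;=\; \int_{\cS(\delbar_2,\Lambda')} f_2^*\ww.
\]

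It remains to replace $\Lambda$ on the left by an \emph{arbitrary} regular perturbation $p_1$, and $\Lambda'$ on the right by an arbitrary regular perturbation $p_2$. For the right-hand side this is immediate: both $\Lambda'$ and $p_2$ are regular perturbations of $\delbar_2$, so Theorem~\ref{thm:cobordism-between-regular-perturbations} furnishes a compact oriented weighted branched cobordism in $[0,1]\times\cZ_2$ between them, and Stokes' theorem~\ref{thm:stokes} (applied to $d(f_2^*\ww)=0$) equates the integrals. The left-hand side is the genuine technical core, because $\Lambda$ is \emph{not} a regular perturbation in the sense of Definition~\ref{def:regular-perturbation}; here I invoke the Fredholm multisection machinery of \S~\ref{subsec:fredholm-multisections}. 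Choose a pair $(N_0,\cU_0)$ controlling the compactness of $\delbar_1$ and a regular perturbation $p_1$ with respect to it. Form the $\ssc$-smooth Fredholm multisection $(\tdelbar_1,\tilde\Lambda)$ on $\cW_1\times[0,1]\to\cZ_1\times[0,1]$ as described just before Proposition~\ref{prop:cobordism-multisection-regular}; its unperturbed solution set is the compact interpolation $\cS(\delbar_1)\sqcup\cS(\delbar_1,\Lambda)$ (compactness of the latter being precisely what was just established). Proposition~\ref{prop:cobordism-multisection-regular} then produces a regular perturbation $\Gamma$ yielding a compact oriented weighted branched cobordism with boundary identified with $-\cS(\delbar_1,\Lambda)\sqcup \cS(\delbar_1,p_1)$, and Stokes' theorem again equates the branched integrals of $f_1^*\ww$ over the two boundary pieces.

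Stringing the three equalities together gives the desired identity. The main obstacles are the two already flagged in the introduction: first, that the continuous bijection between perturbed solution spaces carries no a priori control on compactness of the source and must be upgraded to a homeomorphism through the non-trivial branched invariance-of-domain argument (Lemma~\ref{lem:invariance-of-domain-branched-orbifolds}); and second, that the induced perturbation $\Lambda$ on $\cZ_1$ fails to be regular, so that the comparison with a genuine regular perturbation $p_1$ is only accessible through the Fredholm multisection cobordism Proposition~\ref{prop:cobordism-multisection-regular}. The rest of the argument is essentially bookkeeping: verifying that the orientation and weight hypotheses of Theorem~\ref{thm:change-of-variables} hold, and that the standard inductive patching construction of $\Lambda'$ stays inside $\cR$ and obeys the admissibility bounds from property (4) of Definition~\ref{def:intermediate-subbundle}.
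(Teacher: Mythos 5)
Your architecture matches the paper's closely: build a perturbation on $\cZ_2$ from property~\ref{property-3-intermediary-subbundle} whose restriction along $\hat\iota_\cW$ lands in $\cW_1$, upgrade the inclusion of perturbed solution sets to a homeomorphism via Lemma~\ref{lem:invariance-of-domain-branched-orbifolds}, apply the change of variables theorem, compare with $p_2$ on the $\cZ_2$ side by the standard cobordism, and compare with $p_1$ on the $\cZ_1$ side via Proposition~\ref{prop:cobordism-multisection-regular}. The execution, however, has a genuine gap at this last step. Proposition~\ref{prop:cobordism-multisection-regular} is stated under the standing hypothesis of \S~\ref{subsubsec:cobordism-multisection-regular} that the Fredholm multisection $(\tdelbar_1,\tilde\Lambda)$ is \emph{proper}, i.e.\ that $\cS(\tdelbar_1,\tilde\Lambda)\subset\cZ_1\times[0,1]$ is compact. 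You assert that "its unperturbed solution set is the compact interpolation $\cS(\delbar_1)\sqcup\cS(\delbar_1,\Lambda)$," but that set is only the topological boundary $\partial\cS(\tdelbar_1,\tilde\Lambda)$; the solution set of the multisection is the entire one-parameter family $\{([z],s)\mid\hdelbar_1(z)=s\cdot s_i(z)\ \text{for some}\ i\}$, and compactness of a cobordism's boundary does not imply compactness of the cobordism. Since $\Lambda$ is, by your own observation, not admissible to any pair controlling compactness on $\cZ_1$, you cannot appeal to the $\cZ_1$-side compactness theory directly; the only available route is to transfer compactness from $\cZ_2$, where $\Lambda_2$ is controlled by $(N_2,\cU_2)$, by extending the invariance-of-domain homeomorphism to the one-parameter family $s\mapsto\Lambda_1(\cdot,s t_0)$ (the paper's Step~4). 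Your proposal passes to a single slice of the parametrized family too early to make this transfer, and without it Proposition~\ref{prop:cobordism-multisection-regular} cannot be invoked.

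There is a secondary gap, closeable by the same fix: "by construction $\Lambda'$ descends to a transversal $\ssc^+$-multisection $\Lambda$ for $\delbar_1$" slides past the Sard step. The general-position construction yields $\Lambda'=\Lambda_2(\cdot,t_0)$ for $t_0$ a regular value of the projection $\cS(\tdelbar_2,\Lambda_2)\to B_\ep^N$; there is no reason such a $t_0$ is also regular for the projection $\cS(\tdelbar_1,\Lambda_1)\to B_\ep^N$, since the homeomorphism between thickened solution sets is only a homeomorphism, not a diffeomorphism. What property~\ref{property-3-intermediary-subbundle} actually gives is transversality of \emph{both} thickened problems, so both projections have full-measure sets of regular values and you may choose a \emph{common} $t_0$---but this must be stated. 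Both gaps close simultaneously by keeping the parametrized family $\Lambda_i(\cdot,t)$, together with the homeomorphism $\tilde\iota_\cZ|_{\cS(\tdelbar_1,\Lambda_1)}$ of thickened solution sets, alive throughout the Sard-and-compactness bookkeeping, exactly as the paper does in its Steps~1--4.
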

\begin{proof}
We prove the theorem in six steps.

\begin{itemize}[leftmargin=0em]
	\item[]\textbf{Step 1:} \emph{We use property \ref{property-3-intermediary-subbundle} of the intermediary subbundle to construct a transversal $\ssc^+$-multisection with a well-defined transversal restriction.}
\end{itemize}

At the outset, fix pairs $(N_i,\cU_i)$ which control the compactness of $\delbar_i$ for $i=1,2$. Consider a point $[x_0]\in \cS(\delbar_1)\simeq \cS(\delbar_2)$ and let $x_0\in Z_1$ be a representative with isotropy group $\bm{G}(x_0)$. Via the inclusion map $\hat{\iota}_\cZ$, we may identify $x_0$ with its image in $Z_2$ and note that we may also identify the isotropy groups.

We may use property \ref{property-3-intermediary-subbundle} of the intermediary subbundle to construct an $\ssc^+$-mul\-ti\-sec\-tion functor $\hat{\Lambda}'_0:W_2 \times B_\ep^k \to \Q^+$ 
with local section structure given by $\left\lbrace g * \left(\sum_{i=1}^k t_i \cdot s'_i\right)\right\rbrace_{g\in \bm{G}(x_0)}$ which satisfies the following.
There exists a $\bm{G}(x_0)$-invariant open neighborhood $x_0 \subset U'_0 \subset Z_2$ such that at any object $x\in U'_0$ and for any $g\in \bm{G}(x_0)$ the linearization of the function 
	\begin{align*}
	U'_0 \times B_\ep^k 	&\to W_2\\
	(x, t_1,\ldots,t_k)		&\mapsto \hdelbar_2 (x) - g * \left(\sum_{i=1}^k t_i \cdot s'_i(x)\right)
	\end{align*}
projected to the fiber $(W_2)_x$ is surjective.

Furthermore, property \ref{property-3-intermediary-subbundle} ensures that the functor
$\hat{\Lambda}_0 :=\hat{\Lambda}_0' ( \hat{\iota}_\cW (\cdot), \cdot ) :W_1 \times B_\ep^k \to \Q^+$ is also a $\ssc^+$-multisection functor, with local section structure $\left\lbrace g * \left(\sum_{i=1}^k t_i \cdot s_i\right)\right\rbrace_{g\in \bm{G}(x_0)}$ where the $\ssc^+$-sections $s_i$ are induced by the well-defined restrictions of the sections $s'_i$. Likewise, there exists a $\bm{G}(x_0)$-invariant open neighborhood $x_0 \subset U_0 \subset Z_1$ such that at any object $x\in U_0$ and for any $g\in \bm{G}(x_0)$ the linearization of the function $\hdelbar_1(x) - g * \left(\sum_{i=1}^k t_i \cdot s_i(x)\right)$ projected to the fiber $(W_1)_x$ is surjective.

We may cover the compact topological space $\cS(\delbar_2)$ by a finite collection of such neighborhoods $\abs{U'_i}$ of points $[x_i]\in \cS(\delbar_2)$; we may also cover $\cS(\delbar_1)$ by a finite collection of such neighborhoods $\abs{U_i}$ of points $[x_i]\in \cS(\delbar_1)$.
It follows that the finite sum of $\ssc^+$-multisections
	\[
	\Lambda_2:= \bigoplus_i \Lambda'_i : \cW_2 \times B_\ep^N \to \Q^+
	\]
has the property that: for any point $[x] \in \cZ_2$ with $\Lambda_2 \circ \delbar_2 ([x])>0$, and for any parametrized local section structure $\{s'_i\}_{i\in I}$ at a representative $x$, the linearization of the function $\hdelbar_2 (x) - s'_i(x,t)$ projected to the fiber $(W_2)_x$ is surjective.
Likewise, the finite sum of $\ssc^+$-multisections
	\[
	\Lambda_1 := \bigoplus_i \Lambda_i = \bigoplus_i \Lambda'_i (\iota_\cW(\cdot),\cdot) : \cW_1 \times B_\ep^N \to \Q^+
	\]
has the property that for any point $[x] \in \cZ_1$ which satisfies $\Lambda_1 \circ \delbar_1 ([x])>0$ and for any parametrized local section structure $\{s_i\}_{i\in I}$ at a representative $x$, the linearization of the function $\hdelbar_1 (x) - s_i(x,t)$ projected to the fiber $(W_1)_x$ is surjective. Observe moreover that the multisection sum commutes with composition and thus $\Lambda_1(\cdot,\cdot) = \Lambda_2 (\iota_\cW(\cdot),\cdot)$.

Furthermore for $\ep$ sufficiently small, for any fixed $t_0 \in B_\ep^N$ the $\ssc^+$-multisection $\Lambda_2 (\cdot, t_0)$ is controlled by the pair $(N_2,\cU_2)$, i.e.,
\begin{itemize}
	\item $N_2[\Lambda_2(\cdot,t_0)] \leq 1$,
	\item $\text{dom-supp}(\Lambda_2(\cdot,t_0)) \subset \cU_2$.
\end{itemize}

In contrast, $\Lambda_1(\cdot,t_0)$ will generally not be controlled by the pair $(N_1,\cU_1)$, as in general, 	
	\[
	\text{dom-supp}(\Lambda_1(\cdot,t_0)) = \iota_\cZ^{-1} (\text{dom-supp}(\Lambda_2(\cdot,t_0))) \nsubseteq \cU_1.
	\]

\begin{itemize}[leftmargin=0em]
	\item[]\textbf{Step 2:} \emph{We show the thickened solution sets satisfy the hypotheses of Lemma~\ref{lem:invariance-of-domain-branched-orbifolds}, and are therefore homeomorphic.}
\end{itemize}

Consider the strong polyfold bundle $\cW_i \times B_\ep^N \to \cZ_i \times B_\ep^N$ for $i=1,2$, and let $\tdelbar_i : \cZ_i \times B_\ep^N \to \cW_i \times B_\ep^N$ denote the $\ssc$-smooth proper Fredholm section defined by $([z],t)\mapsto (\delbar_i([z]),t)$.
By construction, $(\tdelbar_i, \Lambda_i)$ are transversal pairs; hence by Theorem~\ref{thm:transversal-pairs-weighted-branched-suborbifolds} the thickened solution sets
	\[
	\cS(\tdelbar_i, \Lambda_i) = \{ ([z],t) \in \cZ_i \times B_\ep^N \mid \Lambda_i (\tdelbar_i ([z],t)) >0 \} \subset \cZ_i\times B_\ep^N
	\]
have the structure of weighted branched orbifolds.

We now claim that these thickened solution sets satisfy the hypotheses of Lemma~\ref{lem:invariance-of-domain-branched-orbifolds}.
Indeed, commutativity of the diagram \eqref{eq:commutative-diagram-naturality} together with the equation $\Lambda_1(\cdot,\cdot) = \Lambda_2 (\iota_\cW(\cdot),\cdot)$ imply that the injective continuous map $\tilde{\iota}_\cZ: \cZ_1 \times B_\ep^N \to \cZ_2 \times B_\ep^N; ([z],t)\mapsto (\iota_\cZ([z]),t)$ has a well-defined restriction to the thickened solution sets,
	\begin{equation}\label{eq:restriction-to-thickening}
	\tilde{\iota}_\cZ |_{\cS(\tdelbar_1, \Lambda_1)} : \cS(\tdelbar_1, \Lambda_1) \hookrightarrow \cS(\tdelbar_2, \Lambda_2).
	\end{equation}
Moreover, at any $(x,t)\in S_1(\hdelbar_1,\hat{\Lambda}_1)$ which maps to $(y,t)\in S_2(\hdelbar_2,\hat{\Lambda}_2)$, the local section structure $(s_i)$ for $\hat{\Lambda}_1$ at $(x,t)$ is induced by the restrictions of the local section structure $(s'_i)$ for $\hat{\Lambda}_2$ at $(y,t)$. In particular, we have the following commutative diagram.
	\[\begin{tikzcd}[column sep = large]
	W_1\times B_\ep^N \arrow[r, "{(\hat{\iota}_\cW(\cdot),\cdot)}"', hook] \arrow[d, "\hdelbar_1 - s_i \quad "'] & W_2\times B_\ep^N \arrow[d, "\quad \hdelbar_2 - s'_i"] &  \\
	O_x\times B_\ep^N \arrow[r, "{(\hat{\iota}_\cZ(\cdot),\cdot)}"', hook] \arrow[u, bend left] & O_y\times B_\ep^N \arrow[u, bend right] & 
	\end{tikzcd}\]
As noted in Remark~\ref{rmk:relationship-local-section-structures-local-branching-structures}, the local section structures and the local branching structures are related via the equations $M_i= (\hdelbar_1 - s_i)^{-1}(0)$, $M'_i = (\hdelbar_2 - s'_i)^{-1}(0)$. Thus it follow from commutativity that we have the required well-defined restriction to the individual local branches.
And now Lemma~\ref{lem:invariance-of-domain-branched-orbifolds} implies that the map \eqref{eq:restriction-to-thickening} is a local homeomorphism.

Furthermore, we may observe that by our orientation assumptions the natural induced map $\tilde{\iota}_* : \det (D(\hdelbar_1-s_i)(x))) \to \det (D(\hdelbar_2-s'_i)(y))$ is an orientation preserving isomorphism; hence the restriction $\tilde{\iota}_\cZ |_{M_i} : M_i \to M'_i$ is orientation preserving.

We now show that \eqref{eq:restriction-to-thickening} is a bijection.
Let $([y],t)\in \cS(\tdelbar_2,\Lambda_2)$, let $(y,t)$ be a representative of $([y],t)$, and consider a local section structure $(s'_i)$ for $\Lambda_2$ at $(y,t)$.
It follows that $\hdelbar_2(y) - s'_i (y,t) = 0$ for some index $i$.
Observe by construction, $s'_i$ is a finite sum of $\ssc^+$-sections with image contained in the intermediate subbundle $R$, and hence $s'_i(y,t)\in R$.
It follows that $\hdelbar_2 (y)\in R$, hence property \ref{property-2-intermediary-subbundle} of the intermediate subbundle implies that $y\in \hat{\iota}_\cZ (Z_1)$. Therefore, there exists a point $[x]\in \cZ_1$ such that $\tilde{\iota}_\cZ ([x],t) = ([y],t)$. Commutativity of \eqref{eq:commutative-diagram-naturality} implies that $\Lambda_1(\tdelbar_1([x]),t) = \Lambda_2(\tdelbar_2([y],t)) >0$, and therefore $([x],t) \in \cS_1(\tdelbar_1, \Lambda_1)$.
Thus, \eqref{eq:restriction-to-thickening} is a homeomorphism.

\begin{itemize}[leftmargin=0em]
	\item[]\textbf{Step 3:} \emph{For a common regular value $t_0$ the branched integrals of the perturbed solution spaces of $\delbar_1$ and $\delbar_2$ are equal.}
\end{itemize}

By Sard's theorem, we can find a common regular value $t_0 \in B_\ep^N$ of the projections $\cS (\tdelbar_1, \Lambda_1) \to B_\ep^N$ and $\cS (\tdelbar_2,\Lambda_2) \to B_\ep^N$.
For this common regular value, the perturbed solution sets
	\[
	\cS(\delbar_i, \Lambda_i (\cdot,t_0)) := \{	[z]\in \cZ_i \mid \delbar_i(\Lambda_i([z],t_0))>0	\} \subset \cZ_i
	\]
have the structure of weighted branched suborbifolds.

As we have already noted, $\Lambda_2(\cdot,t_0)$ is controlled by the pair $(N_2,\cU_2)$ and hence $\cS(\delbar_2, \Lambda_2 (\cdot,t_0))$ is a compact topological space.
For such a common regular value, the homeomorphism \eqref{eq:restriction-to-thickening} has a well-defined restriction to these perturbed solution sets. This restriction is a homeomorphism, and hence $\cS(\delbar_1, \Lambda_1 (\cdot,t_0))$ is also a compact topological space (even though in general $\Lambda_1(\cdot,t_0)$ will not be controlled by the pair $(N_1,\cU_1)$).

The restriction $\tilde{\iota}_\cZ |_{\cS(\delbar_1, \Lambda_1 (\cdot,t_0))}$ satisfies the necessary hypotheses for the change of variables theorem~\ref{thm:change-of-variables}. 
Therefore for a given $\ssc$-smooth differential form $\ww\in \Omega_\infty^* (\cZ_2)$ we have
	\begin{equation}
	\label{eq:change-variables}
	\int_{\cS(\delbar_2, \Lambda_2 (\cdot,t_0))} \ww
	= \int_{\cS(\delbar_1, \Lambda_1 (\cdot,t_0))} \tilde{\iota}_\cZ^* \ww.
	\end{equation}
However, since in general $\Lambda_1(\cdot,t_0)$ is not controlled by a pair, we cannot assume that it is a regular perturbation in the sense of Definition~\ref{def:regular-perturbation}.
This is problematic since Theorem~\ref{thm:cobordism-between-regular-perturbations} only implies the existence of a compact cobordism between the perturbed solution spaces of two perturbations which are both assumed to be regular perturbations (see Figure~\ref{fig:cobordism}).

\begin{figure}[ht]
	\centering
	\includegraphics{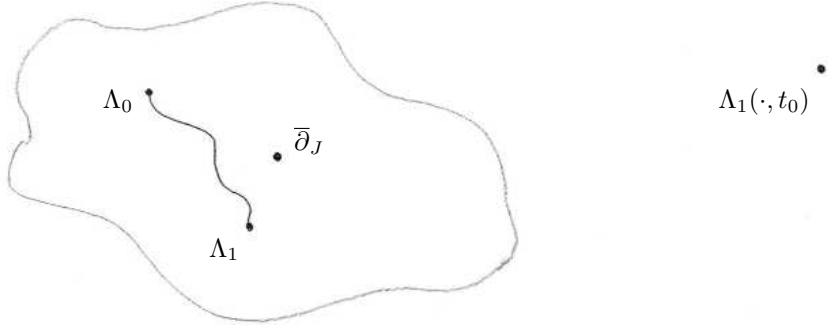}
	\caption{Compact cobordism between regular perturbations}\label{fig:cobordism}
\end{figure}

\begin{itemize}[leftmargin=0em]
	\item[]\textbf{Step 4:} \emph{We show that the set 
		\[
		\cS(\tdelbar_1, \Lambda_1 (\cdot,st_0)) = \{	([z],s) \in \cZ_1\times [0,1]	\mid	\Lambda_1(\delbar_1(z),s t_0)>0	\}
		\]		
		is compact.}
\end{itemize}

Let $\delta = \abs{t_0}$.
The auxiliary norm $\tilde{N}_2 : \cW_2[1] \times \overline{B_{\delta}^N} \to [0,\infty)$ defined by $\tilde{N}_2 ([w],t) := N_2([w])$ together with the open set $\tilde{\cU}_2 := \cU_2 \times \overline{B_{\delta}^N}$ together control the compactness of the extended $\ssc$-smooth Fredholm section $\tdelbar_2$.
By construction, $\Lambda_2$ is controlled by this pair and hence by Theorem~\ref{thm:compactness} the thickened solution set $\cS(\tdelbar_2, \Lambda_2(\cdot, t); t\in \overline{B_\delta ^N})$ is a compact topological space.
Therefore, the closed subset $\cS(\tdelbar_2, \Lambda_2(\cdot, t) ; t = s \cdot t_0, s\in[0,1])$ is also compact.

The restriction of \eqref{eq:restriction-to-thickening} yields a homeomorphism  
	\[
	\cS(\tdelbar_1, \Lambda_1(\cdot, t) ; t = s \cdot t_0, s\in[0,1]) \to \cS(\tdelbar_2, \Lambda_2(\cdot, t) ; t = s \cdot t_0, s\in[0,1]).
	\]
From this it is now clear that 
	\[
	\cS(\tdelbar_1, \Lambda_1 (\cdot,st_0)) \simeq \cS(\tdelbar_1, \Lambda_1(\cdot, t) ; t = s \cdot t_0, s\in[0,1])
	\]
is a compact topological space.

\begin{itemize}[leftmargin=0em]
	\item[]\textbf{Step 5:} \emph{We interpret the pair $(\delbar_1,\Lambda_1(\cdot,t_0))$ as a transversal Fredholm multisection, and use Proposition~\ref{prop:cobordism-multisection-regular} to obtain a compact cobordism to a regular perturbation.}
\end{itemize}

We claim that the hypotheses described in \S~\ref{subsubsec:cobordism-multisection-regular} are satisfied.
In particular, we must show that the extended Fredholm multisection $(\tdelbar_1, \tilde{\Lambda}_1 (\cdot,t_0))$ is proper.
This can be seen using step 4; indeed, the solution set $\cS(\tdelbar_1, \tilde{\Lambda}_1 (\cdot,t_0))$ described in \S~\ref{subsubsec:cobordism-multisection-regular} can be identified with the compact set $\cS(\tdelbar_1, \Lambda_1 (\cdot,st_0))$.

We may therefore use Proposition~\ref{prop:cobordism-multisection-regular} to obtain a cobordism from $(\delbar_1,\Lambda_1(\cdot,t_0))$ to a regular perturbation $\Gamma_0: \cW_1 \to \Q^+$ of $\delbar_1$.
Given a closed $\ssc$-smooth differential form $\ww\in \Omega_\infty^* (\cZ_1)$, Stokes' theorem~\ref{thm:stokes} then implies
	\begin{equation}
	\label{eq:step-5-cobordism-stokes}
	\int_{\cS(\delbar_1,\Gamma_0)}	\ww =\int_{\cS(\delbar_1,\Lambda_1(\cdot,t_0))} \ww.
	\end{equation}
	
\begin{itemize}[leftmargin=0em]
	\item[]\textbf{Step 6:} \emph{We show that the polyfold invariants are equal.}
\end{itemize}

Let $\ww\in H^*_{\dR} (O)$ be the de Rahm cohomology class fixed in the statement of the theorem, and used to define the polyfold invariants.
We can now compute relate the branched integrals as follows:
\begin{align*}
	 \int_{\cS(\delbar_2,\Lambda_2(\cdot,t_0))} f_2^* \ww
	 		& =	\int_{\cS(\delbar_1, \Lambda_1 (\cdot,t_0))} \tilde{\iota}_\cZ^* f_2^*\ww	\\
	 		& =	\int_{\cS(\delbar_1, \Lambda_1 (\cdot,t_0))} f_1^*\ww	\\
	 		& =	\int_{\cS(\delbar_1,\Gamma_0)}	f_1^*\ww,
\end{align*}
where the first equality follows from equation \eqref{eq:change-variables}, the second equality follows from the commutativity of \eqref{eq:gw-invariant-pair-of-polyfolds}, and the third equality follows from equation \eqref{eq:step-5-cobordism-stokes}.
By construction, $\Lambda_2(\cdot,t_0)$ is a regular perturbation of $\delbar_2$, while $\Gamma_0$ is a regular perturbation of $\delbar_1$.
This proves the theorem.
\end{proof}

\subsection{Gromov--Witten invariants are independent of choice of sequence \texorpdfstring{$\delta_i$}{δi}}
	\label{subsec:independence-sequence}

We now use Theorem~\ref{thm:naturality} to show that the Gromov--Witten polyfold invariants are independent of the choice of increasing sequence $(\delta_i)_{i\geq 0}\subset (0,2\pi)$.
Given two sequences $(\delta_i)\subset (0,2\pi)$ and $(\delta_i')\subset (0,2\pi)$ we can always find a third sequence $(\delta_i'')\subset (0,2\pi)$ which satisfies
	\[
	\delta_i \leq \delta_i'', \qquad \delta_i' \leq \delta_i''
	\]
for all $i$.  The GW-polyfold associated to the sequence $(\delta_i'')$ give a refinement of the GW-polyfolds associated to $(\delta_i)$ and $(\delta_i')$, in the sense that there are inclusion maps
	\[
	\cZ_{A,g,k}^{3,\delta'_0} \hookleftarrow \cZ_{A,g,k}^{3,\delta''_0} \hookrightarrow \cZ_{A,g,k}^{3,\delta_0}.
	\]
It is therefore sufficient to consider inclusion maps of the form
$\cZ^{3,\delta_0}_{A,g,k} \hookrightarrow \cZ^{3,\delta_0'}_{A,g,k}$
with $\delta_i' \leq \delta_i$ for all $i$ and demonstrate that the associated GW-invariants are equal.

To this end, consider the commutative diagram:
\[\begin{tikzcd}
\cW^{2,\delta_0}_{A,g,k} \arrow[r, "\iota_\cW"', hook] \arrow[d, "\delbarj\quad "'] & \cW^{2,\delta_0'}_{A,g,k} \arrow[d, "\quad \delbarj'"] &  \\
\cZ^{3,\delta_0}_{A,g,k} \arrow[r, "\iota_\cZ"', hook] \arrow[u, bend left] & \cZ^{3,\delta_0'}_{A,g,k} \arrow[u, bend right] & 
\end{tikzcd}\]
and observe that it satisfies the same properties as \eqref{eq:commutative-diagram-naturality}.
In addition, consider the commutative diagram:
\[\begin{tikzcd}
&  & Q^k\times \dmlog_{g,k} \\
\cZ^{3,\delta_0}_{A,g,k} \arrow[r, "\iota_\cZ"', hook] \arrow[rru, "ev_i \times \pi"] & \cZ^{3,\delta_0'}_{A,g,k} \arrow[ru, "ev_i\times \pi"'] & 
\end{tikzcd}\]
which satisfies the same properties as \eqref{eq:gw-invariant-pair-of-polyfolds}.

Note that if $\delta_0' < \delta_0$ the inclusion map $\iota_\cZ$ is not proper.
To see this, exploit the difference in exponential weights to produce a sequence which converges in a local M-polyfold model for $\cZ^{3,\delta_0}_{A,g,k}$ but diverges in a local M-polyfold model for $\cZ^{3,\delta_0'}_{A,g,k}$.
Note also that the pullback strong polyfold bundle is not the same as the standard strong polyfold bundle on $\cZ^{3,\delta_0}_{A,g,k}$.

\begin{proposition}
	\label{prop:existence-subbundle-naturality}
	The set	
	\[
	\cR := \{	[\Sigma,j,M,D,u,\xi] \in \cW^{2,\de_0'}_{A,g,k} \mid \supp \xi \subset K \subset \Sigma\setminus\abs{D} \text{ for some compact } K	\}
	\]
	is an intermediary subbundle of the strong polyfold bundle $\cW^{2,\de_0'}_{A,g,k}$.
\end{proposition}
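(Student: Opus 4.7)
The plan is to verify the four conditions of Definition~\ref{def:intermediate-subbundle} in turn, exploiting throughout that a section supported away from the nodal set automatically satisfies any prescribed exponential decay rate on the cylindrical ends. Property~(1) is immediate: the condition that $\supp\xi$ is contained in some compact subset of $\Sigma\setminus\abs{D}$ is preserved by finite sums and scalar multiplication (the union of two such compacts is again compact in $\Sigma\setminus\abs{D}$), so each fiber $\cR \cap \cW^{2,\delta_0'}_{[\Sigma,j,M,D,u]}$ is a vector subspace, although it is not complete.

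For Property~\ref{property-2-intermediary-subbundle}, suppose $\delbarj([x])\in\cR$ where $[x]=[\Sigma,j,M,D,u]$. Then $\delbarj(u)$ vanishes on a punctured neighborhood of each point of $\abs{D}$, so $u$ is $J$-holomorphic near the nodes. Standard elliptic bootstrapping together with the matching condition at nodal pairs yields the strongest possible exponential decay on the cylindrical ends; in particular $u$ decays at every rate $\delta<2\pi$, hence at the rate $\delta_0\geq \delta_0'$. Therefore $[x]\in \cZ^{3,\delta_0}_{A,g,k}=\iota_\cZ(\cZ_1)$.

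The substance of the proof lies in Property~\ref{property-3-intermediary-subbundle}. Given $[x_0]\in\cS(\delbarj)$, fix a representative $x_0$ and $\bm{G}(x_0)$-invariant neighborhoods $\overline{V}\subset U\subset Z_2$. The linearization $D\hdelbarj(x_0)$ is $\ssc$-Fredholm, and Appendix~\ref{appx:local-surjectivity} supplies finitely many cokernel representatives $\xi_1,\ldots,\xi_k$ whose supports lie in a compact subset of $\Sigma\setminus\abs{D}$. These lie in the intermediary fiber $R_{x_0}$; moreover, since the Fredholm indices of $D\hdelbarj$ and $D\hdelbarj'$ agree (both weights $\delta_0,\delta_0'$ lie in the same spectral chamber of the asymptotic operator) and the $\xi_i$ are elements common to both target fibers, the same vectors simultaneously span a complement to $\mathrm{Im}(D\hdelbarj(x_0))$ in $(\cW_1)_{x_0}$ and to $\mathrm{Im}(D\hdelbarj'(x_0))$ in $(\cW_2)_{x_0}$. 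Using a $\bm{G}(x_0)$-invariant bump function supported in $V$ and a local $\ssc$-smooth trivialization of the bundle about $x_0$, we extend each $\xi_i$ to a locally defined $\ssc^+$-section $s'_i\colon U\to \cW^{2,\delta_0'}_{A,g,k}$ whose image remains in $R$. This is preserved under the splicing/trivialization constructions, which only modify data in a small neighborhood of the nodes. Because the exponential weight factors $e^{\delta_0|s|}$ are uniformly bounded on the compact support of $s'_i$, the restriction $s_i:=s'_i|_{U\cap\hat{\iota}_\cZ(Z_1)}$ is $\ssc^+$ with respect to the stronger-decay M-polyfold structure on $\cZ_1$ as well.

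Property~(4) is then a straightforward scaling: shrinking $V$ inside the open neighborhood $\cU_2$ of $\cS(\delbarj')$ ensures $\abs{\supp s'_i}\subset\cU_2$, and rescaling each $s'_i$ by a sufficiently small positive constant forces $\hat{N}_2[s'_i]\leq 1$. The main technical obstacle is the compatibility assertion inside Property~(3), namely that the extension of $\xi_i$ via the polyfold trivialization genuinely respects the compact-support-away-from-$\abs{D}$ condition and is $\ssc^+$ for \emph{both} weighted M-polyfold structures simultaneously. This reduces to the observation that the gluing/splicing cutoffs in the Gromov--Witten polyfold are localized near the nodal points, so sections whose fiberwise supports are uniformly compact in $\Sigma\setminus\abs{D}$ are unaffected by changes of the weight parameter $\delta$ and lie tautologically in every space of the $\delta$-family.
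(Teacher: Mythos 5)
Your argument is correct and follows essentially the same route as the paper: you dispatch the vector-subspace condition fiberwise, identify $J$-holomorphicity near the nodes as the content of the closure property, invoke the appendix to produce cokernel vectors supported away from $\abs{D}$, and propagate them to $\ssc^+$-sections via a cutoff while checking that everything survives the passage between the two weight structures. Two remarks. First, your argument for Property~\ref{property-2-intermediary-subbundle} and your spectral-chamber justification for why the same $v_1,\ldots,v_k$ span complements to \emph{both} $\text{Im}(D\hdelbarj)$ and $\text{Im}(D\hdelbarj')$ are more explicit than what the paper records (the paper compresses this into an appeal to Corollary~\ref{cor:vectors-which-span-cokernel}); spelling out that $H^{2,\delta_0}\cap\text{Im}(D\hdelbarj')=\text{Im}(D\hdelbarj)$ via elliptic regularity in the common spectral window is exactly the point. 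Second, be careful with the phrase ``local $\ssc$-smooth trivialization of the bundle'': strong polyfold bundles are not locally trivial in the ordinary sense, and the extension of $v_i$ to a section across the gluing parameters must be made via the strong bundle projection $\rho_a$ associated to the hat gluings (as in the paper's formula $(a,v,\eta)\mapsto(a,v,\eta,\beta\cdot\rho_a(v_i))$). Your observation that the splicing is supported near the nodes and therefore acts trivially on compactly-supported-away-from-$\abs{D}$ sections is the right reason this lands in $\cR$, but it should be phrased in terms of $\rho_a$ rather than a generic trivialization.
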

\begin{proof}
	We must show that the set $\cR$ satisfies the properties of Definition~\ref{def:intermediate-subbundle}.
	The first two properties can be easily checked.
	
	We show how to construct the $\ssc^+$-sections required by property \ref{property-3-intermediary-subbundle}.
	Consider a stable curve $[\alpha]=[\Sigma,j,M,D,u] \in \cS(\delbarj') \subset \cZ^{3,\de_0'}_{A,g,k}$ and let let $\alpha = (\Sigma,j,M,D,u)$ be a stable map representative.
	Let $V_\alpha \subset U_\alpha$ be a $\bm{G}(\alpha)$-invariant M-polyfold charts centered at $\alpha$ such that $\overline{V_\alpha}\subset U_\alpha$.
	This means we have good uniformizing family 
	\[
	(a,v,\eta ) \mapsto (\Sigma_{a},j(a,v),M_{a},D_{a}, \oplus_{a} \exp_u (\eta)),\qquad (a,v,\eta) \in O_\alpha.
	\]
	Let $K\to O_\alpha$ be a local strong bundle model, with $\ssc$-coordinates given by $(a,v,\eta,\xi)$ where $\xi \in H^{2,\de_0'}(\Sigma,\Lambda^{0,1} \otimes_J u^* TQ)$.
	
	Use Corollary~\ref{cor:vectors-which-span-cokernel} to choose vectors $v_1,\ldots, v_k$ which vanish on disk-like regions of the nodal points and such that
	\begin{itemize}
		\item $\text{span}\{v_1,\ldots , v_k\} \oplus \text{Im}(D\hdelbarj'(\alpha)) = H^{2,\de_0'}(\Sigma,\Lambda^{0,1} \otimes_J u^* TQ)$,
		\item $\text{span}\{v_1,\ldots , v_k\} \oplus \text{Im}(D\hdelbarj(\alpha)) = H^{2,\de_0}(\Sigma,\Lambda^{0,1} \otimes_J u^* TQ)$.
	\end{itemize}
	
	Let $\beta: U_\alpha \to [0,1]$ be an $\ssc$-smooth cutoff function which satisfies $\beta\equiv 1$ near $(0,0,0)$, and $\beta \equiv 0$ on $U_\alpha\setminus V_\alpha$.
	In these local $\ssc$-coordinates, the desired $\ssc^+$-sections are defined as follows:
	\[
	s_i': U_\alpha\to K,\qquad (a,v,\eta) \mapsto (a,v,\eta, \beta(a,v,\eta) \cdot ( \rho_{a} (v_i)		)),
	\]
	where $\rho_{a}$ is the strong bundle projection defined using the hat gluings, see \cite[p.~117]{HWZbook} and \cite[pp.~65--67]{HWZGW}
	
	Let $(N,\cU)$ be a pair which controls the compactness of $\delbarj'$.
	By construction, these $\ssc^+$-sections satisfy $N[s_i'] \leq C$ for some constant $C\leq \infty$, and hence by rescaling the vectors $v_i$ we may assume that $N[s_i'] \leq 1$. Moreover, by shrinking the support of the cutoff function we may assume that $\abs{\supp s_i'} = \abs{\supp \beta } \subset \cU$.
	
	By construction, the $\ssc^+$-section $s_i'$ induces a well-defined restriction $s_i|_{\hat{\iota}_\cZ^{-1} (U_\alpha)}$. Locally this restriction is given by multiplying the $\ssc$-smooth cutoff $\beta \circ \hat{\iota}_\cZ$ and the locally constant vector $v_i$, hence it is $\ssc^+$.
\end{proof}

Having shown the previous proposition, we may immediately apply Theorem~\ref{thm:naturality} to see that 
the polyfold Gromov--Witten invariants do not depend on the choice of an increasing sequence $(\delta_i)_{i\geq 0} \allowbreak \subset (0,2\pi)$.
This proves Corollary~\ref{cor:naturality-polyfold-gw-invariants}.

\subsection[Gromov-Witten invariants are independent of punctures at marked points]{Gromov--Witten invariants are independent of punctures at marked points}
\label{subsec:independence-punctures}

We now recall the regularity estimates that the stable curves of the Gromov--Witten polyfolds as constructed in \cite{HWZGW} are required to satisfy.
Let $u: \Sigma \to Q$ be a continuous map, and fix a point $z\in \Sigma$.
We consider a local expression for $u$ as follows. Choose a small disk-like neighborhood $D_z\subset \Sigma$ of $z$ such that there exists a biholomorphism $ \sigma:[0,\infty)\times S^1\rightarrow D_z \setminus\{z\}$.
Let $\varphi:U\rightarrow \R^{2n}$ be a smooth chart on a neighborhood $U \subset Q$ of $u(z)$ such that to $\varphi (u(z))=0$.
The local expression 
\[
\tilde{u}: [s_0,\infty)\times S^1 \to \R^{2n}, \qquad (s,t) \mapsto \varphi\circ u\circ \sigma(s,t)
\]
is defined for $s_0$ large.
Let $m\geq 3$ be an integer, and let $\delta >0$. We say that $u$ is of \textbf{class $H^{m,\delta}$ around the point $z\in \Sigma$} 
if $e^{\delta s}  \tilde{u}$ belongs to the space $L^2([s_0,\infty)\times S^1,\R^{2n})$.
We say that $u$ is of \textbf{class} $H^m_{\text{loc}}$ \textbf{around the point $z\in \Sigma$} if $u$ belongs to the space $H^m_{\text{loc}}(D_z)$.
If $u$ is of class $H^{m,\delta}$ at a point $z\in \Sigma$ we will refer to that point as a \textbf{puncture}.

By definition, any stable map representative $(\Sigma,j,M,D,u)$ of a stable curve in the Gromov--Witten polyfold $\cZ_{A,g,k}$ is required to be of class $H^{3,\de_0}$ at all nodal points.
This is required in order to carry out the gluing construction at the nodes of \cite[\S~2.4]{HWZGW}.

However, in some situations we would like to treat the marked points in the same way as the nodal points.
Note that allowing a puncture with exponential decay at a specified marked point is a global condition on a Gromov--Witten polyfold.
Hence, we will need to require that the map $u$ is of class $H^{3,\delta_0}$ at a fixed subset of the marked points (in addition to the nodal points).

Given a subset $I \subset \{1,\ldots,k\}$ we can define a GW-polyfold $\cZ^I_{A,g,k}$ where we require that all stable map representatives are of class $H^{3,\delta_0}$ at the marked points $z_i$ for all $i\in I$ and of class $H^3_\text{loc}$ at the remaining marked points.
Given another subset $J \subset\{1,\ldots,k\}$ we can define a GW-polyfold $\cZ^J_{A,g,k}$ in the same manner.
On the other hand, we can consider a GW-polyfold $\cY_{A,g,k}$ where we require that all stable map representatives are of class $H^{3,\delta_0}$ and also of class $H^3_\text{loc}$ at all the marked points.
Such a GW-polyfold with strict regularity at all marked points gives a refinement of the GW-polyfolds with different choices of punctures $I,J\subset \{1,\ldots,k\}$ at the marked points, in the sense that there are inclusion maps
\[
\cZ^I_{A,g,k} \hookleftarrow \cY_{A,g,k} \hookrightarrow \cZ^J_{A,g,k}.
\]
It is sufficient to consider inclusion maps of the form $\cY_{A,g,k} \hookrightarrow \cZ^I_{A,g,k}$ and demonstrate that the associated GW-invariants are equal.

To this end, consider the commutative diagram:
\[\begin{tikzcd}
\cV_{A,g,k} \arrow[r, "\iota_\cW"', hook] \arrow[d, "\delbarj\quad "'] & \cW^I_{A,g,k} \arrow[d, "\quad \delbarj"] &  \\
\cY_{A,g,k} \arrow[r, "\iota_\cZ"', hook] \arrow[u, bend left] & \cZ^I_{A,g,k} \arrow[u, bend right] & 
\end{tikzcd}\]
and observe that it satisfies the same properties as \eqref{eq:commutative-diagram-naturality}.
In addition, consider the commutative diagram:
\[\begin{tikzcd}
&  & Q^k\times \dmlog_{g,k} \\
\cY_{A,g,k} \arrow[r, "\iota_\cZ"', hook] \arrow[rru, "ev_i \times \pi"] & \cZ^I_{A,g,k} \arrow[ru, "ev_i\times \pi"'] & 
\end{tikzcd}\]
which satisfies the same properties as \eqref{eq:gw-invariant-pair-of-polyfolds}.

There exist sequences of maps which converge in $H^3_\text{loc}$ but do not converge in $H^{3,\delta_0}$.  Consequently, in general the above inclusion map is not proper.  Furthermore, the pullback strong polyfold bundle is not the same as the standard strong polyfold bundle on $\cY_{A,g,k}$.

\begin{proposition}
	The set	
	\[
	\cR := \{	[\Sigma,j,M,D,u,\xi] \in \cW^I_{A,g,k} \mid \supp \xi \subset K \subset \Sigma\setminus M \text{ for some compact } K	\}
	\]
	is an intermediary subbundle of the strong polyfold bundle $\cW^I_{A,g,k}$.
\end{proposition}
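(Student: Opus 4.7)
The plan is to mirror the argument of Proposition~\ref{prop:existence-subbundle-naturality}, with the marked-point set $M\subset\Sigma$ now playing the role formerly played by the nodal set $\abs{D}$. Linearity of each fiber $\cR_{[\alpha]}$ is immediate, since a finite union of compact subsets of $\Sigma\setminus M$ is again a compact subset of $\Sigma\setminus M$.

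For property~\ref{property-2-intermediary-subbundle} I would argue by interior elliptic regularity. Suppose $[\alpha]=[\Sigma,j,M,D,u]\in\cZ^I_{A,g,k}$ satisfies $\delbarj(\alpha)\in\cR$, so that $\delbarj u$ vanishes on an open neighborhood of each marked point $z_i$. Then $u$ is of class $C^\infty$ in a disk-like neighborhood of $z_i$, and in the cylindrical coordinate $\sigma:[0,\infty)\times S^1\to D_{z_i}\setminus\{z_i\}$ with $\zeta=e^{-(s+it)}$, a Taylor expansion of $u$ at $z_i$ shows that $\tilde u(s,t)-u(z_i)$ and all its derivatives decay at rate $e^{-s}$. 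Since $\delta_0<2\pi$, this places $u$ in the class $H^{3,\delta_0}$ at each marked point, so $[\alpha]\in\iota_\cZ(\cY_{A,g,k})$.

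For property~\ref{property-3-intermediary-subbundle} the construction of the $\ssc^+$-sections copies the recipe of Proposition~\ref{prop:existence-subbundle-naturality}, with the one change that the cokernel vectors are chosen supported away from $M\cup\abs{D}$ rather than merely away from $\abs{D}$. Given $[\alpha]\in\cS(\delbarj)$ with stable representative $\alpha$ and $\bm{G}(\alpha)$-invariant M-polyfold charts $V_\alpha\subset U_\alpha$ with $\overline{V_\alpha}\subset U_\alpha$, I apply Corollary~\ref{cor:vectors-which-span-cokernel} to select $v_1,\ldots,v_k$ having compact support in $\Sigma\setminus(M\cup\abs{D})$ such that $\text{span}\{v_1,\ldots,v_k\}$ is a common complement to $\text{Im}(D\hdelbarj(\alpha))$ both in the $\cW^I_{A,g,k}$-fiber and in the $\cV_{A,g,k}$-fiber (the two linearizations act on the same underlying tangent data, and elements supported away from $M$ lie in both target spaces). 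With an $\ssc$-smooth cutoff $\beta:U_\alpha\to[0,1]$ equal to $1$ near the origin and supported in $V_\alpha$, I set
\[
s'_i:U_\alpha\to K,\qquad (a,v,\eta)\mapsto(a,v,\eta,\beta(a,v,\eta)\cdot\rho_a(v_i)),
\]
using the strong bundle projection $\rho_a$ built from the hat gluings. Each $s'_i$ takes values in $\cR$ because $\rho_a(v_i)$ remains supported away from $M$, and its restriction to $U_\alpha\cap\hat{\iota}_\cZ(Z_1)$ induces an $\ssc^+$-section $s_i$ of $\cV_{A,g,k}$ since it is the product of a pulled-back cutoff with a locally constant vector.

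Finally, given a pair $(N,\cU)$ controlling the compactness of $\delbarj$ on $\cW^I_{A,g,k}$, rescaling the $v_i$ yields $N[s'_i]\leq 1$, and shrinking $\supp\beta$ gives $\abs{\supp s'_i}\subset\cU$; the spanning conditions survive because surjectivity is an open condition. The only genuinely new ingredient compared to \S~\ref{subsec:independence-sequence} is the elliptic regularity step, which I expect to be the main obstacle to write out cleanly: it requires that at a marked point the moduli problem carries no intrinsic puncture structure, so that $C^\infty$-smoothness of $u$ at $z_i$ translates directly into the exponential decay in cylindrical coordinates needed to land in $\cV_{A,g,k}$.
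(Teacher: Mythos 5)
Your proof mirrors the paper's approach exactly: the paper simply says the argument is identical to that of Proposition~\ref{prop:existence-subbundle-naturality}, with cokernel vectors chosen to vanish near the marked points rather than the nodes, and declares properties (1) and (2) ``easily checked.'' Your elaboration of the elliptic-regularity step for property~\ref{property-2-intermediary-subbundle} is a useful expansion of what the paper leaves implicit, and the construction of the $\ssc^+$-sections for property~\ref{property-3-intermediary-subbundle} follows the paper's recipe faithfully.

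One point needs correcting. You write the cylindrical coordinate as $\zeta = e^{-(s+it)}$ and claim the Taylor expansion gives decay of $\tilde u - u(z_i)$ at rate $e^{-s}$, concluding ``since $\delta_0 < 2\pi$, this places $u$ in $H^{3,\delta_0}$.'' With that normalization the conclusion does not follow: $e^{\delta_0 s}\tilde u$ would behave like $e^{(\delta_0 - 1)s}$, which lies in $L^2([s_0,\infty)\times S^1)$ only for $\delta_0 < 1$. The convention consistent with the paper's requirement $\delta_i \in (0,2\pi)$ is $S^1 = \R/\Z$ with $\zeta = e^{-2\pi(s+it)}$, under which a map smooth across $z_i$ yields $\tilde u - u(z_i) = O(e^{-2\pi s})$ together with all derivatives, and then $\delta_0 < 2\pi$ does give $H^{3,\delta_0}$ membership. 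With that correction your argument for property~\ref{property-2-intermediary-subbundle} goes through: $\delbarj(\alpha)\in\cR$ forces $u$ to be $J$-holomorphic on a neighborhood of each $z_i$, interior regularity (for $i\notin I$) or removable singularity (for $i\in I$) makes $u$ smooth across $z_i$, and the $e^{-2\pi s}$ decay places $u$ simultaneously in $H^3_{\text{loc}}$ and $H^{3,\delta_0}$, hence in the image of $\iota_\cZ$.
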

\begin{proof}
	The proof is identical to the proof of Proposition~\ref{prop:existence-subbundle-naturality}, except here we use Corollary~\ref{cor:vectors-which-span-cokernel} to choose vectors $v_i$ which vanish on disk-like regions of the marked points instead of the nodal points.
\end{proof}

Again, combining the previous proposition and Theorem~\ref{thm:naturality} we see that the polyfold Gromov--Witten invariants do not depend on the choice of puncture at the marked points.
This proves Corollary~\ref{cor:punctures-equal}.


\section{Pulling back abstract perturbations}
	\label{sec:pulling-back-abstract-perturbations}

In this section we show how to construct a regular perturbation which pulls back to a regular perturbation, culminating in Theorem~\ref{thm:compatible-pullbacks} and in Corollary~\ref{cor:pullback-via-permutation}.

\subsection{Pullbacks of strong polyfold bundles}
	\label{subsec:pullbacks-strong-polyfold-bundles}

Let $P: \cW \to \cZ$ be a strong polyfold bundle, and let $f: \cY \to \cZ$ be a $\ssc$-smooth map between polyfolds.
Consider the topological pullback
	\[f^* \cW = \{([x],[w]_{[y]})	\mid f([x])=[y]=P([w]_{[y]})	\} \subset \cY \times \cW\]
equipped with the subspace topology. Since $\cY$ and $\cW$ are second countable, paracompact, Hausdorff topological spaces, so too is the product $\cY \times \cW$ and hence $f^*\cW$ is also a second countable, paracompact, Hausdorff topological spaces.

We can take the pullback $\hat{f}^* W$ of the object strong M-polyfold bundle; by Proposition~\ref{prop:pullback-bundle} this has the structure of a strong M-polyfold bundle over the object space $Y$.
The fiber product 
	\[\bm{Y} _s \times_{\text{proj}_1} \hat{f}^* W = \{	(\phi,y,w_x) \mid s(\phi) = y, \hat{f}(y)=x	\}\]
may be viewed as the strong M-polyfold bundle via the source map $s$ over the morphism space $\bm{Y}$,
	\[\begin{tikzcd}
	\bm{Y} _s \times_{\text{proj}_1} \hat{f}^* W \arrow[d] \arrow[r] & \hat{f}^* W \arrow[d] &  \\
	\bm{Y} \arrow[r, "s"'] & Y & 
	\end{tikzcd}\]

We may define a \textbf{pullback strong polyfold bundle structure} over the polyfold structure $(Y,\bm{Y})$ as the strong M-polyfold bundle $\text{proj}_1 : \hat{f}^*W \to Y$ together with the bundle map defined as follows:
	\begin{align*}
	\lambda: \bm{Y} _s \times_{\text{proj}_1} \hat{f}^* W & \to \hat{f}^*W \\
							(\phi,y,w_x)	& \mapsto (t(\phi), \mu(f(\phi), w_x))
	\end{align*}
It is straightforward to check that this map satisfies the requirements of Definition~\ref{def:strong-polyfold-bundle}.

Given a $\ssc$-smooth section $\delbar:\cZ\to \cW$ there exists a well-defined \textbf{pullback section} $f^*\delbar:\cY \to f^*\cW$.  Between the underlying sets, it is defined by
	\[
	f^*\delbar ([x]) = ([x], \delbar \circ f ([x]) ).
	\]
It is automatically regularizing if $\delbar$ is regularizing.
We may define the pullback of a $\ssc^+$-multisection as follows.

\begin{definition}
	\label{def:pullback-multisection}
	Given a $\ssc^+$-multisection $\Lambda:\cW\to \Q^+$ there exists a well-defined \textbf{pullback $\ssc^+$-multisection} $\text{proj}_2^*\Lambda :f^*\cW \to \Q^+$.
	It consists of the following:
	\begin{enumerate}
		\item the function $\Lambda \circ \text{proj}_2 :f^*\cW\to\Q^+$
		\item the functor $\hat{\Lambda} \circ \text{proj}_2 :\hat{f}^* W\to \Q^+$
		\item at each $[x]\in \cZ_1$ there exists a ``pullback local section structure'' for $\text{proj}_2^*\Lambda$, defined below.
	\end{enumerate}
\end{definition}

Given $[x]\in\cZ_1$, the local section structure for $\text{proj}_2^*\Lambda$ at $[x]$ is described as follows.
Let $x\in Z_1$ be a representative of $[x]$.
Let $y:=\hat{f}(x)\in Z_2$, and let $U_y\subset Z_2$ be a $\bm{G}(y)$-invariant open neighborhood of $y$, and let 
$s_1,\ldots,s_k : U_y \to W$ be a local section structure for $\hat{\Lambda}$ at $y$ with associated weights $\sigma_1,\ldots ,\sigma_k$.
Let $U_x\subset Z_1$ be a $\bm{G}(x)$-invariant open neighborhood of $x$ such that $\hat{f}(U_x)\subset U_y$, and consider the restricted strong M-polyfold bundle $(\hat{f}^*W)|_{U_x} \to U_x$.
Then the pullback of the local sections $\hat{f}^*s_1,\ldots,\hat{f}^*s_k :U_x \to \hat{f}^* W$ with the associated weights $\sigma_1,\ldots ,\sigma_k$ gives the local section structure for $\text{proj}_2^*\Lambda$ at $[x]$.

Indeed, it tautologically follows from the original assumption that $s_1,\ldots,s_k$, $\sigma_1,\ldots,\sigma_k$ is a local section structure for $\Lambda$ at $[y]$ that
\begin{enumerate}
	\item $\sum_{i=1}^k \sigma_i =1$ 
	\item the local expression $\text{proj}_2^* \hat{\Lambda}:\hat{f}^*W|_{U_x}\to \Q^+$ is related to the local sections and weights via the equation 
	\[
	\text{proj}_2^* \hat{\Lambda} (x',w_{y'}) = \sum_{\{i\in \{1,\ldots, k \} \mid (x',w_{y'}) = \hat{f}^*s_i (\text{proj}_1 (x',w_{y'} ) )\}} \sigma_i
	\]
	for all $(x',w_{y'}) \in (\hat{f}^* W)|_{U_x}$ (which necessarily satisfy $\hat{f}(x')= y' = P(w_{y'})$).
\end{enumerate}

\subsection{The topological pullback condition and pullbacks of pairs which control compactness}
	\label{subsec:topological-pullback-condition-controlling-compactness}

Suppose at the outset that we have a $\ssc$-smooth map $f: \cY \to \cZ$ and a strong polyfold bundle $P:\cW \to \cZ$ with a $\ssc$-smooth proper Fredholm section $\delbar$.
Consider the pullback of this bundle and of this section via the map $f$ yielding the following commutative diagram:
	\[\begin{tikzcd}
	f^* \cW \arrow[d, "f^* \delbar\quad"'] \arrow[r, "\text{proj}_2"'] & \cW \arrow[d, "\quad \delbar"] &  \\
	\cY \arrow[r, "f"'] \arrow[u, bend left] & \cZ. \arrow[u, bend right] & 
	\end{tikzcd}\]
Assume that the $\ssc$-smooth section $f^*\delbar$ is a proper Fredholm section; such an assumption is not automatic from the above setup, however it is natural in the context of polyfold maps one might encounter.\footnote{An alternative to outright assuming that $f^*\delbar$ is a Fredholm section would be to formulate a precise notion of a ``Fredholm map'' for a map between polyfolds, and then require that $f$ is such a map. This would also be natural in the context of polyfold maps one might encounter.}

Given a pair $(N,\cU)$ which controls the compactness of $\delbar$ we show in this subsection how to obtain a pullback of this pair, which will control the compactness of $f^*\delbar$.

\begin{proposition}
	Let $N :\cW[1] \to [0,\infty)$ be an auxiliary norm. The pullback of $N$, given by
	\[
	\text{proj}_2^* N : f^* \cW[1] \to [0,\infty)
	\]
	defines an auxiliary norm on the pullback strong polyfold bundle $f^* \cW\to \cY$.
\end{proposition}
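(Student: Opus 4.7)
The plan is to verify, in turn, the three requirements of Definition~\ref{def:auxiliary-norm}, using the explicit structure of the pullback bundle from \S~\ref{subsec:pullbacks-strong-polyfold-bundles}.

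First, I would exhibit the associated $\ssc^0$-functor. Since $\hat{f}^*W = \{(y,w_x) \in Y \times W \mid \hat{f}(y) = x = P(w_x)\}$ is equipped with the natural projection $\text{proj}_2 : \hat{f}^*W \to W$ which is $\ssc$-smooth (by Proposition~\ref{prop:pullback-bundle}), and which moreover extends to a functor of strong polyfold bundle structures, the composition $\text{proj}_2^*\hat{N} := \hat{N} \circ \text{proj}_2 : \hat{f}^*W[1] \to [0,\infty)$ is a $\ssc^0$-functor whose induced map on orbit spaces is exactly $\text{proj}_2^* N$. This verifies that $\text{proj}_2^* N$ is a $\ssc^0$-map in the sense of Definition~\ref{def:auxiliary-norm}.

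Next, I would check fiberwise completeness. For any object $y \in Y$, the projection $\text{proj}_2$ restricts to a linear bijection between the fiber $(\hat{f}^*W)_y$ and the fiber $W_{\hat{f}(y)}$. Under this identification, the restriction of $\text{proj}_2^*\hat{N}$ to $(\hat{f}^*W)_y[1]$ is identified with the restriction of $\hat{N}$ to $W_{\hat{f}(y)}[1]$, which is a complete norm by hypothesis on $N$. Hence $\text{proj}_2^*\hat{N}$ restricts to a complete norm on each fiber.

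Finally, for property \eqref{property-2-auxiliary-norm}, I would take a sequence $\{(y_k, w_k)\}$ in $\hat{f}^*W[1]$ whose projection $\{y_k\}$ converges in $Y$ to some object $y$, and which satisfies $\text{proj}_2^*\hat{N}(y_k, w_k) = \hat{N}(w_k) \to 0$. Continuity of $\hat{f}$ on object spaces gives $\hat{P}(w_k) = \hat{f}(y_k) \to \hat{f}(y)$ in $Z$, and property \eqref{property-2-auxiliary-norm} for $\hat{N}$ then forces $w_k \to 0_{\hat{f}(y)}$ in $W[1]$. Since the topology on $\hat{f}^*W[1]$ is the subspace topology inherited from $Y \times W[1]$, it follows that $(y_k, w_k) \to (y, 0_{\hat{f}(y)})$, which is precisely the zero element of the fiber $(\hat{f}^*W)_y[1]$.

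This verification is essentially formal; there is no real obstacle, since everything needed is already built into the construction of the pullback bundle. The only subtlety worth flagging is the identification of the zero element of $(\hat{f}^*W)_y$ with $(y, 0_{\hat{f}(y)})$, which is immediate from the definition of the pullback as the fiber product, and the fact that the subspace topology on $\hat{f}^*W[1] \subset Y \times W[1]$ is exactly what makes the third axiom reduce to the third axiom for $\hat{N}$ together with continuity of $\hat{f}$.
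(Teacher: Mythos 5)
Your proof is correct and follows the same approach as the paper: the paper dismisses the first two axioms as immediate and only spells out the verification of property~\eqref{property-2-auxiliary-norm}, which you reproduce with the same argument (convergence of the base sequence under $\hat{f}$, the original auxiliary norm axiom forcing $w_k \to 0_{\hat{f}(y)}$, and the subspace topology on the fiber product giving convergence of the pairs). Your additional explicit verification of the $\ssc^0$-functor property and fiberwise completeness is harmless extra detail that the paper compresses into ``immediate from the definitions.''
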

\begin{proof}
	This is immediate from the definitions.  In particular, property \ref{property-2-auxiliary-norm} of Definition~\ref{def:auxiliary-norm} can be checked as follows.  Let $(x_k, w_k)$ be a sequence in $\hat{f}^* W[1]$, such that $x_k$ converges to $x$ in $Y$, and suppose $\text{proj}_2^* \hat{N} (x_k, w_k) \to 0$.  Then $w_k$ is a sequence in $W[1]$ such that $\hat{f}(x_k)$ converges to $\hat{f}(x)$ in $Z$, and $\hat{N} (w_k) = \text{proj}_2^* \hat{N} (x_k, w_k) \to 0$, and hence $w_k\to 0_{\hat{f}(x)}$.  Thus $(x_k,w_k)\to (x,0_{\hat{f}(x)})$, as required.
\end{proof}

\begin{definition}\label{topological-pullback-condition}
	We say that $f$ satisfies the \textbf{topological pullback condition} if for all $[y] \in \cS(\delbar)\subset \cZ$ and for any open neighborhood $\cV \subset \cY$ of the fiber $f^{-1} ([y])$ there exists an open neighborhood $\cU_{[y]}\subset \cZ$ of $[y]$ such that $f^{-1} (\cU_{[y]}) \subset \cV$.
	Note that if $f^{-1} ([y])=\emptyset$, this implies that there exists an open neighborhood $\cU_{[y]}$ of $[y]$ such that $f^{-1} (\cU_{[y]})=\emptyset$.
\end{definition}

\begin{proposition}
	\label{prop:simultaneous-compactness}
	Suppose that the map $f:\cY \to \cZ$ satisfies the topological pullback condition.  
	Then there exists a pair $(N,\cU)$ which controls the compactness of $\delbar$ such that the pair $(\text{proj}_2^* N, f^{-1} (\cU) )$ controls the compactness of $f^* \delbar$.
	
	Furthermore, if a $\ssc^+$-multisection $\Lambda: \cW\to \Q^+$ satisfies $N [\Lambda ] \leq 1$ and $\text{dom-supp} (\Lambda) \subset \cU$, then its pullback $\text{proj}_2^* \Lambda :f^*\cW \to \Q^+$ satisfies $\text{proj}_2^* N [\text{proj}_2^* \Lambda ] \leq 1$ and $\text{dom-supp} (\text{proj}_2^* \Lambda) \allowbreak \subset \allowbreak f^{-1} (\cU)$.
\end{proposition}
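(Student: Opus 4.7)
The plan is to start with any pair $(N,\cU_0)$ controlling the compactness of $\delbar$, promote the auxiliary norm to one on $f^*\cW[1]$, invoke the existence theorem for a compactness-controlling pair on the target $\cY$, and then use the topological pullback condition to interleave the resulting open neighborhoods so that the pullback pair controls the compactness of $f^*\delbar$.

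Concretely, I would begin by fixing any pair $(N,\cU_0)$ controlling the compactness of $\delbar$. The previous proposition shows that $\text{proj}_2^*N$ is an auxiliary norm on $f^*\cW[1]$, and since $f^*\delbar$ is assumed to be a $\ssc$-smooth proper Fredholm section, \cite[Thm.~4.5]{HWZ3} furnishes an open neighborhood $\cV_0 \subset \cY$ of $\cS(f^*\delbar)$ such that $(\text{proj}_2^*N,\cV_0)$ controls the compactness of $f^*\delbar$. The remaining step is to shrink $\cU_0$ to some $\cU \subset \cU_0$ for which $f^{-1}(\cU)\subset \cV_0$, since then by Remark~\ref{rmk:shrink-neighborhood} the pair $(N,\cU)$ still controls the compactness of $\delbar$, and likewise $(\text{proj}_2^*N,f^{-1}(\cU))$ controls the compactness of $f^*\delbar$.

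Producing this $\cU$ is exactly what the topological pullback condition is designed to accomplish. Using $\cS(f^*\delbar)=f^{-1}(\cS(\delbar))\subset \cV_0$, for each $[y]\in \cS(\delbar)$ the fiber $f^{-1}([y])$ lies inside $\cV_0$, so the hypothesis supplies an open neighborhood $\cU_{[y]}\subset \cZ$ of $[y]$ with $f^{-1}(\cU_{[y]})\subset \cV_0$; after replacing $\cU_{[y]}$ by $\cU_{[y]}\cap \cU_0$ I may take $\cU_{[y]}\subset \cU_0$. Taking the union $\cU := \bigcup_{[y]\in\cS(\delbar)}\cU_{[y]}$ produces an open neighborhood of $\cS(\delbar)$ with $\cU\subset \cU_0$ and $f^{-1}(\cU)\subset \cV_0$, completing the first assertion.

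For the second assertion, both conditions on $\text{proj}_2^*\Lambda$ follow by unpacking Definition~\ref{def:pullback-multisection}. Because the pullback local sections are honest $\hat{f}$-pullbacks of the original local sections, the pointwise norm transports cleanly under the fiber identification, giving $\text{proj}_2^*N[\text{proj}_2^*\Lambda]([x]) = N[\Lambda](f([x]))\leq N[\Lambda]\leq 1$. For the domain support, the same unpacking shows that the pre-closure set associated with $\text{proj}_2^*\Lambda$ is exactly the $f$-preimage of the analogous set $A\subset\cZ$ for $\Lambda$; since $f^{-1}(\text{cl}_\cZ(A))$ is closed in $\cY$ and contains $f^{-1}(A)$, continuity of $f$ yields $\text{cl}_\cY(f^{-1}(A))\subset f^{-1}(\text{cl}_\cZ(A))=f^{-1}(\text{dom-supp}(\Lambda))\subset f^{-1}(\cU)$. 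I do not anticipate a genuine obstacle in this argument: the topological pullback condition is precisely the point-set hypothesis needed to make the interleaving of $\cU_0$ and $\cV_0$ succeed, and the multisection verification is a definition chase combined with the preservation of closures under continuous preimage.
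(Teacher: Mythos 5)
Your argument is correct and follows the paper's proof essentially verbatim: fix a controlling pair for $\delbar$, use the previous proposition and the existence result from HWZ to get a controlling pair for $f^*\delbar$, then interleave via the topological pullback condition and Remark~\ref{rmk:shrink-neighborhood}. Your last paragraph spells out the definition chase that the paper dismisses as ``immediate from the construction,'' and both the pointwise-norm computation $\text{proj}_2^*N[\text{proj}_2^*\Lambda]([x]) = N[\Lambda](f([x]))$ and the inclusion $\text{cl}_\cY(f^{-1}(A))\subset f^{-1}(\text{cl}_\cZ(A))$ are right.
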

\begin{proof}
	Let $(N, \cV)$ be a pair which controls the compactness of $\delbar$.
	By the previous proposition we know that the pullback $\text{proj}_2^* N :f^* \cW[1]\to [0,\infty)$ is an auxiliary norm.
	We may then apply \cite[Prop.~4.5]{HWZ3} to assert the existence of a neighborhood $\cU'\subset \cY$ of $\cS(f^*\delbar)$ such that the pair $(\text{proj}_2^* N, \cU')$ controls the compactness of $f^*\delbar$.
	
	At every $[y] \in \cS(\delbar)$, observe that $f^{-1} ([y]) \subset \cS(f^*\delbar) \subset \cU'$.  We can use the topological pullback condition to choose a neighborhood $\cU_{[y]}$ of $[y]$ such that $f^{-1} (\cU_{[y]}) \subset \cU'\subset \cY$ and moreover such that $\cU_{[y]} \subset \cV\subset \cZ$.  
	We define an open neighborhood by $\cU := \cup_i \ \cU_{[y]_i}$ for every $[y]_i\in \cS(\delbar)$.
	
	Then $(N,\cU)$ is the desired pair.
	Indeed, $\cU$ is an open neighborhood of the unperturbed solution set $\cS(\delbar)$.
	And $\cU\subset \cV$ since for every $[y]_i$ we have $\cU_{[y]_i}\subset \cV$.  Hence we have $\cS(\delbar) \subset \cU\subset \cV$ therefore it follows from Remark~\ref{rmk:shrink-neighborhood} that $(N,\cU)$ controls the compactness of $\delbar$.
	
	Observe that $\cS(f^*\delbar) = f^{-1} (\cS(\delbar)) \subset f^{-1}(\cU)$.  By the construction of $\cU$ we have $f^{-1} (\cU) \subset \cU'$.  Hence we have $\cS(f^*\delbar) \subset f^{-1}(\cU) \subset \cU'$ therefore it follows from Remark~\ref{rmk:shrink-neighborhood} that $(\text{proj}_2^* N, f^{-1} (\cU))$ controls the compactness of $f^* \delbar$.
	
	Finally, the claim regarding the pullback of a $\ssc^+$-multisection is immediate from the construction.
\end{proof}

\subsection{Construction of regular perturbations which pullback to regular perturbations}
	\label{subsec:construction-regular-perturbation-which-pullback}

With the same assumptions and setup as in the previous subsection (i.e., our map satisfies the topological pullback condition), we show in this subsection how to construct regular perturbations which will pullback to regular perturbations

\begin{theorem}
	\label{thm:compatible-pullbacks}
	We can construct a regular perturbation $\Lambda: \cW \to \Q^+$ which pulls back to a regular perturbation $\text{proj}_2^* \Lambda$. This means that the perturbations satisfy the following conditions.
	\begin{enumerate}
		\item $(\delbar, \Lambda)$ and $(f^* \delbar, \text{proj}_2^* \Lambda)$ are both transversal pairs.
		\item There exists a pair $(N,\cU)$ which controls the compactness of $\delbar$ such that the pair $(\text{proj}_2^* N, f^{-1} (\cU) )$ controls the compactness of $f^* \delbar$. Then:
		\begin{itemize}
			\item $N [\Lambda ] \leq 1$ and $\text{dom-supp} (\Lambda) \subset \cU$
			\item $\text{proj}_2^* N [\text{proj}_2^* \Lambda ] \leq 1$ and $\text{dom-supp} (\text{proj}_2^* \Lambda) \subset f^{-1} (\cU)$.
		\end{itemize}
	\end{enumerate}
\end{theorem}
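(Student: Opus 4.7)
The plan is to construct a parameterized $\ssc^+$-multisection on $\cW$ which achieves transversality simultaneously for $(\delbar,\cdot)$ and for its pullback $(f^*\delbar, \text{proj}_2^*(\cdot))$, and then to extract a common regular value by Sard's theorem. Condition~(2) of the theorem is handled at the outset: Proposition~\ref{prop:simultaneous-compactness} supplies a pair $(N,\cU)$ controlling the compactness of $\delbar$ whose pullback $(\text{proj}_2^*N, f^{-1}(\cU))$ controls the compactness of $f^*\delbar$. Once I build a $\ssc^+$-multisection $\Lambda$ with $N[\Lambda]\leq 1$ and $\text{dom-supp}(\Lambda)\subset\cU$, that same proposition gives both compactness conditions automatically, and it remains only to arrange transversality.

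For each $[y_0]\in\cS(\delbar)$ I build a local parameterized $\ssc^+$-multisection, distinguishing two cases. If $f^{-1}([y_0]) = \emptyset$, the topological pullback condition produces an open neighborhood $\cU_{[y_0]}\subset \cU$ of $[y_0]$ with $f^{-1}(\cU_{[y_0]})=\emptyset$, and the general-position argument of \cite[Thm.~15.4]{HWZbook} supplies a local parameterized $\ssc^+$-multisection supported in $\cU_{[y_0]}$ achieving $(\delbar,\cdot)$-transversality near $[y_0]$; its pullback is identically zero, so pullback transversality is vacuous there. If instead $f^{-1}([y_0])\neq\emptyset$, then $\cS(f^*\delbar)=f^{-1}(\cS(\delbar))$ is compact by properness of $f^*\delbar$, so the fiber $f^{-1}([y_0])$ is a compact subset of $\cS(f^*\delbar)$.

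The key new step occurs in this second case. I pick vectors $v_1,\ldots,v_K\in W_{y_0}$ whose images in the cokernel $W_{y_0}/\text{Im}(D\hdelbar(y_0)\circ D\hat f(x))$ span this finite-dimensional quotient for every $[x]$ in an open neighborhood of $f^{-1}([y_0])$ inside $\cS(f^*\delbar)$. This is possible because (i) each cokernel is finite-dimensional by the Fredholm property of $f^*\delbar$, so finitely many vectors suffice at a single point $[x_0]$; (ii) surjectivity is an open condition, so the same vectors span nearby cokernels; and (iii) $f^{-1}([y_0])$ is compact, so a finite union of such local collections works throughout the fiber. The inclusion $\text{Im}(D\hdelbar(y_0)\circ D\hat f(x_0))\subset \text{Im}(D\hdelbar(y_0))$ induces a surjection $W_{y_0}/\text{Im}(D\hdelbar(y_0)\circ D\hat f(x_0))\twoheadrightarrow \coker(D\hdelbar(y_0))$, so the same vectors automatically span $\coker(D\hdelbar(y_0))$, yielding $(\delbar,\cdot)$-transversality at $[y_0]$ as a free byproduct. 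Invoking \cite[Lem.~5.3]{HWZbook}, I realize each $v_j$ as the value at $y_0$ of a local $\ssc^+$-section $\sigma_j$ on a small $\bm{G}(y_0)$-invariant neighborhood, cut off by a bump function supported in $\cU$, and assemble the $\bm{G}(y_0)$-equivariant local section structure $\{g*\sum_j t_j\sigma_j\}_{g\in\bm{G}(y_0)}$ into a local parameterized $\ssc^+$-multisection $\Lambda'_{[y_0]}:\cW\times B^K_\ep\to\Q^+$.

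Covering the compact set $\cS(\delbar)$ by finitely many such neighborhoods and forming the multisection sum yields a global parameterized $\ssc^+$-multisection $\Lambda':\cW\times B^N_\ep\to\Q^+$. The two thickened pairs are transversal by construction, since at any unperturbed solution the derivative in the parameter directions contributes exactly the chosen cokernel-spanning vectors, completing the image of the base derivative to the full fiber. By Theorem~\ref{thm:transversal-pairs-weighted-branched-suborbifolds} the thickened solution sets in $\cZ\times B^N_\ep$ and $\cY\times B^N_\ep$ are weighted branched suborbifolds. Applying Sard's theorem to the $\ssc$-smooth projections to $B^N_\ep$ and intersecting the two full-measure sets of regular values, I extract a common regular value $t_0$; then $\Lambda:=\Lambda'(\cdot,t_0)$ simultaneously achieves both transversalities. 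Rescaling the $v_j$ ensures $N[\Lambda]\leq 1$, the choice of bump function supports gives $\text{dom-supp}(\Lambda)\subset\cU$, and Proposition~\ref{prop:simultaneous-compactness} then discharges condition~(2). The main obstacle is the simultaneous transversality in the third paragraph, which I resolve by exploiting the finite-dimensionality of the pullback cokernels (Fredholm property of $f^*\delbar$), openness of surjectivity, and compactness of the fiber.
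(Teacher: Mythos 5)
Your overall strategy (parameterized multisection, simultaneous transversality for both thickened problems, Sard, then extract a common regular value) matches the paper's. The observation that vectors spanning the pullback cokernel $W_{y_0}/\text{Im}(D\hdelbar(\hat f(x_0))\circ D\hat f(x_0))$ automatically span $\coker(D\hdelbar(y_0))$ is correct and tidy. However, there is a genuine gap in the covering step.

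You only cover $\cS(\delbar)$, and for each $[y_0]$ in the cover with $f^{-1}([y_0])\neq\emptyset$ you choose vectors that span the pullback cokernel on an open neighborhood $\cV_{[y_0]}$ of the compact fiber $f^{-1}([y_0])$. But the local $\ssc^+$-sections $\sigma_j$ are supported on a neighborhood $V(y_0)\subset Z$ of $y_0$, so the pullback $\hat f^*\sigma_j$ is supported on $\hat f^{-1}(V(y_0))$, which can be much larger than $\cV_{[y_0]}$. Consequently the thickened solution set $\cS(\tilde f^*\tdelbar,\text{proj}_2^*\Lambda')$ can contain points $([x],t)$ with $[x]$ outside every $\cV_{[y_i]}$, where nothing you have arranged guarantees surjectivity. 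More fundamentally, the sets $\cV_{[y_i]}$ need not even cover $\cS(f^*\delbar)$: from $f([x])\in\cU_{[y_i]}$ you cannot conclude $[x]$ is near $f^{-1}([y_i])$. (Think of $f(x)=x^2$: a neighborhood of $0.25$ pulls back to a neighborhood of $\{\pm 0.5\}$, which misses $0$, even though $0$ lies over a nearby point of $\cS(\delbar)$.) You invoke the topological pullback condition only in the trivial case $f^{-1}([y_0])=\emptyset$; it is precisely in the nontrivial case that it is needed, to shrink $V(y_0)$ so that $f^{-1}(V(y_0))$ lands inside the region where your vectors span. With that insertion your proof closes. The paper side-steps the whole issue more simply: it covers $\cS(f^*\delbar)$ directly by a second finite collection of neighborhoods in $\cY$ (one per point $[x_i]\in\cS(f^*\delbar)$, with vectors chosen to span the pullback cokernel at $[x_i]$) and sums the resulting multisections with those obtained from the cover of $\cS(\delbar)$, never needing to relate neighborhoods in $\cZ$ to neighborhoods in $\cY$ beyond what Proposition~\ref{prop:simultaneous-compactness} already provides. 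Your route, once patched, trades that second cover for fiberwise compactness plus the topological pullback condition, and has the modest benefit of reusing one set of vectors for both transversalities; the paper's route is shorter and avoids the fiber analysis entirely.
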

\begin{proof}

We now give an explicit construction of a $\ssc^+$-multisection $\Lambda : \cW \to \Q^+$ such that $(\delbar, \Lambda)$ and $(f^* \delbar, \text{proj}_2^* \Lambda)$ are both transversal pairs.  Our approach is based on the general position argument of \cite[Thm.~15.4]{HWZbook}.

\noindent\emph{Local construction.}
We construct a $\ssc^+$-multisection $\Lambda_0 : \cW \to \Q^+$ which will be transversal at a point $[y_0] \in \cS(\delbar) \subset \cZ$.
Let $U(y_0) \subset Z$ be a $\bm{G}(y_0)$-invariant open neighborhood of $y_0$ and moreover let $V(y_0)\subset U(y_0)$ be a $\bm{G}(y_0)$-invariant open neighborhood of $y_0$ such that $\overline{V(y_0)} \subset U(y_0)$.

Choose smooth vectors $v_1,\ldots, v_k \in W_{y_0}$ such that
	\[
	\text{span} \{v_1,\ldots,v_k \} \oplus D\hdelbar_{y_0} (T_{y_0} Z) = W_{y_0}.
	\]

For each smooth vector $v_1,\ldots, v_k$ we can use \cite[Lem.~5.3]{HWZbook} to define $\ssc^+$-sections $s_i : U(y_0) \to W$ such that
\begin{itemize}
	\item $s_i= 0$ on $U(y_0)\setminus V(y_0)$,
	\item $s_i(y_0) = v_i$.
\end{itemize}
Furthermore, to ensure that the resulting multisection will be controlled by the pair $(N,\cU)$ we require that
\begin{itemize}
	\item $N[s_i] \leq 1,$
	\item $\text{supp}(s_i)\subset \cU$.
\end{itemize}

We may use these locally constructed $\ssc^+$-sections to define a $\ssc^+$-multisection functor 
	\[\hat{\Lambda}'_0:W \times B_\ep^k \to \Q^+\]
with local section structure given by 
$\left\lbrace g * \left(\sum_{i=1}^k t_i \cdot s_i\right)\right\rbrace_{g\in \bm{G}(y_0)}$
which satisfies the following.
There exists a $\bm{G}(y_0)$-invariant open neighborhood $y_0 \subset U'_0 \subset Z$ such that at any object $y\in U'_0$ and for any $g\in \bm{G}(y_0)$ the linearization of the function 
	\begin{align*}
	U'_0 \times B_\ep^k 	&\to W\\
	(y, t_1,\ldots,t_k)		&\mapsto \hdelbar(y) - g * \left(\sum_{i=1}^k t_i \cdot s_i(y)\right)
	\end{align*}
projected to the fiber $W_y$ is surjective.

We now construct a $\ssc^+$-multisection whose pullback $\text{proj}_2^* \Lambda_0 : f^*\cW \to \Q^+$ will be transversal at a point $[x_0] \in \cS(f^*\delbar) \subset \cY$.
Consider a point $[x_0]\in \cS(f^*\delbar) \subset \cY$ which maps to $[y_0]:=f([x_0])\in \cS(\delbar)\subset \cZ$.
Let $x_0\in S(\hat{f}^*\hdelbar)$ be a representative of $[x_0]$, and let $U(x_0) \subset Y$ be a $\bm{G}(x_0)$-invariant open neighborhood of $x_0$.
Then $y_0:= \hat{f}(x_0) \in S(\hdelbar)$ is a representative of $[y_0]$.
Let $U(y_0) \subset Z$ be a $\bm{G}(y_0)$-invariant open neighborhood of $y_0$ and moreover let $V(y_0)\subset U(y_0)$ be a $\bm{G}(y_0)$-invariant open neighborhood of $y_0$ such that $\overline{V(y_0)} \subset U(y_0)$.
By shrinking the open set $U(x_0)$, we may assume that the local expression $\hat{f}: U(x_0)\to U(y_0)$ is well-defined.

The fibers $(\hat{f}^*W)_{x_0}$ and $W_{y_0}$ may be identified via $\text{proj}_2$.
Choose smooth vectors $v_1,\ldots, v_k \in W_{y_0}$ such that
	\[\text{span} \{\text{proj}_2^{-1} (v_1),\ldots,\text{proj}_2^{-1} (v_k) \} \oplus D\hat{f}^*\hdelbar_{x_0} (T_{x_0} Y) = (\hat{f}^*W)_{x_0}.\]

For each smooth vector $v_1,\ldots, v_k$ we may use \cite[Lem.~5.3]{HWZbook} to define $\ssc^+$-sections $s_i : U(y_0) \to W$ such that
\begin{itemize}
	\item $s_i\equiv 0$ on $U(y_0)\setminus V(y_0)$,
	\item $s_i(y_0) = v_i$.
\end{itemize}
Furthermore, to ensure that the resulting multisection will be controlled by the pair $(N,\cU)$ we require that
\begin{itemize}
	\item $N[s_i] \leq 1,$
	\item $\text{supp}(s_i)\subset \cU$.
\end{itemize}

We may use these locally defined $\ssc^+$-sections to define a $\ssc^+$-multisection functor $\hat{\Lambda}_0:	W \times B_\ep^k \to \Q^+$ 
with local section structure given as follows. 
	\[\left\lbrace g * \left(\sum_{i=1}^k t_i \cdot s_i\right)\right\rbrace_{g\in \bm{G}(y_0)}\]
By construction, the pullback $\ssc^+$-multisection functor
$\text{proj}_2^* \hat{\Lambda}_0:	\hat{f}^* W \times B_\ep^k \to \Q^+$ 
has local section structure given as follows. 
	\[\left\lbrace \hat{f}^* \left(g * \left(\sum_{i=1}^k t_i \cdot s_i\right)\right) \right\rbrace_{g\in \bm{G}(y_0)}\]
It may be observed that there exists a $\bm{G}(x_0)$-invariant open neighborhood $x_0 \subset U_0\subset Y$ such that at any object $x\in U_0$ and for any $g\in \bm{G}(y_0)$ the linearization of the function 
	\begin{align*}
	U_0 \times B_\ep^k 	&\to \hat{f}^*W\\
	(x, t_1,\ldots,t_k)	&\mapsto \hat{f}^* \hdelbar (x) - \hat{f}^* \left( g * \left(\sum_{i=1}^k t_i \cdot s_i\right)\right) (x) \\
						&\phantom{\mapsto} = \left(x, \hdelbar (\hat{f}(x)) - g * \left(\sum_{i=1}^k t_i \cdot s_i(\hat{f}(x)) \right) \right)
	\end{align*}
projected to the fiber $(\hat{f}^* W)_x$ is surjective.

\noindent\emph{Global construction.}
We may cover the compact topological space $\cS(\delbar)$ by a finite collection of such neighborhoods $\abs{U'_i}$ of points $[y_i]\in \cS(\delbar)$; we may also cover $\cS(f^*\delbar)$ by a finite collection of such neighborhoods $\abs{U_i}$ of points $[x_i]\in \cS(f^*\delbar)$.
It follows that the finite sum of $\ssc^+$-multisections
	\[
	\Lambda:= \bigoplus_i \Lambda_i : \cW \times B_\ep^N \to \Q^+
	\]
has the property that: for any point $[y] \in \cZ$ with $\Lambda \circ \delbar ([y])>0$, and for any parametrized local section structure $\{s_i\}_{i\in I}$ at a representative $y$, the linearization of the function $\hdelbar (y) - s_i(y,t)$ projected to the fiber $W_y$ is surjective.

Likewise, the pullback $\ssc^+$-multisection $\text{proj}_2^* \Lambda : f^*\cW \times B_\ep^N \to \Q^+$
has the property that for any point $[x] \in \cY$ which satisfies $\text{proj}_2^* \Lambda \circ f^*\delbar ([x])>0$ and for any parametrized local section structure $\{s_i\}_{i\in I}$ at a representative $x$, the linearization of the function $\hdelbar (x) - s_i(x,t)$ projected to the fiber $(\hat{f}^*W)_x$ is surjective. 

Furthermore for $\ep$ sufficiently small, for any fixed $t_0 \in B_\ep^N$ the $\ssc^+$-multisection $\Lambda (\cdot, t_0)$ is controlled by the pair $(N,\cU)$, i.e.,
\begin{itemize}
	\item $N[\Lambda(\cdot,t_0)] \leq 1$,
	\item $\text{dom-supp}(\Lambda(\cdot,t_0)) \subset \cU$.
\end{itemize}
It follows from Proposition~\ref{prop:simultaneous-compactness} that the pullback $\ssc^+$-multisection $\text{proj}_2^* \Lambda (\cdot, t_0)$ is controlled by the pullback $(\text{proj}_2^*N,f^{-1}(\cU))$.

\noindent\emph{Common regular value.}
Consider the strong polyfold bundle $\cW \times B_\ep^N \to \cZ \times B_\ep^N$ and the pullback strong polyfold bundle $f^*\cW \times B_\ep^N \to \cY \times B_\ep^N$.
Let $\tdelbar : \cZ \times B_\ep^N \to \cW \times B_\ep^N$ denote the $\ssc$-smooth proper Fredholm section defined by $([y],t)\mapsto (\delbar([y]),t)$, and let $\tilde{f}^*\tdelbar :\cY \times B_\ep^N \to f^*\cW\times B_\ep^N$ denote the $\ssc$-smooth proper Fredholm section defined by $([x],t) \mapsto ([x],\delbar(f([x])),t)$.
By construction, $(\tdelbar, \Lambda)$ and $(\tilde{f}^*\tdelbar, \text{proj}_2^* \Lambda)$ are transversal pairs; hence by Theorem~\ref{thm:transversal-pairs-weighted-branched-suborbifolds} the thickened solution sets
	\begin{align*}
	\cS(\tdelbar, \Lambda) &= \{ ([y],t) \in \cZ \times B_\ep^N \mid \Lambda (\tdelbar ([y],t)) >0 \} \subset \cZ \times B_\ep^N,	\\
	\cS(\tilde{f}^*\tdelbar, \text{proj}_2^* \Lambda) &= \{ ([x],t) \in \cY \times B_\ep^N \mid \text{proj}_2^*\Lambda (\tilde{f}^*\tdelbar ([x],t)) >0 \} \subset \cY \times B_\ep^N
	\end{align*}
have the structure of weighted branched orbifolds.

By Sard's theorem, we can find a common regular value $t_0 \in B_\ep^N$ of the projections $\cS(\tdelbar, \Lambda) \to B_\ep^N$ and $\cS(\tilde{f}^*\tdelbar, \text{proj}_2^* \Lambda)\to B_\ep^N$.
Then the $\ssc^+$-multisection $\Lambda(\cdot,t_0) : \cW \to \Q^+$ and its pullback $\text{proj}_2^*\Lambda(\cdot,t_0) :f^*\cW \to \Q^+$ are the desired regular perturbations.
\end{proof}

The significance of this theorem is the following.
Both perturbed solution sets $\cS(f^*\delbar, \text{proj}_2^* \Lambda )$ and $\cS(\delbar,\Lambda)$ have the structure of compact oriented weighted branched suborbifolds.  
Moreover, the restriction of $f$ gives a well-defined continuous function between these perturbed solution spaces, i.e.,
	\[
	f|_{\cS(f^*\delbar, \text{proj}_2^* \Lambda )}: \cS(f^*\delbar, \text{proj}_2^* \Lambda ) \to \cS(\delbar,\Lambda).
	\]
Furthermore, $f$ is weight preserving in the sense that the weight functions are related via pullback by the following equation $ ( \Lambda \circ \delbar) \circ f= \text{proj}_2^* \Lambda \circ f^* \delbar$.

\subsection{The permutation maps between perturbed Gromov--Witten moduli spaces}
	\label{subsec:permutation-map}
{As we have explained in the introduction, naively there does not exist a permutation map for an arbitrary choice of abstract perturbation.}

Fix a permutation $\sigma \in S_k,\  \sigma: \{1,\ldots, k\}\to \{1,\ldots,k\}$.
Associated to this permutation we can define a $\ssc$-smooth permutation map between the Gromov--Witten polyfold
\[
\sigma: \cZ_{A,g,k} \to \cZ_{A,g,k}, \qquad [\Sigma,j,M,D,u]  \mapsto [\Sigma,j,M^\sigma,D,u]
\]
where $M = \{z_1,\ldots,z_k\}$ and where $M^\sigma := \{z'_1,\ldots,z'_k\}$, $z'_i:= z_{\sigma(i)}$.

Consider the pullback via $\sigma$ of the strong bundle $\cW_{A,g,k} \to \cZ_{A,g,k}$ and the Cauchy--Riemann section $\delbarj$, as illustrated in the below commutative diagram.
	\[
	\begin{tikzcd}
	\sigma^* \cW_{A,g,k} \arrow[d, "\sigma^* \delbarj \quad"'] \arrow[r, "\text{proj}_2"'] & \cW_{A,g,k} \arrow[d, "\quad \delbarj"] &  \\
	\cZ_{A,g,k} \arrow[r, "\sigma"'] \arrow[u, bend left] & \cZ_{A,g,k} \arrow[u, bend right] & 
	\end{tikzcd}
	\]
The map $\sigma$ is a homeomorphism when considered on the underlying topological spaces, and hence satisfies the topological pullback condition.
By applying Theorem~\ref{thm:compatible-pullbacks} we immediately obtain Corollary~\ref{cor:pullback-via-permutation}.

It follows that the permutation map restricts to a well-defined map between the perturbed Gromov--Witten moduli spaces, 
\[
\sigma|_{\cS_{A,g,k} (\delbarj, \text{proj}_2^* \Lambda)} : \cS_{A,g,k} (\delbarj, \text{proj}_2^* \Lambda) \to \cS_{A,g,k} (\delbarj,\Lambda).
\]
Considered on the underlying topological spaces, this map is a homeomorphism.  Considered on the branched ep-subgroupoid structures, the associated functor
\[
\hat{\sigma}|_{S_{A,g,k} (\hdelbarj, \text{proj}_2^* \hat{\Lambda})}: S_{A,g,k} (\hdelbarj, \text{proj}_2^* \hat{\Lambda}) \to S_{A,g,k} (\hdelbarj,\hat{\Lambda})
\]
is a local diffeomorphism, and moreover is injective.
The restricted permutation map $\sigma$ and its associated functor $\hat{\sigma}$ are both weight preserving, i.e.,
$(\Lambda\circ \delbarj) \circ \sigma = \text{proj}_2^*\Lambda \circ \delbarj$ and $(\hat{\Lambda}\circ \hdelbarj) \circ \hat{\sigma} = \text{proj}_2^* \hat{\Lambda} \circ \hdelbarj$.

\appendix

\section{Local surjectivity of the linearized Cauchy--Riemann operator}
	\label{appx:local-surjectivity}


We recall some basic facts about the standard, linear Cauchy--Riemann operator.

\begin{proposition}[{\cite[Prop.~4.15]{HWZGW}}]
	\label{prop:linear-cr-cylinder}
	Let $H_c^{3,\delta_0}(\R\times S^1,{\mathbb R}^{2n})$ be the $\ssc$-Hilbert space with antipodal asymptotic constants, where the level $m$ has regularity $(m+3,\delta_m)$.  Let $J(0)$ be a constant almost complex structure on $\R^{2n}$.
	The Cauchy--Riemann operator
	\[
	\partial_s+J(0)\partial_t: H^{3,\delta_0}_c( \R\times S^1,\R^{2n} )\rightarrow H^{2,\delta_0}(\R\times S^1, \R^{2n} )
	\]
	is a $\ssc$-isomorphism.
\end{proposition}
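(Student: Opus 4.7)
The plan is to reduce to Fourier analysis on the $S^1$ factor and then to solve the resulting family of ODEs in $s$, using the assumption $\delta_0 \in (0, 2\pi)$ in a crucial way to control growth and decay.

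First I would set up coordinates carefully. Identify $\R^{2n} \cong \C^n$ so that $J(0)$ corresponds to multiplication by $i$; then the operator becomes $\partial_s + i\partial_t = 2\,\partial_{\bar z}$ for $z = s + it$, i.e., the standard $\bar\partial$ on the cylinder. Next, parametrize the source space: write $u \in H^{3,\delta_0}_c(\R \times S^1, \R^{2n})$ in the form $u(s,t) = c \cdot \chi(s) + r(s,t)$, where $c \in \R^{2n}$ is the asymptotic constant, $\chi \in C^\infty(\R)$ interpolates smoothly between $-1$ for $s \ll 0$ and $+1$ for $s \gg 0$, and $r$ lies in the ordinary weighted Sobolev space $H^{3,\delta_0}(\R\times S^1, \R^{2n})$. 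Under this splitting the operator becomes $(\partial_s + J(0)\partial_t)(c\chi + r) = c\chi'(s) + (\partial_s + J(0)\partial_t) r$, and both terms are manifestly in $H^{2,\delta_0}$ since $\chi'$ is compactly supported; this verifies boundedness and shows that the operator reduces to a finite-rank modification of its restriction to $H^{3,\delta_0}$.

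The heart of the argument is bijectivity on each level $m$, which I would prove by Fourier-decomposing in $t$. Writing $u(s,t) = \sum_{k\in\Z} u_k(s)\, e^{2\pi i k t}$ and similarly for the right-hand side $g$, the equation $(\partial_s + i\partial_t) u = g$ decouples into the scalar ODEs
\begin{equation*}
u_k'(s) - 2\pi k\, u_k(s) = g_k(s), \qquad k \in \Z.
\end{equation*}
For $k \neq 0$, since $|2\pi k| \geq 2\pi > \delta_m \geq \delta_0$, the homogeneous solution $e^{2\pi k s}$ fails to lie in the exponentially weighted $L^2$-space; hence uniqueness holds, and variation of parameters (integrating from $-\infty$ for $k < 0$ and from $+\infty$ for $k > 0$) produces a unique solution $u_k$ with decay rate $\delta_m$ and the correct Sobolev regularity. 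For $k = 0$, the equation $u_0'(s) = g_0(s)$ with $g_0 \in H^{m+2,\delta_m}$ forces $u_0$ to be asymptotic to constants $c_\pm$ with $c_+ - c_- = \int_\R g_0\,ds$; the antipodal constraint $c_+ + c_- = 0$ then pins them down uniquely as $c_\pm = \pm\tfrac12\int_\R g_0$, and the residual $u_0(s) - c\chi(s)$ lies in the weighted space. Reassembling the Fourier series yields the unique preimage $u$, and standard Plancherel/Sobolev estimates mode-by-mode (together with the fact that the cutoff contribution costs only a bounded operator) give the required continuity $H^{m+2,\delta_m} \to H^{m+3,\delta_m}_c$.

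Finally, once bijectivity and continuity are established on every level, the open mapping theorem gives a bounded inverse on each level, and compatibility of the inverse with the scale structure follows automatically because the Fourier-mode solution formulas manifestly commute with the inclusions $H^{m+1,\delta_{m+1}} \hookrightarrow H^{m,\delta_m}$. I expect the main technical obstacle to be the bookkeeping around the $k = 0$ mode: one must verify carefully that the antipodal condition gives the exactly correct codimension-1 constraint needed to make the $k=0$ ODE solvable uniquely within the $H^{3,\delta_0}_c$-space, and that this construction is uniform in the level index $m$. All other steps are essentially classical weighted elliptic theory for $\bar\partial$ on the cylinder.
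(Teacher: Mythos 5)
The paper does not supply its own proof of this proposition; it is cited verbatim from \cite[Prop.~4.15]{HWZGW}, so there is no internal argument to compare against. Your sketch is, nonetheless, essentially the canonical proof and the one carried out in the cited reference: Fourier decomposition in the $S^1$ variable, mode-by-mode solution of the resulting ODEs $u_k' - 2\pi k\, u_k = g_k$, exclusion of the homogeneous solutions $e^{2\pi k s}$ for $k\neq 0$ from the weighted space because $\delta_m < 2\pi \leq 2\pi|k|$, and the observation that the antipodal asymptotic constraint pins down the $k=0$ mode uniquely and fills the cokernel of the restriction to the purely decaying subspace, bringing the Fredholm index to zero. Two small points worth making explicit if you were to write this up in full: (i) the convention $S^1 = \R/\Z$ is what makes $2\pi$ the relevant threshold (with $S^1 = \R/2\pi\Z$ the bound would be $|k|\geq 1$ and one would need $\delta_m < 1$), so state this up front; and (ii) for the $k=0$ mode you should check not just pointwise decay of $u_0 - c\chi$ but the weighted $L^2$ estimate, which follows from writing $e^{\delta_m s}\!\int_s^\infty g_0\,d\sigma = \int_s^\infty e^{-\delta_m(\sigma - s)}\bigl(e^{\delta_m\sigma}g_0(\sigma)\bigr)\,d\sigma$ and applying Young's inequality, an argument you gesture at but do not spell out. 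Subject to filling in those Plancherel/Young estimates, the argument is sound, and the scale-smoothness of the inverse is indeed automatic from the explicit, level-independent solution formula.
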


This expression also gives the formula for the filled section for the Cauchy--Riemann operator, (see \cite[pp.~129--130]{HWZGW}).  We have a similar result for disks.

\begin{proposition}
	\label{prop:linear-cr-disk}
	Consider the $\ssc$-Hilbert space $H^3_\text{loc}(\mathbb{D},\R^{2n})$.  Let $J(0)$ be a constant almost complex structure on $\R^{2n}$.
	The Cauchy--Riemann operator
	\[
	\partial_s+J(0)\partial_t: H^3_\text{loc}(\mathbb{D},\R^{2n}) \to H^2_\text{loc}(\mathbb{D},\R^{2n})
	\]
	is surjective.
\end{proposition}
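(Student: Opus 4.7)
The plan is to reduce to the scalar inhomogeneous Cauchy--Riemann equation $\partial_{\bar z} u = g$ on the disk, and then to build a global solution by an exhaustion-and-patching argument using the Cauchy--Pompeiu transform together with Runge's approximation theorem.

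Using $J(0)$ to identify $(\R^{2n},J(0))$ with $\C^n$, the operator $\partial_s + J(0)\partial_t$ becomes a nonzero real scalar multiple of the standard $\bar\partial$-operator $\partial_{\bar z}$ acting componentwise. It therefore suffices to prove surjectivity of
\[
\partial_{\bar z}: H^3_{\text{loc}}(\mathbb{D},\C) \to H^2_{\text{loc}}(\mathbb{D},\C),
\]
and I would do this by direct construction of a right inverse on each compact subdisk, followed by patching.

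For $g \in H^2_{\text{loc}}(\mathbb{D})$, I would exhaust $\mathbb{D}$ by sub-disks $\mathbb{D}_{r_k}$ with $r_k \nearrow 1$, fix cutoffs $\chi_k \in C^\infty_c(\mathbb{D})$ equal to $1$ on $\overline{\mathbb{D}_{r_k}}$ and supported in $\mathbb{D}_{r_{k+1}}$, and set $g_k := \chi_k g \in H^2(\C)$ (extended by zero). Applying the Cauchy--Pompeiu transform
\[
T[\phi](z) := -\frac{1}{\pi}\int_{\C}\frac{\phi(w)}{w-z}\, dA(w)
\]
yields $u_k := T[g_k] \in H^3_{\text{loc}}(\C)$ satisfying $\partial_{\bar z} u_k = g_k$ in the distributional sense on all of $\C$, hence $\partial_{\bar z} u_k = g$ on $\mathbb{D}_{r_k}$. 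That $T$ sends compactly supported $H^2$ functions into $H^3_{\text{loc}}$ is classical, since $1/(\pi z)$ is a fundamental solution of $\partial_{\bar z}$ and elliptic regularity provides the expected one-derivative gain.

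Next I would patch the $u_k$ into a single solution. On $\mathbb{D}_{r_k}$ the difference $u_{k+1} - u_k$ is holomorphic, so Runge's theorem (applicable since each $\overline{\mathbb{D}_{r_k}}$ is a Runge subset of the simply connected domain $\mathbb{D}$) produces holomorphic $h_k$ on $\mathbb{D}$ approximating $u_{k+1} - u_k$ in $C^0(\overline{\mathbb{D}_{r_k}})$ to within any prescribed error $\varepsilon_k > 0$. Setting $\tilde u_{k+1} := u_{k+1} - h_k - h_{k-1} - \cdots - h_0$, the corrected sequence still satisfies $\partial_{\bar z}\tilde u_k = g$ on $\mathbb{D}_{r_k}$, while on $\overline{\mathbb{D}_{r_k}}$ it differs from $\tilde u_{k-1}$ by a holomorphic function of $C^0$-norm at most $\varepsilon_k$. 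Choosing the $\varepsilon_k$ summable, the sequence $\{\tilde u_k\}$ is Cauchy in $H^3(\overline{\mathbb{D}_{r_\ell}})$ for every fixed $\ell$, and the $H^3_{\text{loc}}(\mathbb{D})$-limit $u$ is the desired solution.

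The main technical point is ensuring that the Runge corrections made at large stages do not spoil the $H^3$-accuracy already achieved on earlier sub-disks. This is handled by Cauchy estimates: uniform smallness of a holomorphic function on a slightly larger sub-disk automatically upgrades to $H^3$-smallness on the original sub-disk, so the $C^0$-control provided by Runge's theorem is enough to drive the $H^3_{\text{loc}}$-convergence of the patched sequence. Beyond this bookkeeping, the argument uses only classical facts about the inhomogeneous Cauchy--Riemann equation on planar domains and requires no scale-calculus input.
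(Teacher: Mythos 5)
Your proof is correct, but it takes a genuinely different route from the paper's. The paper cites McDuff--Salamon, Exercise~B.3.3, which reduces $\partial_s w + J(0)\partial_t w = h$ (with $h = f + ig$, $w = u+iv$ after identifying $(\R^{2n},J(0))$ with $\C^n$) to the Poisson equations $\Delta u_j = \partial_s f_j + \partial_t g_j$ and $\Delta v_j = \partial_s g_j - \partial_t f_j$, and then invokes the solvability of the Dirichlet problem for the Laplacian on a disk. Equivalently, one solves $\Delta\Phi = 4F$ and sets $w = \partial_z\Phi$, using the factorization $\Delta = 4\partial_z\partial_{\bar z}$; since the Poisson solver is available on each subdisk with the expected elliptic gain $H^2_{\mathrm{loc}} \mapsto H^4_{\mathrm{loc}}$, no patching is required. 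You instead attack $\partial_{\bar z}$ directly: truncate $g$ by cutoffs, apply the Cauchy--Pompeiu transform to the compactly supported pieces (getting the one-derivative gain via the boundedness of the Beurling transform and elliptic regularity), and then glue the local solutions by a Mittag-Leffler--type argument using Runge approximation, promoting the $C^0$-smallness of the holomorphic corrections to $H^3$-smallness on compact subsets by Cauchy estimates. Both arguments are classical; the paper's is shorter because passing through the Laplacian avoids the exhaustion-and-patching step entirely, while yours stays within the first-order $\bar\partial$-framework at the cost of the Runge bookkeeping. One small remark: since $\chi_{k+1}-\chi_k$ vanishes on an open neighborhood of $\overline{\mathbb{D}_{r_k}}$, the differences $u_{k+1}-u_k$ are holomorphic on a neighborhood of $\overline{\mathbb{D}_{r_k}}$ (not merely on the open subdisk), so Runge's theorem for compact sets with connected complement applies directly and you do not need to invoke Mergelyan; it would be worth stating this explicitly.
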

\begin{proof}
	This follows from \cite[Exer.~B.3.3]{MSbook}, where solutions can be constructed using the existence of solutions to the Laplacian.
\end{proof}

Consider now the Cauchy--Riemann section, defined on the underlying sets of the polyfold $\cZ_{A,g,k}^{3,\delta_0}$ and strong polyfold bundle $\cW_{A,g,k}^{2,\delta_0}$ by the equation
	\[
	[\Sigma,j,M,D,u] \mapsto [\Sigma,j,M,D,u,\tfrac{1}{2}(du+J(u)\circ du \circ j))].
	\]
In local $\ssc$-coordinates it has the following local expression
	\[
	(a,v,\eta) \mapsto (a,v,\eta, \overline{\xi})
	\]
where $\overline{\xi}$ is the unique solution of the equations
	\begin{align}
	\Gamma (\oplus_a \exp_u \eta, \oplus_a u) \cdot \hat{\oplus}_a \overline{\xi} \circ \delta(a,v) &= \delbar_{J,j(a,v)} (\oplus_a \exp_u \eta),\nonumber \\
	\hat{\ominus}_a \overline{\xi} \cdot \frac{\partial}{\partial_s} &= 0, \label{eq:local-expression-cr-operator}
	\end{align}
where:
\begin{itemize}
	\item $\delta(a,v):(T\Sigma_a,j(a,v)) \to (T\Sigma_a,j(a,0))$ is the complex linear map given by $\delta(a,v)h=\tfrac{1}{2}(id - j(a,0)\circ j(a,v)) h$,
	\item $\Gamma$ is defined via parallel transport of a complex connection, as follows.
	Fix a complex connection on the almost complex vector bundle $(TQ, J)\to Q$, i.e., if $\nabla_X$ is the covariant derivative on $Q$ belonging to the Riemaniann metric $\omega\circ (\id \oplus J)$, the connection $\tilde{\nabla}_X$, defined by $\tilde{\nabla}_XY=\nabla_XY-\frac{1}{2}J(\nabla_XJ)Y$, defines a complex connection, in the sense that it satisfies $\tilde{\nabla}_X(JY)=J(\tilde{\nabla}_XY)$.
	If $\eta\in T_pQ$ is a tangent vector, the parallel transport of a complex connection along the path $t\mapsto \exp_p (t\eta)$ for $t\in [0,1]$, defines the linear map
	\[
	\Gamma  (\exp_p(\eta), p):(T_pQ, J(p))\to (T_{\exp_p(\eta)}Q, J( \exp_p(\eta)))
	\]
	which is complex linear, hence $\Gamma (\exp_p(\eta), p)\circ J(p)= J( \exp_p(\eta))\circ  \Gamma (\exp_p(\eta), p)$.
\end{itemize}
A full explanation of these details can be found in \cite[p.~118, p.~126]{HWZGW}.

We can simplify this expression by fixing the coordinates $a=0$, $v=0$.  Moreover, we may identify a neighborhood of a point $q\in Q$ with a neighborhood of $0\in\R^{2n}$ under which the Euclidean metric pulls back to the Riemannian metric on $Q$.  The formula \eqref{eq:local-expression-cr-operator} defining $\overline{\xi}$  now becomes
\[
\overline{\xi} = \Gamma (u+ \eta, u)^{-1} \cdot \left( \partial_s(u+\eta) + J(u+\eta ) \partial_t(u+\eta)	\right).
\]
Consider the linearization at a solution $\delbar_{J,j} u =0$ with respect to the coordinate $\eta$.  This linearization is given as follows:
	\[
	\eta \mapsto \frac{1}{2} \left(	\partial_s \eta + J(u) \partial_t \eta + \partial_\eta J(u) \partial_t u	\right).
	\]

\begin{lemma}[Local surjectivity of the linearized Cauchy--Riemann operator]
	\label{lem:local-surjectivity-cauchy-riemann}
	Let $(\Sigma,\allowbreak j,\allowbreak M,\allowbreak D,\allowbreak u)$ be a stable map which is a solution to the Cauchy--Riemann operator $\hdelbar$.  Let 
	\[
	D_{u} \hdelbar : H^{3,\delta_0} (\Sigma, u^*TQ) \to H^{2,\delta_0}(\Sigma,\Lambda^{0,1} \otimes_J u^* TQ)
	\]
	be the linearization of $\hdelbar$ at $(\Sigma,j,M,D,u)$, considered as an $\ssc$-Fredholm operator between $\ssc$-Banach spaces, where we have fixed the complex structure on $(\Sigma,j)$ and the gluing parameters.
	Then there exist open subsets of the Riemann surface $\Sigma$:
	\begin{itemize}
		\item a disk-like neighborhood $D_{z_i}$ at every marked point $z_i \in M$ (regardless of whether we require $u$ is of class $H^{3,\delta_0}$ or of class $H^3_\text{loc}$ at $z_i$)
		\item disk-like neighborhoods $D_{x_a}\sqcup D_{y_a}$ at every nodal pair $\{x_a,y_a\}\in D$
		\item (if it exists) a component $S^2\subset \Sigma$ with two punctures, on which $u$ is constant
	\end{itemize}
	such that the restriction of $D_{u} \hdelbar$ to each of these regions is a surjective operator.
\end{lemma}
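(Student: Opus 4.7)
The plan is to reduce each of the three types of region to a constant-coefficient model problem for which Proposition~\ref{prop:linear-cr-cylinder} or Proposition~\ref{prop:linear-cr-disk} gives surjectivity, and then handle the transition from the model to the actual linearized operator by a small-perturbation argument. In each case I would begin by choosing a smooth chart $\varphi: U \to \R^{2n}$ on $Q$ around $u(z_i)$ (or $u(x_a)$, or the constant value $q$ on the $S^2$-component) such that $\varphi(u(z_i)) = 0$, the Euclidean metric pulls back to the Riemannian metric, and $J(0) = J_0$ is the standard complex structure on $\R^{2n}$. In these coordinates, the local formula for $D_u\hdelbar$ derived in the paragraphs preceding the lemma takes the form
\[
D_u\hdelbar\,\eta \;=\; \tfrac{1}{2}\bigl(\partial_s\eta + J_0\,\partial_t\eta\bigr) \;+\; R(s,t)\,\eta \;+\; R'(s,t)\,\partial_t\eta,
\]
where the coefficients $R,R'$ depend on $J(u)-J_0$ and on $\partial_\eta J(u)\,\partial_t u$.

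Next I would identify the model geometry for each case. For a disk-like neighborhood $D_{z_i}$ at a marked point with no puncture, I identify $D_{z_i}$ biholomorphically with $\mathbb{D}$ and use the surjectivity of $\partial_s+J_0\partial_t$ on $H^3_{\mathrm{loc}}(\mathbb{D},\R^{2n})$ from Proposition~\ref{prop:linear-cr-disk}. For a puncture (a nodal point, or a marked point at which $u$ is of class $H^{3,\delta_0}$), I identify the punctured disk with a half-cylinder; for a nodal pair $\{x_a,y_a\}$, the two half-cylinders $[0,\infty)\times S^1$ and $(-\infty,0]\times S^1$ glue into $\R\times S^1$, and the matching condition $\eta(x_a)=\eta(y_a)\in T_{u(x_a)}Q$ coming from the polyfold structure is precisely the antipodal-asymptotic-constants condition of Proposition~\ref{prop:linear-cr-cylinder}, which then yields an $\ssc$-isomorphism of the model operator. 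For the $S^2$-component on which $u\equiv q$ is constant, a punctured $S^2$ is biholomorphic to $\R\times S^1$; since $\partial_t u=0$ and $J(u)\equiv J_0$, the terms $R,R'$ vanish identically, so the operator \emph{equals} its model and Proposition~\ref{prop:linear-cr-cylinder} gives surjectivity outright with no perturbation required.

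To pass from the model to the actual operator on $D_{z_i}$ or $D_{x_a}\sqcup D_{y_a}$, I would shrink the region enough that the zeroth-order perturbation $R\cdot + R'\partial_t\cdot$ has arbitrarily small operator norm between the relevant weighted $\ssc$-Hilbert spaces, and then conclude surjectivity by a standard Neumann-series argument for perturbations of right-invertible Fredholm operators. For a non-puncture marked point, continuity of $u$ at $z_i$ forces $J(u)-J_0\to 0$ uniformly on shrinking disks, and Sobolev embedding controls the $R'\partial_t$-term. For a puncture or nodal end (using the half-cylinder coordinates $(s,t)\in[s_0,\infty)\times S^1$), exponential decay of $u$ to its asymptotic limit and of $\partial_t u$, combined with the weight $e^{\delta_0 s}$ in the target space, makes the contribution of $R,R'$ on $\{s\geq s_0\}$ tend to zero as $s_0\to\infty$. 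Once surjectivity on the local model is established, a solution $\eta$ to $D_u\hdelbar\,\eta = w$ for $w$ supported in the region can be obtained globally by extending by zero (marked-point disks) or by matching asymptotic constants across the node (nodal pairs), possibly followed by one more small Neumann correction to absorb any error introduced by the extension.

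The main obstacle is the quantitative estimate in the perturbation step: one must control $\lVert R'\partial_t\rVert$ between $H^{3,\delta_0}$ and $H^{2,\delta_0}$ uniformly as the region shrinks. In the $H^3_{\mathrm{loc}}$ case this reduces to standard interior Sobolev estimates, but in the $H^{3,\delta_0}$ case it requires combining the exponential decay of $\partial_t u$ at the puncture with the matched exponential weight on source and target, which is a standard but delicate computation in the sc-calculus framework of \cite{HWZGW}.
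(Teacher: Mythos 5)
Your proof takes essentially the same route as the paper: decompose $D_u\hdelbar$ in local coordinates into the constant-coefficient Cauchy--Riemann operator of Propositions~\ref{prop:linear-cr-disk} and~\ref{prop:linear-cr-cylinder} plus zeroth-order error terms coming from $J(u)-J(0)$ and $\partial_\eta J(u)\,\partial_t u$, shrink the region so the error has small operator norm, and conclude by openness of the set of surjective operators -- with the weighted half-cylinder estimate you flag as the delicate point being exactly what the paper handles via \cite[Lem.~4.19]{HWZGW}. The one minor variation is at a nodal pair: the paper treats $D_{x_a}$ and $D_{y_a}$ as two separate half-cylinders and argues ``by symmetry'' from the punctured marked-point case, whereas you glue them conformally into a full cylinder $\R\times S^1$ and apply Proposition~\ref{prop:linear-cr-cylinder} directly, which is a defensible and arguably cleaner phrasing since that proposition is stated on the whole cylinder with the antipodal-constant matching condition already built in.
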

\begin{proof}
	We prove the existence of the first neighborhood, assuming $u$ is of class $H^3_\text{loc}$.  Consider the operator 
	\[
	D_{u} \hdelbar : H^3_\text{loc} (\mathbb{D}, \R^{2n}) \to H^2_\text{loc} (\mathbb{D}, \R^{2n})
	\]
	which is defined by the local expression we have just discussed, i.e.,
	\[
	D_{u} \hdelbar \eta = \frac{1}{2} \left(	\partial_s \eta + J(u) \partial_t \eta + \partial_\eta J(u) \partial_t u	\right).
	\]
	Moreover, assume we have identified a neighborhood of $Q$ with a neighborhood of $\R^{2n}$ such that $u(0) =0$.
	
	For every $\epsilon>0$ there exists a $\delta>0$ such that we have the following estimates for the ball $B_\delta(0)\subset \mathbb{D}$:
	\[
	\norm{(J(u)-J(0)\partial_t \eta }_{H^2_\text{loc} (B_\delta, \R^{2n})} \leq \frac{\epsilon}{2} \cdot \norm{\eta}_{H^3_\text{loc} (B_\delta, \R^{2n})}
	\]
	and
	\[
	\norm{\partial_\eta J(u) \partial_t u}_{H^2_\text{loc} (B_\delta, \R^{2n})} \leq \frac{\epsilon}{2} \cdot \norm{\eta}_{H^3_\text{loc} (B_\delta, \R^{2n})}.
	\]
	We consider the restriction of $D_{u} \hdelbar$ to $H^3_\text{loc} (B_\delta, \R^{2n})$; we may write
	\[
	D_{u} \hdelbar \eta = \frac{1}{2} \left(	\partial_s \eta + J(0) \partial_t \eta\right)
	+ \frac{1}{2} \left((J(u)-J(0))\partial_t \eta\right)
	+ \frac{1}{2} \left(\partial_\eta J(u) \partial_t u \right).
	\]
	From Proposition~\ref{prop:linear-cr-disk} the first term on the right is surjective, while we can bound the other two terms on the right in the operator norm by $\epsilon$.  From classical functional analysis 
	the space of surjective operators is open.  Hence there exists some $\delta$ such that
	\[
	D_{u} \hdelbar : H^3_\text{loc} (B_\delta, \R^{2n}) \to H^2_\text{loc} (B_\delta, \R^{2n})
	\]
	is surjective.
	
	We prove the existence of first neighborhood, assuming $u$ is of class $H^{3,\delta_0}$.
	By symmetry, this will also show the existence of the second neighborhood.  Consider the operator 
	\[
	D_{u} \hdelbar : H^{3,\delta_0}_c (\R^+\times S^1, \R^{2n}) \to H^{2,\delta_0} (\R^+\times S^1, \R^{2n})
	\]
	which is defined by the same expression as before.  Moreover, assume we have identified a neighborhood of $Q$ with a neighborhood of $\R^{2n}$ such that $\lim_{s\to \infty} u(s) =0$.
	We proceed the same as before.  By \cite[Lem.~4.19]{HWZGW} there exists $R\geq 0$ such that we have the following estimate for the region $[R,\infty)\times S^1 \subset \R^+\times S^1$,
	\[
	\norm{(J(u)-J(0)\partial_t \eta }_{H^{2,\delta_0} ([R,\infty)\times S^1, \R^{2n})} \leq \frac{\epsilon}{2} \norm{\eta}_{H^{3,\delta_0}_c ([R,\infty)\times S^1, \R^{2n})}.
	\]
	The same argument shows that there exists $R\geq 0$ such that
	\[
	\norm{\partial_\eta J(u) \partial_t u}_{H^{2,\delta_0} ([R,\infty)\times S^1, \R^{2n})} \leq \frac{\epsilon}{2} \norm{\eta}_{H^{3,\delta_0}_c ([R,\infty)\times S^1, \R^{2n})}.
	\]
	One should be careful to note the presence of the exponential weights in the above norms.  Using Proposition~\ref{prop:linear-cr-cylinder}, we may use the same argument to conclude that there exists some $R\geq 0$ such that 
	\[
	D_{u} \hdelbar : H^{3,\delta_0}_c ([R,\infty)\times S^1, \R^{2n}) \to H^{2,\delta_0} ([R,\infty)\times S^1, \R^{2n})
	\]
	is surjective.
	
	We prove the existence of the third neighborhood.  Noting that $u$ is constant on the component $S^2$, and assuming in our chart $u(S^2)=0$ the local expression for the linearized Cauchy--Riemann operator
	\[
	D_{u} \hdelbar :	H^{3,\delta_0}_{a,b}( \R\times S^1,\R^{2n} )\rightarrow H^{2,\delta_0}(\R\times S^1, \R^{2n} )
	\]
	is given by 
	\[
	\eta \mapsto \partial_s+J(0)\partial_t \eta
	\]
	where $H^{3,\delta_0}_{a,b}( \R\times S^1,\R^{2n} )$ is the $\ssc$-Hilbert space of maps with asymptotic constant $a$ as $s\to -\infty$ and asymptotic constant $b$ as $s\to +\infty$.  By Proposition~\ref{prop:linear-cr-cylinder}, we can observe that this is a surjective Fredholm operator, with kernel the constant maps.
\end{proof}

As a consequence of the above lemma, we obtain the following.

\begin{corollary}
	\label{cor:vectors-which-span-cokernel}
	Shrink the above small disk-like neighborhoods slightly.  If necessary, shrink further in order to assume the regions are all disjoint.  Then there exist vectors $v_1, \ldots, v_k\in H^{2,\delta_0}(\Sigma,\Lambda^{0,1} \otimes_J u^* TQ)$ such that
	\begin{itemize}
		\item $v_1, \ldots, v_k$ together with $\text{Im} (D_{u} \hdelbar)$ span $H^{2,\delta_0}(\Sigma,\Lambda^{0,1} \otimes_J u^* TQ)$
		\item $v_1, \ldots, v_k$ vanish on the above regions of $\Sigma$.
	\end{itemize}
\end{corollary}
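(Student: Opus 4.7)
The plan is to start from any choice of cokernel representatives and modify each one, by an element of $\operatorname{Im}(D_u\hdelbar)$, so that the result vanishes on the shrunken regions. Since $D_u\hdelbar$ is $\ssc$-Fredholm, $\coker D_u\hdelbar$ is finite dimensional of some dimension $k$. First I would pick $w_1,\ldots,w_k \in H^{2,\delta_0}(\Sigma,\Lambda^{0,1}\otimes_J u^*TQ)$ whose classes span $\coker D_u\hdelbar$, label the disjoint neighborhoods from Lemma~\ref{lem:local-surjectivity-cauchy-riemann} as $R_1,\ldots,R_m$, and let $R'_j\subset R_j$ denote the shrunken versions on which the $v_i$ are required to vanish. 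Then I would fix smooth cutoff functions $\chi_j:\Sigma\to [0,1]$ with $\chi_j\equiv 1$ on $R'_j$ and $\supp\chi_j\subset R_j$.

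Next I would apply Lemma~\ref{lem:local-surjectivity-cauchy-riemann} on each region: for each pair $(i,j)$ the local surjectivity of $D_u\hdelbar$ on $R_j$ produces a local section $\eta_{i,j}$, in whichever of $H^3_{\mathrm{loc}}$, $H^{3,\delta_0}_c$, or $H^{3,\delta_0}_{a,b}$ is appropriate, solving $D_u\hdelbar\,\eta_{i,j} = w_i|_{R_j}$. I would then set
\[
v_i := w_i - D_u\hdelbar\Bigl(\sum_{j=1}^m \chi_j\,\eta_{i,j}\Bigr).
\]
The spanning property is immediate: each $v_i-w_i\in\operatorname{Im}(D_u\hdelbar)$, so $[v_i]=[w_i]$ in the cokernel, and the classes of $v_1,\ldots,v_k$ still span $\coker D_u\hdelbar$. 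For vanishing on $R'_l$, disjointness of the regions makes $\chi_j\eta_{i,j}$ vanish on $R'_l$ for every $j\neq l$, leaving only the $j=l$ term. The product rule gives $D_u\hdelbar(\chi_l\eta_{i,l}) = \chi_l\cdot D_u\hdelbar\,\eta_{i,l} + [D_u\hdelbar,\chi_l]\eta_{i,l}$, where the commutator is a zeroth-order operator in $\eta_{i,l}$ whose coefficients are first derivatives of $\chi_l$; since $\chi_l\equiv 1$ on $R'_l$, its derivatives vanish there and the commutator term drops out, yielding $v_i|_{R'_l} = w_i|_{R'_l} - 1\cdot w_i|_{R'_l} = 0$.

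The main technical obstacle will be checking that the globally assembled section $\sum_j \chi_j\eta_{i,j}$, produced by cutting off and extending by zero outside $R_j$, actually lies in the global domain $H^{3,\delta_0}(\Sigma,u^*TQ)$ of $D_u\hdelbar$. This is where the exponentially weighted versions in Lemma~\ref{lem:local-surjectivity-cauchy-riemann} are essential: on the half-cylinder models of the nodal ends and on the cylindrical model of a constant $S^2$-component, the local primitive $\eta_{i,j}$ must have exponential decay matching the global weight $\delta_0$, which is precisely what that lemma guarantees (in contrast with the marked-point disks, where no exponential weight is needed). Once this is in hand, multiplying by a smooth compactly supported cutoff preserves the relevant Sobolev regularity, extension by zero across the boundary of $R_j$ is well defined, and matching these local norms with the global $H^{3,\delta_0}$ norm on $\Sigma$ is routine.
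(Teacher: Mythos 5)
Your plan — modify arbitrary cokernel representatives $w_i$ by elements of $\operatorname{Im}(D_u\hdelbar)$ built from local primitives cut off to the shrunken regions — is correct and is evidently the argument the paper has in mind (the paper presents the corollary as a direct consequence of the lemma, with no spelled-out proof). Your handling of the spanning property and of the first-order commutator $[D_u\hdelbar,\chi_l]$ on $R'_l$ is right, and you correctly identify the one nontrivial verification, namely that $\sum_j\chi_j\eta_{i,j}$ lies in the global domain.

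One point of imprecision worth flagging: you say the local primitives on the cylindrical models ``must have exponential decay matching the global weight $\delta_0$.'' In fact the local domains from Lemma~\ref{lem:local-surjectivity-cauchy-riemann} are $H^{3,\delta_0}_c$ (half-cylinder with one asymptotic constant) and $H^{3,\delta_0}_{a,b}$ (cylinder with two asymptotic constants): the section itself need not decay to zero, only its deviation from the asymptotic constant does. Since your cutoff $\chi_j$ is identically $1$ near the puncture, $\chi_j\eta_{i,j}$ retains that asymptotic constant; this is compatible because the global domain $H^{3,\delta_0}(\Sigma,u^*TQ)$ likewise permits asymptotic constants at nodal punctures (it is the codomain $H^{2,\delta_0}(\Sigma,\Lambda^{0,1}\otimes_J u^*TQ)$ that has pure exponential decay). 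As written, ``must have exponential decay'' could mislead a reader into thinking decay to zero is required, which it is not; rephrasing this in terms of asymptotic constants would make the final verification cleaner. Relatedly, for a nodal pair $\{x_a,y_a\}$, the lemma's region is the disjoint union $D_{x_a}\sqcup D_{y_a}$, so you should treat that as a single $R_j$ and solve once on the pair (so that any matching of asymptotic constants built into the global domain is respected by $\eta_{i,j}$), rather than solving independently on each half. With those two small adjustments your argument is complete.
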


\begin{remark}
	This is not the full linearization of the $\ssc$-smooth proper Fredholm operator $\delbarj :\cZ \to \cW$, rather the linearization restricted to the subset $a=0, v=0$.  However, the image of the full linearization contains $\text{Im} (D_{u} \hdelbar)$, so all this implies is that the number of vectors in the above corollary will be greater or equal to the dimension of the cokernel of the full linearization.
\end{remark}


\begin{bibdiv}
	\begin{biblist}
		\bib{brouwer1911beweis}{article}{
			AUTHOR = {Brouwer, L. E. J.},
			TITLE = {Beweis dass der {B}egriff der {M}enge h\"{o}herer {O}rdung nicht
				als {G}rundbegriff der intuitionistischen {M}athematik in
				{B}etracht kommt},
			JOURNAL = {Nederl. Akad. Wetensch., Proc.},
			VOLUME = {45},
			YEAR = {1942},
			PAGES = {791--793},
		}
	
		\bib{cieliebak2003equivariant}{article}{
			AUTHOR = {Cieliebak, Kai},
			AUTHOR = {Mundet i Riera, Ignasi},
			AUTHOR = {Salamon, Dietmar A.},
			TITLE = {Equivariant moduli problems, branched manifolds, and the
				{E}uler class},
			JOURNAL = {Topology},
			VOLUME = {42},
			YEAR = {2003},
			NUMBER = {3},
			PAGES = {641--700},
			ISSN = {0040-9383},
			DOI = {10.1016/S0040-9383(02)00022-8},
			URL = {https://doi.org/10.1016/S0040-9383(02)00022-8},
		}
	
		\bib{ffgw2016polyfoldsfirstandsecondlook}{article}{
			AUTHOR = {Fabert, Oliver},
			AUTHOR = {Fish, Joel W.},
			AUTHOR = {Golovko, Roman},
			AUTHOR = {Wehrheim, Katrin},
			TITLE = {Polyfolds: A first and second look},
			JOURNAL = {EMS Surv. Math. Sci.},
			VOLUME = {3},
			YEAR = {2016},
			NUMBER = {2},
			PAGES = {131--208},
			ISSN = {2308-2151},
			DOI = {10.4171/EMSS/16},
			URL = {https://doi.org/10.4171/EMSS/16},
		}
	
		\bib{filippenko2018arnold}{article}{
			author = {{Filippenko}, Benjamin},
			author = {{Wehrheim}, Katrin},
			title = {A polyfold proof of the Arnold conjecture},
			journal = {arXiv e-prints},
			year = {2018},
			pages = {45},
			eprint = {arXiv:1810.06180},
		}
	
		\bib{fish2018sft}{article}{
			author = {{Fish}, Joel W.},
			author = {{Hofer}, Helmut},
			title = {Lectures on polyfolds and symplectic field theory},
			journal = {arXiv e-prints},
			year = {2018},
			pages = {134},
			eprint = {arXiv:1808.07147},
		}
		
		\bib{haefliger1971homotopy}{article}{
			AUTHOR = {Haefliger, Andr\'{e}},
			TITLE = {Homotopy and integrability},
			BOOKTITLE = {Manifolds--{A}msterdam 1970 ({P}roc. {N}uffic {S}ummer
				{S}chool)},
			SERIES = {Lecture Notes in Mathematics, Vol. 197},
			PAGES = {133--163},
			PUBLISHER = {Springer, Berlin},
			YEAR = {1971},
		}
		
		\bib{haefliger1984groupoide}{article}{
			AUTHOR = {Haefliger, Andr\'{e}},
			TITLE = {Groupo\"{i}des d'holonomie et classifiants},
			NOTE = {Transversal structure of foliations (Toulouse, 1982)},
			JOURNAL = {Ast\'{e}risque},
			NUMBER = {116},
			YEAR = {1984},
			PAGES = {70--97},
			ISSN = {0303-1179},
		}
		
		\bib{haefliger2001groupoids}{article}{
			AUTHOR = {Haefliger, Andr\'{e}},
			TITLE = {Groupoids and foliations},
			BOOKTITLE = {Groupoids in analysis, geometry, and physics ({B}oulder, {CO},
				1999)},
			SERIES = {Contemp. Math.},
			VOLUME = {282},
			PAGES = {83--100},
			PUBLISHER = {Amer. Math. Soc., Providence, RI},
			YEAR = {2001},
			DOI = {10.1090/conm/282/04680},
			URL = {https://doi.org/10.1090/conm/282/04680},
		}
		
		\bib{HWZ2}{article}{
			author={Hofer, H.},
			author={Wysocki, K.},
			author={Zehnder, E.},
			title={A general Fredholm theory. II. Implicit function theorems},
			journal={Geom. Funct. Anal.},
			volume={19},
			date={2009},
			number={1},
			pages={206--293},
			issn={1016-443X},
			review={\MR{2507223}},
			doi={10.1007/s00039-009-0715-x},
		}
		
		\bib{HWZ3}{article}{
			author={Hofer, H.},
			author={Wysocki, K.},
			author={Zehnder, E.},
			title={A general Fredholm theory. III. Fredholm functors and polyfolds},
			journal={Geom. Topol.},
			volume={13},
			date={2009},
			number={4},
			pages={2279--2387},
			issn={1465-3060},
			review={\MR{2515707}},
			doi={10.2140/gt.2009.13.2279},
		}
		
		\bib{HWZint}{article}{
			author={Hofer, H.},
			author={Wysocki, K.},
			author={Zehnder, E.},
			title={Integration theory on the zero sets of polyfold Fredholm sections},
			journal={Math. Ann.},
			volume={346},
			date={2010},
			number={1},
			pages={139--198},
			issn={0025-5831},
			review={\MR{2558891}},
			doi={10.1007/s00208-009-0393-x},
		}
		
		\bib{HWZGW}{article}{
			author={Hofer, H.},
			author={Wysocki, K.},
			author={Zehnder, E.},
			title={Applications of polyfold theory I: The polyfolds of {G}romov--{W}itten theory},
			journal={Mem. Amer. Math. Soc.},
			volume={248},
			date={2017},
			number={1179},
			pages={v+218},
			issn={0065-9266},
			isbn={978-1-4704-2203-5},
			isbn={978-1-4704-4060-2},
			review={\MR{3683060}},
			doi={10.1090/memo/1179},
		}
		
		\bib{HWZbook}{article}{	
			author={Hofer, H.},
			author={Wysocki, K.},
			author={Zehnder, E.},
			title = {Polyfold and {F}redholm theory},
			journal = {arXiv e-prints},
			year = {2017},
			pages = {714},
			eprint = {arXiv:1707.08941},
		}
		
		\bib{MSbook}{book}{
			AUTHOR = {McDuff, Dusa},
			AUTHOR = {Salamon, Dietmar},
			TITLE = {{$J$}-holomorphic curves and symplectic topology},
			SERIES = {American Mathematical Society Colloquium Publications},
			VOLUME = {52},
			EDITION = {Second},
			PUBLISHER = {American Mathematical Society, Providence, RI},
			YEAR = {2012},
			PAGES = {xiv+726},
			ISBN = {978-0-8218-8746-2},
		}
		
		\bib{MWtopology}{article}{
			AUTHOR = {McDuff, Dusa},
			AUTHOR = {Wehrheim, Katrin},
			TITLE = {The topology of {K}uranishi atlases},
			JOURNAL = {Proc. Lond. Math. Soc. (3)},
			VOLUME = {115},
			YEAR = {2017},
			NUMBER = {2},
			PAGES = {221--292},
			ISSN = {0024-6115},
			DOI = {10.1112/plms.12032},
			URL = {https://doi.org/10.1112/plms.12032},
		}
		
		\bib{moerdijk2003introduction}{article}{
			AUTHOR = {Moerdijk, I.},
			AUTHOR = {Mr\v{c}un, J.},
			TITLE = {Introduction to foliations and {L}ie groupoids},
			SERIES = {Cambridge Studies in Advanced Mathematics},
			VOLUME = {91},
			PUBLISHER = {Cambridge University Press, Cambridge},
			YEAR = {2003},
			PAGES = {x+173},
			ISBN = {0-521-83197-0},
			DOI = {10.1017/CBO9780511615450},
			URL = {https://doi.org/10.1017/CBO9780511615450},
		}
		
		\bib{moerdijk2002orbifolds}{article}{
			AUTHOR = {Moerdijk, Ieke},
			TITLE = {Orbifolds as groupoids: An introduction},
			BOOKTITLE = {Orbifolds in mathematics and physics ({M}adison, {WI}, 2001)},
			SERIES = {Contemp. Math.},
			VOLUME = {310},
			PAGES = {205--222},
			PUBLISHER = {Amer. Math. Soc., Providence, RI},
			YEAR = {2002},
			DOI = {10.1090/conm/310/05405},
			URL = {https://doi.org/10.1090/conm/310/05405},
		}
		
		\bib{satake1956generalization}{article}{
			AUTHOR = {Satake, I.},
			TITLE = {On a generalization of the notion of manifold},
			JOURNAL = {Proc. Nat. Acad. Sci. U.S.A.},
			VOLUME = {42},
			YEAR = {1956},
			PAGES = {359--363},
			ISSN = {0027-8424},
			DOI = {10.1073/pnas.42.6.359},
			URL = {https://doi.org/10.1073/pnas.42.6.359},
		}
	\end{biblist}
\end{bibdiv}


\end{document}